  \providecommand\BibTeX{{%
    \normalfont B\kern-0.5em{\scshape  i\kern-0.25em b}\kern-0.8em\TeX}}}
\newtheorem{theorem}{Theorem}[section]
\newtheorem{lemma}[theorem]{Lemma}
\newtheorem{corollary}[theorem]{Corollary}
\newtheorem{conjecture}[theorem]{Conjecture}
\newtheorem{proposition}[theorem]{Proposition}
\newtheorem{example}[theorem]{Example}
\newtheorem{remark}[theorem]{Remark}
\newtheorem{definition}{Definition}
\newcommand{\bD}{\mathfrak{D}}
\newcommand{\fs}{\mathfrak{S}}
\newcommand{\bA}{\mathfrak{A}}
\newcommand{\bB}{\mathfrak{B}}
\newcommand{\bC}{\mathfrak{C}}
\newcommand{\bF}{\mathfrak{F}}
\newcommand{\bG}{\mathfrak{G}}
\newcommand{\bR}{\mathfrak{R}}
\newcommand{\bS}{\mathfrak{S}}
\newcommand{\bT}{\mathfrak{T}}
\renewcommand{\nsim}{\bot}
\renewcommand{\precnsim}{
\smash{{}^{\bot}_{\prec}}
}
\newcommand{\red}[1]{\textcolor{red}{#1}}
\newcommand{\id}{\operatorname{id}}
\newcommand{\typ}{\operatorname{typ}}
\newcommand{\atyp}{\operatorname{qf-typ}}
\newcommand{\Pol}{\operatorname{Pol}}
\newcommand{\Aut}{\operatorname{Aut}}
\newcommand{\Age}{\operatorname{Age}}
\newcommand{\Can}{\operatorname{Can}}
\newcommand{\proj}{\operatorname{Proj}}
\newcommand{\Csp}{\operatorname{CSP}}
\newcommand{\Th}{\operatorname{Th}}
\newcommand{\eq}{\mathtt{EQ}}
\newcommand{\poprec}{\po  \, {\cap} \,{\prec}}
\newcommand{\drprec}{\dr  \, {\cap} \,{\prec}}
\newcommand{\dr}{\mathtt{DR}}
\newcommand{\op}{\mathtt{PO}}
\newcommand{\po}{\mathtt{PO}}
\newcommand{\pp}{\mathtt{PP}}
\newcommand{\ppi}{\mathtt{PPI}}
\begin{document}
\title[A Complexity Dichotomy in Spatial Reasoning 
via Ramsey Theory]{A Complexity Dichotomy in Spatial Reasoning 
via Ramsey Theory}
\author{Manuel Bodirsky}
\authornote{Funded by the European Union (project POCOCOP, ERC Synergy grant No. 101071674). Views and opinions expressed are however those of the author(s) only and do not necessarily reflect those of the European Union or the European Research Council Executive Agency. Neither the European Union nor the granting authority can be held responsible for them.}
\affiliation{\institution{Institut f\"{u}r Algebra, Fakult\"at f\"ur Mathematik, TU Dresden}\country{Germany}}
\orcid{0000-0001-8228-3611}
\email{manuel.bodirsky@tu-dresden.de}
 \author{Bertalan Bodor}
\authornote{
Supported by the grants TKP2021-NVA-09 of the Ministry for Innovation and Technology, Hungary, and NKFIH-K138892.
}
\affiliation{\institution{
Department of Algebra and Number Theory, University of Szeged}\country{Hungary}}
\email{bodor@server.math.u-szeged.hu}

\begin{abstract}
Constraint satisfaction problems (CSPs) for first-order reducts of finitely bounded homogeneous structures form a large class of computational problems that might exhibit a complexity dichotomy, P versus NP-complete. 
A powerful method to obtain polynomial-time tractability results for such CSPs is a certain reduction to  polynomial-time tractable finite-domain CSPs defined over $k$-types, for a sufficiently large $k$.
We give sufficient conditions when this method can be applied and apply these conditions to obtain a new complexity dichotomy for CSPs of first-order expansions of the basic relations of the well-studied spatial reasoning formalism RCC5. 
We also classify which of these CSPs can be expressed in Datalog. Our method relies on Ramsey theory; we prove that RCC5 has a Ramsey order expansion. 
\end{abstract}

\maketitle


\tableofcontents

\begin{CCSXML}
<ccs2012>
<concept>
<concept_id>10002950.10003624</concept_id>
<concept_desc>Mathematics of computing~Discrete mathematics</concept_desc>
<concept_significance>500</concept_significance>
</concept>
<concept>
<concept_id>10003752.10003790</concept_id>
<concept_desc>Theory of computation~Logic</concept_desc>
<concept_significance>500</concept_significance>
</concept>
</ccs2012>
\end{CCSXML}

\ccsdesc[500]{Mathematics of computing~Discrete mathematics}
\ccsdesc[500]{Theory of computation~Logic}

\keywords{Constraint Satisfaction, Computational Complexity, Ramsey Theory, Spatial Reasoning, RCC5, Universal Algebra, Model Theory}

\red{This is a postprint version; the official article appeared online in \emph{ACM Transactions on Computation Theory}, March 2024.}

\section{Introduction}
The \emph{Constraint satisfaction problem (CSP)}
of a relational structure $\bB$ is the computational problem of deciding whether a given finite structure $\bA$ has a homomorphism to $\bB$. 
If the structure $\bB$ is finite, then $\Csp(\bB)$ is in P or NP-complete; this was conjectured by Feder and Vardi~\cite{FederVardi} and proved by Bulatov~\cite{BulatovFVConjecture} and, independently, by Zhuk~\cite{ZhukFVConjecture}. 
Both approaches use concepts and methods from universal algebra; in particular, they use that
the computational complexity of $\Csp(\bB)$ is
fully determined by the set $\Pol(\bB)$ of \emph{polymorphisms} of $\bB$, which is the set of homomorphisms from $\bB^n$ to $\bB$ for $n \in {\mathbb N}$. 
In fact, $\Csp(\bB)$ is NP-complete if $\Pol(\bB)$ has a minor-preserving map to the clone $\proj$ of projections on a two-element set, 
and is in P otherwise.

Some of the results about finite structures can
be lifted to countably infinite structures whose automorphism group satisfies a certain finiteness condition, called \emph{oligomorphicity}: the requirement is that for every $n \in {\mathbb N}$ 
the componentwise action of the automorphism group on $n$-tuples has only finitely many orbits. 
Examples of such structures arise systematically in model theory: every homogeneous structure $\bB$ with a finite relational signature is of this type;
such structures will be called \emph{finitely homogeneous}.  
An additional finiteness condition is to require that
the class of finite substructures of $\bB$ is described by finitely many forbidden substructures, in which case $\bB$ is called \emph{finitely bounded}. The class of reducts of finitely bounded homogeneous structures is a huge generalisation of the class of all finite structures. 

There is also a generalisation of the Feder-Vardi dichotomy conjecture, which is still open, to reducts of finitely bounded homogeneous structures. In fact, there is a known NP-hardness condition for the CSPs of such structures which is conjectured to be at the 
border between
polynomial-time tractable and NP-complete CSPs~\cite{BPP-projective-homomorphisms,wonderland,BKOPP}.
The \emph{infinite-domain tractability conjecture} states that the CSP for 
every structure that does not satisfy the mentioned hardness condition is in P (details can be found in Section~\ref{sect:clones}). 
There are many classes of infinite-domain structures where the infinite-domain tractability  conjecture has been verified. Often, these classes consist of the \emph{first-order reducts} of some fixed underlying structure $\bB$. By a \emph{first-order reduct of $\bB$} we mean a reduct of the expansion of $\bB$ by all relations that are first-order definable in $\bB$. For example, the infinite-domain tractability conjecture
has been verified for 
\begin{enumerate}
\item 
all CSPs of structures preserved by all permutations~\cite{ecsps}, which is 
precisely the class of first-order reducts of pure sets (structures with no relations); 
\item 
all CSPs of structures with a highly set-transitive automorphism group~\cite{tcsps-journal} (i.e., for all finite subsets $X,Y$ with $|X|=|Y|$ there exists an automorphism which maps $X$ to $Y$),
which is precisely the class of first-order reducts 
of unbounded dense linear orders;
\item 
all CSPs of first-order reducts of the homogeneous universal poset~\cite{posetCSP18};
\item 
 all CSPs of first-order reducts of the binary branching C-relation~\cite{Phylo-Complexity};
\item 
all CSPs of first-order reducts of homogeneous graphs~\cite{BMPP16}; 
\item all CSPs of first-order reducts of \emph{unary structures}, i.e., structures with a signature that consists of finitely many unary relation symbols~\cite{BodMot-Unary}; 
\item all CSPs expressible in MMSNP~\cite{MMSNP}; MMSNP is a fragment of existential second-order logic introduced by Feder and Vardi~\cite{FederVardi}. 
\end{enumerate}


In some, but not in all cases above the polynomial-time tractability results for $\Csp(\bB)$ can be obtained by reducing $\Csp(\bB)$ to $\Csp(\bT)$ where
$\bT$ is a certain finite structure whose domain is the set of complete $k$-types of $\bB$, for a sufficiently large $k$;
this method works if the \emph{clone of canonical polymorphisms of $\bB$} does not have a minor-preserving map to the projections~\cite{BodMot-Unary}. 
In this case, $\Pol(\bT)$ does not have a minor-preserving map to $\proj$, and hence 
satisfies the condition 
for the algorithms of Bulatov and of Zhuk. The  polynomial-time tractability of 
 $\Csp(\bT)$ then implies the polynomial-time tractability of $\Csp(\bB)$, by a reduction from~\cite{BodMot-Unary}.
The publications cited above show that the canonical
polymorphisms determine the complexity of  
the CSPs in (1), (3), (5), (6), and (7), 
but not for the CSPs in (2) and (4). 


We introduce a property, which is implicit in~\cite{BodMot-Unary,MMSNP}, and which we call 
the \emph{unique interpolation property (UIP)}.  
We present equivalent characterisations of the unique interpolation property; in particular, we show that it suffices to verify the unique interpolation property for binary polymorphisms. 
For our characterisation we need an additional 
assumption, which holds in all the mentioned examples that appear in (1)-(7) above, and which makes the connection
between CSPs and 
universal algebra particularly strong:
we require that $\bB$ is a reduct of a finitely homogeneous \emph{Ramsey structure} $\bA$. 
While the assumption to be Ramsey is quite strong, the assumption that a finitely homogeneous structure is a reduct of a finitely homogeneous Ramsey structure is weak: in fact, it might be that  \emph{every} finitely bounded homogeneous structure has a finitely bounded homogeneous Ramsey expansion (see~\cite{BodirskyRamsey,BPT-decidability-of-definability}). 


We use our general results to prove a 
new complexity dichotomy for qualitative spatial reasoning. A fundamental formalism
in spatial reasoning is RCC5~\cite{Bennett}, 
which involves five binary relations defined on some general set of \emph{regions}; a formal definition can be found in Section~\ref{sect:rcc5}. 
Renz and Nebel~\cite{NebelRenz} showed that the CSP for RCC5 
is NP-complete, and Nebel~\cite{Nebel95} showed that the CSP for the five basic relations of RCC5 is in P (via a reduction to 2SAT\footnote{The proof of Nebel was formulated for RCC8 instead of RCC5, but the result for RCC5 can be shown analogously.}). 
Renz and Nebel~\cite{NebelRenz} extended the polynomial-time tractability result to 
a superclass of the basic relations, and they showed that their expansion is \emph{maximal}
in the sense that every larger subset of the RCC5 relations has an NP-hard CSP. 
Drakengren and Jonsson~\cite{RCC5JD} classified the computational complexity of the CSP for all subsets of the RCC5 relations. 
We classify the complexity of the CSP for expansions of the basic relations of RCC5 by first-order definable relations of arbitrary finite arity, 
solving the essential part of Problem 4 in~\cite{Qualitative-Survey}\footnote{For a full solution, it remains to consider the CSPs for relations of RCC5 where some of the basic relations are missing.}.
In particular, we verify the tractability conjecture for these CSPs, and show that the tractable cases can be solved 
by reducing them to polynomial-time tractable finite-domain CSPs using the reduction from~\cite{BodMot-Unary}. 
For our method, we need a homogeneous Ramsey expansion of RCC5; to prove the Ramsey property of the expansion that we construct we apply a recent Ramsey transfer result of Mottet and Pinsker~\cite{MottetPinskerCores}, using the well-known fact that a certain ordered version of the atomless Boolean algebra is Ramsey~\cite{Topo-Dynamics}. 

The results mentioned so far have been announced in the proceedings of LICS 2021~\cite{BodirskyBodorUIP}, but most proofs were omitted because of space restrictions. 
In this extended version of the article 
we additionally show that if the condition of the infinite-domain tractability conjecture applies to a first-order expansion $\bB$ of the basic relations of RCC5,
then $\Csp(\bB)$ can be solved by Datalog~\cite{FederVardi} (in polynomial time); otherwise, it cannot be expressed in the much stronger \emph{fixed-point logic with counting (FPC)}~\cite{AtseriasBulatovDawar}.

\subsection*{Related Work}
There exists another powerful approach to CSP complexity classification under similar conditions, the theory of \emph{smooth approximations}~\cite{MottetPinskerSmoothConf,Collapses}. 
The theory of smooth approximations also gives a sufficient condition for the UIP in certain cases (see the discussion at the end of Section 3.2 in~\cite{MottetPinskerSmoothConf}); but the machinery and the general statements there do not seem to directly relate to the statements that we prove here. 
The smooth approximations approach proved to be very efficient in re-deriving existing classifications and proving new classifications 
where a surprisingly large proportion of the work follows from the general results of smooth approximations, and only relatively few arguments are needed to adapt to the classes under consideration. However, we do not know how to apply their approach to derive our spatial reasoning classification. We believe that also the general results presented here will be useful in other settings; one example of a different setting where the present approach has been applied is the class of monadically stable $\omega$-categorical structures, for which the second author confirmed the infinite-domain tractability conjecture in his PhD thesis~\cite{BodorDiss}, using results of this article. 
In contrast to the present article, the smooth approximations approach also provides a systematic approach to Datalog expressibility~\cite{Collapses}; but also here we do not know how to obtain the particular results about Datalog expressibility in spatial reasoning from their general theorems. 


\subsection*{Outline}
In Section~\ref{sect:omega-cat},
Section~\ref{sect:ramsey}, 
Section~\ref{sect:clones}, and Section~\ref{sect:canonicity} 
we introduce some fundamental notions and results from model theory, Ramsey theory, 
universal algebra, and canonical functions, respectively, 
that we need to state our results. In Section~\ref{sect:ramsey} we also need a consequence of the Ramsey property
of an $\omega$-categorical structure $\bA$ 
for the existence of independent elementary substructures of $\bA$ (Theorem~\ref{thm:indep}); this result might be of independent interest in model theory. 

Our general main results about the UIP with applications to complexity classification are presented in Section~\ref{sect:results}. 
In particular, 
we present two results that facilitate the verification of the UIP in concrete settings (Section~\ref{sect:binary-uip}). The proofs of these results are given in Section~\ref{sect:ext-proofs} and Section~\ref{sect:binary}, respectively. 

The applications of the general results to the spatial reasoning formalism RCC5 are in  Section~\ref{sect:rcc5}, and the classification of the respective CSPs in Datalog in Section~\ref{sect:Datalog}.


\section{Countably Categorical Structures}
\label{sect:omega-cat}
All structures considered in this text are assumed to be relational unless stated otherwise. If $\bA$ is a structure,  then $A$ denotes the domain of $\bA$.
A structure $\bA$ is called \emph{$\omega$-categorical} 
if the set of all first-order sentences that hold in
$\bA$ has exactly one countable model up to isomorphism. This concept has an equivalent
characterisation based on the automorphism group
$\Aut(\bA)$ of $\bA$. For $l \in {\mathbb N}$, 
the \emph{$l$-orbit} of $a = (a_1,\dots,a_l) \in A^l$ in $\Aut(\bA)$ is the set $$\{\alpha(a) \colon \alpha \in \Aut(\bA) \} \text{ where } \alpha(a) \coloneqq (\alpha a_1,\dots, \alpha a_l).$$ 
We write ${\mathcal O}_l(\bA)$ 
for the set of $l$-orbits of $\Aut(\bA)$. 
 Engeler, Svenonius, and Ryll-Nardzewski (see, e.g., Theorem~6.3.1 in~\cite{Hodges}) proved that a countable structure is $\omega$-categorical if and only if 
 $\Aut(\bA)$ is \emph{oligomorphic}, i.e., if 
 ${\mathcal O}_l(\bA)$ is finite for every $l \in {\mathbb N}$. In fact, $a,b \in A^l$ have the same $l$-orbit if and only if 
 they satisfy the same first-order formulas over $\bA$; we write 
 $\typ^{\bA}(a)$ for the set of all first-order formulas satisfied by $a$ over $\bA$, called the \emph{type of $a$ over $\bA$}. For $B \subseteq A$, we write $\typ^{\bA}(a/B)$ for the set of all first-order formulas
 with parameters from $B$ (i.e., formulas over an expanded signature that also contains constant symbols denoting elements from $B$) 
 satisfied by $a$ over $\bA$. 

{\bf Homogeneous structures.}
Many examples of $\omega$-categorical structures
arise from structures $\bA$ that are \emph{homogeneous}.
A relational structure is \emph{homogeneous} if every isomorphism between finite substructures of $\bA$ can be extended to an automorphism of $\bA$. 
It follows from the above that if $\bA$ is finitely homogeneous, then 
$\bA$ is $\omega$-categorical. 
If $a \in A^n$, we write $\atyp^{\bA}(a)$ for the set of all quantifier-free formulas that hold on $a$ over $\bA$. Clearly, in homogeneous structures $\bA$ the quantifier-free type of $a$ determines the type of $a$. 
Also note that every $\omega$-categorical structure can be turned into a homogeneous $\omega$-categorical structure by expanding it by all first-order definable relations.

Every homogeneous structure $\bA$ is uniquely given (up to isomorphism) by its \emph{age}, 
i.e., by the class $\Age(\bA)$ of all finite structures that embed into $\bA$ (see, e.g., Lemma 6.1.4 in~\cite{Hodges}). Conversely, every class
$\mathcal C$ 
of structures with finite relational signature which 
is an \emph{amalgamation class}, i.e., is closed under isomorphism, substructures, and
which has the \emph{amalgamation property}, is the age of a homogeneous structure, which we call the \emph{Fra{\"i}ss\'e-limit of ${\mathcal C}$}
(see, e.g., Theorem~6.1.2 in~\cite{Hodges}). 
We say that $\bB$ is \emph{finitely bounded} if there exists a finite set of finite structures $\mathcal F$ such that the age of $\bB$ equals the class of all finite structures $\bA$ such that no structure from $\mathcal F$ embeds into $\bA$.

{\bf Model-complete cores.} 
An $\omega$-categorical
structure $\bC$ is \emph{model-complete}
if every self-embedding $e \colon \bC \hookrightarrow \bC$ 
preserves all first-order formulas. 
An $\omega$-categorical structure $\bC$
is called a \emph{core} if every endomorphism 
of $\bC$ is a self-embedding of $\bC$. 
If there is a homomorphism from a structure $\bA$ to a structure $\bB$ and vice versa, then $\bA$ and $\bB$ are called \emph{homomorphically equivalent}. 
Every $\omega$-categorical structure $\bA$ is
homomorphically equivalent to
a model-complete core, which is unique up to isomorphism, and again $\omega$-categorical, and which is called \emph{the} model-complete core of $\bA$~\cite{Cores-Journal,BodHilsMartin-Journal}.




\section{Ramsey Theory}
\label{sect:ramsey}
For $\tau$-structures $\bA,\bB$ we
write ${\bB \choose \bA}$ for the set of all embeddings of $\bA$ into $\bB$. 
If $\bA,\bB,\bC$ are $\tau$-structures and $r \in {\mathbb N}$ then we write
$$\bC \to (\bB)^{\bA}_r$$ if for 
every function $\chi \colon {\bC \choose \bA} \to \{1,\dots,r\}$ (also referred to as a \emph{colouring}, where $\{1,\dots,r\}$ are the different colours) there exists 
$e \in {\bC \choose \bB}$ such that
$\chi$ is constant on $$\left \{e \circ f \colon f \in {\bB \choose \bA} \right \} \subseteq {\bC \choose \bA}.$$ 

\begin{definition}
A class $\mathcal C$ of finite $\tau$-structures 
has the \emph{Ramsey property}
if for all $\bA,\bB \in {\mathcal C}$ and $r \in {\mathbb N}$ 
there exists $\bC \in {\mathcal C}$ such that 
 $\bC \to (\bB)^{\bA}_r$. A homogeneous structure is \emph{Ramsey} if its age has the Ramsey property. 
 \end{definition}

Let $r \in {\mathbb N}$. 
We write 
$\bC \to (\bB)_r$
if for every $\chi \colon C^{|B|} \to \{1,\dots,r\}$ 
there exists 
$e \colon \bB \hookrightarrow \bC$ 
such that if
$\atyp^{\bB}(s) = \atyp^{\bB}(t)$ for $s,t \in B^{|B|}$, then $\chi(e(s)) = \chi(e(t))$. 
The Ramsey property 
has the following well-known consequence (see, e.g., Proposition 2.21 in~\cite{BodirskyRamsey}).  

\begin{lemma}\label{lem:can}
Let $\mathcal C$ be a Ramsey class. 
Then for every $r \in {\mathbb N}$ and $\bB \in \mathcal C$ there exists $\bC \in \mathcal C$ such that 
$\bC \to (\bB)_r$. 
\end{lemma}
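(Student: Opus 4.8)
The statement we need: given a Ramsey class $\mathcal{C}$, for every $r \in \mathbb{N}$ and $\bB \in \mathcal{C}$ there is $\bC \in \mathcal{C}$ with $\bC \to (\bB)_r$. The key observation is that the relation $\bC \to (\bB)_r$ is about colourings of $|B|$-tuples of elements of $C$ that factor through quantifier-free types over $\bB$, whereas the Ramsey property $\bC \to (\bB')^{\bA}_s$ is about colourings of embeddings of structures. So the plan is to translate the first kind of colouring into the second.

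Let me sketch the steps. First I would let $n = |B|$ and enumerate the quantifier-free types realised by $n$-tuples $s \in B^n$ whose entries enumerate $B$ (equivalently, the isomorphism types of the "tagged" structures $(\bB, s)$ where $s$ lists all of $B$, possibly with repetitions). There are finitely many such; the point of the conclusion $\bC \to (\bB)_r$ is precisely that inside a copy of $\bB$ in $\bC$ the colour of an $n$-tuple depends only on which of these finitely many classes it falls into — no, wait, re-reading: actually the conclusion only requires the colour to be constant on tuples with the same quantifier-free type \emph{within the image $e(B)$}, which is exactly the condition that the colouring, restricted to $n$-tuples from $e(B)$, factors through $\atyp^{\bB}$. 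Next, I would consider an arbitrary colouring $\chi \colon C^n \to \{1,\dots,r\}$. For each embedding $f \colon \bB \hookrightarrow \bC$ and each fixed $n$-tuple $s_0 \in B^n$ enumerating $B$, the composition gives an $n$-tuple $f(s_0) \in C^n$, hence a colour $\chi(f(s_0))$. But different tuples enumerating $B$ give potentially different colours, so I cannot use a single $\bA$; instead I would iterate.

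The clean approach: let $\bB^*$ be the substructure of $\bB$ on its full domain but we really want to control all $n$-tuples simultaneously. I would take $\bA$ to range over all structures that embed into $\bB$ on at most $n$ elements — but the cleanest is to do the Ramsey step once for $\bA = \bB$ itself with a product colouring. Concretely: fix once and for all a list $s_1,\dots,s_m$ of all $n$-tuples with entries from $B$ (so $m = |B|^n$), and define a colouring $\chi' \colon {\bC \choose \bB} \to \{1,\dots,r\}^m$ by $\chi'(g) = (\chi(g(s_1)),\dots,\chi(g(s_m)))$. This uses $r^m$ colours. Apply the Ramsey property: there is $\bC \in \mathcal{C}$ (choosing $\bC$ large enough, with $\bB$ playing the role of both the small and the "medium" structure, or rather using $\bC \to (\bB)^{\bB}_{r^m}$) and an embedding $e \colon \bB \hookrightarrow \bC$ such that $\chi'$ is constant on $\{e \circ f : f \in {\bB \choose \bB}\} = \{e \circ \alpha : \alpha \in \operatorname{Aut}(\bB) \text{ or self-embedding}\}$. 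Actually ${\bB \choose \bB}$ is the set of self-embeddings of $\bB$; since $\bB$ is finite this is $\operatorname{Aut}(\bB)$. Hence $\chi'(e \circ \alpha)$ is the same for all $\alpha \in \operatorname{Aut}(\bB)$, in particular $\chi(e(\alpha(s_i)))$ is independent of $\alpha$, for each $i$. Now if $s,t \in B^n$ satisfy $\atyp^{\bB}(s) = \atyp^{\bB}(t)$, then by homogeneity of $\bB$ (finite homogeneous, or just: equal quantifier-free type in a finite structure means there's an automorphism) there is $\alpha \in \operatorname{Aut}(\bB)$ with $\alpha(s) = t$; writing $s = s_i$, $t = s_j$, we get $\chi(e(s)) = \chi(e(s_i)) = \chi(e(\alpha(s_i))) = \chi(e(s_j)) = \chi(e(t))$. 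This is exactly $\bC \to (\bB)_r$.

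The main obstacle — really the only subtle point — is making sure the role-assignment in the Ramsey property is set up correctly: we need the "pattern" structure $\bA$ and the "target copy" structure $\bB$ to be chosen so that the colour classes within $e(B)$ are governed by $\operatorname{Aut}(\bB)$-orbits of $n$-tuples, which forces us to take $\bA = \bB$ (not a single tuple), and to encode \emph{all} the tuple-colours into one vector-valued colouring so a single application of Ramsey suffices. One must also be a little careful that ${\bB \choose \bB}$ consists exactly of automorphisms (true since $\bB$ is finite) and invoke homogeneity of $\bB$ to pass from equal quantifier-free type to existence of an automorphism — which holds because $\bB \in \mathcal{C}$ and we may assume $\bB$ is a finite substructure of the Fraïssé limit, or simply because any finite structure embeds in its own "homogenisation"; in the generality needed here the statement $\atyp^{\bB}(s)=\atyp^{\bB}(t) \Rightarrow \exists \alpha \in \operatorname{Aut}(\bB)\, \alpha(s) = t$ is only automatic when $\bB$ is homogeneous, so strictly one either assumes that or replaces $\operatorname{Aut}(\bB)$-orbits by $\atyp^{\bB}$-classes throughout and checks the Ramsey colouring is still well-defined, which it is since $\chi'$ can be defined on embeddings without reference to automorphisms. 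I would present the argument in the latter, cleaner form to avoid any homogeneity assumption on $\bB$ beyond membership in $\mathcal{C}$.
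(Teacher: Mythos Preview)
The paper does not prove this lemma in detail; it refers to Proposition~2.21 in~\cite{BodirskyRamsey}. Assessing your argument on its own merits, there is a genuine gap.

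Applying the Ramsey property with $\bA = \bB$ and your product colouring yields an embedding $e$ such that $\chi'$ is constant on $\{e \circ \alpha : \alpha \in {\bB \choose \bB}\} = \{e \circ \alpha : \alpha \in \Aut(\bB)\}$, so $\chi \circ e$ is constant on $\Aut(\bB)$-orbits of $|B|$-tuples. But $\bC \to (\bB)_r$ demands constancy on \emph{quantifier-free type classes}, which are in general strictly coarser than $\Aut(\bB)$-orbits. Take the Ramsey class of finite linearly ordered graphs and let $\bB$ have vertices $a < b < c$ with a single edge $\{a,b\}$: then $(a,a,a)$ and $(c,c,c)$ have the same quantifier-free type, yet $\bB$ is rigid, so your Ramsey step is vacuous --- the ``monochromatic'' set $\{e\}$ is a singleton and you learn nothing relating $\chi(e(a,a,a))$ to $\chi(e(c,c,c))$. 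Your proposed ``cleaner form'' cannot repair this: the Ramsey property with pattern structure $\bB$ only ever constrains the set ${\bB \choose \bB} = \Aut(\bB)$, and no rephrasing makes it speak about tuples whose entries do not exhaust $B$.

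The correct route is exactly the one you raised first and then discarded: iterate over the substructures of $\bB$. Each quantifier-free type $p$ of an $n$-tuple ($n = |B|$) realised in $\bB$ determines a substructure $\bA_p$ of $\bB$ together with a surjection $\sigma_p \colon \{1,\dots,n\} \to A_p$, and tuples of type $p$ in any $\bD$ correspond bijectively to embeddings of $\bA_p$ into $\bD$ via $g \mapsto g \circ \sigma_p$. List the types as $p_1,\dots,p_k$, set $\bC_0 \coloneqq \bB$, choose $\bC_j \in \mathcal{C}$ with $\bC_j \to (\bC_{j-1})^{\bA_{p_j}}_r$, and put $\bC \coloneqq \bC_k$. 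Given $\chi$, descend through the chain to obtain $e \colon \bB \hookrightarrow \bC$ on whose image each induced embedding-colouring of $\bA_{p_j}$ is constant; this is precisely $\bC \to (\bB)_r$. This is the standard argument in the cited reference.
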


Another important consequence of the Ramsey property is presented in Section~\ref{sect:canonisation}. 
We also need the following consequence of the Ramsey property (in the proof of Proposition~\ref{lem:diagonal-common-interpolation}), which appears to be new.
Let $\bD$ be a structure and let $A,B \subseteq D$. We say that $A$ is \emph{independent from $B$ in $\bD$} if for all $\bar a,\bar b \in A^n$, if $\typ^{\bD}(\bar a) = \typ^{\bD}(\bar b)$, then
$\typ^{\bD}(\bar a/B) = \typ^{\bD}(\bar b/B)$. 
Two substructures $\bA,\bB$ of $\bD$ are called \emph{independent} if $A$ is independent from $B$ in $\bD$ and $B$  is independent from $A$ in $\bD$. 
A substructure $\bB$ of a $\tau$-structure $\bA$ is called
\emph{elementary} if the identity mapping from $\bB$ to $\bA$ 
preserves all first-order $\tau$-formulas. 

\begin{theorem}\label{thm:indep}
Let $\bA$ be a countable homogeneous $\omega$-categorical Ramsey structure. Then $\bA$ contains two independent elementary substructures. 
\end{theorem}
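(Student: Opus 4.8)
\medskip
\noindent\emph{Proof idea.}
The plan is to build $\bB$ and $\bC$ simultaneously, as increasing unions of finite substructures of $\bA$, by a back-and-forth construction in which the Ramsey property is used to keep the finite approximations of $\bB$ and of $\bC$ in ``canonical relative position''. First I would reformulate the goal. Since $\bA$ is homogeneous, the type of any tuple over any finite parameter set is determined by its quantifier-free type; hence on the one hand every substructure of $\bA$ isomorphic to $\bA$ is automatically an elementary substructure, and on the other hand, for disjoint substructures $\bB,\bC\leq\bA$ with $\bB\cong\bA\cong\bC$, the structure $\bB$ is independent from $\bC$ in $\bA$ if and only if $\atyp^{\bA}(\bar b\bar c)=\atyp^{\bA}(\bar b'\bar c)$ whenever $\bar b,\bar b'\in B^{n}$, $\atyp^{\bA}(\bar b)=\atyp^{\bA}(\bar b')$ and $\bar c\in C^{m}$. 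So the task becomes: find disjoint $\bB,\bC\leq\bA$, both isomorphic to $\bA$, such that the quantifier-free type of a ``mixed'' tuple depends only on the quantifier-free types of its $\bB$-part and its $\bC$-part (and on how the two parts are interleaved) --- in other words, embed into $\bA$ a ``canonical doubling'' of $\bA$.

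I would construct such $\bB,\bC$ as $\bB=\bigcup_{n}B_{n}$ and $\bC=\bigcup_{n}C_{n}$, where $\emptyset=B_{0}\subseteq B_{1}\subseteq\cdots$ and $\emptyset=C_{0}\subseteq C_{1}\subseteq\cdots$ are finite substructures of $\bA$ with $B_{n}\cap C_{n}=\emptyset$, maintaining the invariant that $(B_{n},C_{n})$ is \emph{in canonical relative position} (i.e.\ $\atyp^{\bA}(\bar b\bar c)=\atyp^{\bA}(\bar b'\bar c)$ for all $\bar b,\bar b'\in B_{n}$ with $\atyp^{\bA}(\bar b)=\atyp^{\bA}(\bar b')$ and all $\bar c\in C_{n}$, and symmetrically), while a fair enumeration of ``tasks'' ensures that every one-point extension in $\Age(\bA)$ of every finite substructure of $B_{n}$ is eventually realised inside some $B_{m}$ with $m>n$, and likewise for $\bC$. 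The latter requirement makes $\bB\cong\bA\cong\bC$ by the usual argument for presenting a Fra{\"i}ss\'e limit as an increasing union, so $\bB$ and $\bC$ are disjoint elementary substructures of $\bA$; and any two tuples $\bar b$ from $B$ and $\bar c$ from $C$ lie in $B_{n}\cup C_{n}$ for some $n$, so the invariant at stage $n$ together with the reformulation above yields that $\bB$ and $\bC$ are independent.

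The crux is the extension step: given $(B_{n},C_{n})$ in canonical relative position and a prescribed one-point extension of $B_{n}$ --- say an element $b^{*}$ with a prescribed quantifier-free type over $B_{n}$ --- one must exhibit such a $b^{*}\in A\setminus C_{n}$ for which $(B_{n}\cup\{b^{*}\},C_{n})$ is again in canonical relative position. Here I would invoke the Ramsey property through its canonical-function consequence: the expansion $(\bA,B_{n})$ of $\bA$ by constants for the finitely many elements of $B_{n}$ is again finitely homogeneous, $\omega$-categorical and Ramsey, so Lemma~\ref{lem:can} and the canonisation machinery of Section~\ref{sect:canonisation} produce a self-embedding $g$ of $\bA$ fixing $B_{n}$ pointwise along which the finite colouring $\bar a\mapsto\atyp^{\bA}(\bar a/C_{n})$ becomes canonical over $(\bA,B_{n})$, i.e.\ $\atyp^{\bA}(g(\bar a)/C_{n})$ depends only on $\atyp^{\bA}(\bar a/B_{n})$; applying $g$ to a realisation of the prescribed type then gives a candidate $b^{*}$ that is in ``canonical position over $C_{n}$ relative to $B_{n}$''. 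The main obstacle, I expect, is that this canonisation is \emph{relative to} $B_{n}$, whereas the invariant is an \emph{absolute} condition on quantifier-free types over $\emptyset$: one still has to see that the canonical $C_{n}$-type forced on $b^{*}$ is compatible with the elements of $B_{n}$ that already share its quantifier-free type. This should follow from the inductive hypothesis (that $B_{n}$ was already in canonical position with $C_{n}$) together with the rigidity of the finite members of the Ramsey class $\Age(\bA)$ --- equivalently, the $\emptyset$-definable linear order carried by $\bA$, which exists because $\Aut(\bA)$ is extremely amenable and hence fixes a point in the compact flow of all linear orders on $A$ (cf.~\cite{Topo-Dynamics}). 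One also has to verify, by an amalgamation argument, that the finite structure on $B_{n}\cup C_{n}\cup\{b^{*}\}$ that one wants to realise lies in $\Age(\bA)$ (then it is realised in $\bA$, since $\bA$ is $\omega$-categorical). Getting the invariant precise enough to make these compatibility and amalgamation checks go through --- and running the symmetric step for extensions of $\bC$ --- is the technical heart; granting it, the back-and-forth succeeds and proves Theorem~\ref{thm:indep}.
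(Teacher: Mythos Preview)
Your approach is genuinely different from the paper's, but the obstacle you flag in the extension step is real and your sketch does not close it. Canonising in $(\bA,B_n)$ only ensures that $\atyp^{\bA}(g(\bar a)/C_n)$ depends on $\atyp^{\bA}(\bar a/B_n)$; the invariant, however, demands dependence only on $\atyp^{\bA}(\bar a)$ over~$\emptyset$. Concretely, if some $b_0\in B_n$ has the same absolute quantifier-free type as the new point $b^*=g(b^{**})$, you need $\atyp^{\bA}(b^*/C_n)=\atyp^{\bA}(b_0/C_n)$, but $b_0$ and $b^{**}$ have \emph{different} types over $B_n$ (one lies in $B_n$, the other does not), so canonicity over $(\bA,B_n)$ says nothing here. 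The appeal to rigidity or the definable linear order does not visibly bridge this: the order distinguishes tuples, it does not force their $C_n$-types to agree. Moreover, your canonisation addresses at best one direction of independence; you also need $C_n$ to remain independent from $B_n\cup\{b^*\}$, i.e.\ that tuples from $C_n$ of equal type have equal type over the \emph{new} point, and nothing in the step controls that. Finally, even granting a target quantifier-free type for $b^*$ over $C_n$, the amalgamation check (that the intended structure on $B_n\cup C_n\cup\{b^*\}$ lies in $\Age(\bA)$) is not routine once the signature has relations of arity~$\geq 3$, since the joint type $\atyp^{\bA}(\bar b/C_n)$ for mixed tuples $\bar b$ is not determined by the $C_n$-types of the individual entries.

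The paper sidesteps all of this by first reducing, via compactness (Proposition~\ref{prop:indep}), to the finite statement: any two $\bB_1,\bB_2\in\Age(\bA)$ embed into $\bA$ with independent images. This finite version is then obtained by two nested applications of the Ramsey lemma (Lemma~\ref{lem:ramsey-indep}): first choose a ``Ramsey bound'' $\bD_1$ for $\bB_1$ relative to parameter sets of size $|B_2|$, so that \emph{every} copy of $\bD_1$ contains a copy of $\bB_1$ canonical over $B_2$; then find a copy of $\bB_2$ canonical over all of $D_1$. Pulling back by an automorphism, one extracts the canonical copy of $\bB_1$ inside the (moved) $D_1$; since $\bB_2$ was canonised over the whole of $D_1$, both directions of independence hold at once. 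The point is that neither side is built incrementally: both copies are chosen fresh, so no compatibility-with-previous-stages problem arises. If you want to rescue a back-and-forth, you would need a much stronger invariant---essentially that $B_n,C_n$ sit inside some pair of independent elementary substructures---which is circular; the paper's compactness-first route is the clean way out.
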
 


In the proof of Theorem~\ref{thm:indep} 
we use the following consequence
of the compactness theorem of first-order logic. 

\begin{proposition}\label{prop:indep}
Let $\bA$ be a countable $\omega$-categorical  
homogeneous structure.
Then the following are equivalent.
\begin{enumerate}
\item $\bA$ contains two independent elementary substructures. 
\item For all $\bB_1,\bB_2 \in \Age(\bA)$ there exists embeddings 
$e_i \colon \bB_i \hookrightarrow \bA$, for $i \in \{1,2\}$, 
such that $e_1(B_1)$ and $e_2(B_2)$ are independent in $\bA$. 
\end{enumerate}
\end{proposition}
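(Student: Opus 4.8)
The plan is to prove Proposition~\ref{prop:indep} by establishing the two implications separately. The direction (1)~$\Rightarrow$~(2) is immediate: if $\bA$ contains independent elementary substructures $\bA_1,\bA_2$, then for any $\bB_1,\bB_2\in\Age(\bA)$ we may embed $\bB_i$ into $\bA_i$ (possible since $\bA_i$ is elementary, hence has the same age as $\bA$), and the resulting images inherit independence from $\bA_1,\bA_2$ because independence of sets is clearly preserved under passing to subsets. The substance of the proposition is the converse (2)~$\Rightarrow$~(1), which is a standard compactness argument.

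For (2)~$\Rightarrow$~(1), I would work in the language $\tau$ expanded by two disjoint countable sets of new constant symbols $C_1=\{c^1_i : i\in\omega\}$ and $C_2=\{c^2_j : j\in\omega\}$, intending the $c^k_i$ to name the elements of the two desired substructures $\bA_1,\bA_2$. Consider the theory $T$ consisting of: the elementary diagram sentences forcing each $\{c^k_i\}$ to carry a copy of $\bA$ elementarily (since $\bA$ is $\omega$-categorical, being an elementary substructure is a first-order expressible constraint on the constants — each finite tuple must realize the correct complete type, and types are isolated by quantifier-free formulas via homogeneity); together with, for every pair of tuples $\bar c,\bar c'$ of constants from $C_1$ of the same length with the same quantifier-free $\tau$-type, and every finite tuple $\bar d$ from $C_2$, the sentences asserting $\typ(\bar c/\bar d)=\typ(\bar c'/\bar d)$; and symmetrically with the roles of $C_1$ and $C_2$ swapped. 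Any model of $T$ has its two constant-sets spanning independent elementary substructures of (a structure elementarily equivalent to, hence by $\omega$-categoricity isomorphic to) $\bA$.

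By the compactness theorem it suffices to satisfy every finite subset $T_0$ of $T$ inside $\bA$ itself. A finite $T_0$ mentions only finitely many constants, say those naming substructures $\bB_1\subseteq\Age(\bA)$ and $\bB_2\subseteq\Age(\bA)$ of bounded size, and imposes finitely many type-equality requirements. Here I would invoke hypothesis (2) to obtain embeddings $e_i\colon\bB_i\hookrightarrow\bA$ with $e_1(B_1)$ and $e_2(B_2)$ independent in $\bA$; interpreting the finitely many constants via $e_1,e_2$ then satisfies the elementarity constraints (the images have the right induced structure, and by homogeneity the right types) and the independence constraints in $T_0$ follow directly from the independence of $e_1(B_1)$ and $e_2(B_2)$. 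Hence $T_0$ is satisfiable, so $T$ has a model, which witnesses (1).

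The main obstacle I anticipate is bookkeeping rather than conceptual: one must be careful that "elementary substructure" really is captured by a first-order theory over the constants. This is where $\omega$-categoricity is essential — a countable substructure $\bA'\subseteq\bA$ is elementary iff it realizes, for each $n$, exactly the complete $n$-types realized in $\bA$, and by Ryll-Nardzewski there are only finitely many such types, each isolated by a single formula (indeed a quantifier-free formula, by homogeneity). So the elementarity requirement becomes: each $n$-tuple of the constants $c^k_{i_1},\dots,c^k_{i_n}$ satisfies one of the finitely many isolating formulas, and each such formula is "realized infinitely often" among the constants — the latter being expressible since we have infinitely many constants available. I would also need to check that the notion of independence used in $T$ is genuinely first-order expressible tuple-by-tuple, which it is: $\typ^{\bD}(\bar a/B)=\typ^{\bD}(\bar b/B)$ over an $\omega$-categorical $\bD$ is equivalent to $\bar a$ and $\bar b$ lying in the same orbit under the pointwise stabilizer of $B$, and for finite $B$ this is a first-order condition on $\bar a,\bar b,B$. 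With these points in hand, the compactness argument goes through routinely.
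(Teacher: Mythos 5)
Your direction (1)~$\Rightarrow$~(2) is correct, and the overall compactness strategy for (2)~$\Rightarrow$~(1) is the same one the paper uses. However, there is a genuine gap in the way you propose to encode ``the constants span an elementary substructure'' in the theory $T$.

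You assert that a countable substructure $\bA' \subseteq \bA$ of an $\omega$-categorical homogeneous $\bA$ is elementary iff it realizes, for each $n$, exactly the complete $n$-types realized in $\bA$ (each infinitely often). This is false. Take $\bA = (\mathbb{Q},<)$ and $\bA' = (\mathbb{Z},<)$: every complete $n$-type over $(\mathbb{Q},<)$ is just an order pattern of $n$ points, and $\mathbb{Z}$ realizes every such pattern, each infinitely often. Yet $(\mathbb{Z},<)$ is not dense, hence not isomorphic to $(\mathbb{Q},<)$, hence not an elementary substructure. So the theory $T$ built on this criterion admits models in which the constant sets $C_1,C_2$ do \emph{not} span elementary substructures, and your conclusion at the limit stage does not follow. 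The point is that ``the substructure realizes the same set of types as $\bA$'' fails to capture the Tarski--Vaught condition: it says nothing about whether extension types are \emph{witnessed inside} $\bA'$.

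The fix, which is what the paper actually does, is to pin down an isomorphism explicitly: fix an enumeration $a_1,a_2,\dots$ of $A$ and include in $T$, for each $n$, the single formula (available by Ryll--Nardzewski) asserting $\typ^{\bA}(c^1_1,\dots,c^1_n)=\typ^{\bA}(a_1,\dots,a_n)$, and similarly for $C_2$. In any model that is (after L\"owenheim--Skolem and $\omega$-categoricity) an expansion of $\bA$, the assignment $a_i \mapsto c^1_i$ then preserves all first-order formulas, i.e.\ is an elementary self-embedding of $\bA$, so its image \emph{is} an elementary substructure. Your opening phrase ``elementary diagram sentences'' gestures at this, but the subsequent elaboration replaces it with the weaker (and insufficient) realized-infinitely-often condition. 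With that replacement undone, your argument coincides with the paper's proof.
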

\begin{proof}
The forward implication is immediate from the definitions. 
For the converse implication, 
let $a_1,a_2,\dots$ be an enumeration of $A$ and let $B = \{b_1,b_2,\dots\}$ and $C = \{c_1,c_2,\dots\}$ be sets of new constant symbols.  
Let $\Phi$ be the set of all sentences 
that express that for all $n \in {\mathbb N}$, $\bar b,\bar b' \in B^n$, and $\bar c,\bar c' \in C^n$
\begin{itemize}
\item if $\typ^{\bA}(\bar b) = \typ^{\bA}(\bar b')$ 
then $\typ^{\bA}(\bar b,\bar c) 
= \typ^{\bA}(\bar b',\bar c)$. 
\item if $\typ^{\bA}(\bar c) = \typ^{\bA}(\bar c')$ 
then $\typ^{\bA}(\bar c,\bar b) 
= \typ^{\bA}(\bar c',\bar b)$. 
\end{itemize}
For $n \in {\mathbb N}$ let 
$\phi_n(x_1,\dots,x_n)$ be a formula that expresses that
$\typ^{\bA}(x_1,\dots,x_n) = \typ^{\bB}(a_1,\dots,a_n)$. 
By assumption, all finite subsets of 
$$T \coloneqq \Th(\bA) \cup \Phi \cup \{\phi_n(b_1,\dots,b_n) \wedge \phi_n(c_1,\dots,c_n) : n \in {\mathbb N} \}$$
are satisfiable. 
By compactness of first-order logic, it follows that $T$ has a model $\bA'$.  
By the downward L\"owenheim-Skolem theorem, we may assume that $\bA'$ is countably infinite. The reduct of $\bA'$ with the same signature as $\bA$ satisfies $\Th(\bA)$, and since $\bA$ is $\omega$-categorical this reduct 
is isomorphic to $\bA$, so that we may assume that $\bA'$ is an expansion of $\bA$. 
Let $\bB$ be the substructure of $\bA$
induced by the constants
from $B$. 
Since
$\bA' \models \{\phi_n(b_1,\dots,b_n) : n \in {\mathbb N} \}$ and by the homogeneity of $\bA$
we have that $\bB$ is an elementary substructure of $\bA$. Likewise, the 
substructure $\bC$ of $\bA$ 
induced by the constants
from $C$ is an elementary substructure of $\bA$. Since $\bA' \models \Phi$ the
two substructures $\bB$ and $\bC$ are independent. 
\end{proof}

To find independent sets, we  use the Ramsey property via the following lemma. 

\begin{lemma}\label{lem:ramsey-indep}
Let $\bA$ be a countable 
homogeneous $\omega$-categorical Ramsey structure, 
$C \subseteq A$ finite,
and $\bB \in \Age(\bA)$. 
Then there exists $\bD \in \Age(\bA)$ such that 
for every $e \colon \bD \hookrightarrow \bA$
there exists $f \colon \bB \hookrightarrow \bA$ such that
$f(B) \subseteq e(D)$ 
and for all $\bar a, \bar b \in B^{|B|}$, if $\typ^{\bA}(f(\bar a)) = \typ^{\bA}(f(\bar b))$ then 
$\typ^{\bA}(f(\bar a)/C) = \typ^{\bA}(f(\bar b)/C)$. 
\end{lemma}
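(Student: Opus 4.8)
The plan is to reduce the statement to the colouring form of the Ramsey property recorded in Lemma~\ref{lem:can}, using that ``realising a prescribed type over $C$'' is a finite colouring of $|B|$-tuples. Write $m = |B|$ throughout.

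First I would observe that since $\bA$ is $\omega$-categorical and $C$ is finite, the expansion $\bA_C$ of $\bA$ by constants naming the elements of $C$ is again $\omega$-categorical (its automorphism group is the pointwise stabiliser of $C$ in $\Aut(\bA)$, which is still oligomorphic). Consequently there are only finitely many complete $m$-types over $C$ realised in $\bA$; let $r$ be their number and fix an enumeration $p_1,\dots,p_r$ of them. Then, applying Lemma~\ref{lem:can} to the Ramsey class $\Age(\bA)$, I would choose $\bD \in \Age(\bA)$ with $\bD \to (\bB)_r$, and claim that this $\bD$ witnesses the lemma.

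To verify the claim, let $e \colon \bD \hookrightarrow \bA$ be an arbitrary embedding. Define a colouring $\chi \colon D^{m} \to \{1,\dots,r\}$ by letting $\chi(\bar d)$ be the index $i$ with $\typ^{\bA}(e(\bar d)/C) = p_i$. By the choice of $\bD$ there is an embedding $g \colon \bB \hookrightarrow \bD$ such that $\atyp^{\bB}(s) = \atyp^{\bB}(t)$ implies $\chi(g(s)) = \chi(g(t))$ for all $s,t \in B^{m}$. Put $f \coloneqq e \circ g$; then $f \colon \bB \hookrightarrow \bA$ and $f(B) = e(g(B)) \subseteq e(D)$, as required. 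For the independence conclusion: since $f$ is an embedding into the homogeneous structure $\bA$, for $\bar a,\bar b \in B^{m}$ we have $\typ^{\bA}(f(\bar a)) = \typ^{\bA}(f(\bar b))$ if and only if $\atyp^{\bB}(\bar a) = \atyp^{\bB}(\bar b)$; in that case the canonicity of $\chi$ under $g$ gives $\chi(g(\bar a)) = \chi(g(\bar b))$, that is $\typ^{\bA}(e(g(\bar a))/C) = \typ^{\bA}(e(g(\bar b))/C)$, which is precisely $\typ^{\bA}(f(\bar a)/C) = \typ^{\bA}(f(\bar b)/C)$.

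The argument is short once the setup is right; the only point I would flag as needing care — the main obstacle — is that $\bD$ must be fixed uniformly, before seeing $e$, whereas the colouring recording types over $C$ genuinely depends on $e$. This is harmless because $\bD \to (\bB)_r$ quantifies over \emph{all} $r$-colourings of $D^{m}$, so the same $\bD$ handles every $e$ simultaneously; the finiteness of the number of $m$-types over the finite set $C$ is exactly what makes $r$, and hence the choice of $\bD$, independent of $e$.
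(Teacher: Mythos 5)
Your proof is correct and takes essentially the same route as the paper: both invoke Lemma~\ref{lem:can} to obtain $\bD$ with $\bD \to (\bB)_r$, and both colour $|B|$-tuples of $e(D)$ by their type over $C$ (the paper records this equivalently as the orbit of the tuple concatenated with an enumeration of $C$, so your $r$ is a possibly smaller but equally valid bound). The only cosmetic difference is that you make explicit the intermediate embedding $g \colon \bB \hookrightarrow \bD$ and set $f = e \circ g$, which the paper leaves implicit.
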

\begin{proof}
Let $C = \{c_1,\dots,c_n\}$. 
Let $r$ be the number of types
of $|B|+n$-tuples in $\bA$. 
Since $\bA$ is homogeneous 
Ramsey 
there exists
$\bD \in \Age(\bA)$ such that 
$\bD \to (\bB)_r$. We claim that
$\bD$ satisfies the statement of the lemma. 
Let $e \colon \bD \hookrightarrow \bA$ be an embedding.
 We color $e(D)^{|B|}$ as follows: 
the colour of $t \in e(D)^{|B|}$ is the orbit
 of $(t_1,\dots,t_{|B|},c_1,\dots,c_n)$. 
 There are at most $r$ such orbits. 
 Since $\bD \to (\bB)_r$ we
 get that there exists an $f \colon \bB \hookrightarrow \bA$ such that 
 $f(B) \subseteq e(D)$ and 
 if $\bar a,\bar b \in B^{|B|}$ are such that $\typ^{\bA}(f(\bar a)) = \typ^{\bA}(f(\bar b))$, then $\typ^{\bA}(f(\bar a)/C) = \typ^{\bA}(f(\bar b)/C)$. 
\end{proof}


\begin{proof}[Proof of Theorem~\ref{thm:indep}]
We use Proposition~\ref{prop:indep}.
Let $\bB_1,\bB_2 \in \Age(\bA)$;
we may assume that $\bB_1$ and $\bB_2$ are substructures of $\bA$. 
Then Lemma~\ref{lem:ramsey-indep}
implies that there exists $\bD_1 \in \Age(\bA)$ such that for every embedding $e_1 \colon \bD_1 \hookrightarrow \bA$
there exists an embedding $f_1 \colon \bB_1 \hookrightarrow \bA$ 
such that $f_1(B_1) \subseteq e_1(D_1)$
and for all $n \in {\mathbb N}$, $b,b' \in (B_1)^n$
\begin{equation}
\begin{aligned}
 \text{ if }
\typ^{\bA}(f_1(b)) = \typ^{\bA}(f_1(b')) \\
\text{ then } 
\typ^{\bA}(f_1(b)/B_2) = \typ^{\bA}(f_1(b')/B_2).
\end{aligned}
\label{eq:indep}
\end{equation}
We may assume that $\bD_1$ is a
substructure of $\bA$. 
Another application of Lemma~\ref{lem:ramsey-indep}
gives us 
an embedding $f_2 \colon \bB_2 \hookrightarrow \bA$ such that 
for all $n \in {\mathbb N}$, $b,b' \in (B_2)^n$, 
if
$\typ^{\bA}(f_2(b)) = \typ^{\bA}(f_2(b'))$
then 
$\typ^{\bA}(f_2(b)/D_1) = \typ^{\bA}(f_2(b')/D_1)$. 
By the homogeneity of $\bA$, there exists 
$\alpha \in \Aut(\bA)$ extending $f_2$. 
The property of $\bD_1$ implies that there
exists an embedding $f_1 \colon \bB_1 \hookrightarrow \bA$ such that
$f_1(B_1) \subseteq \alpha^{-1}(D_1)$ such that
(\ref{eq:indep}) holds 
for all $b,b' \in (B_1)^n$. 

We claim that $f_1(B_1)$ and $B_2$ are independent. Clearly, $f_1(B_1)$ is independent from $B_2$. To show that $B_2$ is independent from $f_1(B_1)$, 
let $n \in {\mathbb N}$ and $b,b' \in (B_2)^n$ 
be such that
$\typ^{\bA}(b) = \typ^{\bA}(b')$. 
Then 
$\typ^{\bA}(\alpha(b)) = \typ^{\bA}(\alpha(b'))$,
and hence 
that $\typ^{\bA}(\alpha(b)/D_1) = \typ^{\bA}(\alpha(b')/D_1)$ by the choice of $f_2$. This in turn implies
that $\typ^{\bA}(b/\alpha^{-1}(D_1)) = \typ^{\bA}(b'/\alpha^{-1}(D_1))$ which
implies the statement since $f_1(B_1) \subseteq \alpha^{-1}(D_1)$. 
\end{proof}

\section{Oligomorphic Clones}
\label{sect:clones}
In this section we introduce fundamental concepts and results about polymorphism clones
of finite and countably infinite $\omega$-categorical structures that are needed for the statement of our results. 

The \emph{$k$-th direct power} of a structure $\bA$ 
(also called the \emph{categorical power}), denoted by
$\bA^k$. It is the $\tau$-structure with domain $A^k$ such that 
$$((a_{1,1},\dots,a_{1,k}),\dots,(a_{n,1},\dots,a_{n,k})) \in R^{\bA^k}$$
if and only if $(a_{1,i},\dots,a_{n,i}) \in R^{\bA}$ for every $i \in \{1,\dots,k\}$. 
A \emph{polymorphism} of $\bA$
is a homomorphism from a finite direct power $\bA^k$ to $\bA$. The set of all polymorphisms of $\bA$, denoted by $\Pol(\bA)$,  forms an \emph{clone (over $A$)}, i.e., it is 
closed
under composition and contains for every $k$ and $i \leq k$ the projection $\pi^k_i$ of arity $k$ that always returns the $i$-th argument. 
Moreover, $\Pol(\bA)$ is closed with respect to the \emph{topology of pointwise convergence}, i.e., 
if $f \colon A^k \to A$ is such that for every finite $F \subseteq A$ there exists $g \in \Pol(\bA)$ such that $g(a_1,\dots,a_k) = f(a_1,\dots,a_k)$ for all $a_1,\dots,a_k \in F$, then $f \in \Pol(\bA)$. 
If ${\mathscr S}$ is a set of operations over $A$,
then $\overline {\mathscr S}$ denotes the smallest closed set that contains ${\mathscr S}$. 
If ${\mathscr S}$ is a set of operations, we write ${\mathscr S}^{(k)}$ for the set of operation in ${\mathscr S}$ of arity $k$.
The clone of all projections on $\{0,1\}$
is denoted by $\proj$. 
If $f \colon A^n \to A$ is an operation and $\alpha \colon \{1,\dots,n\} \to \{1,\dots,k\}$ is a function, then $f_{\alpha}$ denotes the operation $(x_1,\dots,x_{k}) \mapsto f(x_{\alpha(1)},\dots,x_{\alpha(n)})$, and is called a \emph{minor} of $f$.

\begin{definition}
A function $\zeta \colon {\mathscr C}_1 \to {\mathscr C}_2$ between two clones over a countably infinite set $A$ is 
called 
\begin{itemize}
\item \emph{minor-preserving} 
if for any $f \in {\mathscr C}^{(n)}_1$ and $\alpha \colon \{1,\dots,n\} \to \{1,\dots,k\}$ 
$$\zeta(f_{\alpha}) = \zeta(f)_{\alpha};$$
\item \emph{uniformly continuous} if and only if for any 
finite $G \subseteq A$ 
there is a finite $F \subseteq A$
such that for all $n \in {\mathbb N}$ and 
$f,g \in {\mathscr C}_1^{(n)}$, if $f|_{F^n} = g|_{F^n}$ then $\zeta(f)|_{G^n} = \zeta(g)|_{G^n}$. 
\end{itemize}
\end{definition}
The set 
${\mathscr O}_A \coloneqq \bigcup_{k \in {\mathbb N}} (A^k \to A)$ of all operations on $A$ 
can be equipped with a complete 
metric such that 
the closed clones with respect to this metric are precisely the polymorphism clones of relational
structures with domain $A$, and such that 
the uniformly continuous maps are precisely the maps that are uniformly continuous with respect to this metric in the standard sense, justifying our terminology; see, e.g.,~\cite{Book}.
Now suppose that $\bB$ is a finite structure with a finite relational signature; the following results imply that the existence of a minor-preserving map from $\Pol(\bB)$ to $\proj$ describes the border between containment in P and NP-completeness. 
 
 \begin{theorem}[follows from~\cite{JBK,wonderland}] 
 \label{thm:finite-hard}
 Let $\bB$ be a structure with a finite domain and a finite relational signature. 
If there is a minor-preserving map 
$\Pol(\bB) \to \proj$ 
 then $\Csp(\bB)$ is NP-hard.
 \end{theorem}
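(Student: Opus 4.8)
The plan is to reduce the NP-hardness of $\Csp(\bB)$ to the known NP-hardness of the Boolean CSPs that are not preserved by a near-unanimity or Mal'cev-type operation, i.e., to invoke the finite-domain side of the algebraic dichotomy via the minor-preserving map hypothesis. Concretely, suppose $\zeta \colon \Pol(\bB) \to \proj$ is minor-preserving. The key point is a classical observation going back to the work on the algebraic approach to finite CSPs (and made precise in the references \cite{JBK,wonderland}): a minor-preserving map to $\proj$ is exactly the obstruction that rules out the existence, in $\Pol(\bB)$, of operations witnessing any Siggers-type identity — equivalently, $\Pol(\bB)$ admits no cyclic term, no weak near-unanimity term, etc. By the structure theory for finite relational structures (the reformulation of the Bulatov–Jeavons–Krokhin conjecture, now a theorem, though here we only need the hardness direction which predates it), this means $\bB$ \emph{pp-interprets}, with parameters, a structure whose CSP is NP-hard, such as \textsc{1-in-3-Sat} or \textsc{Not-All-Equal-Sat}.

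First I would make explicit the chain: minor-preserving map $\Pol(\bB)\to\proj$ $\Rightarrow$ $\bB$ primitively positively interprets (possibly after adding constants and identifying a pp-definable equivalence relation — i.e., a pp-construction) the structure $K_3$ or the 1-in-3 relation; then since pp-constructions give polynomial-time reductions between CSPs, and $\Csp$ of that hard target is NP-hard, we conclude $\Csp(\bB)$ is NP-hard. The hardness of the target structure itself is Schaefer's theorem (or the Hell–Nešetřil theorem for graphs), which is elementary and classical. The tool linking the clone-theoretic hypothesis to the pp-construction is precisely what \cite{wonderland} isolates: loss of a minor-preserving map to projections is equivalent to not being able to pp-construct all finite structures, equivalently to pp-constructing some NP-hard one. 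So the ``proof'' is really a citation-assembly: quote the equivalence from \cite{wonderland}, quote the reduction-preservation-under-pp-constructions lemma, and quote Schaefer.

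I would structure the write-up as: (i) state that by \cite{wonderland} the existence of a minor-preserving map $\Pol(\bB)\to\proj$ implies that $\bB$ pp-constructs every finite structure; (ii) in particular $\bB$ pp-constructs a structure $\bC$ with $\Csp(\bC)$ NP-hard (e.g.\ one-in-three SAT), which is NP-hard by Schaefer's dichotomy \cite{JBK} or directly; (iii) invoke that pp-constructions yield logspace (hence polynomial-time) reductions on the level of CSPs, so $\Csp(\bC)$ reduces to $\Csp(\bB)$; (iv) conclude $\Csp(\bB)$ is NP-hard. The main obstacle — really the only nontrivial content — is step (i), the equivalence ``no minor-preserving map to $\proj$ $\iff$ cannot pp-construct all finite structures''; but this is exactly the statement the theorem says it ``follows from \cite{JBK,wonderland}'', so in the paper it is legitimate to simply cite it rather than reprove it. Everything else is bookkeeping about reductions. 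Hence the proof is short: it is an application of the general machinery, and the ``hard part'' has been deliberately outsourced to the cited literature.

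\begin{proof}
Suppose there is a minor-preserving map $\zeta \colon \Pol(\bB) \to \proj$. By the results of~\cite{wonderland} (see also~\cite{JBK}), the existence of such a map implies that $\bB$ pp-constructs every finite structure; in particular $\bB$ pp-constructs a finite structure $\bC$ such that $\Csp(\bC)$ is NP-hard (for instance the structure on $\{0,1\}$ with the single relation $\{(0,0,1),(0,1,0),(1,0,0)\}$, whose CSP is the NP-complete problem \textsc{Positive-1-in-3-Sat}). Since pp-constructions induce polynomial-time (in fact log-space) reductions between the associated constraint satisfaction problems, there is a polynomial-time reduction from $\Csp(\bC)$ to $\Csp(\bB)$. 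As $\Csp(\bC)$ is NP-hard, it follows that $\Csp(\bB)$ is NP-hard as well.
\end{proof}
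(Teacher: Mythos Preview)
The paper does not give its own proof of this theorem; it is stated as a consequence of~\cite{JBK,wonderland} and left at that. Your write-up correctly unpacks what those references contain --- the equivalence between having a minor-preserving map to $\proj$ and pp-constructing all finite structures, together with the fact that pp-constructions yield polynomial-time reductions --- so your proposal is a faithful expansion of the citation and there is nothing to compare against.
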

 
\begin{theorem}[\cite{MarotiMcKenzie}]
\label{thm:wnu}
Let $\bB$ be a structure with a finite domain. 
If there is no minor-preserving map 
$\Pol(\bB) \to \proj$ 
then $\Pol(\bB)$ contains a \emph{weak near unanimity (WNU) operation}, i.e., an operation $f$ of some arity $k \geq 2$ that satisfies for all $a,b \in B$ that
$$f(a,\dots,a,b) = f(a,\dots,b,a) = \dots = f(b,a,\dots,a).$$
\end{theorem}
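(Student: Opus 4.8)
The plan is to prove the nontrivial implication by reducing to a known deep theorem about finite idempotent algebras.

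\textbf{Reduction to the idempotent case.} Since $\bB$ is finite, it is homomorphically equivalent to its core $\bB'$, which we may take to be a retract of $\bB$; write $\iota\colon\bB'\hookrightarrow\bB$ for the inclusion and $r\colon\bB\to\bB'$ for the retraction, so $r\circ\iota=\id$. The assignments $f\mapsto r\circ f\circ(\iota,\dots,\iota)$ and $g\mapsto\iota\circ g\circ(r,\dots,r)$ are minor-preserving and exhibit $\Pol(\bB')$ as a ``minor retract'' of $\Pol(\bB)$, so $\Pol(\bB)$ has a minor-preserving map to $\proj$ iff $\Pol(\bB')$ does. Expand $\bB'$ to $\bB''$ by adjoining a unary singleton relation $\{b\}$ for every $b\in B'$; because $\bB'$ is a core, $\Pol(\bB'')$ is exactly the set of idempotent polymorphisms of $\bB'$, and a standard argument (again using that $\bB'$ is a core) shows that $\Pol(\bB')$ has a minor-preserving map to $\proj$ iff $\Pol(\bB'')$ does. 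Finally, any WNU polymorphism $w'$ of $\bB''$ is a WNU polymorphism of $\bB'$, and then $w\coloneqq\iota\circ w'\circ(r,\dots,r)$ is a polymorphism of $\bB$ which is still a WNU, since the WNU identities survive precomposition of every coordinate with $r$ and postcomposition with $\iota$. Hence it suffices to prove the statement for the finite \emph{idempotent} algebra $\bA$ with universe $B''$ whose operations are all idempotent polymorphisms of $\bB'$.

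\textbf{Extracting a Taylor term.} For a finite idempotent algebra, having no minor-preserving map to $\proj$ is equivalent to having no clone homomorphism to $\proj$, and by Birkhoff's theorem this global statement is equivalent to the purely local one that $\bA$ has a \emph{Taylor term}: a term $t$ of some arity $n$ such that for each $i\le n$ an identity $t(\dots)=t(\dots)$ holds in $\bA$ whose two argument tuples, with entries among two variables, differ in coordinate $i$. (By Siggers' theorem one may even fix a single term of arity at most $4$ witnessing one nontrivial height-$1$ identity.)

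\textbf{Symmetrization.} It remains to convert this weak, ``local'' symmetry into the full set of WNU identities holding \emph{simultaneously} for a single term. I would invoke the cyclic term theorem of Barto and Kozik: a finite idempotent algebra with a Taylor term has, for every prime $p>|A|$, a \emph{cyclic} term $c$ of arity $p$, i.e.\ one satisfying $c(x_1,\dots,x_p)=c(x_2,\dots,x_p,x_1)$; such a $c$ is a fortiori a WNU term, and pulling it back through the reduction above produces a WNU polymorphism of $\bB$. The genuine obstacle is the cyclic term theorem itself (equivalently, Mar\'oti--McKenzie's original direct symmetrization): it cannot be obtained by elementary term rewriting and relies on the structure theory of finite algebras --- in the modern proof, the absorption theory built on the loop lemma for smooth digraphs together with the analysis of linked (``Prague'') instances; in the original, tame congruence theory used to exclude type~$\mathbf 1$. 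Everything before this step is routine bookkeeping; this step is where the real work lies.
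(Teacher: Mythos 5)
The paper does not prove this theorem; it simply cites Mar\'oti--McKenzie, so there is no internal argument to compare against. Your sketch is a correct route to the stated form. The reduction through the idempotent core is standard and carried out correctly (in particular, the passage of the WNU identities through $g\mapsto\iota\circ g\circ(r,\dots,r)$ is right), and you correctly identify where the real mathematical content lives --- either Mar\'oti--McKenzie's original theorem that a finite Taylor algebra has a WNU, or Barto--Kozik's cyclic term theorem. Two cautions. First, the step ``having no minor-preserving map to $\proj$ is equivalent to having no clone homomorphism to $\proj$'' for finite idempotent algebras is itself a theorem (the h1-to-clone-homomorphism collapse for finite algebras) and should be cited rather than asserted as folklore. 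Second, the Barto--Kozik cyclic term theorem is strictly stronger than the Mar\'oti--McKenzie WNU-existence result and chronologically postdates it --- every cyclic term is a WNU, not conversely --- so describing them as ``equivalent'' is an overstatement; both deliver a WNU, but the paper's citation points to the weaker (and sufficient) of the two. Either way, your outline would compile into a correct proof of the statement with the appropriate references inserted.
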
 
 
\begin{theorem}[Bulatov~\cite{BulatovFVConjecture}, Zhuk~\cite{ZhukFVConjecture}]
\label{thm:fin-tract}
Let $\bB$ be a structure with a finite domain  and a finite relational signature. 
If $\bB$ has a WNU polymorphism, 
then $\Csp(\bB)$ is in P. 
\end{theorem}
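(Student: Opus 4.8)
The plan is not to reprove this from scratch: it is the resolution of the Feder--Vardi dichotomy conjecture, obtained in long independent works of Bulatov~\cite{BulatovFVConjecture} and Zhuk~\cite{ZhukFVConjecture}, and we only indicate the high-level structure of the algebraic argument. The first step is a sequence of complexity-preserving reductions. Since $\bB$ is finite one may replace it by its (finite) core and then expand it by all the singleton relations $\{b\}$ with $b \in B$; for a core this leaves the complexity of $\Csp(\bB)$ unchanged and makes every polymorphism idempotent. By a theorem of Barto and Kozik, a finite idempotent algebra has a WNU term operation if and only if it has a \emph{cyclic} term operation $c$ of some prime arity $p > |B|$, i.e.\ one satisfying $c(x_1,\dots,x_p) = c(x_2,\dots,x_p,x_1)$. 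Hence it suffices to give a polynomial-time algorithm for $\Csp(\bB)$ under the assumption that $\bB$ is a finite core, has all constants among its relations, and is invariant under a cyclic operation.

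The second step is the algorithm itself, which is recursive on the input instance $\bA$ and works throughout with the relations invariant under $\Pol(\bB)$, viewed as subdirect subalgebras of powers of the algebra $(B;\Pol(\bB))$. One first enforces a suitable level of local consistency --- $(2,3)$-consistency, in fact a stronger ``Prague-style'' consistency --- which either reports unsolvability or shrinks the variable domains to subdirect products. If every resulting constraint algebra is \emph{affine}, so that the instance encodes a system of linear equations over a finite ring, one solves it outright by Gaussian elimination. Otherwise the structure theory of finite idempotent algebras with a cyclic term produces a variable whose constraint algebra has a nontrivial proper congruence, or a nontrivial binary absorbing or central subalgebra; one then branches into the corresponding bounded number of sub-instances and recurses. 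Bulatov organises this by colouring, for each algebra, the pairs of its elements according to their local tame-congruence-theoretic type (``affine'', ``semilattice'', or ``majority/boolean'') and decomposing along the colouring; Zhuk organises it around ``strongly subdirectly irreducible'' algebras and the ``bridges'' connecting them.

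The main obstacle --- and the reason this theorem appears here with a citation rather than a proof --- is precisely the correctness of the branching step: that a locally consistent instance admitting none of the above reductions already has a solution. This is where the full weight of the modern structural theory is needed: absorption, the loop lemma for digraphs compatible with a cyclic polymorphism, the theory of centrality, and the classification of the irreducible algebras that can occur. We use the theorem only as a black box; combined with Theorem~\ref{thm:wnu} it yields that a finite relational structure $\bB$ with no minor-preserving map $\Pol(\bB) \to \proj$ has $\Csp(\bB)$ in P, which is all we need in the sequel.
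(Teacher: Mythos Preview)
The paper does not prove this theorem at all; it is stated with attribution to Bulatov~\cite{BulatovFVConjecture} and Zhuk~\cite{ZhukFVConjecture} and used as a black box. Your proposal takes the same stance, explicitly deferring to the original works, and the high-level sketch you add (reduction to an idempotent core, Barto--Kozik cyclic terms, local consistency plus branching on absorbing/central subalgebras or solving affine parts by Gaussian elimination) is an accurate summary that goes beyond what the paper provides.
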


The hardness result in Theorem~\ref{thm:finite-hard} can be generalised to a large class of countable structures $\bB$. 

\begin{theorem}[\cite{wonderland}]
\label{thm:hard}
Let $\bB$ be a countable $\omega$-categorical structure with finite relational signature such that $\Pol(\bB)$ has a uniformly continuous minor-preserving map to $\proj$. Then $\Csp(\bB)$ is NP-hard.  
\end{theorem}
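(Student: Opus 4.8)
The plan is to derive this from the finite-domain hardness result Theorem~\ref{thm:finite-hard} through the machinery of primitive positive interpretations and constructions: the existence of a uniformly continuous minor-preserving map between polymorphism clones corresponds to a pp-construction between the associated structures, and pp-constructions yield polynomial-time (in fact log-space) reductions between constraint satisfaction problems. So morally, a uniformly continuous minor-preserving map $\Pol(\bB)\to\proj$ lets $\bB$ ``simulate'' an NP-hard finite-domain CSP.

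Concretely, I would proceed in three steps. First, fix a convenient finite NP-hard target: let $\bD$ be the two-element structure $(\{0,1\};R_1,R_2,\dots)$ whose signature contains \emph{all} relations on $\{0,1\}$. Then $\Pol(\bD)=\proj$ (the only operations preserving every relation on a two-element set are projections), and $\Csp(\bD)$ is NP-hard by Theorem~\ref{thm:finite-hard}, since it contains, e.g., $3$-SAT. Second, recall that if a structure $\bB$ \emph{pp-constructs} $\bD$ --- i.e.\ some structure homomorphically equivalent to $\bD$ is primitively positively interpretable in $\bB$ --- then $\Csp(\bD)$ reduces in polynomial time to $\Csp(\bB)$: given an instance of $\Csp(\bD)$, replace each variable by a tuple of variables of length the dimension of the interpretation, translate each atomic constraint by the pp-formula supplied by the interpretation, add constraints forcing the tuples into the pp-definable domain, and compose with the homomorphic equivalence; correctness is the routine verification that solutions correspond. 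Third --- and this is the algebraic heart, provided by~\cite{wonderland} together with the $\omega$-categorical analogue of the Galois correspondence between polymorphism clones and pp-definable relations --- for an $\omega$-categorical $\bB$ with finite relational signature and finite $\bD$, $\bB$ pp-constructs $\bD$ if and only if there is a uniformly continuous minor-preserving map $\Pol(\bB)\to\Pol(\bD)$. Since $\Pol(\bD)=\proj$, the map hypothesised in the theorem is exactly such a map; hence $\bB$ pp-constructs $\bD$, and therefore $\Csp(\bB)$ is NP-hard.

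The main obstacle is the forward direction of the correspondence in the third step: extracting an \emph{actual} primitive positive interpretation of a finite NP-hard structure from the mere existence of a uniformly continuous minor-preserving map to $\proj$. The $\omega$-categoricity of $\bB$ is needed so that every relation invariant under $\Pol(\bB)$ is pp-definable in $\bB$; one then encodes the finitely many relations of the target inside a finite power of $\bB$ using this correspondence together with a compactness argument, and the role of uniform continuity is precisely to keep this encoding \emph{finite-dimensional} --- without it one obtains only an ``interpretation of unbounded dimension'', which does not define a structure. A further technical point to handle is the treatment of parameters: one typically first passes to the model-complete core of $\bB$, which has the same CSP and whose polymorphism clone carries the relevant minor-preserving map as well, after which pp-interpretations without parameters suffice. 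All of this is contained in~\cite{wonderland} and the work it builds on, so the theorem genuinely ``follows from'' that reference once these pieces are assembled.
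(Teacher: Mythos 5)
The paper does not give a proof of this theorem; it is stated as a citation of~\cite{wonderland}, so there is no ``paper's own proof'' to compare against. Your reconstruction follows precisely the route taken in the cited reference: a uniformly continuous minor-preserving map to $\proj$ yields a pp-construction of a finite NP-hard structure, pp-constructions give polynomial-time reductions between CSPs, and $\omega$-categoricity plus uniform continuity are exactly what make the interpretation parameter-free (after passing to the core) and finite-dimensional, respectively. Your three steps and the identification of uniform continuity as the mechanism bounding the dimension are the right conceptual picture.

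One small imprecision worth fixing: the structure $\bD = (\{0,1\}; R_1, R_2, \dots)$ carrying \emph{all} relations on $\{0,1\}$ has an infinite signature, so $\Csp(\bD)$ is not a single decision problem and Theorem~\ref{thm:finite-hard} does not apply to it directly. The clean way to say what you mean is: pick a concrete finite-signature two-element structure $\bD'$ with $\Csp(\bD')$ NP-hard (e.g.\ positive 1-in-3-SAT). Since $\proj \subseteq \Pol(\bD')$, composing the hypothesised map with the inclusion gives a uniformly continuous minor-preserving map $\Pol(\bB) \to \Pol(\bD')$; the correspondence from~\cite{wonderland} then yields a pp-construction of $\bD'$ from $\bB$ and hence the reduction. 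With that adjustment the argument is complete.
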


The present paper is motivated by the following
conjecture from~\cite{BPP-projective-homomorphisms}, in a reformulation from~\cite{BKOPP}, 
which generalises the Bulatov-Zhuk dichotomy (Theorem~\ref{thm:fin-tract}). 

\begin{conjecture}
\label{conj:inf-tract}
Let $\bB$ be a reduct of a countable finitely bounded homogeneous structure such that there is no 
uniformly continuous minor-preserving map from $\Pol(\bB)$ to $\proj$. Then 
$\Csp(\bB)$ is in P. 
\end{conjecture}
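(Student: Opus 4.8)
\emph{This is an open conjecture, so what follows is a description of the program by which one aims to attack it — in particular the route taken in this paper for the special cases resolved here — rather than a complete proof.} The plan is to reduce, in three stages, to the finite-domain tractability theorem (Theorem~\ref{thm:fin-tract}). First I would pass to the model-complete core $\bC$ of $\bB$: this does not change the CSP, since $\bB$ and $\bC$ are homomorphically equivalent, and $\bC$ is again $\omega$-categorical with a finite relational signature, while the hypothesis that there is no uniformly continuous minor-preserving map $\Pol(\bB) \to \proj$ carries over to $\Pol(\bC)$. One then wants to know that $\Pol(\bC)$ contains a polymorphism witnessing a nontrivial height-one identity modulo endomorphisms (for instance a pseudo-Siggers operation); for reducts of finitely bounded homogeneous structures this is supplied by the algebraic-dichotomy machinery of Barto and Pinsker, and it is the point at which finite boundedness is genuinely used.

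The second stage is canonisation via Ramsey theory. Fix a finitely bounded homogeneous Ramsey expansion $\bA$ of the structure underlying $\bC$; the existence of such an expansion is a standing assumption, satisfied in all the classes treated in the paper and, for RCC5, established in Section~\ref{sect:rcc5}. Using the Ramsey property (Lemma~\ref{lem:can}) and the canonisation technique of Section~\ref{sect:canonisation}, one extracts from the nontrivial polymorphism of $\bC$ a polymorphism that is \emph{canonical} with respect to $\bA$ and that still induces a nontrivial operation on the finitely many $k$-types of $\bA$, for a suitable $k$; equivalently, the clone of canonical polymorphisms of $\bC$ on its $k$-types admits no minor-preserving map to $\proj$.

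The third stage descends to a finite domain, and this is where the unique interpolation property enters. When $\bC$ has the UIP — which one verifies via the sufficient conditions of Sections~\ref{sect:uip} and~\ref{sect:binary-uip} — the finite structure $\bT$ on the $k$-types satisfies $\Csp(\bC) \leq_{p} \Csp(\bT)$ by the reduction of~\cite{BodMot-Unary}, and $\Pol(\bT)$ inherits the absence of a minor-preserving map to $\proj$; then Theorem~\ref{thm:wnu} yields a WNU polymorphism of $\bT$ and Theorem~\ref{thm:fin-tract} places $\Csp(\bT)$, hence $\Csp(\bC) = \Csp(\bB)$, in P.

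The main obstacle is that this program proves the conjecture only once two side conditions are in place. First, one needs the Barto--Pinsker step in a form compatible with canonisation; this is available but deep. Second, and the real bottleneck in full generality, the UIP can fail — canonical polymorphisms need not determine the complexity, as the examples of unbounded dense linear orders and of the binary branching $C$-relation (items (2) and (4) in the introduction) show — so for an arbitrary $\bB$ one would additionally have to handle the non-UIP cases, which the present method does not address. Supplying usable sufficient conditions for the UIP, and verifying them for first-order expansions of the basic relations of RCC5, is precisely the contribution that makes the program go through in the cases settled here.
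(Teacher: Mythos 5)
The statement you were given is explicitly labelled as a conjecture in the paper, and the paper offers no proof of it; indeed it is the still-open infinite-domain tractability conjecture from~\cite{BPP-projective-homomorphisms,BKOPP}. You recognise this at the outset, and what you then describe is an accurate account of the verification strategy the paper runs in its special cases: pass to the model-complete core, invoke the existence of a finitely homogeneous Ramsey expansion, canonise via the Ramsey property, establish the UIP, and descend by~\cite{BodMot-Unary} to a finite-domain CSP on $k$-types where Theorems~\ref{thm:wnu} and~\ref{thm:fin-tract} apply. You also correctly identify the two genuine obstructions to turning this into a proof of the conjecture in full generality: the UIP can fail (as in classes (2) and (4) of the introduction), and — worth stating slightly more emphatically than you do — the existence of a suitable finitely bounded homogeneous Ramsey expansion is itself only conjectured, not a standing assumption one is entitled to; the paper flags it as an open problem (see the discussion around~\cite{BodirskyRamsey,BPT-decidability-of-definability}). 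One small correction of emphasis: for the hardness side one does not need Barto--Pinsker pseudo-Siggers machinery, since Theorem~\ref{thm:hard} already gives NP-hardness directly from a uniformly continuous minor-preserving map to $\proj$; the pseudo-Siggers/cyclic-term technology enters on the tractability side, via Lemma~\ref{lem:lift} and Proposition~\ref{prop:can-proj}, when lifting the absence of such a map for the canonical clone to a concrete operation usable in the finite reduction. With those caveats your summary of the programme and its gaps is consistent with the paper.
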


\section{Canonical Functions}
\label{sect:canonicity}

Some of the strongest partial results towards Conjecture~\ref{conj:inf-tract} are based on the concept of \emph{canonical} polymorphisms. 
Let $\bA$ be an $\omega$-categorical structure. 
An operation 
$f \colon A^k \to A$ is called \emph{canonical over ${\bA}$} if for all $n \in {\mathbb N}$ and $a_1,\dots,a_k,b_1,\dots,b_k \in A^n$, if 
 $\typ^{\bA}(a_i) = \typ^{\bA}(b_i)$ for all $i \in \{1,\dots,k\}$, then
 $\typ^{\bA}(f(a_1,\dots,a_k)) = \typ^{\bA}(f(b_1,\dots,b_k))$ (the function is applied to the tuples componentwise). Note that if $\bA$ is a 
 finitely homogeneous structure, and $m$ is the maximal arity of (the relations of) $\bA$, then $f$ is canonical with respect to $\bA$ if and only if it is satisfies the condition above for $n = m$. 
 The set of all canonical operations over $\bA$ is denoted by $\Can(\bA)$; note that $\Can(\bA)$ is a clone and closed.

\subsection{Behaviours}
To conveniently work with canonical functions
and with functions that locally resemble canonical functions, we need the concept of a \emph{behaviour} of functions $f \colon A^k \to A$ over a structure $\bA$, following~\cite{BP-reductsRamsey,BP-canonical}. 
In order to have some flexibility when using
the terminology, we work in the setting of  
\emph{partial functions} from $A^k \to A$, i.e.,  functions that are only defined on a subset of $A^k$. 

\begin{definition}[Behaviours]
Let $n,k \in {\mathbb N}$. Then an \emph{$n$-behaviour over $\bA$ (of a $k$-ary partial map from $A^k$ to $A$)}  is a partial function 
from ${\mathcal O_n(\bA)}^k$ to ${\mathcal O}_n(\bA)$. 
A \emph{behaviour over $\bA$} 
is a partial function $\mathcal B$ from $\bigcup_{i=1}^{\infty} {\mathcal O_i(\bA)}^k$ to $\bigcup_{i=1}^{\infty} {\mathcal O}_i(\bA)$ so that for every $i$ we have $\mathcal B({\mathcal O}_i(\bA)^k) \subseteq {\mathcal O}_i(\bA)$. 
An $n$-behaviour (behaviour) is called \emph{complete} if it is defined on all of ${\mathcal O}_n(\bA)^k$ (on all of $\bigcup_{i=1}^{\infty} {\mathcal O_i(\bA)^k}$). 
\end{definition}

Note that every $n$-behaviour is in particular a behaviour. 

\begin{definition}[Behaviours of functions]
Let $\bA$ be a structure and $k \in {\mathbb N}$. 
A partial function $f \colon A^k \to A$ \emph{realises} a behaviour $\mathcal B$ over $\bA$ 
if for all $a^1_1,\dots,a^k_n \in A$,
if $O_i$ is the orbit of $(a^i_1,\dots,a^i_n)$ for $i \in \{1,\dots,k\}$ in $\Aut(\bA)$ and $f$ is defined on $(a^1_1,\dots,a^k_1), \dots, (a^1_n,\dots,a^k_n)$ and $B$ is defined on $(O_1,\dots,O_k)$, then 
${\mathcal B}(O_1,\dots,O_k)$ 
is the orbit of $(f(a^1_1,\dots,a^k_1),\dots,f(a^1_n,\dots,a^k_n))$. 
\end{definition}

If $S \subseteq A^k$ and $\mathcal B$ is a behaviour over $\bA$, then 
we say that $f \colon A^k \to A$ 
\emph{realises $\mathcal B$ on $S$} if the restriction $f|_S$ realises $\mathcal B$. 
The following proposition can be shown by a
standard compactness argument. 

\begin{proposition}\label{prop:compact} 
Let $\bA$ be an $\omega$-categorical structure
and $k,n \in {\mathbb N}$. 
Then an $n$-behaviour over $\bA$ is realised by some $f \in \Pol(\bA)^{(k)}$ if and only if it is realised by some $f \in \Pol(\bA)^{(k)}$ on $F^k$ for every finite subset $F \subseteq A$. 
\end{proposition}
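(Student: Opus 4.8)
The forward implication is immediate: a polymorphism that realises $\mathcal{B}$ realises $\mathcal{B}$ on $F^k$ for every finite $F \subseteq A$. For the converse, the plan is to recast the property ``$g$ is a $k$-ary polymorphism of $\bA$ realising $\mathcal{B}$'' as the property ``$g$ is a homomorphism between two fixed $\omega$-categorical structures'', and then to apply the standard fact that a countable structure admits a homomorphism into an $\omega$-categorical structure as soon as each of its finite substructures does.

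In detail, I would argue as follows. Since $\bA$ is $\omega$-categorical, there are only finitely many $n$-types, each of them is isolated, and hence each orbit $O \in {\mathcal O}_n(\bA)$ is defined in $\bA$ by a single first-order formula $\psi_O$. Let $\bA^{\ast}$ be the expansion of $\bA$ by the relations $\psi_O^{\bA}$ for all $O$ in the image of $\mathcal{B}$; these relations are first-order definable over $\bA$, so $\Aut(\bA^{\ast}) = \Aut(\bA)$ and $\bA^{\ast}$ is again $\omega$-categorical. Let $\bD$ be the structure with domain $A^k$ carrying the relation $R^{\bA^k}$ for each relation symbol $R$ in the signature of $\bA$ and, in addition, for each $O$ in the image of $\mathcal{B}$, the $n$-ary relation $P_O$ consisting of those $\bar u \in (A^k)^n$ for which there is $(O_1,\dots,O_k)$ in the domain of $\mathcal{B}$ with $\mathcal{B}(O_1,\dots,O_k) = O$ such that the $i$-th coordinate projection of $\bar u$ lies in $O_i$ for every $i \in \{1,\dots,k\}$; on $\bA^{\ast}$ we interpret each $P_O$ as $\psi_O^{\bA}$. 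Unwinding the definitions of a polymorphism and of realising a behaviour, one obtains the two equivalences: (i) a map $g \colon A^k \to A$ is a homomorphism from $\bD$ to $\bA^{\ast}$ if and only if $g \in \Pol(\bA)^{(k)}$ and $g$ realises $\mathcal{B}$; and (ii) for every finite $F \subseteq A$, a map in $\Pol(\bA)^{(k)}$ realises $\mathcal{B}$ on $F^k$ if and only if its restriction to $F^k$ is a homomorphism from the substructure of $\bD$ induced on $F^k$ into $\bA^{\ast}$.

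With this reformulation in hand, the hypothesis says precisely that the substructure of $\bD$ induced on $F^k$ maps homomorphically to $\bA^{\ast}$ for every finite $F \subseteq A$; since every finite substructure of $\bD$ is contained in one of this form (take $F$ to be the set of all coordinates occurring in its elements), every finite substructure of $\bD$ maps homomorphically to $\bA^{\ast}$. As $\bD$ is countable and $\bA^{\ast}$ is $\omega$-categorical, I would conclude by the familiar compactness argument: pass to the language of $\bA^{\ast}$ expanded by a new constant for each element of $\bD$, take $\Th(\bA^{\ast})$ together with the set of all atomic sentences in this language that hold in $\bD$, observe that every finite subset of this theory is satisfiable by interpreting the constants occurring in it via a homomorphism from the corresponding finite substructure of $\bD$ into $\bA^{\ast}$, apply the compactness theorem and the downward L\"owenheim--Skolem theorem to obtain a countable model, and identify its reduct to the signature of $\bA^{\ast}$ with $\bA^{\ast}$ itself using $\omega$-categoricity; the interpretation of the new constants then gives a homomorphism $g \colon \bD \to \bA^{\ast}$, which by equivalence (i) is the desired polymorphism.

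The subtlety the argument has to navigate — and the step I would expect to be the main obstacle — is that the witnesses guaranteed by the hypothesis for the various finite sets $F$ cannot simply be glued together by a pointwise limit, since their values on a fixed finite set need not lie in a finite set, so K\"onig's lemma does not apply directly to the naive tree of partial functions; the detour through homomorphisms into an $\omega$-categorical structure, where the compactness theorem of first-order logic does the work, is exactly what circumvents this. Beyond that, the only thing requiring care is the verification of equivalences (i) and (ii), which is a routine unfolding of the definitions using that $\mathcal{B}$, as an $n$-behaviour, constrains a function only through its action on $n$-tuples and that orbits of $n$-tuples over an $\omega$-categorical structure are first-order definable.
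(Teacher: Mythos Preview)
Your argument is correct and is precisely the ``standard compactness argument'' the paper invokes without further detail: encode the conditions ``$g$ is a $k$-ary polymorphism'' and ``$g$ realises $\mathcal{B}$'' as preservation of relations, so that the sought function becomes a homomorphism into an $\omega$-categorical expansion of $\bA$, and then use compactness to pass from local witnesses to a global one. The paper does not spell out any of this, so your write-up in fact supplies the missing details.
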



\subsection{Canonicity}
\label{sect:canon}
Let $n,k \in {\mathbb N}$. 
A partial function $f \colon A^k \to A$ is called
\emph{$n$-canonical over $\bA$} 
if it realises a complete $n$-behaviour over
$\bA$, 
and \emph{canonical over $\bA$} 
if it realises some complete behaviour 
over $\bA$; note that this is consistent with the definition given in the beginning of Section~\ref{sect:canonicity}. 
If $S \subseteq A$, then 
we say that $f$ is 
\emph{($n$-) canonical on $S$} if the restriction $f|_S$ is ($n$-) canonical. 

Suppose that $\bB$ is a homogeneous relational structure with maximal arity $m$. 
If $f \colon B \to C$ realises some complete 
$m'$-behaviour over $(\bB,\bC)$ for $m' = \max(m,2)$, 
then $f$ is canonical over $(\bB,\bC)$
(in other words, in this situation a complete $m$-behaviour uniquely determines a complete behaviour; the requirement $m' \geq 2$ is necessary so that the $m'$-behaviour allows, for instance, to distinguish constant functions from injective functions). 
Hence, if $\bB$ is finitely homogeneous, then there are only finitely many complete behaviours for functions of arity $k$. 

Note that 
the binary equivalence relation $\{(u,\alpha(u)) \colon u \in A^n, \alpha \in \Aut(\bA) \}$ 
is preserved by $\Can(\bA)$.
We consider the clone 
whose domain ${\mathcal O}_n(\bA)$ is the (finite) set of equivalence classes of this equivalence relation, and which contains 
for every $f \in \Can(\bA)$ the operation induced by $f$ on these classes. The map that sends $f$ to this operation 
will be denoted by $\xi^{\bA}_n$. 
Note that $\xi^{\bA}_n$ is a uniformly continuous clone homomorphism  
from $\Can(\bA)$ to $\xi^{\bA}_n(\Can(\bA))$. 
If ${\mathscr C} \subseteq \Can(\bA)$,
then we write $\xi^{\bA}_n({\mathscr C})$ for the
subclone of $\xi^{\bA}_n(\Can(\bA))$ induced by
the image of $\mathscr C$ under $\xi^{\bA}_n$. Note that if $f \in \Can(\bA)$, then $g \colon A^k \to A$ realises the same complete behaviour as $f$ over $\bA$ if and only if $f$ interpolates $g$ and $g$ interpolates $f$ over $\bA$. 


\begin{lemma}[Proposition 6.6 in~\cite{BPP-projective-homomorphisms}]\label{lem:lift}
Let $\bA$ be a finitely homogeneous structure.
Let $m$ be the maximal arity of $\bA$ and 
suppose that $f,g \in \Can(\bA)
$ are such that 
$\xi^{\bA}_m(f) = \xi^{\bA}_m(g)$.
Then there are $e_1,e_2 \in \overline{\Aut(\bA)}$
such that $e_1 \circ f = e_2 \circ g$. 
\end{lemma}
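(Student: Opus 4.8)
The plan is to prove the statement in three steps: reinterpret the hypothesis $\xi^{\bA}_m(f)=\xi^{\bA}_m(g)$ as the assertion that $f$ and $g$ realise the \emph{same} complete behaviour over $\bA$, deduce a local form of the conclusion from this, and then globalise by a compactness argument carried out over the signature of $\bA$ alone. For the first step: by the discussion preceding the lemma (with the convention $m\ge 2$), for a finitely homogeneous structure of maximal arity $m$ a complete $m$-behaviour determines a unique complete behaviour, and $\xi^{\bA}_m(f)$ is precisely the $m$-behaviour realised by the canonical function $f$; hence $\xi^{\bA}_m(f)=\xi^{\bA}_m(g)$ says exactly that $f$ and $g$ realise one and the same complete behaviour $\mathcal B$ over $\bA$.

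The second step is the local statement: for every finite $F\subseteq A$ there is $\beta_F\in\Aut(\bA)$ with $\beta_F\circ g = f$ on $F^k$. To see this I would enumerate $F^k=\{p_1,\dots,p_n\}$ and, for $j\in\{1,\dots,k\}$, form the tuple $a^j:=(p_1^j,\dots,p_n^j)\in A^n$ of $j$-th coordinates. Writing $O_j$ for the orbit of $a^j$, the definition of ``realising $\mathcal B$'' gives that $\mathcal B(O_1,\dots,O_k)$ is simultaneously the orbit of $(f(p_1),\dots,f(p_n))$ and the orbit of $(g(p_1),\dots,g(p_n))$; so these two $n$-tuples lie in the same $\Aut(\bA)$-orbit, and any $\beta_F$ witnessing this equality satisfies $\beta_F(g(p_i))=f(p_i)$ for every $i$, i.e. $\beta_F\circ g=f$ on $F^k$. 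In particular $g(p)\mapsto f(p)$ is a well-defined partial isomorphism of $\bA$ on $g(F^k)$.

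For the third step I would enumerate $A=\{a_1,a_2,\dots\}$, introduce new constant symbols $(c_i)_{i\ge 1}$ and $(d_i)_{i\ge 1}$, meant to name $e_1(a_i)$ and $e_2(a_i)$, and let $T$ be the theory over the signature of $\bA$ expanded by these constants consisting of: $\Th(\bA)$; the schema of sentences expressing that $a_i\mapsto c_i$ and $a_i\mapsto d_i$ are embeddings of $\bA$ into $\bA$ (each atomic or negated atomic formula on tuples of the $a_i$ is equivalent to the one obtained by replacing each $a_i$ by $c_i$, resp. $d_i$); and, for every $p\in A^k$, the equation $c_{j(p)}=d_{l(p)}$, where $a_{j(p)}=f(p)$ and $a_{l(p)}=g(p)$. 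A finite fragment of $T$ mentions only finitely many constants and finitely many equations, hence only finitely many tuples $p$; picking a finite $F$ containing all their coordinates, one satisfies the fragment in $\bA$ by setting $d_i:=a_i$ and $c_i:=\beta_F^{-1}(a_i)$ with $\beta_F$ from the second step, since then $c_{j(p)}=\beta_F^{-1}(f(p))=g(p)=d_{l(p)}$ while the embedding schemata hold because $\beta_F^{-1},\id\in\Aut(\bA)$. By compactness $T$ has a model, which by downward L\"owenheim--Skolem may be taken countable; its reduct to the signature of $\bA$ satisfies $\Th(\bA)$ and is countable, hence isomorphic to $\bA$ by $\omega$-categoricity, so we may take the model to be an expansion $(\bA,(c_i),(d_i))$. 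Defining $e_1(a_i):=c_i$ and $e_2(a_i):=d_i$, the embedding schemata make $e_1,e_2$ self-embeddings of $\bA$, hence elements of $\overline{\Aut(\bA)}$ (in a homogeneous structure the self-embeddings are exactly the limits of automorphisms), and the equations say precisely $e_1\circ f=e_2\circ g$.

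The main obstacle, and the one place that needs real care, is that one must run the compactness argument over the signature of $\bA$ \emph{only}: if one naively added $f$ and $g$ to the signature, the expansion $(\bA,f,g)$ need not be $\omega$-categorical, so the countable model produced by compactness could not be identified with $\bA$. Encoding the single constraint ``$e_1\circ f=e_2\circ g$'' as the schema of equations $\{c_{j(p)}=d_{l(p)}\}$ among the new constants circumvents this; the remaining bookkeeping (that the embedding schemata plus homogeneity really force $e_1,e_2\in\overline{\Aut(\bA)}$) is routine. An alternative to first-order compactness is a König's lemma argument applied directly to pairs of finite partial isomorphisms with the required incidence with $f$ and $g$, using $\omega$-categoricity to bound the branching after quotienting by simultaneous post-composition with an automorphism; the first-order compactness route seems cleanest.
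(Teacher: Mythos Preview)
Your proof is correct. The paper does not give its own proof of this lemma; it simply cites it as Proposition~6.6 in~\cite{BPP-projective-homomorphisms}, so there is no in-paper argument to compare against.

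Your three-step strategy is the standard one and each step is sound. In Step~1 you correctly use the fact (recorded just before the lemma in Section~\ref{sect:canon}) that for a finitely homogeneous $\bA$ of maximal arity $m$, a complete $m$-behaviour determines the full behaviour, so $\xi^{\bA}_m(f)=\xi^{\bA}_m(g)$ indeed forces $f$ and $g$ to realise the same complete behaviour. Step~2 is exactly right: applying the shared $n$-behaviour with $n=|F^k|$ to the coordinate tuples $a^1,\dots,a^k$ puts $(f(p_1),\dots,f(p_n))$ and $(g(p_1),\dots,g(p_n))$ in the same orbit, yielding the local $\beta_F$. Your compactness argument in Step~3 is carefully set up over the signature of $\bA$ alone, which is the key point you flag; the identification of the countable model with $\bA$ via $\omega$-categoricity and the passage from self-embeddings to $\overline{\Aut(\bA)}$ via homogeneity are both valid. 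The K\"onig's lemma alternative you mention at the end would also work and is arguably more elementary, but the compactness route you wrote out is clean and complete.
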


The following proposition 
can be shown 
by a compactness argument.

\begin{lemma}\label{lem:simul-interpol}
Let $\bB$ be a first-order reduct of an $\omega$-categorical structure $\bA$. 
Let $k,n,m \in {\mathbb N}$  and 
let ${\mathcal B}_1,\dots,{\mathcal B}_m$
be $n$-behaviours over $\bA$. 
Suppose that for every finite $F \subseteq A$ and $i \in \{1,\dots,m\}$ there exist $\alpha_{i,1},\dots,\alpha_{i,k}$ and $f \in \Pol(\bB)$ 
such that $f(\alpha_{i,1},\dots,\alpha_{i,k})$ realises ${\mathcal B}_i$ on $F^k$. 
Then there exist $e_{1,1},\dots,e_{m,k} \in \overline{\Aut(\bA)}$ and $f \in \Pol(\bB)$ 
such that for every $i \in \{1,\dots,m\}$ the operation  $f(e_{i,1},\dots,e_{i,k})$ realises ${\mathcal B}_i$ on all of $A^k$. 
Moreover, if $I \subseteq \{1,\dots,k\}$ and $\alpha_{i,j} = \id$ for all $j \in I$ and every finite $X \subseteq A$ then we can guarantee $e_{i,j} = \id$ for all $j \in I$. 
\end{lemma}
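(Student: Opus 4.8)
\emph{Plan of proof.} The plan is to prove the lemma by a compactness argument, realised as an application of K\"onig's Lemma to a tree of finite approximations that is finitely branching thanks to the $\omega$-categoricity of $\bA$. We may assume that $\bA$ is homogeneous: expanding $\bA$ by all first-order definable relations changes neither $\Aut(\bA)$ nor $\overline{\Aut(\bA)}$, nor $\Pol(\bB)$, nor the orbits of $\bA$ (hence not the behaviours over $(\bA^{(k)},\bA)$ either). Fix an enumeration $a_1,a_2,\dots$ of $A$ and put $A_\ell\coloneqq\{a_1,\dots,a_\ell\}$. Call a tuple $\bigl(h,(\beta_{i,j}),(g_i)\bigr)$ (with $i\in\{1,\dots,m\}$, $j\in\{1,\dots,k\}$) a \emph{level-$\ell$ approximation} if: each $\beta_{i,j}$ is an embedding of $\bA|_{A_\ell}$ into $\bA$, with $\beta_{i,j}=\id_{A_\ell}$ for $j\in I$; $h$ is a partial map from $A^k$ to $A$ with domain $A_\ell^k\cup\bigcup_i\prod_j\beta_{i,j}(A_\ell)$ which is the restriction of some element of $\Pol(\bB)$; $g_i(\bar x)=h(\beta_{i,1}(x_1),\dots,\beta_{i,k}(x_k))$ for every $\bar x\in A_\ell^k$; and each $g_i$ realises $\mathcal B_i$ on $A_\ell^k$. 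Declare two level-$\ell$ approximations \emph{equivalent} if one is obtained from the other by an automorphism of $\bA$ fixing $A_\ell$ pointwise (acting by conjugation on $h$ and by post-composition on the $\beta_{i,j}$ and the $g_i$). One checks that this is an equivalence relation on level-$\ell$ approximations, that it is compatible with restricting an approximation to $A_\ell$, and, using homogeneity, that every isomorphism between finite substructures of $\bA$ fixing $A_\ell$ pointwise extends to such an automorphism.

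For every $\ell\in{\mathbb N}$ there is a level-$\ell$ approximation: by hypothesis, applied with $F=A_\ell$, there are a single $f\in\Pol(\bB)$ and self-embeddings $\alpha_{i,j}$ of $\bA$, with $\alpha_{i,j}$ fixing $A_\ell$ pointwise for $j\in I$ (by the ``moreover'' assumption), such that $f(\alpha_{i,1},\dots,\alpha_{i,k})$ realises $\mathcal B_i$ on $A_\ell^k$ for all $i$; take $\beta_{i,j}\coloneqq\alpha_{i,j}|_{A_\ell}$, let $h$ be the restriction of $f$ to the prescribed finite domain, and $g_i\coloneqq f(\alpha_{i,1},\dots,\alpha_{i,k})|_{A_\ell^k}$. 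Moreover, restricting a level-$(\ell+1)$ approximation to $A_\ell$ yields a level-$\ell$ approximation. Thus the equivalence classes of level-$\ell$ approximations, for $\ell\in{\mathbb N}$, are the vertices of level $\ell$ of an infinite tree, a class at level $\ell$ being a predecessor of a class at level $\ell+1$ precisely if the latter has a representative whose restriction to $A_\ell$ represents the former. The decisive point is that this tree is \emph{finitely branching}: a level-$\ell$ approximation is, up to finitely many combinatorial possibilities (which positions coincide, and hence the shape of the domain of $h$), a tuple of elements of $A$ of length bounded in terms of $\ell$, $m$, and $k$, and since $\bA$ is $\omega$-categorical the automorphism group of $\bA$ expanded by the constants $a_1,\dots,a_\ell$ is still oligomorphic, hence has only finitely many orbits on such tuples; so there are only finitely many equivalence classes at each level. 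By K\"onig's Lemma the tree has an infinite branch, along which one chooses representatives $\bigl(h^{(\ell)},(\beta_{i,j}^{(\ell)}),(g_i^{(\ell)})\bigr)$ coherently, i.e., so that the level-$(\ell+1)$ representative restricts to $A_\ell$ exactly to the level-$\ell$ representative (here homogeneity is used again, to absorb the automorphisms witnessing the equivalences).

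Finally, put $e_{i,j}\coloneqq\bigcup_\ell\beta_{i,j}^{(\ell)}$ and $f\coloneqq\bigcup_\ell h^{(\ell)}$; coherence makes these well defined. Each $e_{i,j}$ is a self-embedding of $\bA$, hence lies in $\overline{\Aut(\bA)}$ by homogeneity, and equals $\id$ for $j\in I$. Since $\bigcup_\ell A_\ell^k=A^k$, the map $f$ is a total operation on $A^k$, and on each $A_\ell^k$ it agrees with an element of $\Pol(\bB)$; as $\Pol(\bB)$ is closed, $f\in\Pol(\bB)$. Likewise $\bigcup_\ell g_i^{(\ell)}$ is a total operation on $A^k$, it equals $f(e_{i,1},\dots,e_{i,k})$ by the defining relation of a level-$\ell$ approximation, and it realises $\mathcal B_i$ on every $A_\ell^k$, hence on all of $A^k$. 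I expect the one genuinely non-routine step to be the finite branching of the tree (the only point relying essentially on $\omega$-categoricity); the closure of the approximation conditions under restriction and the coherent choice of representatives along the branch are standard bookkeeping.
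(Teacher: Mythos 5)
Your overall strategy — realise the ``compactness argument'' the paper alludes to as K\"onig's Lemma on a finitely-branching tree of orbit-classes of finite approximations, with finite branching coming from oligomorphicity of the pointwise stabiliser — is a legitimate way to prove this lemma, and the computation of the limit objects $e_{i,j}=\bigcup_\ell\beta_{i,j}^{(\ell)}$, $f=\bigcup_\ell h^{(\ell)}$ and the check that they belong to $\overline{\Aut(\bA)}$ and $\Pol(\bB)$ are fine. But the step you dismiss as ``standard bookkeeping'' — choosing representatives coherently along the branch — is in fact the crux, and as you have set things up it does not go through. You define a level-$\ell$ approximation to satisfy $\beta_{i,j}=\id_{A_\ell}$ for $j\in I$, which forces the equivalence group at level $\ell$ to be the pointwise stabiliser $\Aut(\bA)_{A_\ell}$. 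Now suppose inductively you have chosen $Z_\ell^*\in v_\ell$ and $W\in v_{\ell+1}$ with $\pi(W)\in v_\ell$; the witness $\sigma\in\Aut(\bA)_{A_\ell}$ with $\sigma(\pi(W))=Z_\ell^*$ need not fix $a_{\ell+1}$, so $\sigma(W)$ is in general \emph{not} a level-$(\ell+1)$ approximation (it violates $\beta_{i,j}=\id_{A_{\ell+1}}$ for $j\in I$) and does not lie in $v_{\ell+1}$. Equivalently, the image $\pi(v_{\ell+1})$ is only an $\Aut(\bA)_{A_{\ell+1}}$-orbit inside the $\Aut(\bA)_{A_\ell}$-orbit $v_\ell$, and there is no reason your chosen $Z_\ell^*$ lies in it; so the induction stalls and the claimed conclusion $e_{i,j}=\id$ does not follow from your construction.

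The repair is small but should not be omitted. Quotient by the full group $\Aut(\bA)$ rather than by $\Aut(\bA)_{A_\ell}$, and replace the constraint ``$\beta_{i,j}=\id_{A_\ell}$ for $j\in I$'' by the $\Aut(\bA)$-invariant constraint ``for each $j\in I$ the embedding $\beta_{i,j}$ does not depend on $i$'' (which the hypothesis still provides, since all those $\alpha_{i,j}$ equal $\id$); the analogous change is needed for the $A_\ell^k$ part of the domain of $h$, which you should carry along as $\beta_0(A_\ell)^k$ for a further embedding $\beta_0$ rather than fixing it. With the full-group quotient the tree is still finite at each level, and the coherent lift is immediate: if $\sigma\in\Aut(\bA)$ carries $\pi(W)$ to $Z_\ell^*$, then $Z_{\ell+1}^*\coloneqq\sigma(W)$ is a valid level-$(\ell+1)$ approximation, lies in $v_{\ell+1}$, and restricts to $Z_\ell^*$. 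In the limit you get, for each $j\in I$, a single self-embedding $e_j\coloneqq e_{i,j}$ independent of $i$; then pass to $f'(x_1,\dots,x_k)\coloneqq f(x'_1,\dots,x'_k)$ with $x'_j=e_j(x_j)$ for $j\in I$ and $x'_j=x_j$ otherwise, and to $e'_{i,j}=\id$ for $j\in I$, $e'_{i,j}=e_{i,j}$ otherwise: since $\overline{\Aut(\bA)}\subseteq\Pol(\bB)$ and $\Pol(\bB)$ is a closed clone, $f'\in\Pol(\bB)$, and $f'(e'_{i,1},\dots,e'_{i,k})=f(e_{i,1},\dots,e_{i,k})$ realises ${\mathcal B}_i$ on $A^k$. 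Alternatively, the whole argument is cleanest via first-order compactness and L\"owenheim--Skolem, exactly as in the paper's proof of Lemma~\ref{lem:diagonal-common-interpolation}: introduce function symbols for $f$ and the $e_{i,j}$, add the elementary diagram, axioms that $f$ preserves the relations of $\bB$, that $e_{i,j}$ are elementary self-maps with $e_{i,j}=\id$ for $j\in I$, and for each tuple the formula expressing the correct image orbit under $f(e_{i,1},\dots,e_{i,k})$; finite satisfiability is exactly your hypothesis, and there is then no bookkeeping about coherent representatives at all.
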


\subsection{Canonisation}
\label{sect:canonisation}
The following is from~\cite{BPT-decidability-of-definability}; 
also see~\cite{BP-canonical}. 

\begin{lemma}[Canonisation lemma]\label{lem:canon}
Let $\bA$ be a countable $\omega$-categorical homogeneous Ramsey structure and 
let $f \colon A^k \to A$. Then there is a
canonical function over $\bA$ which is interpolated by $f$ 
over $\Aut(\bA)$. 
\end{lemma}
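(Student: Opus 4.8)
The plan is to obtain $g$ as a pointwise limit of \emph{local canonisations} of $f$: for each finite substructure $\bB$ of $\bA$ I first construct automorphisms $\alpha_1,\dots,\alpha_k$ of $\bA$ such that $f\circ(\alpha_1,\dots,\alpha_k)$ is canonical on $B$ (its restriction to $B^k$ realises a complete behaviour over $\bA$), and then I glue these together over an exhausting chain of finite substructures by a compactness argument. Since each local canonisation is literally $f$ with its arguments pre-composed by automorphisms, so is the limit, which is therefore interpolated by $f$ over $\Aut(\bA)$.

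\emph{The local statement.} Fix a finite substructure $\bB$ of $\bA$ and put $\ell\coloneqq|B|$; I process the $k$ coordinates in the order $k,k-1,\dots,1$. Using Lemma~\ref{lem:can} and the fact that every member of $\Age(\bA)$ occurs as a substructure of $\bA$, I first fix — for $j=1,2,\dots,k$ in this order — finite substructures $\bD_j$ of $\bA$ with $\bD_j\to(\bB)_{r_j}$, where $r_j$ is the number of colours used in step $j$ below; crucially $r_j$ depends only on $\ell$, $k$, $|\mathcal O_\ell(\bA)|$, and $|D_1|,\dots,|D_{j-1}|$, so it is determined by the time $\bD_j$ is chosen. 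I then run the steps. At step $j$ I already have copies $\bC_{j+1}\subseteq\bD_{j+1},\dots,\bC_k\subseteq\bD_k$ of $\bB$ inside $\bA$; I colour a tuple $\bar u\in D_j^{\ell}$ by the function that sends each choice of ``frozen'' arguments $\bar c_i\in D_i^{\ell}$ (for $i<j$) and $\bar c_i\in C_i^{\ell}$ (for $i>j$) to the $\mathcal O_\ell(\bA)$-orbit of $f$ applied componentwise with $\bar u$ in coordinate $j$ and the $\bar c_i$ elsewhere. There are at most $r_j$ such colours, so $\bD_j\to(\bB)_{r_j}$ yields an embedding $e_j\colon\bB\hookrightarrow\bD_j$ such that, writing $\bC_j\coloneqq e_j(\bB)$, this colour depends only on the $\bA$-type of $\bar u$ for $\bar u$ ranging over $C_j^{\ell}$ (homogeneity of $\bA$ is used here to pass between quantifier-free types and types). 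With all of $\bC_1,\dots,\bC_k$ found, $f$ restricted to $C_1\times\dots\times C_k$ realises a complete behaviour over $\bA$: given $\bar s_i,\bar t_i\in C_i$ with $\typ^{\bA}(\bar s_i)=\typ^{\bA}(\bar t_i)$, change the arguments of $f$ from $\bar s_i$ to $\bar t_i$ one coordinate at a time in the order $1,2,\dots,k$; the $j$-th change leaves the orbit of the value unchanged by the property of $\bC_j$, and this is legitimate because at that moment coordinates $1,\dots,j-1$ carry tuples from $C_i\subseteq D_i$ while coordinates $j{+}1,\dots,k$ carry tuples from $C_i$ — exactly the frozen ranges for which the property of $\bC_j$ was established. (A padding argument reduces tuples of arbitrary length to $\ell$-tuples.) By homogeneity, each isomorphism $\bB\to\bC_j$ extends to some $\alpha_j\in\Aut(\bA)$, and then $f\circ(\alpha_1,\dots,\alpha_k)$ is canonical on $B$.

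\emph{Patching.} For every finite $F\subseteq A$ the local statement supplies $\alpha_1^F,\dots,\alpha_k^F\in\Aut(\bA)$ with $f\circ(\alpha_1^F,\dots,\alpha_k^F)$ canonical on $F$. A standard compactness argument along the lines of Proposition~\ref{prop:compact}, carried out along an exhausting chain $F_1\subseteq F_2\subseteq\cdots$ of finite subsets with $\bigcup_n F_n=A$ and using that a complete behaviour over $\bA$ is pinned down on the orbits realised in any fixed $F_n$ by only finitely much data, then produces a single complete behaviour $\mathcal B$ over $\bA$ together with $\beta_1,\dots,\beta_k\in\overline{\Aut(\bA)}$ such that $f\circ(\beta_1,\dots,\beta_k)$ realises $\mathcal B$ on all of $A^k$. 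This $g\coloneqq f\circ(\beta_1,\dots,\beta_k)$ is canonical over $\bA$, and it is interpolated by $f$ over $\Aut(\bA)$ because on every finite subset of $A$ each $\beta_j$ agrees with some automorphism of $\bA$.

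\emph{The main obstacle.} Everything except the coordinate bookkeeping in the local statement is soft. The difficulty is that a single use of the Ramsey property makes $f$ canonical in just one coordinate, on a monochromatic copy of $\bB$ that unavoidably lives inside a \emph{larger} structure, so a careless treatment of the remaining coordinates would destroy the progress already made. The point is to freeze the not-yet-processed coordinates over the \emph{full} Ramsey targets $\bD_i$ and to choose the $\bD_j$ (equivalently, the bounds $r_j$) in the order $1,\dots,k$, so that every monochromatic copy $\bC_j$ ends up inside the frozen ranges used at the earlier steps; this is exactly what lets the induction over the coordinates close, and is the heart of the proof.
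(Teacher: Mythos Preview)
The paper does not prove this lemma; it is quoted from~\cite{BPT-decidability-of-definability} (see also~\cite{BP-canonical}). Your argument is precisely the standard combinatorial proof found in those references: a coordinate-by-coordinate local canonisation via iterated applications of Lemma~\ref{lem:can}, followed by a compactness argument to pass from finite substructures to all of $A$. The local part --- in particular the bookkeeping of choosing the Ramsey witnesses $\bD_1,\dots,\bD_k$ in increasing order of $j$ while processing the coordinates in decreasing order, so that each monochromatic copy $\bC_j$ lands inside the frozen ranges fixed at the earlier steps --- is correct and is essentially the content of the paper's Lemma~\ref{lem:canon2}, which the paper likewise leaves unproved.

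One small imprecision in the patching step: you assert that compactness yields $\beta_1,\dots,\beta_k\in\overline{\Aut(\bA)}$ with $g=f\circ(\beta_1,\dots,\beta_k)$. A K\H{o}nig argument on behaviours produces a single complete behaviour $\mathcal B$ that is realised on every $F_n$ by some $f\circ(\alpha_1^n,\dots,\alpha_k^n)$, but it does not by itself force the $\alpha_i^n$ to converge, since $\overline{\Aut(\bA)}^k$ is not compact. The usual fix is to note that any two functions realising the same behaviour on a finite $F$ differ on $F^k$ by post-composition with a single automorphism; using this to align the local canonisations produces a limit $g\in\overline{\{u\circ f\circ(v_1,\dots,v_k):u,v_i\in\Aut(\bA)\}}$. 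Since the definition of ``$f$ interpolates $g$ over $\Aut(\bA)$'' already allows an outer $u$, this does not affect your conclusion --- only the specific form you claim for $g$.
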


In some situations it is practical to use a finite version 
of the canonisation lemma (e.g.~in the proof of Lemma~\ref{lem:int-inv} or Lemma~\ref{lem:checkers}), which can be derived easily from
Lemma~\ref{lem:canon} by a compactness argument, or directly from Lemma~\ref{lem:can}. 

\begin{lemma}\label{lem:canon2}
Let $\bA$ be a homogeneous $\omega$-categorical Ramsey structure and $B \subseteq A$ finite. Then there exists a finite $C \subseteq A$ such that for every $f \colon A^k \to A$
there are $\alpha_1,\dots,\alpha_k \in \Aut(\bA)$
such that $\alpha_1(B) \subseteq C,\dots,\alpha_k(B) \subseteq C$ and 
$f(\alpha_1,\dots,\alpha_k)$ is canonical on $B^k$. 
\end{lemma}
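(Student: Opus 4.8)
\textbf{Proof plan for Lemma~\ref{lem:canon2}.}

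The plan is to derive the finite, uniform version of the canonisation lemma from Lemma~\ref{lem:can} directly, avoiding any detour through Lemma~\ref{lem:canon} and compactness. The key point is that the number of possible behaviours on $B^k$ is finite, so a single Ramsey object can be used to iron out the colouring induced by "which behaviour does $f(\alpha_1,\dots,\alpha_k)$ realise on $B^k$''.

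First I would set up the combinatorial object. Since $\bA$ is $\omega$-categorical, the set $\mathcal{O}_{|B|}(\bA)$ of $|B|$-orbits is finite; consequently, for a fixed arity $k$, there are only finitely many functions from $\mathcal{O}_{|B|}(\bA)^k$ to $\mathcal{O}_{|B|}(\bA)$, hence only finitely many possible $|B|$-behaviours of a $k$-ary partial map restricted to $B^k$. Call this number $r$. Now apply Lemma~\ref{lem:can}: since $\bA$ is homogeneous $\omega$-categorical Ramsey, there exists $\bD \in \Age(\bA)$ with $\bD \to (\bB)_r$; identify $\bD$ with a finite substructure of $\bA$ and let $C \coloneqq D$ be its domain. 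I claim this $C$ works.

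Next I would verify the claim. Fix any $f \colon A^k \to A$. The idea is to "read off'' $k$ automorphisms iteratively, one coordinate at a time, pushing a copy of $B$ inside $C$ so that $f$ becomes canonical on the relevant set. Concretely, one proceeds by induction on the coordinate: having chosen $\alpha_1,\dots,\alpha_{j-1}$ with $\alpha_i(B)\subseteq C$, define a colouring of $C^{|B|}$ (equivalently, of the embeddings of $\bB$ into $\bD$) by sending a tuple $t$ to the $|B|$-orbit of $f$ applied with the already-fixed first $j-1$ arguments coming from $\alpha_1,\dots,\alpha_{j-1}$ evaluated on a fixed enumeration of $B$, the $j$-th argument ranging over $t$, and the remaining arguments fixed arbitrarily inside $C$; since there are at most $r$ orbits this is an $r$-colouring, and $\bD\to(\bB)_r$ yields an embedding $\bB\hookrightarrow\bD\hookrightarrow\bA$ that is colour-constant; extend this embedding to an automorphism $\alpha_j\in\Aut(\bA)$ by homogeneity, with $\alpha_j(B)\subseteq C$. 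After $k$ steps, the behaviour of $f(\alpha_1,\dots,\alpha_k)$ on $B^{|B|}$ no longer depends on which of the colour-constant representatives one picks, i.e. $f(\alpha_1,\dots,\alpha_k)$ realises a well-defined complete $|B|$-behaviour on $B^k$ and is therefore canonical on $B^k$.

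The main obstacle I anticipate is bookkeeping in the iterative step: one must make the "diagonal'' colouring argument precise so that fixing coordinate $j$ does not disturb the canonicity already obtained for coordinates $1,\dots,j-1$, and so that the final function is simultaneously canonical in all coordinates on $B^k$ rather than just coordinate-wise. This is exactly the kind of argument already present in the proof of Lemma~\ref{lem:ramsey-indep} (and in the canonisation literature \cite{BP-canonical,BPT-decidability-of-definability}), so I would either cite that pattern or reproduce the short induction; a cleaner alternative is to note that a single application of $\bD\to(\bB)_r$ with $r$ taken as the number of \emph{complete} $|B|$-behaviours of $k$-ary maps suffices if one colours $C^{|B|}$ (or rather the set of embeddings of $\bB$ into $\bD$) in one shot by the induced behaviour of $f$ pulled back along a fixed identification, but making "pulled back along $k$ coordinates simultaneously'' precise still requires the same care. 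Either way, the argument is routine given Lemma~\ref{lem:can}.
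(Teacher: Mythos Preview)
The paper gives no proof of this lemma; it only remarks that it ``can be derived easily from Lemma~\ref{lem:canon} by a compactness argument, or directly from Lemma~\ref{lem:can}.'' You choose the second of these two suggested routes, so your overall strategy is aligned with the paper. The problem is in the execution.

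A single Ramsey witness $\bD$ with $\bD\to(\bB)_r$ cannot carry your iteration when $k\geq 2$. To make $f(\alpha_1,\dots,\alpha_k)$ canonical on $B^k$ you must control, at the first step, how $f$ behaves when one argument ranges over a copy of $B$ while the remaining arguments range over \emph{all of} $D$ (otherwise the canonicity you obtain in coordinate $1$ is destroyed when you later move coordinates $2,\dots,k$). Concretely, the colouring you actually need at step~$1$ sends $t\in D^{|B|}$ to the orbit of the tuple $\big(f(t_j,d_{i_2},\dots,d_{i_k})\big)_{j,i_2,\dots,i_k}$ where $d$ enumerates $D$; the number of colours here is the number of orbits of $|B|\cdot|D|^{k-1}$-tuples, which depends on $|D|$. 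Since $r$ has to be fixed before $\bD$ is chosen, your construction is circular. Neither proposed workaround helps: the pattern of Lemma~\ref{lem:ramsey-indep} is genuinely one-dimensional, and your ``one-shot'' colouring yields a single embedding of $\bB$, not $k$ of them.

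The standard repair---and presumably what the paper means by ``directly from Lemma~\ref{lem:can}''---is to nest the applications: take $\bD_0=\bB$ and successively choose $\bD_i\in\Age(\bA)$ with $\bD_i\to(\bD_{i-1})_{r_i}$, where $r_i$ is large enough to count orbits of $|D_{i-1}|^{\,i}$-tuples; set $C=D_k$. Given $f$, one first finds a copy of $\bD_{k-1}$ inside $\bD_k$ on which the full $k$-fold output pattern is type-determined, then a copy of $\bD_{k-2}$ inside that, and so on down to $\bB$. This product-Ramsey iteration is routine once set up correctly, but it is not the argument you wrote.
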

 
\subsection{Complexity}
 
  The following theorem is essentially from~\cite{BodMot-Unary} 
 and uses~\cite{BulatovFVConjecture,ZhukFVConjecture}; 
 since we are not aware of a formulation of the result in the form below, which is the form that is most useful for the purposes of this article, we added a guide to the literature in the proof. 
 
\begin{theorem}[\cite{BodMot-Unary}]
\label{thm:can-tract}
Let $\bB$ be a reduct of a countable finitely bounded homogeneous structure $\bA$ 
such that there is no 
uniformly continuous minor-preserving map from $\Pol(\bB) \cap \Can(\bA)$ to $\proj$. Then 
$\Csp(\bB)$ is in P. 
\end{theorem}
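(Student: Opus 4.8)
The plan is to reduce $\Csp(\bB)$ to a finite-domain CSP over $k$-types and invoke the Bulatov--Zhuk theorem. Fix a finite bound $m$ on the arity of the relations of $\bA$ (and of $\bB$), and let $n \geq \max(m,2)$. Write $\mathcal{T}$ for the finite set ${\mathcal O}_n(\bA)$ of $n$-orbits, and let $\bT$ be the finite relational structure on domain $\mathcal{T}$ whose relations are obtained by ``pushing forward'' the relations of $\bB$ along the projection $A^n \to \mathcal{T}$; concretely, for each relation $R$ of $\bB$, of arity $r \leq n$, one puts into the corresponding relation of $\bT$ those tuples of $n$-orbits that are jointly witnessed in $\bA$ by tuples standing in $R^{\bB}$ (one has to be slightly careful to encode enough of the type, which is why $n \geq m$ and the $n$-orbits, rather than the $r$-orbits, are used). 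The point of this construction, which is exactly the content of the reduction in~\cite{BodMot-Unary}, is twofold: first, there is a polynomial-time many-one reduction from $\Csp(\bB)$ to $\Csp(\bT)$ and back (an instance $\bA_0$ of $\Csp(\bB)$ is turned into an instance of $\Csp(\bT)$ by guessing, for each element, an $n$-type-pattern; the soundness of this uses $\omega$-categoricity and a local-to-global argument); second, and crucially, $\Pol(\bT)$ captures precisely $\xi^{\bA}_n(\Pol(\bB) \cap \Can(\bA))$.

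With this in hand the argument is short. By hypothesis there is no uniformly continuous minor-preserving map from $\Pol(\bB) \cap \Can(\bA)$ to $\proj$. I would now argue that this implies there is no minor-preserving map from $\Pol(\bT) = \xi^{\bA}_n(\Pol(\bB) \cap \Can(\bA))$ to $\proj$ at all: indeed, $\xi^{\bA}_n$ is itself a uniformly continuous clone homomorphism onto $\Pol(\bT)$ (as noted in the excerpt, the orbit equivalence on $n$-tuples is a congruence of $\Can(\bA)$), so composing a hypothetical minor-preserving map $\Pol(\bT) \to \proj$ with $\xi^{\bA}_n$ would yield a uniformly continuous minor-preserving map $\Pol(\bB) \cap \Can(\bA) \to \proj$ — note any map into the finite clone $\proj$ is automatically continuous once it factors through the finite quotient — contradicting the hypothesis. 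Hence $\Pol(\bT)$ admits no minor-preserving map to $\proj$. By Theorem~\ref{thm:wnu}, $\Pol(\bT)$ contains a WNU operation, and since $\bT$ is a finite structure with a finite relational signature, Theorem~\ref{thm:fin-tract} gives that $\Csp(\bT)$ is in P. Composing with the polynomial-time reduction from $\Csp(\bB)$ to $\Csp(\bT)$, we conclude that $\Csp(\bB)$ is in P.

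The step I expect to be the main obstacle — and the one that genuinely requires citing~\cite{BodMot-Unary} rather than reproving from scratch — is the correctness of the reduction between $\Csp(\bB)$ and $\Csp(\bT)$, together with the identification $\Pol(\bT) \cong \xi^{\bA}_n(\Pol(\bB) \cap \Can(\bA))$. The forward direction of the reduction (an instance of $\Csp(\bB)$ yields a satisfiable instance of $\Csp(\bT)$) is straightforward by projecting a solution to its $n$-types. The converse — that a solution of the $\bT$-instance can be lifted back to a homomorphism to $\bB$ — is the delicate part: it requires a compactness/amalgamation argument showing that a family of consistently chosen $n$-types can be realised simultaneously inside $\bA$, and that the resulting assignment actually satisfies the $\bB$-constraints. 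This is where $\omega$-categoricity and the finite boundedness of $\bA$ enter. Since the statement is attributed to~\cite{BodMot-Unary} and the excerpt explicitly says a ``guide to the literature'' will be supplied in the proof, I would present the argument above at the level of a reduction-plus-algebra sketch and defer the verification of the reduction itself to the cited work, taking care only to phrase the hypothesis ``no uniformly continuous minor-preserving map from $\Pol(\bB) \cap \Can(\bA)$ to $\proj$'' in the exact form in which it feeds into the $\xi^{\bA}_n$-image.
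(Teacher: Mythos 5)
Your proposal follows essentially the same route as the paper: you argue that the finite quotient clone (of $\Pol(\bB)\cap\Can(\bA)$ under the $n$-orbit congruence) has no minor-preserving map to $\proj$, extract a good operation there, and then hand off the substantive algorithmic work to \cite{BodMot-Unary}. The superficial differences are that you phrase the good operation as a WNU and pass through Theorems~\ref{thm:wnu} and~\ref{thm:fin-tract}, while the paper extracts a Siggers operation and cites Corollary~4.13 of \cite{BodMot-Unary}; these are interchangeable at this level. Your presentation also makes the finite structure $\bT$ and the reduction explicit, whereas the paper invokes a pre-packaged corollary. One step the paper makes explicit that you elide is Lemma~\ref{lem:lift}: after finding a Siggers operation in the quotient $\xi^{\bA}_m(\Pol(\bB)\cap\Can(\bA))$, the paper lifts it to a Siggers operation \emph{modulo} $\overline{\Aut(\bA)}$ inside $\Pol(\bB)\cap\Can(\bA)$ itself, because that is the form in which the cited corollary applies; you avoid this by phrasing everything in terms of $\Pol(\bT)$, which is fine provided the reduction is stated in the corresponding form in \cite{BodMot-Unary}.

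One inaccuracy worth flagging: you write that ``there is a polynomial-time many-one reduction from $\Csp(\bB)$ to $\Csp(\bT)$ and back'' and then attribute the soundness of the backward direction to $\omega$-categoricity, finite boundedness, and a compactness/amalgamation argument. That is not quite how it works. The reduction to the finite-domain CSP over types is \emph{not} correct unconditionally: one can have $\Csp(\bB)$ in P while $\Csp(\bT)$ is NP-hard (e.g.\ for certain temporal CSPs where the tractable algorithm is not captured by canonical polymorphisms), so a bidirectional polynomial-time equivalence cannot hold in general. The correctness of the ``lift a $\bT$-solution back to a $\bB$-solution'' step is precisely where the polymorphism hypothesis (the WNU/Siggers structure of $\Pol(\bT)$) enters, not just the model-theoretic finiteness conditions. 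Since you defer that step to \cite{BodMot-Unary} anyway, your argument still lands in the right place, but the annotation of what makes the reduction sound should be corrected.
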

\begin{proof}
Let $m$ be the maximal arity 
of the relations of $\bB$. 
If $\xi_m^{\bA}$ has a (uniformly continuous) minor-preserving map to $\proj$ then so has $\Pol(\bB)$ by composing the maps, contrary to our assumptions.
Therefore, $\xi_m^{\bA}$ contains a \emph{Siggers operation}~\cite{Siggers}. 
Lemma~\ref{lem:lift} implies that $\Pol(\bB) \cap \Can(\bA)$ has a \emph{Siggers operation modulo $\overline{\Aut(\bA)}$}~\cite{Topo}. 
The polynomial-time tractability of 
$\Csp(\bB)$ then follows from Corollary 4.13 in~\cite{BodMot-Unary}, which uses~\cite{BulatovFVConjecture,ZhukFVConjecture}. 
\end{proof} 

For many classes of reducts of finitely bounded homogeneous structures, the conditions given in Theorem~\ref{thm:hard}
and in Theorem~\ref{thm:can-tract} cover all the cases. 
The next section presents a condition from~\cite{BodMot-Unary} that turned out to be highly relevant to understand this phenomenon. 

\section{Results}
\label{sect:results}
In this section we state the main results of the paper that we then prove in Section~\ref{sect:ext-proofs} and in Section~\ref{sect:binary}. The applications to RCC5 in spatial reasoning can be found in Section~\ref{sect:rcc5}. 

\subsection{The Unique Interpolation Property}
\label{sect:uip}
Let $\mathscr M$ be a set of function from $A$ to $A$ and let $f,g \colon A^k \to A$ be operations. We say that \emph{$f$ interpolates $g$ over ${\mathscr M}$}  if for
every finite $F \subseteq A$ there are
$u,v_1,\dots,v_k \in {\mathscr M}$ such that
$g(a_1,\dots,a_k) = u(f(v_1(a_1),\dots,v_k(a_k)))$ for all $a_1,\dots,a_k \in F$, i.e., 
$$ g \in \overline{ \{ u(f(v_1,\dots,v_k)) \mid u,v_1,\dots,v_k \in {\mathscr M} \} } .$$


\begin{definition}\label{def:uip}
Let $\bA$ be a structure and 
let $\mathscr C$ be a clone that contains
$\Aut(\bA)$. 
A function $\zeta$ defined
on $\mathscr D \subseteq \mathscr C$ has 
the \emph{unique interpolation property (UIP) with respect to ${\mathscr C}$ over $\bA$} if $\zeta(g) = \zeta(h)$ for all $g,h \in \mathscr D$ that are interpolated by the same operation $f \in {\mathscr C}$ over $\Aut(\bA)$. 
\end{definition}

The main result of this section, 
Lemma~\ref{lem:uip}, gives a sufficient
condition for the existence of an extension
of a minor-preserving map defined on 
the canonical polymorphisms of a structure 
$\bB$ to a uniformly continuous minor-preserving map defined on all of $\Pol(\bB)$. 
The statement is inspired by 
Theorem~5.5 in~\cite{BodMot-Unary}, and
similar statements played an important role in~\cite{MMSNP}. 
However, the original proof in~\cite{BodMot-Unary}
contains a mistake (in the proof the mashup
theorem, Theorem 5.5 in~\cite{BodMot-Unary}, more specifically in the claim that $\phi$ is a minor-preserving map)
and fixing this with a general approach is one of the contributions of the present article.

\begin{lemma}[The extension lemma]
\label{lem:uip}
	Let $\bB$ be a reduct of a 
	countable finitely homogeneous Ramsey structure $\bA$  
	and let $\mathscr E$ be a clone. 
	Suppose that $\zeta \colon \Pol(\bB) \cap \Can(\bA) \to \mathscr E$ 
	is minor-preserving and 
	has the UIP with respect to $\Pol(\bB)$ over $\bA$. 
	Then $\zeta$ can be extended to a 
	uniformly continuous minor-preserving map $\tilde{\zeta} \colon \Pol(\bB) \to \mathscr{E}$. 
\end{lemma}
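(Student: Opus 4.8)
The plan is to define $\tilde\zeta$ by first canonising. Given $f \in \Pol(\bB)^{(k)}$, the canonisation lemma (Lemma~\ref{lem:canon}) yields a canonical function $f^{\ast}$ over $\bA$ that is interpolated by $f$ over $\Aut(\bA)$. Since $\bB$ is a reduct of $\bA$ we have $\Aut(\bA) \subseteq \Pol(\bB)$, and since $\Pol(\bB)$ is closed it contains every function interpolated by $f$ over $\Aut(\bA)$; hence $f^{\ast} \in \Pol(\bB) \cap \Can(\bA)$, the domain of $\zeta$, and we may set $\tilde\zeta(f) \coloneqq \zeta(f^{\ast})$. Well-definedness is exactly what the UIP buys us: if $f^{\ast}_1, f^{\ast}_2$ are two such canonicalisations of $f$, then both lie in $\Pol(\bB) \cap \Can(\bA)$ and both are interpolated by $f \in \Pol(\bB)$ over $\Aut(\bA)$, so $\zeta(f^{\ast}_1) = \zeta(f^{\ast}_2)$. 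Finally $\tilde\zeta$ extends $\zeta$, because a canonical function is interpolated by itself over $\Aut(\bA)$ (all the automorphisms chosen to be $\id$).

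\textbf{Minor-preservation (the crux).} Fix $f \in \Pol(\bB)^{(n)}$ and $i_1,\dots,i_n \in \{1,\dots,k\}$ and write $g \coloneqq f(\pi^k_{i_1},\dots,\pi^k_{i_n})$; the goal is $\tilde\zeta(g) = \tilde\zeta(f)(\pi^k_{i_1},\dots,\pi^k_{i_n})$. Pick canonicalisations $f^{\ast}$ of $f$ and $h$ of $g$, so $\tilde\zeta(f) = \zeta(f^{\ast})$ and $\tilde\zeta(g) = \zeta(h)$. Since $\Can(\bA)$ is a clone, $f^{\ast}(\pi^k_{i_1},\dots,\pi^k_{i_n})$ is again canonical and a polymorphism of $\bB$, so minor-preservation of $\zeta$ on $\Pol(\bB)\cap\Can(\bA)$ gives $\zeta\big(f^{\ast}(\pi^k_{i_1},\dots,\pi^k_{i_n})\big) = \zeta(f^{\ast})(\pi^k_{i_1},\dots,\pi^k_{i_n})$. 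Thus it remains to prove $\zeta(h) = \zeta\big(f^{\ast}(\pi^k_{i_1},\dots,\pi^k_{i_n})\big)$; as both arguments lie in the domain of $\zeta$, the UIP reduces this to producing a single $F \in \Pol(\bB)$ that interpolates both $h$ and $f^{\ast}(\pi^k_{i_1},\dots,\pi^k_{i_n})$ over $\Aut(\bA)$. When $i_1,\dots,i_n$ are pairwise distinct, $F = g$ works: it interpolates $h$ by the choice of $h$, and it interpolates $f^{\ast}(\pi^k_{i_1},\dots,\pi^k_{i_n})$ by routing the automorphisms that witness that $f$ interpolates $f^{\ast}$ into the (pairwise distinct) coordinates $i_1,\dots,i_n$ and setting the remaining coordinate maps to $\id$. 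The remaining case, where the minor operator identifies some variables (a "diagonal" minor), is precisely where the naive implication "if $f$ interpolates $g$ then $f(\pi^k_{i_1},\dots,\pi^k_{i_n})$ interpolates $g(\pi^k_{i_1},\dots,\pi^k_{i_n})$" breaks down, and it is exactly the gap in the argument of~\cite{BodMot-Unary}. Here I invoke Proposition~\ref{lem:diagonal-common-interpolation}, which supplies the required common interpolant: morally, a diagonal $a \mapsto f(\dots,a,\dots,a,\dots)$ is re-routed through (several) independent elementary substructures of $\bA$ furnished by Theorem~\ref{thm:indep}, which forces the arguments of $f$ to lie generically enough that $f$ applied to them realises the same behaviour as $f^{\ast}$ applied to the corresponding "spread-out" tuple, and this configuration is transported back into $\bA$ by Ramsey (Lemma~\ref{lem:canon}).

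\textbf{Uniform continuity, and the main obstacle.} By Lemma~\ref{lem:lift} together with the UIP, the restriction of $\zeta$ to polymorphisms of each fixed arity factors through the finite set of complete behaviours over $\bA$ (two canonical polymorphisms of equal behaviour interpolate each other over $\Aut(\bA)$, hence receive the same $\zeta$-value), and the local canonisation lemma (Lemma~\ref{lem:canon2}) shows that the behaviour of $f^{\ast}$ on a prescribed finite cube is determined by $f$ on a correspondingly larger finite cube. Combining these, for every finite $G$ there is a finite $F$ such that $f|_{F^{k}} = g|_{F^{k}}$ forces $f^{\ast}$ and $g^{\ast}$ to have the same behaviour, whence $\tilde\zeta(f) = \tilde\zeta(g)$ and in particular they agree on $G^{k}$; this is uniform continuity. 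Everything except minor-preservation is routine bookkeeping around the canonisation lemma and the UIP; the genuine obstacle is the diagonal case of minor-preservation, i.e.\ Proposition~\ref{lem:diagonal-common-interpolation} and, beneath it, Theorem~\ref{thm:indep}.
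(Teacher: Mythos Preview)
Your definition of $\tilde\zeta$ and the well-definedness argument are correct and match the paper. The gap is in the minor-preservation step for diagonal minors. You reduce to finding an $F\in\Pol(\bB)$ interpolating both your chosen canonicalisation $h$ of $g=f\circ\pi$ and the specific function $f^{\ast}\circ\pi$, and then say Lemma~\ref{lem:diagonal-common-interpolation} ``supplies the required common interpolant''. It does not. Applied to $f$, that lemma yields $g'\in\Pol(\bB)$ that \emph{diagonally} interpolates $f$ and some canonical $h'$; there is no reason $h'=f^{\ast}$, and since $g'$ only interpolates $f^{\ast}$ \emph{non-diagonally} (via $g'$ interpolates $f$ interpolates $f^{\ast}$), you cannot pass to $g'\circ\pi$ interpolating $f^{\ast}\circ\pi$ --- that is exactly the failure Example~\ref{expl:canonisation-violates-minor} exhibits. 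The crucial ingredient you never mention is Lemma~\ref{lem:diag-minor}: \emph{diagonal} interpolation commutes with taking minors. The paper's chain is
\[
\tilde\zeta(f\circ\pi)=\tilde\zeta(g'\circ\pi)=\tilde\zeta(h'\circ\pi)=\zeta(h'\circ\pi)=\zeta(h')\circ\pi=\tilde\zeta(f)\circ\pi,
\]
where the first two equalities use Lemma~\ref{lem:diag-minor} plus the (easy but essential) fact that $\tilde\zeta$ is \emph{interpolation invariant}, and the last uses that $g'$ interpolates both canonical $f^{\ast}$ and $h'$, whence $\zeta(h')=\zeta(f^{\ast})$ by the UIP. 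Your framing skips the passage through $h'$ and the role of Lemma~\ref{lem:diag-minor}, so as written it does not go through.

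Your uniform-continuity paragraph is also too thin. The assertion ``the behaviour of $f^{\ast}$ on a prescribed finite cube is determined by $f$ on a correspondingly larger finite cube'' is false for a \emph{particular} canonicalisation $f^{\ast}$: different canonicalisations of the same $f$ can have different behaviours, and Lemma~\ref{lem:canon2} only tells you that \emph{some} local canonical behaviour is determined. The paper avoids this by first proving that $\tilde\zeta$ is interpolation invariant (a two-line argument from transitivity of interpolation plus canonisation) and then invoking Lemma~\ref{lem:int-inv}, whose proof contains precisely the rich-set/compactness work your sketch elides, including uniformity over the arity $k$.
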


The extension lemma motivates the study of the UIP in the context of complexity classification of CSPs, and we obtain the following in Section~\ref{sect:main-proofs}. 

\begin{corollary}\label{cor:dicho}
Let $\bC$ be a
reduct of a finitely bounded homogeneous structure $\bB$, which is itself a reduct of a finitely homogeneous Ramsey structure $\bA$. 
Suppose that whenever there is a uniformly continuous minor-preserving map
from $\Pol(\bC) \cap \Can(\bB)$ to $\proj$ then there is also a minor-preserving map 
from $\Pol(\bC) \cap \Can(\bA)$ to $\proj$ which has the UIP with respect to $\Pol(\bC)$ over $\bA$. 
Then 
$\Csp(\bC)$ is in P or NP-complete. 
\end{corollary}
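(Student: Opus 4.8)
The plan is to argue by a case distinction on whether $\Pol(\bC)\cap\Can(\bB)$ admits a uniformly continuous minor-preserving map to $\proj$. In the negative case I would apply Theorem~\ref{thm:can-tract} directly, taking $\bB$ as the ambient finitely bounded homogeneous structure and $\bC$ as its reduct: the hypothesis of that theorem is then exactly the assumption of this case, so it yields that $\Csp(\bC)$ is in P. No further work is needed here.

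In the positive case, so when there is a uniformly continuous minor-preserving map $\Pol(\bC)\cap\Can(\bB)\to\proj$, the hypothesis of the corollary provides a minor-preserving map $\zeta\colon \Pol(\bC)\cap\Can(\bA)\to\proj$ that has the UIP with respect to $\Pol(\bC)$ over $\bA$. Since $\bC$ is a reduct of the finitely homogeneous Ramsey structure $\bA$, I would invoke the extension lemma (Lemma~\ref{lem:uip}) with $\mathscr{D}=\proj$ to extend $\zeta$ to a uniformly continuous minor-preserving map $\tilde\zeta\colon\Pol(\bC)\to\proj$. As $\bC$ is $\omega$-categorical with finite relational signature (being a reduct of the finitely homogeneous structure $\bB$), Theorem~\ref{thm:hard} then gives that $\Csp(\bC)$ is NP-hard; combined with the standard fact that the CSP of any reduct of a finitely bounded homogeneous structure lies in NP (one guesses a quantifier-free $\bB$-type for each tuple of the input and checks that no forbidden substructure arises), this shows that $\Csp(\bC)$ is NP-complete.

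The two cases are exhaustive, so $\Csp(\bC)$ is in P or NP-complete, as claimed. I expect the only subtlety to be purely organisational: the case split and Theorem~\ref{thm:can-tract} are stated in terms of $\Pol(\bC)\cap\Can(\bB)$, whereas the UIP hypothesis and Lemma~\ref{lem:uip} concern $\Pol(\bC)\cap\Can(\bA)$, and one must also confirm the routine side conditions ($\omega$-categoricity, finite relational signature, membership in NP). All of the genuine content sits in the results cited above, so the proof amounts to assembling them correctly.
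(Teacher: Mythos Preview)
Your proposal is correct and follows essentially the same approach as the paper: the same case split on whether $\Pol(\bC)\cap\Can(\bB)$ maps to $\proj$, the same appeal to Theorem~\ref{thm:can-tract} in the negative case, and the same use of the extension lemma (Lemma~\ref{lem:uip}) followed by Theorem~\ref{thm:hard} and NP-membership in the positive case. Your write-up is slightly more explicit about the side conditions (finite signature, $\omega$-categoricity, NP-membership), but the argument is the same.
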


\subsection{Other Characterisations of the UIP}
\label{sect:binary-uip}
To verify the UIP, the following 
characterisation of the UIP is helpful, since it 
allows us to focus on binary polymorphisms. 
Let ${\mathscr C}$ be a clone 
and let ${\mathscr M} \subseteq {\mathscr C}^{(1)}$. 
If $v_1,\dots,v_k \in {\mathscr M}$ we write
$f(v_1,\dots,v_k)$ for the operation that maps
$(a_1,\dots,a_k)$ to $f(v_1(a_1),\dots,v_k(a_k))$. 
We say that a function $\zeta$ defined on $\mathscr C$ is \emph{$\mathscr M$-invariant} if for all
$f \in {\mathscr C}^{(k)}$ and $u,v_1,\dots,v_k \in \mathscr M$
we have 
\begin{align}
\zeta(f) = \zeta \big(u(f(v_1,\dots,v_k)) \big).
\label{eq:inv}
\end{align}

It follows from known results
(see Proposition~\ref{prop:can-proj})
that if $\bA$ is a finitely homogeneous structure and $\mathscr C \subseteq \Can(\bA)$ is a closed clone, then there exists 
a uniformly continuous minor-preserving map 
from $\mathscr C$ to $\proj$ 
if and only if there exists a 
minor-preserving map 
$\zeta \colon {\mathscr C} \to \proj$ which is $\overline{\Aut(\bA)}$-invariant. 



\begin{theorem}[Binary UIP verification]
\label{thm:binary}
Let $\bA$ be a countable finitely homogeneous Ramsey structure
and let $\bC$ be a reduct of $\bA$. Let
$\mathscr C$ be the clone of all polymorphisms of $\bC$ that are canonical over $\bA$
and let 
$\zeta \colon {\mathscr C} \to \proj$
be an $\overline{\Aut(\bA)}$-invariant minor-preserving map. 
Then the following are equivalent. 
\begin{enumerate}
\item $\zeta$ has the UIP with respect to $\Pol(\bC)$ over $\bA$. 
\item 
for all $f \in \Pol(\bC)^{(2)}$ and 
$u_1,u_2 \in {\overline{\Aut(\bA)}}$, if  
$f(\id,u_1)$ and $f(\id,u_2)$ are canonical over $\bA$ then 
$$\zeta(f(\id,u_1)) = \zeta(f(\id,u_2)).$$
\end{enumerate}
\end{theorem}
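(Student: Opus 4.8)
The implication $(1) \Rightarrow (2)$ is essentially immediate: if $f(\id, u_1)$ and $f(\id, u_2)$ are both canonical over $\bA$, then both are interpolated by $f$ over $\overline{\Aut(\bA)}$ (indeed over $\Aut(\bA)$, using $u_1, u_2$ locally), so the UIP applied to $f \in \Pol(\bC)$ forces $\zeta(f(\id,u_1)) = \zeta(f(\id,u_2))$. The content is the converse, $(2) \Rightarrow (1)$. So assume $(2)$ and let $g, h \in \Pol(\bC) \cap \Can(\bA)$ be interpolated by a common $f \in \Pol(\bC)^{(k)}$ over $\Aut(\bA)$; we must show $\zeta(g) = \zeta(h)$.

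The plan is to reduce the $k$-ary statement to the binary one by a ``coordinate-by-coordinate'' argument, changing one argument of $f$ at a time and using $\overline{\Aut(\bA)}$-invariance of $\zeta$ together with the canonisation lemma (Lemma~\ref{lem:canon}) and the simultaneous-interpolation lemma (Lemma~\ref{lem:simul-interpol}). Concretely: since $g$ is interpolated by $f$ over $\Aut(\bA)$, for every finite $F$ there are $u, v_1, \dots, v_k \in \Aut(\bA)$ with $g = u \circ f(v_1, \dots, v_k)$ on $F$; by $\overline{\Aut(\bA)}$-invariance of $\zeta$ (on canonical operations) it suffices to understand $\zeta$ of operations of the form $f(v_1, \dots, v_k)$ that happen to be canonical. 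I would process the coordinates $1, \dots, k$ in turn: starting from a witness presentation of $g$, freeze all but one coordinate by composing $f$ with fixed automorphisms on those coordinates (turning $f$ into a new binary-in-the-relevant-sense operation $f' \in \Pol(\bC)$), apply the canonisation lemma to replace the free coordinate's self-embedding by an $\overline{\Aut(\bA)}$-element making the result canonical, and invoke hypothesis $(2)$ to see that the value of $\zeta$ does not depend on which such self-embedding was chosen. Iterating through all coordinates transforms a presentation of $g$ into a presentation of $h$ while keeping $\zeta$ constant, because both $g$ and $h$ arise from $f$ and differ only in the choice of the inner self-embeddings — which is exactly what $(2)$, combined with compactness (Proposition~\ref{prop:compact}) to pass from finite $F$ to all of $A$, neutralises.

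The technical heart, and the step I expect to be the main obstacle, is making the ``freeze all but one coordinate'' step legitimate: after fixing automorphisms $\alpha_j$ ($j \neq i$) on the other coordinates, the unary map $a_i \mapsto f(\alpha_1(a_1^0), \dots, a_i, \dots, \alpha_k(a_k^0))$ obtained by also freezing the other $a_j$ is not canonical in general, and the induced binary operation need not land in $\Pol(\bC) \cap \Can(\bA)$ on the nose — one must first use Lemma~\ref{lem:canon} to correct the free coordinate, and then argue that the finitely many ``intermediate'' operations produced along the chain can be realised simultaneously by a single polymorphism of $\bB$ via Lemma~\ref{lem:simul-interpol} (with the ``$\alpha_{i,j} = \id$'' refinement used to keep the coordinates already processed in place). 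Handling the bookkeeping so that each link in the chain is an instance of hypothesis $(2)$ applied to a genuine $f \in \Pol(\bC)^{(2)}$ (obtained by collapsing all frozen coordinates into constants and one remaining slot), and then threading the finite-$F$ approximations through a final compactness argument to conclude $\zeta(g) = \zeta(h)$ globally, is where the real care is needed; the rest is routine.
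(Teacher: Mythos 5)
Your high-level plan --- a chain of one-coordinate-at-a-time changes that reduces a $k$-ary UIP failure to an instance of condition~$(2)$ --- matches the paper's strategy (there, cast as a contraposition). But two specific ingredients are missing or garbled, and they are where the proof actually lives. First, the passage from the $k$-ary $g$ to a binary $f \in \Pol(\bC)^{(2)}$ should not be by ``collapsing frozen coordinates into constants''; substituting constants into a polymorphism does not yield a polymorphism of $\bC$. One instead takes a \emph{minor} of $g$, identifying the $k-1$ non-critical slots with a single second variable, $f(x,y) \coloneqq g(y,\dots,y,x,y,\dots,y)$. This minor is exactly where the hypothesis that $\zeta$ is \emph{minor-preserving}, which your sketch never explicitly invokes, does its work: from $\zeta(g^u_\ell) = \pi^k_r \neq \pi^k_s = \zeta(g^v_\ell)$ one deduces, using the minor identity together with $\overline{\Aut(\bA)}$-invariance, that $\zeta(f(\id,u)) = \pi^2_1 \neq \pi^2_2 = \zeta(f(\id,v))$, contradicting~$(2)$.

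Second, and more substantially, applying Lemma~\ref{lem:canon} ``to correct the free coordinate'' at each step of the chain does not suffice. Lemma~\ref{lem:canon} and its finite form Lemma~\ref{lem:canon2} produce automorphisms in \emph{all} $k$ coordinates simultaneously, valid only on a finite set, so canonising the hybrid at stage $i$ gives no control over whether the neighbouring hybrid --- differing in coordinate $i{+}1$ --- is still canonical; yet both hybrids in each pair must be canonical before hypothesis~$(2)$ can even be applied. This is exactly what the ``higher-dimensional checker board'' canonisation lemma (Lemma~\ref{lem:checkers}) supplies and what your proposal omits: for every finite $X$ it produces a single tuple $\alpha_{1,1},\dots,\alpha_{2,k} \in \Aut(\bA)$ making \emph{all} $2^k$ hybrids $g(\alpha_{u_1,1},\dots,\alpha_{u_k,k})$ simultaneously canonical on $X^k$. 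Only with that in hand can one isolate a critical index where $\zeta$ jumps, thin out the finite approximations to stabilise the two behaviours, and finally invoke Lemma~\ref{lem:simul-interpol} --- which you do name --- to globalise and obtain a genuine binary witness against~$(2)$.
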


Theorem~\ref{thm:binary} can be strengthened further. 
We have already seen that every homogeneous $\omega$-categorical Ramsey structure contains two independent elementary substructures. The idea of the following theorem is that it suffices to verify the UIP on such a pair of substructures.

\begin{theorem}
\label{thm:binary-indep}
Let $\bA$ be a countable finitely homogeneous Ramsey structure 
and let $\bC$ be a reduct of $\bA$. 
Let ${\mathscr C}$ be the clone of all polymorphisms of $\bC$ that are canonical over $\bA$ and 
let $\zeta \colon {\mathscr C} \to \proj$
be an $\overline{\Aut(\bA)}$-invariant minor-preserving map. 
Then for every pair $(\bA_1,\bA_2)$ of independent elementary substructures of $\bA$  the following are equivalent. 
\begin{enumerate}
\item $\zeta$ has the UIP with respect to $\Pol(\bC)$ over $\bA$. 
\item for all 
$f \in \Pol(\bC)^{(2)}$,
$u_1,u_2 \in {\overline{\Aut(\bA)}}$ 
with 
$u_1(A) \subseteq A_1, u_2(A) \subseteq A_2$, if 
$f(\id,u_1)$ and $f(\id,u_2)$ are canonical over $\bA$ then 
$$\zeta(f(\id,u_1)) = \zeta(f(\id,u_2)).$$ 
\end{enumerate}
\end{theorem}


\section{Proof of the Extension Lemma}
\label{sect:ext-proofs}
This section contains the proof of Lemma~\ref{lem:uip};
Corollary~\ref{cor:dicho} about the complexity of CSPs is then an easy consequence. 

\subsection{Diagonal Interpolation} 
In our proofs we need the concept of \emph{diagonal interpolation}, which is a more restricted
form of interpolation, and was already used in~\cite{MMSNP}.

\begin{definition}
Let $\mathscr M$ be a set of functions from $A$ to $A$. We say that
$f \colon A^k \to A$ \emph{diagonally interpolates $g \colon A^k \to A$ 
over ${\mathscr M}$} if for every finite $F \subseteq A$ there are $u,v \in {\mathscr M}$ 
such that $g(a_1,\dots,a_k) = u(f(v(a_1),\dots,v(a_k)))$ for all $a_1,\dots,a_k \in F$. 
\end{definition}

Diagonal interpolation 
 is well-behaved with respect to 
minors. 

\begin{lemma}\label{lem:diag-minor}
Let $\mathscr M$ be a set of functions from $A$ to $A$, let $f \colon A^k \to A$ be an operation, and let $\alpha \colon \{1,\dots,k\} \to \{1,\dots,n\}$. 
Suppose that $f$ diagonally interpolates 
$g$ over $\mathscr M$. Then  
$f_\alpha$ diagonally
interpolates $g_\alpha$  over ${\mathscr M}$. 
\end{lemma}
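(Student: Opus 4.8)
The statement is a compatibility claim between diagonal interpolation and the operation of substituting projections into arguments, so the natural approach is to unwind both definitions and track the single pair of witnessing unary functions through the substitution. First I would fix a finite subset $F \subseteq A^n$; I want to produce $u, v \in {\mathscr M}$ such that $(g \circ \pi)(a_1,\dots,a_n) = u\big((f\circ\pi)(v(a_1),\dots,v(a_n))\big)$ for all $(a_1,\dots,a_n) \in F^n$ (reading $F^n$ as the relevant finite tuple set, or more precisely working with an arbitrary finite $F \subseteq A$ and tuples from $F^n$). The key observation is that $(f \circ \pi)$ applied to $(b_1,\dots,b_n)$ equals $f(b_{i_1},\dots,b_{i_k})$, and likewise for $g\circ\pi$; so the identity I must establish reduces, after the projections select coordinates, to an instance of the identity witnessing that $f$ diagonally interpolates $g$.

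The crucial point that makes this work is that \emph{diagonal} interpolation uses a single $v$ for all $k$ arguments, rather than possibly different $v_j$'s. Here is why that matters: I would let $F' \subseteq A$ be the (finite) set $\{a_{i_1},\dots,a_{i_k} : (a_1,\dots,a_n)\in F^n\}$ — really just $F$ itself, since the selected coordinates still lie in $F$ — and apply the hypothesis that $f$ diagonally interpolates $g$ over ${\mathscr M}$ to this finite set, obtaining $u,v\in{\mathscr M}$ with $g(c_1,\dots,c_k) = u(f(v(c_1),\dots,v(c_k)))$ for all $c_1,\dots,c_k\in F$. Now for $(a_1,\dots,a_n)\in F^n$, setting $c_j = a_{i_j}$ we get
\[
(g\circ\pi)(a_1,\dots,a_n) = g(a_{i_1},\dots,a_{i_k}) = u\big(f(v(a_{i_1}),\dots,v(a_{i_k}))\big) = u\big((f\circ\pi)(v(a_1),\dots,v(a_n))\big),
\]
where the last equality is just the definition of $f\circ\pi$ applied to the tuple $(v(a_1),\dots,v(a_n))$, using that the $i_j$-th coordinate of that tuple is $v(a_{i_j})$. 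This is exactly the required witnessing identity with the same $u,v$, so $f\circ\pi$ diagonally interpolates $g\circ\pi$ over ${\mathscr M}$.

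There is essentially no obstacle here — the only thing to be careful about is the bookkeeping of which coordinates the projections select and checking that $v$ being applied uniformly (diagonally) to all of $a_1,\dots,a_n$ before projecting is the same as applying $v$ to the selected coordinates $a_{i_1},\dots,a_{i_k}$; this is immediate since $\pi$ commutes with the coordinatewise application of $v$. (Had we used ordinary interpolation with possibly distinct $v_1,\dots,v_k$, this step would fail, because after substituting projections the argument in position $j$ and position $j'$ might come from the same original coordinate yet need $v_j \neq v_{j'}$; the diagonal restriction is precisely what sidesteps this.) The lemma then follows directly.
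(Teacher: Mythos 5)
Your proof is correct and amounts to an element-wise unwinding of the paper's argument. The paper phrases the same idea via topological closures (diagonal interpolation is membership in $\overline{\{\,u(f(v,\dots,v)) : u,v \in \mathscr M\,\}}$, and composition with $\pi$ is continuous), but the crucial identity underlying both proofs --- that composing with the projection vector $\pi$ commutes with the diagonal application of $v$, so $u(f(v,\dots,v)) \circ \pi = u\bigl((f\circ\pi)(v,\dots,v)\bigr)$ --- is identical, and you correctly isolate the single-$v$ (diagonal) restriction as the reason the argument goes through.
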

\begin{proof}
By assumption 
$g \in \overline{ \{ u(f(v,\dots,v)) \colon u,v \in \mathscr M \} }$. 
Since composition in clones 
is continuous, 
\begin{align*}
g_\alpha & \in \overline{ \{ u(f(v,\dots,v))_\alpha \colon u,v \in \mathscr M \} }  \\
& = \overline{ \{ u(f_\alpha)(v,\dots,v)  \colon u,v \in \mathscr M \} } \end{align*}
and thus $f_\alpha$ diagonally interpolates $g_\alpha$ over ${\mathscr M}$. 
\end{proof}

We want to stress that the innocent-looking Lemma~\ref{lem:diag-minor} fails for interpolation instead of diagonal interpolation, as shown in 
the following example.
For $H_1,\dots,H_k \subseteq A$ define 
$$f(H_1,\dots,H_k)  \coloneqq \{f(a_1,\dots,a_k) \colon a_1 \in H_1,\dots,a_k \in H_k\}.$$


\begin{example}\label{expl:canonisation-violates-minor}
 Let $\bA$ be a structure with countable domain and two disjoint infinite unary relations $U_1$ and $U_2$. 
 Let $f \colon A^3 \to A$ be an injective function such that for every $i \in \{1,2\}$ 
 \begin{itemize}
 \item $f(a,b,c) \in U_i$ if $a \in U_i$ and $b \neq c$, 
 \item $f(a,b,c) \in U_i$ if $b \in U_i$ and $b = c$. 
\end{itemize}
Let
$e_1,e_2$ be two self-embeddings of $\bA$ with disjoint images and let $g \colon A^3 \to A$ be given by
$$g(x,y,z) \coloneqq f(x,e_1(y),e_2(z)).$$
Then $g$ is interpolated by $f$, injective and for each $i \in \{1,2\}$ we have $g(U_i,A,A) \subseteq U_i$. Hence, $g$ is canonical and
$\xi_1(g) = \pi^3_1$. 
Let $\alpha \colon \{1,2,3\} \to \{1,2\}$ be given by $\alpha(1) = 1$, $\alpha(2) = 2$, and $\alpha(3) = 2$.  
We claim that 
$f_\alpha$ does not 
interpolate $g_\alpha$. 
On the one hand, for each $i \in \{1,2\}$ we have $f_\alpha(A,U_i) \subseteq U_i$,  
so $f'$ is canonical and $\xi_1(f_\alpha) = \pi^2_2$. 
On the other hand 
$$\xi_1(g_\alpha) = \xi_1(g)_\alpha = (\pi^3_1)_\alpha = \pi^2_1$$
which proves the claim. 
\end{example}

The next lemma plays an important role 
in the proof of Lemma~\ref{lem:uip}.

\begin{lemma}\label{lem:diagonal-common-interpolation}
Let $\bB$ be a reduct of a countable 
finitely homogeneous Ramsey structure $\bA$. 
Let $f \in \Pol(\bB)$. Then there exists $g \in \Pol(\bB)$ that diagonally interpolates both $f$
and an operation in $\Can(\bA)$ over $\Aut(\bA)$. 
\end{lemma}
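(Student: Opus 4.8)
The goal is, given $f \in \Pol(\bB)$ of some arity $k$, to produce $g \in \Pol(\bB)$ that diagonally interpolates both $f$ and some canonical operation over $\Aut(\bA)$. The plan is to take two "copies" of the domain placed far apart (witnessed by two independent elementary substructures, which exist by Theorem~\ref{thm:indep}), run $f$ on one copy to reproduce $f$, and run $f$ on the other copy after first canonising it by the Canonisation Lemma (Lemma~\ref{lem:canon}), then glue these two behaviours together into a single polymorphism $g$ by a compactness argument (Lemma~\ref{lem:simul-interpol}).

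**Main steps.** First I would fix $f \in \Pol(\bB)^{(k)}$. By the Canonisation Lemma~\ref{lem:canon} applied to $f$, there is a canonical operation $f_0 \in \Can(\bA)$ that is interpolated by $f$ over $\Aut(\bA)$; more carefully, and this is the crux, I want $f$ to \emph{diagonally} interpolate $f_0$, which one can arrange on each finite set by composing $f$ with a single automorphism on all coordinates (Lemma~\ref{lem:canon2} gives exactly this: there are $\alpha_1,\dots,\alpha_k \in \Aut(\bA)$ with $f(\alpha_1,\dots,\alpha_k)$ canonical on a prescribed finite cube, and after a further automorphism one may take the $\alpha_i$ equal on that cube, since the orbit of a tuple depends only on its type). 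Second, I would apply Theorem~\ref{thm:indep} to obtain two independent elementary substructures $\bA_1,\bA_2$ of $\bA$, with embeddings $\iota_1\colon \bA \hookrightarrow \bA_1$, $\iota_2\colon \bA \hookrightarrow \bA_2$; since they are elementary they lie in $\overline{\Aut(\bA)}$ as self-embeddings, and independence means that the type of a tuple in $A_1 \cup A_2$ is determined by the types of its projections to $A_1$ and to $A_2$. Third — the heart of the construction — I would show that for every finite $F \subseteq A$ there is an operation $h_F \in \Pol(\bB)^{(k)}$ together with $v^{(1)}_F, v^{(2)}_F, u^{(1)}_F, u^{(2)}_F \in \overline{\Aut(\bA)}$ such that $u^{(1)}_F(h_F(v^{(1)}_F(a_1),\dots,v^{(1)}_F(a_k))) = f(a_1,\dots,a_k)$ and $u^{(2)}_F(h_F(v^{(2)}_F(a_1),\dots,v^{(2)}_F(a_k))) = f_0(a_1,\dots,a_k)$ for all $a_1,\dots,a_k \in F$: concretely $h_F$ agrees with $f$ on $(A_1)^k$ (via $\iota_1$) and with a canonised translate of $f$ on $(A_2)^k$ (via $\iota_2$ together with the canonising automorphism from step one), and such an $h_F$ exists as a polymorphism on $F^k$ because the inputs coming from $A_1$ and from $A_2$ are type-independent, so the partial map is well defined and respects $\bB$. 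Finally, I would pass from these finite approximations to a single $g \in \Pol(\bB)$ using the simultaneous-interpolation compactness statement (Lemma~\ref{lem:simul-interpol}, or equivalently Proposition~\ref{prop:compact} together with a diagonal/K\"onig argument), obtaining $g$ that diagonally interpolates both $f$ and $f_0 \in \Can(\bA)$ over $\overline{\Aut(\bA)}$, hence over $\Aut(\bA)$ by replacing self-embeddings with automorphisms on finite sets.

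**Main obstacle.** The delicate point is the transition from ordinary interpolation (which is what Lemma~\ref{lem:canon} naturally gives) to \emph{diagonal} interpolation, i.e.\ forcing the same unary map $v$ on all $k$ coordinates; Example~\ref{expl:canonisation-violates-minor} shows this distinction is not cosmetic. The resolution is that on any single finite cube $F^k$ one canonises $f$ by post-composing with a tuple of automorphisms $(\alpha_1,\dots,\alpha_k)$, and since on a finite set an automorphism is determined up to the action of $\Aut(\bA)$ only by the orbit of the relevant tuple, one can further correct by a single automorphism applied to all coordinates simultaneously so that effectively $v_1 = \dots = v_k$ on $F$; this is exactly the content of Lemma~\ref{lem:canon2}. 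The second, more bookkeeping-heavy obstacle is verifying that the "glued" map $h_F$ on $(A_1 \cup A_2)^k$ is indeed a polymorphism of $\bB$ and not merely of $\bA$: here one uses that $\bB$ is a \emph{reduct} of $\bA$ so its relations are unions of orbits of $\Aut(\bA)$, together with the independence of $\bA_1$ and $\bA_2$, to check that any relational tuple entirely within the image of $h_F$ already had the required type before gluing.
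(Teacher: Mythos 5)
Your high-level plan matches the paper's: split into two independent elementary substructures (Theorem~\ref{thm:indep}), reproduce $f$ on one side and a canonised variant on the other, then pass to the limit by compactness. But the "heart of the construction" (your step three) contains a real gap, and a parenthetical claim in your step one is incorrect; together these hide the actual work.

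The parenthetical "after a further automorphism one may take the $\alpha_i$ equal on that cube, since the orbit of a tuple depends only on its type" is false. If it were true, $f$ itself would diagonally interpolate a canonical operation, and the lemma would not need to produce a \emph{different} polymorphism $g$ at all. Example~\ref{expl:canonisation-violates-minor} illustrates precisely why collapsing the $\alpha_i$ to a single $\alpha$ is not innocent. In step three you then assert the existence of the glued $h_F$ because "the inputs coming from $A_1$ and from $A_2$ are type-independent, so the partial map is well defined and respects $\bB$"; this explains neither what $h_F$ is nor why a polymorphism with the two required restrictions exists. A partial map on two disjoint $k$-cubes is not a behaviour, and neither Proposition~\ref{prop:compact} nor Lemma~\ref{lem:simul-interpol} (which handles behaviours and only lets you preserve $\alpha_{i,j}=\id$, not the diagonal constraint) tells you the partial map extends to a polymorphism of $\bB$.

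What is missing is the explicit construction, which is also exactly where independence earns its keep. Fix a finite $X$ and WLOG assume $X\subseteq A_1$. Let $e_2\in\overline{\Aut(\bA)}$ embed $\bA$ onto $\bA_2$. By Ramsey there are $\delta_1,\dots,\delta_k\in\Aut(\bA)$ with $f(e_2\delta_1,\dots,e_2\delta_k)$ canonical on $X^k$; set $Y_i := e_2\delta_i(X)\subseteq A_2$. Independence of $A_2$ from $A_1$ says the $k$ tuples $e_2\delta_i(c)$ (where $c$ enumerates $X$), which all have the same plain type, also all have the same type \emph{over $X$}. Hence there are $\alpha_i\in\Aut(\bA)$ with $\alpha_i|_X=\id$ and $\alpha_i(Y_1)=Y_i$. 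Take $g := f(\alpha_1,\dots,\alpha_k)$ — this is automatically in $\Pol(\bB)$, it agrees with $f$ on $X^k$ (since $\alpha_i|_X=\id$), and $g(e_2\delta_1,\dots,e_2\delta_1)=f(\alpha_1e_2\delta_1,\dots,\alpha_ke_2\delta_1)$ is canonical on $X^k$ because $\alpha_ie_2\delta_1(X)=Y_i$ and $f$ is canonical on $Y_1\times\dots\times Y_k$. This is the entire point of bringing in the independent substructures, and it is absent from your proposal. Finally, to pass from the local statement to a single $g$ working on all of $A$, the paper builds a first-order theory encoding the two diagonal constraints and applies compactness and L\"owenheim--Skolem; this is a bespoke argument, not an application of Lemma~\ref{lem:simul-interpol}.
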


\begin{proof}
Let $k$ be the arity of $f$. 
We first show a local version of the statement,
and then derive the statement by a compactness argument. The local version is that for every finite 
$X \subseteq A$ there exists $g \in \Pol(\bB)$
and $e \in \overline{\Aut(\bA)}$ such that $g(e,\dots,e)$ is canonical on $X^k$ over $\bA$ and $g$ agrees with $f$ on $X^k$. 

Let $c = (c_1,\dots,c_n)$ be such that 
$\{c_1,\dots,c_n\} = X$. By Theorem~\ref{thm:indep} there are
two independent elementary substructures $\bC_1$ and $\bC_2$ of $\bA$.
Since $\bA$ is $\omega$-categorical, for
$i \in \{1,2\}$ there exists an isomorphism 
$e_i$ from $\bA$ to $\bC_i$; by the homogeneity of $\bA$ we have $e_i \in \overline{\Aut(\bA)}$. 
The restriction 
of $e_1$ to $X$ can be extended to an automorphism 
$\delta \in \Aut(\bA)$. Then $\delta^{-1}(C_1)$
and $\delta^{-1}(C_2)$ induce independent elementary substructures of $\bA$, and $X \subseteq \delta^{-1}(C_1)$. 
So we may assume without loss of generality that $X \subseteq C_1$. 

Since $\bA$ is Ramsey, $f(e_2,\dots,e_2)$ interpolates a canonical function over $\Aut(\bA)$; so there are $\delta_1,\dots,\delta_k \in \Aut(\bA)$ such that $f(e_2 \delta_1,\dots, e_2 \delta_k)$ is canonical on $X^k$. 
In other words, for $Y_i \coloneqq e_2 (\delta_{i}(X))$,
the map $f$ is canonical on
$Y_1 \times \cdots \times Y_k$ over $\bA$. 
Since $X \subseteq C_1$ and 
$Y_i \subseteq C_2$, the tuples $e_2\delta_1(c),\dots,
e_2\delta_k(c)$ all have the same 
type over $c$. 
Since $\bC_1$ and $\bC_2$ are independent, there exist $\alpha_i  \in \Aut(\bA)$
such that $\alpha_i(Y_1) = Y_i$ and 
$\alpha_{i}(c) = c$. 
Define $g \coloneqq f(\alpha_1,\dots,\alpha_k)$. 
Then $f$ and $g$ agree on $X^k$. 
We claim that $h \coloneqq g(e_2 \delta_1,\dots,e_2 \delta_1)$ is canonical on $X^k$ over $\bA$. 
This follows from $f$ being canonical on 
\begin{align*}
Y_1 \times \cdots \times Y_k & = \alpha_1(Y_1) \times \cdots \times \alpha_k(Y_1) \\
& = \alpha_1 e_2 \delta_1(X) \times 
\cdots \alpha_k e_2 \delta_{1}(X)
\end{align*}
since $h$ equals $f (\alpha_1 e_2 \delta_1,\dots, \alpha_k e_2 \delta_{1})$ on $X^k$. 

\medskip 
To show how the local version implies the statement of the lemma, 
let $\sigma$ be the signature of $\bB$
and let $\bC$ be an expansion of 
$\bB$ by countably many constants 
such that every element of
$\bC$ is named by a constant symbol. 

Let $\rho$ be the signature of $\bC$ together
with a new a $k$-ary function symbol $g$. 
Consider the $\rho$-theory $T$ consisting
of the union of the following first-order sentences: 
\begin{enumerate}
\item $\Th(\bC)$; 
\item A first-order sentence which asserts that $g$ preserves all relations from $\sigma$; 
\item For all $l \in {\mathbb N}$ and constant symbols $c_0,c_1,\dots,c_k \in \rho$ such that $f(c_1^{\bC},\dots,c_k^{\bC}) = c_0^{\bC}$ the sentence $g(c_1,\dots,c_k)=c_0$. 
\item for all constant symbols $c_1,\dots,c_l \in \sigma$ the sentence $\psi_{c_1,\dots,c_l}$ which expresses that there exist $y_1,\dots,y_l$ such that $\typ^{\bA}(y_1,\dots,y_l) = \typ^{\bA}(c_1,\dots,c_l)$ and
$g$ is canonical on $\{y_1,\dots,y_l\}^k$. 
\end{enumerate}
It follows from the basic facts about $\omega$-categorical structures from Section~\ref{sect:omega-cat} that such sentences exist. 

{\bf Claim.} Every finite subset $S$ of $T$ has a model. Let $X$ be the (finite) set of all constants $c$ such that the respective constant symbol appears in a sentence of $S$ or in $\{c_1,\dots,c_l\}$ for some $\psi_{c_1,\dots,c_l} \in S$. Let $d_1,\dots,d_n$ be some enumeration 
of $X$. Then $\psi_{d_1,\dots,d_n}$ implies 
every sentence in $S'$ from item (4). By the 
local version of the statement there exists $g \in \Pol(\bB)$ 
and $e \in \overline{\Aut(\bA)}$ such that
\begin{itemize} 
\item $g(e,\dots,e)$ is canonical on $X^k$ 
over $\bA$, and 
\item $g$ agrees with $f$ on $X^k$. 
\end{itemize}
Let $\bD$ be the $\rho$-expansion of $\bC$ 
where $g^{\bD} \coloneqq g$. Then $\bD$ satisfies $S$. 
This is clear for the sentences from item (1), (2), and (3).  Finally, since $g(e,\dots,e)$ is canonical on $X^k$ over $\bA$ the sentences of $S'$ from item (4) can be satisfied by setting $y_i \coloneqq e(c_i)$. 

\medskip 
By the compactness theorem of first-order logic, 
$T$ has a model $\bD$. By the downward L\"owenheim-Skolem theorem we may also assume that $\bD$ is countable.
Then the $\sigma$-reduct of $\bD$ satisfies $\Th(\bB)$ and hence is isomorphic to $\bB$ 
by the $\omega$-categoricity of $\bB$. We may
therefore identify the element of $\bD$ with the elements of $\bB$ along this isomorphism and henceforth assume that $\bD$ is an expansion of $\bA$. Because of the sentences under (2) we 
have that $g^{\bD}$ is a polymorphism of $\bB$. 
The map $e \colon B \to B$ given by $c^{\bC} \mapsto c^{\bD}$ is an elementary self-embedding. 
The sentences under (3) imply that $g(e,\dots,e)$ 
equals $e(f)$, so $g$ diagonally interpolates $f$. 
Finally, the sentences under (4) imply that $g$ diagonally interpolates some function from $\Can(\bB)$. 
\end{proof}

\subsection{Rich subsets}
\label{sect:rich}
To prove the existence of canonical
functions it is useful to introduce a notion of \emph{rich} substructures of a structure; this will be needed in the proofs for Lemma~\ref{lem:uip},
but also explicitly in Section~\ref{sect:rcc5} (e.g., in~Lemma~\ref{lem:copy}) when we apply the general results to the spatial reasoning formalism RCC5. 

\begin{definition}\label{def:rich}
Let $\bA$ be a countable finitely homogeneous structure and let $m$ be the maximal arity of $\bA$.
Let 
${\mathscr C} \subseteq {\mathscr O}_A$ be a closed clone containing
$\Aut(\bA)$ and let $k \in {\mathbb N}$.  Then we say that $X \subseteq A$ is \emph{$k$-rich with respect to ${\mathscr C}$} if 
\begin{itemize}
\item every $m$-orbit of $\Aut(\bA)$ 
contains a tuple from $X^m$. 
\item every behaviour over $\bA$ which is realised on $X^k$ by some operation in $\mathscr C$ is also realised on $A^k$ by some operation in 
$\mathscr C$. 
\end{itemize}
\end{definition}

Note that if $X$ is $k$-rich then every behaviour
which is realisable on $X^k$ is complete.  
It is clear from the definition that if $X \subseteq A$ is $k$-rich, and $X \subseteq Y$, then $Y$ is also $k$-rich. 


\begin{lemma}\label{lem:rich}
Let $\bA$ be a countable finitely homogeneous structure. Let 
$\mathscr C$ be a closed clone containing $\Aut(\bA)$ and let $k \in {\mathbb N}$.
Then there exists a finite $X \subseteq A$ which is $k$-rich with respect to ${\mathscr C}$. 
\end{lemma}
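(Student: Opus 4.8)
The plan is to build the finite set $X$ by treating the two requirements of Definition~\ref{def:rich} separately and then taking a union. Throughout let $m$ be the maximal arity of $\bA$ and set $m' \coloneqq \max(m,2)$. For the first requirement, note that $\bA$, being finitely homogeneous, is $\omega$-categorical, hence so is $\bA^{(k)}$; thus there are only finitely many $m'$-orbits of tuples from $(A^k)^{m'}$. Pick one representative from each such orbit and let $X_0 \subseteq A$ be the finite set of all coordinates occurring in these representatives. Then every $m'$-orbit of $\bA^{(k)}$, and hence (by passing to subtuples, since $m \le m'$) every $m$-orbit, contains a tuple from $(X_0^k)^m$, so any finite $X$ with $X_0 \subseteq X$ satisfies the first bullet of Definition~\ref{def:rich}. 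Moreover, by the choice of $X_0$ together with the remark in Section~\ref{sect:canon} that a complete $m'$-behaviour uniquely determines a complete behaviour, for every finite $X \supseteq X_0$ any operation in $\mathscr C$ that realises some behaviour on $X^k$ in fact realises one of the \emph{finitely many} complete behaviours $\mathcal B_1,\dots,\mathcal B_s$ of $k$-ary operations over $\bA$ (here we use finite homogeneity of $\bA$).

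For the second requirement, call an index $i \in \{1,\dots,s\}$ \emph{bad} if $\mathcal B_i$ is not realised on all of $A^k$ by any operation in $\mathscr C$. I claim that for each bad $i$ there is a finite $F_i \subseteq A$ such that no operation in $\mathscr C$ realises $\mathcal B_i$ on $F_i^k$. Suppose not; then for every finite $F \subseteq A$ there is $g_F \in \mathscr C$ realising $\mathcal B_i$ on $F^k$. Since $\mathscr C$ is a closed clone, $\mathscr C = \Pol(\Gamma)$ where $\Gamma$ is the (possibly infinite) set of all relations invariant under $\mathscr C$. Now carry out the compactness argument from the proof of Proposition~\ref{prop:compact}: expand the language of $\bA$ by a constant for each element of $A$ and a $k$-ary function symbol $g$, and let $T$ consist of the elementary diagram of $\bA$, one first-order sentence for each $R \in \Gamma$ expressing that $g$ preserves $R$, and, for every tuple of $m'k$ constants, the sentence expressing that the $m'$-behaviour of $g$ at those constants agrees with $\mathcal B_i$ (a quantifier-free condition, since orbits are first-order definable over the $\omega$-categorical $\bA$). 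Every finite subset of $T$ mentions only constants from a finite $F \subseteq A$ and finitely many relations of $\Gamma$, hence is satisfied by $(\bA,(a)_{a\in A},g_F)$. By compactness and downward L\"owenheim--Skolem, $T$ has a countable model whose reduct to the language of $\bA$ is isomorphic to $\bA$ by $\omega$-categoricity; its interpretation of $g$ is then an operation in $\Pol(\Gamma) = \mathscr C$ realising the complete behaviour $\mathcal B_i$ on all of $A^k$, contradicting badness of $i$.

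Finally put $X \coloneqq X_0 \cup \bigcup_{i \text{ bad}} F_i$, a finite set. If a behaviour $\mathcal B$ is realised on $X^k$ by some operation in $\mathscr C$, then by the first paragraph $\mathcal B = \mathcal B_j$ for some $j$, and since $\mathcal B_j$ is realised on $X^k \supseteq F_j^k$ the index $j$ is not bad, i.e.\ $\mathcal B_j$ is realised on all of $A^k$ by an operation in $\mathscr C$; together with the first bullet this shows that $X$ is $k$-rich with respect to $\mathscr C$. I expect the main obstacle to be precisely the compactness step for a \emph{general} closed clone $\mathscr C$ rather than $\Pol(\bA)$: one must encode membership in $\mathscr C$ by the possibly infinitely many, possibly unbounded-arity sentences ``$g$ preserves $R$'', and arrange the sentences encoding ``$g$ realises $\mathcal B_i$ everywhere'' via constants in such a way that each finite subtheory is witnessed by one of the $g_F$, which only realise $\mathcal B_i$ on a finite cube.
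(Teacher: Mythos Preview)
Your proof is correct and follows essentially the same approach as the paper: both rest on the compactness statement of Proposition~\ref{prop:compact} (which does go through verbatim for any closed clone $\mathscr C \supseteq \Aut(\bA)$ in place of $\Pol(\bA)$, precisely via the encoding $\mathscr C = \Pol(\Gamma)$ that you sketch), the only cosmetic difference being that the paper organises the argument as a stabilising decreasing chain $\mathcal B_0 \supseteq \mathcal B_1 \supseteq \cdots$ of realisable $m$-behaviours along an exhaustion $X_0 \subset X_1 \subset \cdots$ of $A$, whereas you take the contrapositive and pick one obstructing $F_i$ per bad behaviour. Your worry at the end is unfounded: each $g_F$ is a \emph{global} member of $\mathscr C$ and therefore satisfies all the preservation sentences at once, while any finite subset of the behaviour sentences mentions only finitely many constants and is witnessed by $g_F$ for $F$ containing those constants.
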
 

\begin{proof}
Let $m$ be the maximal arity 
of $\bA$. 
Since ${\mathcal O}_m(\bA)$ is finite,  
we can choose a finite subset $X_0$ of $A$ such that every orbit of $m$-tuples in $\Aut(\bA)$ intersects
$X_0^m$. Let $X_0 \subset X_1 \subset \cdots$ be finite subsets of $A$ such that $\bigcup_{i=1}^{\infty} X_i = A$. Let $\mathcal B_i$ denote the (finite) set of all $m$-behaviours which are realised on $X^k_i$ by some operation of ${\mathscr C}$. Then ${\mathcal B}_0 \supseteq {\mathcal B}_1 \supseteq  \cdots$ and hence there exists an $N$ such that ${\mathcal B}_N = {\mathcal B}_{N+1} = \cdots$.
Then Proposition~\ref{prop:compact} implies 
that every behaviour in ${\mathcal B}_N$ is realised on $A^k$ by some operation in ${\mathscr C}$. 
This proves that $X_N$ is $k$-rich:
the first item is satisfied because $X_0 \subseteq X_N$, and the second item because every
behaviour that is realised in $X^k_N$ is also realised on $A^k$. 
\end{proof}

\begin{definition}\label{def:rich-behave}
Let $\bA$ and $\mathscr C$ be as in Definition~\ref{def:rich}, let $\eta$ be a
function defined on ${\mathscr C} \cap \Can(\bA)$, let $F$ be $k$-rich with respect to ${\mathscr C}$ for some $k \geq 1$, and  
let $f \in \mathscr C^{(k)}$. 
Suppose that $f$ realises on $F^k$ the same 
complete behaviour as $g \in \Can(\bA)$. 
Then we will also write $\eta(f|_{F^k})$ 
for $\eta(g)$. 
\end{definition}


\subsection{Interpolation invariance}
\label{sect:int-inv}
It has been shown in~\cite{Topo-Birk}
(see Theorem 6.4 in~\cite{wonderland})
that if $\mathscr C \subseteq {\mathscr O_B}$ is a closed clone and $\mathscr G  \subseteq {\mathscr C}$ an oligomorphic permutation group over the same base set $B$, 
then every $\mathscr G$-invariant continuous 
map $\zeta$ defined on $\mathscr C$ is 
uniformly continuous. 
In Lemma~\ref{lem:int-inv} we present a sufficient condition for uniform continuity which does not require that the map $\zeta$ we start from is continuous, 
for the special case that $\mathscr G = \Aut(\bC)$ for a finitely homogeneous Ramsey 
structure $\bC$.


\begin{definition}
Let $\mathscr C$ be a closed clone on a set $B$ and let $\mathscr M$ be a set of functions from $B$ to $B$. 
A function $\zeta$ defined on ${\mathscr C}$
is called \emph{interpolation invariant over $\mathscr M$} if $\zeta(f) = \zeta(g)$ whenever $f \in {\mathscr C}$ interpolates $g \in {\mathscr C}$ over $\mathscr M$. 
\end{definition}

Clearly, interpolation invariance over $\mathscr M$ implies ${\mathscr M}$-invariance; see~\eqref{eq:inv}. 

\begin{lemma}\label{lem:int-inv}
Let $\bB$ be a first-order reduct of a countable finitely homogeneous 
Ramsey structure $\bA$. 
Let $\zeta$ be a function defined on $\Pol(\bB)$ which is interpolation 
invariant over $\Aut(\bA)$.  
Then $\zeta$ is uniformly continuous. 
\end{lemma}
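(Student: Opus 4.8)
The plan is to reduce the statement to the \emph{continuity} of $\zeta$ and then to invoke the topological Birkhoff theorem (Theorem~6.4 in~\cite{wonderland}, based on~\cite{Topo-Birk}). Since $\bA$ is $\omega$-categorical, $\Aut(\bA)$ is an oligomorphic permutation group; it is contained in the closed clone $\Pol(\bB)$ because $\bB$ is a first-order reduct of $\bA$; and interpolation invariance over $\Aut(\bA)$ trivially implies $\Aut(\bA)$-invariance. Hence any continuous $\Aut(\bA)$-invariant function on $\Pol(\bB)$ is already uniformly continuous, and it suffices to prove that $\zeta$ is continuous, i.e.\ that for every $k$, every $f\in\Pol(\bB)^{(k)}$ and every finite $G\subseteq A$ there is a finite $F\subseteq A$ with $\zeta(g)|_{G^k}=\zeta(f)|_{G^k}$ whenever $g\in\Pol(\bB)^{(k)}$ and $g|_{F^k}=f|_{F^k}$.

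First I would record two consequences of interpolation invariance. By the Canonisation lemma (Lemma~\ref{lem:canon}) every $f\in\Pol(\bB)$ interpolates over $\Aut(\bA)$ some $\hat f\in\Can(\bA)$, which lies in $\Pol(\bB)$ since $\Pol(\bB)$ is closed, so $\zeta(f)=\zeta(\hat f)$. Moreover, two members of $\Pol(\bB)\cap\Can(\bA)$ with the same complete behaviour over $\bA$ interpolate one another over $\Aut(\bA)$, hence have the same $\zeta$-value; as $\bA$ is finitely homogeneous there are, for each $k$, only finitely many behaviours of $k$-ary functions, so $\zeta$ takes only finitely many values on $\Pol(\bB)^{(k)}$, with $\zeta(f)$ equal to the common value $\zeta(h)$ over all $h\in\Pol(\bB)\cap\Can(\bA)$ interpolated by $f$.

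To prove continuity at $f\in\Pol(\bB)^{(k)}$ for a finite $G$, let $\hat f\in\Pol(\bB)\cap\Can(\bA)$ be a canonisation of $f$ with complete behaviour $\mathcal B$, so $\zeta(f)=\zeta(\hat f)$. Using Lemma~\ref{lem:rich} choose a finite $X\supseteq G$ that is $k$-rich with respect to $\Pol(\bB)$, and pick $u,v_1,\dots,v_k\in\Aut(\bA)$ with $u\circ f(v_1,\dots,v_k)=\hat f$ on $X^k$. Set $F\coloneqq X\cup v_1(X)\cup\dots\cup v_k(X)$. If $g\in\Pol(\bB)^{(k)}$ agrees with $f$ on $F^k$, then $u\circ g(v_1,\dots,v_k)$ agrees with $\hat f$ on $X^k$, so $g(v_1,\dots,v_k)$ realises the complete behaviour $\mathcal B$ on the rich power $X^k$; by richness, $\mathcal B$ is realised on all of $A^k$ by a $k$-ary polymorphism of $\bB$. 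Now suppose towards a contradiction that $\zeta(g)\neq\zeta(f)$; then $\zeta(g)=\zeta(\mathcal B')$ for some complete behaviour $\mathcal B'\neq\mathcal B$ realised by a canonical polymorphism of $\bB$. Both $\mathcal B$ and $\mathcal B'$ are then realised locally on every finite power by polymorphisms of $\bB$, so the simultaneous interpolation lemma (Lemma~\ref{lem:simul-interpol}) yields a single $h\in\Pol(\bB)^{(k)}$ and tuples of maps in $\overline{\Aut(\bA)}$ such that $h$ composed with them realises $\mathcal B$ on all of $A^k$, and also realises $\mathcal B'$ on all of $A^k$. Both of these are canonical polymorphisms of $\bB$ interpolated by $h$ over $\Aut(\bA)$, so interpolation invariance forces $\zeta(\mathcal B)=\zeta(h)=\zeta(\mathcal B')$, whence $\zeta(g)=\zeta(\mathcal B)=\zeta(f)$; thus this $F$ works and $\zeta$ is continuous.

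The step I expect to be the main obstacle is the passage from the \emph{finite} datum $g|_{F^k}$ to the \emph{global} behaviour class of $g$'s canonisation, on which $\zeta(g)$ depends: a function can realise a complete behaviour on a rich finite power without interpolating the corresponding canonical function globally, so one cannot conclude $\zeta(g)=\zeta(\mathcal B)$ directly. The way around this is precisely the combination of interpolation invariance with Lemma~\ref{lem:simul-interpol}, which merges the $\zeta$-values of the competing behaviours so that the exact behaviour class ceases to matter; the delicate bookkeeping is to fix the rich set $X$ and the finite set $F$ before $g$ is seen while still being able to run this merging argument. If one prefers to avoid the appeal to topological Birkhoff, the same ingredients — canonisation, richness, and simultaneous interpolation — can be organised into a direct proof of uniform continuity.
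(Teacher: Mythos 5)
Your overall plan is sound: interpolation invariance trivially implies $\Aut(\bA)$-invariance, $\Aut(\bA)$ is oligomorphic since $\bA$ is $\omega$-categorical and is contained in the closed clone $\Pol(\bB)$, so by topological Birkhoff (Theorem~6.4 in~\cite{wonderland}) it would suffice to show that $\zeta$ is continuous; and the paper's own argument attacks uniform continuity directly by contradiction instead, so this is a genuine, valid alternative reduction. The preliminary observations (canonisation, finitely many behaviour classes, $\zeta$ being determined by the behaviour of the canonisation) are also correct.

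The gap is exactly where you feared: the final appeal to Lemma~\ref{lem:simul-interpol}. That lemma requires, for \emph{every} finite $F'\subseteq A$, a \emph{single} polymorphism $h$ (which may depend on $F'$) together with automorphisms $\alpha_{i,1},\dots,\alpha_{i,k}$ such that $h(\alpha_{i,1},\dots,\alpha_{i,k})$ realises $\mathcal B_i$ on $(F')^k$ for each $i$. What you actually know is weaker on two counts. First, for each $F'$ you know that $\mathcal B$ is realised on $(F')^k$ by $\hat f$ and $\mathcal B'$ by $\hat g$ — but these are two \emph{different} polymorphisms, and the lemma does not allow $f$ to depend on $i$. (If it did, the lemma would be false: for instance with $\bA=(\mathbb{Q};<)$ and $\bB=(\mathbb{Q};\mathrm{Betw})$, the order-preserving and order-reversing unary behaviours are each realised by canonical polymorphisms, yet no single $h$ composed with $\overline{\Aut(\mathbb{Q};<)}$ on the right can realise both, since right-composition with order-preserving maps cannot change the monotonicity type of $h$; consequently, interpolation invariance does \emph{not} force $\zeta$ to merge those two behaviours, whereas your argument would.) Second, the one function that does realise both $\mathcal B$ and $\mathcal B'$ (after composing with automorphisms), namely $g$, is only controlled on your fixed rich set $X$: the hypothesis $g|_{F^k}=f|_{F^k}$ tells you nothing about the behaviour of $g$ on larger finite powers, so you cannot upgrade this to ``for every finite $F'$''.

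To repair this you would need to replace the fixed-$X$ argument by the same machinery the paper uses: suppose (uniform) continuity fails, produce for each member $A_l$ of a chain $A_0\subseteq A_1\subseteq\cdots$ exhausting $A$ a single witness $h_l$ that realises one behaviour on $A_l^k$ and interpolates a canonical function with a second behaviour, apply the pigeonhole principle over the finitely many behaviour pairs, and \emph{only then} invoke Lemma~\ref{lem:simul-interpol}. Once you do this you have essentially reproduced the paper's proof, and the detour through continuity plus topological Birkhoff no longer saves work.
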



\begin{proof}
Let $m$ be the maximal arity of $\bA$. 
Suppose for contradiction that $\zeta$ is not uniformly continuous. Then there exists a $k \in {\mathbb N}$ so that for every finite $X \subseteq A$ there exist $f_X,g_X \in \Pol(\bB)$ such that 
$(f_X)|_{X^k} = (g_X)|_{X^k}$, but $\zeta(f_X) \neq \zeta(g_X)$. 
By Lemma~\ref{lem:rich}, there exists a finite $A_0 \subseteq A$ which is $k$-rich with respect to $\Pol(\bB)$.

{\bf Claim.} For every finite $F \subseteq A$ that contains $A_0$
there exists an $h \in \Pol(\bB)^{(k)}$ and 
$h' \in \Pol(\bB)^{(k)} \cap \Can(\bA)$ 
such that $h$ and $h'$ realise the same complete behaviour on $F^k$ and 
$\zeta(h) \neq \zeta(h')$. 
By Lemma~\ref{lem:canon2} there exists
a finite $C \subseteq A$ 
such that for every $h \colon A^k \to A$
there are $\alpha_1,\dots,\alpha_k \in \Aut(\bA)$
such that $\alpha_1(F) \subseteq C,\dots,\alpha_k(F) \subseteq C$ 
and $h(\alpha_1,\dots,\alpha_k)$ is canonical on $F^k$. 
In particular, this holds for the operations 
$f_C,g_C$ introduced above, so that 
$f' \coloneqq f_C(\alpha_1,\dots,\alpha_k)$ and 
$g' \coloneqq g_C(\alpha_1,\dots,\alpha_k)$ 
are canonical on $F^k$. 
We have $(f_C)|_{C^k} = (g_C)|_{C^k}$ and $\zeta(f_C) \neq \zeta(g_C)$, 
and $\zeta(f_C) = \zeta(f')$ and $\zeta(g_C) = \zeta(g')$
because $f_C$ interpolates $f'$ and $g_C$ interpolates $g'$ over $\Aut(\bA)$.
Since $A_0 \subseteq F$
is $k$-rich 
there exist $h' \in \Pol^{(k)}(\bB) \cap \Can(\bA)$ such that $h'$ realise the same complete behaviour as $f'$ on $F^k$, and therefore also the same complete behaviour as $g'$ on $F^k$. 
Hence, we must have
$\zeta(h') \neq \zeta(f')$ or $\zeta(h') \neq \zeta(g')$, and hence either $(f',h')$ or 
$(g',h')$ provide us witnesses for the claim. 

Let $A_0 \subseteq A_1 \subseteq \cdots$ be finite such that
$\bigcup_{l \in {\mathbb N}} A_l = A$. By the claim above, for each $l \in {\mathbb N}$ we can find a function $h \in \Pol(\bB)$ and 
$h' \in \Pol(\bB)^{(k)} \cap \Can(\bA)$ 
such that $h$ and $h'$ have the same $m$-behaviour ${\mathcal H}_l$ on $(A_l)^k$ and $\zeta(h) \neq \zeta(h')$. 
By Lemma~\ref{lem:canon} the
operation $h$ interpolates an operation $g$
with a complete behaviour ${\mathcal B}_l$
over $\bA$, and $\zeta(g) = \zeta(h) \neq \zeta(h')$. 
Since there are finitely many $m$-behaviours over $\bA$, there exists
$({\mathcal H},{\mathcal B})$ such that $({\mathcal H}_l,{\mathcal B}_l) = ({\mathcal H},{\mathcal B})$  for infinitely many $l \in {\mathbb N}$.



Lemma~\ref{lem:simul-interpol}
shows that there exists 
$f \in \Pol(\bB)$ and $e_{1},e'_{1},\dots,e_{k},e_{k}' \in \overline{\Aut(\bA)}$ such that 
$f \circ (e_{i},\dots,e_{k})$ has behaviour ${\mathcal H}$
and $f \circ (e'_{i},\dots,e'_{k})$ has behaviour ${\mathcal B}$. 
Hence, $f$ interpolates two functions that $\zeta$ maps to different values, contradicting interpolation invariance of $\zeta$. 
\end{proof}

\begin{lemma}\label{lem:uc-can}
Let $\bA$ be a finitely 
homogeneous structure 
and let $\mathscr C$ be a subclone
of $\Can(\bA)$. 
Let $\zeta \colon {\mathscr C} \to \proj$ be $\overline{\Aut(\bA)}$-invariant. 
Then $\zeta$ is uniformly continuous. 
\end{lemma}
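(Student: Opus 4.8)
The goal is to show that an $\overline{\Aut(\bA)}$-invariant map $\zeta \colon \mathscr C \to \proj$, where $\mathscr C$ is a subclone of $\Can(\bA)$ and $\bA$ is finitely related homogeneous, is automatically uniformly continuous. The natural strategy is to reduce this to the already-proven Lemma~\ref{lem:int-inv} by passing through a suitable finitely homogeneous Ramsey structure, or alternatively to argue directly using the finite-range structure of $\proj$ together with Lemma~\ref{lem:lift}. Since $\zeta$ takes values in $\proj$, whose domain has only two elements, the uniform-continuity requirement is very weak: for each arity $k$ we must find a single finite $F \subseteq A$ such that agreement of $f,g \in \mathscr C^{(k)}$ on $F^k$ forces $\zeta(f) = \zeta(g)$. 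So the plan is first to reduce to a fixed arity $k$ and then to exhibit such an $F$.

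\textbf{Key steps.} First I would fix $k$ and invoke Lemma~\ref{lem:rich} to obtain a finite $X \subseteq A$ that is $k$-rich with respect to $\mathscr C$; this is the candidate witness set. The point of $k$-richness is that any two operations in $\mathscr C^{(k)}$ which agree on $X^k$ realise the same complete behaviour on $X^k$, hence (since $X$ is $k$-rich, every such behaviour realised on $X^k$ is complete and realised on all of $A^k$) they realise the same complete behaviour over $\bA$ — here we use that $\mathscr C \subseteq \Can(\bA)$ and that $\bA$ is finitely related, so a complete $m$-behaviour determines a complete behaviour. Second, by Lemma~\ref{lem:lift} (with $m$ the maximal arity of $\bA$), two operations $f,g \in \Can(\bA)$ with the same behaviour — equivalently $\xi^{\bA}_m(f) = \xi^{\bA}_m(g)$ — satisfy $e_1 \circ f = e_2 \circ g$ for some $e_1,e_2 \in \overline{\Aut(\bA)}$. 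Third, $\overline{\Aut(\bA)}$-invariance of $\zeta$ gives $\zeta(f) = \zeta(e_1 \circ f) = \zeta(e_2 \circ g) = \zeta(g)$. Combining, agreement on $X^k$ implies $\zeta(f) = \zeta(g)$, which is exactly uniform continuity for arity $k$; ranging over all $k$ completes the argument.

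\textbf{Main obstacle.} The delicate point is the equality "$f$ and $g$ agree on $X^k$ $\Longrightarrow$ $f$ and $g$ have the same behaviour over all of $\bA$." Agreement on $X^k$ only literally gives equality of the $m$-behaviours \emph{restricted to orbits that occur inside $X$}; the first clause of $k$-richness (every $m$-orbit of $\bA^{(k)}$ contains a tuple from $(X^k)^m$) is precisely what upgrades this to equality of the full complete $m$-behaviour, and then finite relatedness upgrades that to equality of the complete behaviour. One must be careful that $\mathscr C$ being merely a subclone (not necessarily the full $\Can(\bA)$) does not obstruct either the application of Lemma~\ref{lem:rich} — it does not, since Lemma~\ref{lem:rich} is stated for an arbitrary closed clone containing $\Aut(\bA)$, and $\mathscr C$ contains $\Aut(\bA)$ as $\Aut(\bA) \subseteq \Can(\bA)$ and $\zeta$ being $\overline{\Aut(\bA)}$-invariant presupposes this — or the application of Lemma~\ref{lem:lift}, which only needs $f,g \in \Can(\bA)$ and is insensitive to which subclone they lie in. Once these bookkeeping points are handled the proof is short.
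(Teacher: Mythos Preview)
Your core argument is exactly the paper's: deduce $\xi^{\bA}_m(f) = \xi^{\bA}_m(g)$ from agreement on a suitable finite set, apply Lemma~\ref{lem:lift} to obtain $e_1 \circ f = e_2 \circ g$ with $e_1,e_2 \in \overline{\Aut(\bA)}$, and conclude $\zeta(f) = \zeta(g)$ from $\overline{\Aut(\bA)}$-invariance.

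There is one genuine slip. You write that uniform continuity means ``for each arity $k$ we must find a single finite $F$''; but the definition requires one finite $F$ that works for \emph{all} arities simultaneously. Invoking Lemma~\ref{lem:rich} yields a $k$-rich set that a priori depends on $k$, so as stated your argument does not close. The paper avoids this by choosing $X$ directly to be any finite set containing a witness in $X^m$ for every $m$-orbit of $\bA$ (finitely many, since $\bA$ is finitely homogeneous). This single $X$ works for every $k$: given $(O_1,\dots,O_k) \in {\mathcal O}_m(\bA)^k$, pick $a_i \in O_i \cap X^m$ independently and read off $\xi^{\bA}_m(f)(O_1,\dots,O_k) = \xi^{\bA}_m(g)(O_1,\dots,O_k)$ from $f|_{X^k} = g|_{X^k}$. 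Note that you only ever use the first clause of $k$-richness, and this simple $X$ supplies that clause for all $k$ at once --- so Lemma~\ref{lem:rich} (together with its closedness hypothesis, which you worried about) is an unnecessary detour here.
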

\begin{proof}
Let $m$ be the maximal arity of $\bA$, and 
let $X \subseteq A$ be finite so that every 
orbit of $m$-tuples contains a witness in $X^m$. 
Let $k \in {\mathbb N}$ and $f,g \in {\mathscr C}^{(k)}$ be such that
$f|_{X^k} = g|_{X^k}$. Then $\zeta(f) = \zeta(g)$ by the choice of $X$. By Lemma~\ref{lem:lift}, there are 
$e_1,e_2 \in \overline{\Aut(\bA)}$ such that
$e_1 \circ f = e_2 \circ g$. 
The $\overline{\Aut(\bA)}$-invariance of $\zeta$ implies that $\zeta(f) = \zeta(e_1 \circ f) = \zeta(e_2 \circ g) = \zeta(g)$, proving the uniform continuity of $\zeta$. 
\end{proof}

The following can be obtained by combining known results in the literature.  
An operation $f \colon A^k \to A$ is called \emph{cyclic} if $k \geq 2$ and  $f(x_1,\dots,x_k) = f(x_2,\dots,x_k,x_1)$ for all $x_1,\dots,x_k \in A$. 

\begin{proposition}
\label{prop:can-proj}
Let $\bB$ be finitely 
homogeneous 
with maximal arity $m \in {\mathbb N}$. Let
$\bC$ be a first-order reduct of $\bB$ which is
a model-complete core. Suppose that 
${\mathscr C} \coloneqq \Pol(\bC) \subseteq \Can(\bB)$. Then the following are equivalent. 
\begin{enumerate}
\item There is no uniformly continuous minor-preserving map from ${\mathscr C} \to \proj$. 
\item There is no $\overline{\Aut(\bB)}$-invariant 
minor-preserving map ${\mathscr C} \to \proj$. 
\item $\xi^{\bB}_m({\mathscr C})$ has no minor-preserving map to $\proj$. 
\item $\xi^{\bB}_m({\mathscr C})$ has a cyclic  operation. 
\end{enumerate}
\end{proposition}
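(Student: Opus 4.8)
The plan is to prove the four conditions equivalent by closing the cycle $(1)\Rightarrow(2)\Rightarrow(3)\Rightarrow(4)\Rightarrow(1)$; only the last implication uses genuine structure theory, the first three being bookkeeping around the uniformly continuous clone homomorphism $\xi^{\bB}_m\colon\Can(\bB)\to\xi^{\bB}_m(\Can(\bB))$ of Section~\ref{sect:canon}, which restricts to a surjective clone homomorphism $\xi^{\bB}_m|_{\mathscr C}\colon\mathscr C\to\xi^{\bB}_m(\mathscr C)$ since $\mathscr C=\Pol(\bC)\subseteq\Can(\bB)$. I will use two easy observations. First, every $u\in\overline{\Aut(\bB)}$ is a self-embedding of $\bB$, hence preserves all types over $\bB$, hence $\xi^{\bB}_m(u)=\id$; combined with the fact that $\xi^{\bB}_m$ is a clone homomorphism, this gives $\xi^{\bB}_m(u\circ f\circ(v_1,\dots,v_k))=\xi^{\bB}_m(f)$ for all $f\in\mathscr C$ and $u,v_1,\dots,v_k\in\overline{\Aut(\bB)}$. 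Second, by Lemma~\ref{lem:lift} (with $\bB$ in the role of $\bA$), for $f,g\in\mathscr C$ one has $\xi^{\bB}_m(f)=\xi^{\bB}_m(g)$ if and only if $e_1\circ f=e_2\circ g$ for suitable $e_1,e_2\in\overline{\Aut(\bB)}$. Finally I note that $\overline{\Aut(\bB)}\subseteq\overline{\Aut(\bC)}=\End(\bC)\subseteq\Pol(\bC)=\mathscr C$, so all operations that appear below do lie in $\mathscr C$.

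For $(1)\Rightarrow(2)$ I would argue contrapositively: an $\overline{\Aut(\bB)}$-invariant minor-preserving map $\mathscr C\to\proj$ is uniformly continuous by Lemma~\ref{lem:uc-can}, so its existence already violates $(1)$. For $(2)\Rightarrow(3)$, again contrapositively, given a minor-preserving $\mu\colon\xi^{\bB}_m(\mathscr C)\to\proj$ I would form $\mu\circ\xi^{\bB}_m|_{\mathscr C}$: it is minor-preserving as a composite of minor-preserving maps, and it is $\overline{\Aut(\bB)}$-invariant by the first observation above, so it violates $(2)$. For $(3)\Rightarrow(4)$ I would work entirely inside the finite clone $\xi^{\bB}_m(\mathscr C)$: if it admits no minor-preserving map to $\proj$, then by Theorem~\ref{thm:wnu} it has a weak near unanimity operation, and hence, by the cyclic term theorem for finite algebras, a cyclic operation, which is exactly $(4)$.

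The main obstacle is $(4)\Rightarrow(1)$, the step where the hypotheses "$\bC$ is a model-complete core'' and "$\mathscr C\subseteq\Can(\bB)$'' are actually used. Starting from a cyclic operation of arity $n$ in $\xi^{\bB}_m(\mathscr C)$, its defining identity says that it is fixed by the minor $(\pi^n_2,\dots,\pi^n_n,\pi^n_1)$; choosing a preimage $c\in\mathscr C$ under $\xi^{\bB}_m|_{\mathscr C}$ we get $\xi^{\bB}_m(c)=\xi^{\bB}_m\bigl(c\circ(\pi^n_2,\dots,\pi^n_n,\pi^n_1)\bigr)$, so by Lemma~\ref{lem:lift} there are $e_1,e_2\in\overline{\Aut(\bB)}\subseteq\overline{\Aut(\bC)}$ with $e_1\circ c=e_2\circ\bigl(c\circ(\pi^n_2,\dots,\pi^n_n,\pi^n_1)\bigr)$; in other words $\Pol(\bC)$ has a pseudo-cyclic, hence a pseudo-Siggers, operation modulo $\overline{\Aut(\bC)}$. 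For an $\omega$-categorical model-complete core this is known to be incompatible with the existence of a uniformly continuous minor-preserving map $\Pol(\bC)\to\proj$ --- this is the characterisation of CSP hardness in terms of pseudo-Siggers operations; see~\cite{wonderland,BKOPP} --- which is precisely $(1)$. In a full write-up the points that need care are the verification that the composite map in $(2)\Rightarrow(3)$ really is $\overline{\Aut(\bB)}$-invariant, the (standard) passage from a pseudo-cyclic operation to a pseudo-Siggers operation, and invoking the pseudo-Siggers characterisation in exactly this form, using $\overline{\Aut(\bC)}=\End(\bC)$.
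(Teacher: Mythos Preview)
Your proof is correct and, for the implications $(1)\Rightarrow(2)\Rightarrow(3)\Rightarrow(4)$, essentially identical to the paper's: Lemma~\ref{lem:uc-can} for the first step, composing with $\xi^{\bB}_m$ for the second, and the cyclic-term theorem for finite idempotent algebras for the third (the paper cites \cite{wonderland,Cyclic} directly rather than passing through Theorem~\ref{thm:wnu}, but this is the same argument).

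The one genuine difference is in $(4)\Rightarrow(1)$. You lift the cyclic operation to a pseudo-cyclic polymorphism of $\bC$ via Lemma~\ref{lem:lift}, deduce a pseudo-Siggers operation, and then invoke the Barto--Pinsker characterisation for $\omega$-categorical model-complete cores from \cite{wonderland,BKOPP}. The paper instead observes that $\bC$, as a first-order reduct of the finitely homogeneous $\bB$, has less than doubly exponential orbit growth, and then appeals to the results of \cite{BKOPP-equations}, which under this growth hypothesis collapse the topological and purely algebraic hardness conditions. Both routes are valid; your argument is more self-contained here (the lifting is explicit, and it uses only the model-complete core hypothesis rather than the growth rate), while the paper's route packages the lifting and the algebraic consequence into a single black-box citation. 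Neither approach dominates the other, and the paper's growth-rate hypothesis is automatic in the present setting anyway.
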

\begin{proof}
$(1) \Rightarrow (2)$: An
immediate consequence of Lemma~\ref{lem:uc-can}. 


$(2) \Rightarrow (3)$: 
If $\xi^{{\mathfrak C}}_m({\mathscr C})$ has a minor-preserving map to $\proj$, then we can compose
it with $\xi^{{\mathfrak C}}_m$ and obtain a 
$\overline{\Aut(\bB)}$-invariant 
 minor-preserving map from ${\mathscr C}$ to $\proj$, and $(2)$ does not hold. 
 
$(3) \Rightarrow (4)$ 
has been shown in~\cite{wonderland,Cyclic}. 


$(4) \Rightarrow (1)$. Since ${\mathfrak C}$ is finitely homogeneous, the number of orbits of $n$-tuples of $\Aut({\mathfrak C})$ is bounded by $2^{p(n)}$ for some polynomial $p$, and hence less than doubly exponential. Therefore, the same applies to $\bB$, and  
the statement follows from results in~\cite{BKOPP-equations}. 
\end{proof}

\subsection{Proofs of main results}
\label{sect:main-proofs}
We can now prove the Extension Lemma (Lemma~\ref{lem:uip}). 
\begin{proof}[Proof of Lemma~\ref{lem:uip}]
Let $f \in \Pol(\bB)^{(k)}$. 
Since $\bA$ is a finitely homogeneous Ramsey structure,
we can apply the canonisation lemma (Lemma~\ref{lem:canon}), 
which implies that $f$ interpolates a function $g \in \Can(\bA)$ over $\Aut(\bA)$. 
Define $\bar \zeta(f) \coloneqq \zeta(g)$; this is well-defined because $\zeta$ has the UIP with respect to $\Pol(\bB)$. Note that if $f \in \Can(\bA)$, then $\bar \zeta(f) = \zeta(f)$,
so $\bar \zeta$ extends $\zeta$. 

{\bf Claim 1.} $\bar \zeta$ is $\overline{\Aut(\bA)}$-invariant.
Suppose that $f,g \in \Pol(\bB)$ are such that $f$ interpolates $g$ over $\Aut(\bA)$. 
Lemma~\ref{lem:canon}
implies that $g$ interpolates over $\Aut(\bA)$ an operation $h \in \Can(\bA)$. Then $f$ interpolates $h$ over  $\Aut(\bA)$, too, and we have
$\bar \zeta(f) = \zeta(h) = \bar \zeta(g)$. 

{\bf Claim 2.} 
$\bar \zeta$ is uniformly continuous. This follows
from Claim 1 by Lemma~\ref{lem:int-inv}. 

{\bf Claim 3.} $\bar \zeta$ is minor-preserving. Arbitrarily choose $f \in \Pol(\bB)^{(k)}$ and 
$\alpha \colon \{1,\dots,n\} \to \{1,\dots,k\}$. 
We have to show that 
$\bar \zeta(f_\alpha) = \bar \zeta(f)_\alpha$. 
By Lemma~\ref{lem:diagonal-common-interpolation}, there exists $g \in \Pol(\bB)^{(k)}$
that diagonally interpolates $f$ and diagonally 
interpolates 
$h \in \Pol(\bB)^{(k)} \cap \Can(\bA)$. 
We obtain
\begin{align*}
\bar \zeta(f_\alpha) & = \bar \zeta(g_\alpha) \quad \quad\quad \quad  \text{(Lemma~\ref{lem:diag-minor} and interpolation-invariance of $\bar \zeta$)} \\
& = \bar \zeta(h_\alpha) \quad \quad\quad \quad  \text{(Lemma~\ref{lem:diag-minor} and interpolation-invariance of $\bar \zeta$)} \\
& = \zeta(h_\alpha) \quad \quad \quad\quad \text{($h_\alpha \in \Pol(\bB) \cap \Can(\bA)$)} \\
& = \zeta(h)_\alpha \quad \quad \quad\quad \text{($\zeta$ is minor-preserving)} \\
& = \bar \zeta(h)_\alpha = \bar \zeta(g)_\alpha = \bar \zeta(f)_\alpha.
\end{align*}
Claim 2 and 3 imply the statement of the lemma. 
\end{proof}

\begin{proof}[Proof of Corollary~\ref{cor:dicho}]
First suppose that $\Pol(\bC) \cap \Can(\bB)$ 
does not have a uniformly continuous minor-preserving map to the projections. 
Theorem~\ref{thm:can-tract} implies
that $\Csp(\bC)$ is in P. 
Now suppose that $\Pol(\bC) \cap \Can(\bB)$ 
has a uniformly continuous minor-preserving map to $\proj$. 
Then the assumptions imply that there is also 
a uniformly continuous minor-preserving map 
 $\zeta \colon \Pol(\bC) \cap \Can(\bA) \to \proj$ 
 that has the UIP with respect to 
$\Pol(\bC)$ over $\bA$.  
Lemma~\ref{lem:uip}  
shows that $\zeta$ 
can be extended to a uniformly continuous minor-preserving map $\bar \zeta \colon \Pol(\bC) \to \proj$. 
Then $\Csp(\bC)$ is NP-hard by Theorem~\ref{thm:hard}. Since CSPs of reducts of finitely bounded structures are in NP, this concludes the proof. 
\end{proof}

Note that the proof of Corollary~\ref{cor:dicho} shows
that Conjecture~\ref{conj:inf-tract} holds for all structures $\bC \in {\mathcal C}$.
 

\section{Verification of the UIP}
\label{sect:binary}
In this section we show that if a minor-preserving map to the clone of projections does not have
the UIP, then this is witnessed by binary operations of a very special form, proving Theorem~\ref{thm:binary} and its strengthening Theorem~\ref{thm:binary-indep}. 
We need the following 
`higher-dimensional checker board'
canonisation lemma. 

\begin{lemma}\label{lem:checkers}
Let $\bA$ be a countable finitely homogeneous Ramsey structure 
and $f \colon A^k \to A$. Suppose that $f$ interpolates over $\bA$ 
the operations $h_1,\dots,h_m \in \Can(\bA)$. Then for every finite $X \subseteq A$ there exist
$\alpha_{1,1},\dots,\alpha_{m,k} \in \Aut(\bA)$ such that 
\begin{itemize}
\item for every $i \in \{1,\dots,m\}$  the operation $f(\alpha_{i,1},\dots,\alpha_{i,k})$ has the same behaviour as $h_i$ on $X^k$, and 
\item for all $u_1,\dots,u_k \in \{1,\dots,m\}$ the operation
$f(\alpha_{u_1,1},\dots,\alpha_{u_k,k})$ is canonical on $X^k$. 
\end{itemize}
\end{lemma}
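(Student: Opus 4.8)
The plan is to decouple the two required properties: the $m$ ``diagonal'' conditions $f(\alpha_{i,1},\dots,\alpha_{i,k}) \sim h_i$ will come purely from the interpolation hypothesis, while the $m^k$ ``mixed-grid'' conditions will come from a \emph{simultaneous, bounded} version of the finite canonisation lemma (Lemma~\ref{lem:canon2}). The glue between the two halves is the observation that, once $h_i$ has been interpolated on a sufficiently large finite set $Y$, postcomposing the arguments of $f$ with automorphisms that keep $X$ inside $Y$ cannot destroy the $h_i$-behaviour on $X^k$, because automorphisms preserve orbits and $h_i$ is canonical.

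First I would record two reductions. Let $m_0 := \max(2,m')$ where $m'$ is the maximal arity of $\bA$, so that every behaviour in sight is determined by an $m_0$-behaviour. Since enlarging $X$ only strengthens both conclusions — every condition asserted on a grid $X^k$ passes to all subgrids — we may assume $X$ is large enough that $X^k$ meets every $m_0$-orbit of $\bA^{(k)}$; then ``$g$ is canonical on $X^k$'' just means ``$g|_{X^k}$ realises a complete $m_0$-behaviour'', and ``$g$ has the same behaviour as $h_i$ on $X^k$'' means ``$g|_{X^k}$ realises the $m_0$-behaviour of $h_i$''.

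The technical heart is the following simultaneous canonisation statement: \emph{for every finite $X \subseteq A$ and every $N \in {\mathbb N}$ there is a finite $Y \supseteq X$ such that for all $g_1,\dots,g_N \colon A^k \to A$ there are $\delta_1,\dots,\delta_k \in \Aut(\bA)$ with $\delta_j(X) \subseteq Y$ and with $g_t(\delta_1,\dots,\delta_k)$ canonical on $X^k$ for every $t$.} I would prove this by iterating Lemma~\ref{lem:canon2}, exploiting that the finite set produced by that lemma depends only on its input set and not on the function: fix in advance an increasing chain $X = C_0 \subseteq C_1 \subseteq \dots \subseteq C_N =: Y$ in which $C_{t}$ contains the set associated by Lemma~\ref{lem:canon2} to $C_{t-1}$. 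Processing $t = 1,\dots,N$ and writing $\tilde g_t$ for $g_t$ with the automorphism-tuples $\delta^{(1)},\dots,\delta^{(t-1)}$ already composed in, one applies Lemma~\ref{lem:canon2} to $\tilde g_t$ on the set $C_{N-t}$ to obtain $\delta^{(t)}_1,\dots,\delta^{(t)}_k \in \Aut(\bA)$ with $\delta^{(t)}_j(C_{N-t}) \subseteq C_{N-t+1}$ making $g_t(\delta^{(1)}_1\circ\dots\circ\delta^{(t)}_1,\dots)$ canonical on $C_{N-t}^k$. Setting $\delta_j := \delta^{(1)}_j \circ \dots \circ \delta^{(N)}_j$, the later twists $\delta^{(t+1)}_j,\dots,\delta^{(N)}_j$ map $X = C_0$ into $C_{N-t}$, so canonicity of $g_t$ on $C_{N-t}^k$ descends to canonicity of $g_t(\delta_1,\dots,\delta_k)$ on $X^k$; and $\delta_j(X) \subseteq C_N = Y$, as required.

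Finally I would assemble the lemma. Apply the statement above with $N = m^k$ to get $Y \supseteq X$. For each $i$, the interpolation hypothesis applied to the finite set $Y$ yields $v^i_1,\dots,v^i_k \in \Aut(\bA)$ such that $f(v^i_1,\dots,v^i_k)$ realises the behaviour of $h_i$ on $Y^k$ (the outer automorphism in the definition of interpolation is irrelevant for behaviours). List the functions $g_{\bar u} := f(v^{u_1}_1,\dots,v^{u_k}_k)$ for $\bar u \in \{1,\dots,m\}^k$, apply the statement to get $\delta_1,\dots,\delta_k \in \Aut(\bA)$ with $\delta_j(X) \subseteq Y$ and every $g_{\bar u}(\delta_1,\dots,\delta_k)$ canonical on $X^k$, and set $\alpha_{i,j} := v^i_j \circ \delta_j$. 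For arbitrary $\bar u$ we have $f(\alpha_{u_1,1},\dots,\alpha_{u_k,k}) = g_{\bar u}(\delta_1,\dots,\delta_k)$, canonical on $X^k$; for $\bar u = (i,\dots,i)$ this operation is $f(v^i_1,\dots,v^i_k)(\delta_1,\dots,\delta_k)$, which realises the behaviour of $h_i$ on $X^k$ because $\delta_j(X) \subseteq Y$ and $f(v^i_1,\dots,v^i_k)$ realises it on $Y^k$. The main obstacle is exactly the simultaneous canonisation statement — that several functions can be canonised at once by a \emph{single} automorphism tuple of bounded displacement — and, within it, choosing the nested chain $C_0 \subseteq \dots \subseteq C_N$ and the order of the iteration so that a later canonisation step never undoes an earlier one; once that is in place, combining it with the interpolation hypothesis is routine.
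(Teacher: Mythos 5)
Your proof is correct. The paper proves the lemma by a single induction on an enumeration $u^1,\dots,u^p$ of $\{1,\dots,m\}^k$, where the $q$-th step applies Lemma~\ref{lem:canon2} to get a set $C \supseteq X$, calls the induction hypothesis for $q-1$ on $C$, canonicalises the new grid point $u^q$ by bounded-displacement automorphisms, and composes. Your proof instead factors the argument into two pieces: a self-contained \emph{simultaneous canonisation} sublemma (for any finite $X$ and $N$ there is a finite $Y \supseteq X$, depending only on $X$ and $N$, such that any $N$ functions can be made canonical on $X^k$ by a single $k$-tuple of automorphisms displacing $X$ into $Y$), proved by iterating Lemma~\ref{lem:canon2} along a pre-chosen nested chain $C_0 \subseteq \cdots \subseteq C_N$; and then a short assembly that applies interpolation on $Y$ to get the diagonal conditions and feeds the $m^k$ mixed compositions $g_{\bar u} = f(v^{u_1}_1,\dots,v^{u_k}_k)$ into the sublemma. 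The underlying mechanics — iterating Lemma~\ref{lem:canon2} along a nested chain and exploiting that precomposing with automorphisms of bounded displacement preserves both canonicity and realised behaviour on the smaller grid — are identical to the paper's, and the number of Ramsey applications ($m^k$) is the same; what you gain is modularity: your simultaneous canonisation statement is independent of the interpolation hypothesis, applies to arbitrary operations, and cleanly separates the ``grid'' conditions from the ``diagonal'' ones, whereas the paper's inline induction interleaves the two and is harder to reuse but slightly more compact to state. All the non-obvious checks (that the later twists $\delta^{(t+1)}_j \circ \cdots \circ \delta^{(N)}_j$ map $C_0$ into $C_{N-t}$, that the outer automorphism in the interpolation definition is behaviour-irrelevant, that $f(\alpha_{u_1,1},\dots,\alpha_{u_k,k}) = g_{\bar u}(\delta_1,\dots,\delta_k)$) go through.
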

\begin{proof}
Let $u^1,\dots,u^p$ be an enumeration of $\{1,\dots,m\}^k$. We show by induction on $q \in \{1,\dots,p\}$ that for every finite $X \subseteq A$ there exist $\alpha_{1,1},\dots,\alpha_{m,k} \in \Aut(\bA)$ 
such that for every $i \in \{1,\dots,m\}$  the operation $f(\alpha_{i,1},\dots,\alpha_{i,k})$ has the same behaviour as $h_i$ on $X^k$, and 
for all $l \in \{1,\dots,q\}$ the operation
$f(\alpha_{u^l_1,1},\dots,\alpha_{u^l_k,k})$ is canonical on $X^k$. For $q = p$ we obtain the statement of the lemma. 

If $q = 0$ the statement of the claim follows from the assumptions of the lemma. For the inductive step, we assume that the statement holds for 
$q-1$
and prove the statement for $q$. Let $X \subseteq A$ be finite. Lemma~\ref{lem:canon2} asserts the existence of a finite $C \subseteq A$ such that for every $f \colon A^k \to A$ there are $\beta_1,\dots,\beta_k \in \Aut(\bA)$ such that 
$\beta_1(X) \subseteq C,\dots,\beta_k(X) \subseteq C$ and 
$f(\beta_1,\dots,\beta_k)$ is canonical on $X^k$. We apply the induction hypothesis to $C$,
and obtain $\gamma_{1,1},\dots,\gamma_{m,k}$ such that for every $i \in \{1,\dots,m\}$ the operation $f(\gamma_{i,1},\dots,\gamma_{i,k})$
has the same behaviour as $h_i$ on $C^k$ 
and $f(\gamma_{u^l_1,1},\dots,\gamma_{u^l_k,k})$ is canonical on $C^k$ for every $l \in \{1,\dots,q-1\}$. 
Let $f' \coloneqq f(\gamma_{u^q_1,1},\dots,\gamma_{u^q_k,k})$. 
The property of $C$ implies that there are 
$\beta_1,\dots,\beta_k \in \Aut(\bA)$ such that
$\beta_1(X) \subseteq C,\dots,\beta_k(X) \subseteq C$ and 
$f'(\beta_1,\dots,\beta_k)$ is canonical on $X^k$. 
For $i \in \{1,\dots,m\}$ and $j \in \{1,\dots,k\}$ 
define $\alpha_{i,j} \coloneqq \gamma_{i,j} \circ \beta_j$. 
Observe $\alpha_{i,j}(X) \subseteq \gamma_{i,j}(C)$ and hence 
\begin{itemize}
\item $f(\alpha_{u^l_1,1},\dots,\alpha_{u^l_k,k})$ is canonical on $X^k$ for $l \in \{1,\dots,q-1\}$: 
this follows from the inductive assumption that 
$f(\gamma_{u^l_1,1},\dots,\gamma_{u^l_k,k})$ 
is canonical on $C^k$;
\item $f(\alpha_{u^q_1,1},\dots,\alpha_{u^q_k,k})$ 
is canonical on $X^k$: this follows from the
property of $\beta_1,\dots,\beta_k$ that 
$$f'(\beta_1,\dots,\beta_k) = f(\alpha_{u^q_1,1},\dots,\alpha_{u^q_k,k})$$ is canonical on $X^k$;
\item for every $i \in \{1,\dots,m\}$, the operation $f(\alpha_{i,1},\dots,\alpha_{i,k})$ has the same behaviour as $h_i$ on $X^k$
because 
for every $j \in \{1,\dots,k\}$ and $f(\gamma_{i,1},\dots,\gamma_{i,k})$
has the same behaviour as $h_i$ on $C^k$ by the inductive assumption. 
\end{itemize}
This concludes the proof that $\alpha_{1,1},\dots,\alpha_{m,k}$ 
satisfy the inductive statement. 
\end{proof} 

We introduce useful notation for the proofs of Theorem~\ref{thm:binary} and Theorem~\ref{thm:binary-indep}. If $f \colon B^k \to B$, 
$u \colon B \to B$, and $\ell \in \{1,\dots,k\}$,
we write $f^{u}_\ell$ for the $k$-ary operation
defined by 
$$(x_1,\dots,x_k) \mapsto f(x_1,\dots,x_{\ell-1},u(x_\ell),x_{\ell+1},\dots,x_k).$$



\begin{proof}[Proof of 
Theorem~\ref{thm:binary}]
The forward implication is trivial. 
For the converse implication, suppose that
$\zeta$ does not have the UIP with respect to $\Pol(\bC)$ over $\bA$. That is, there are operations $g \in \Pol(\bC)$ and canonical $g_1,g_2 \in {\mathscr C}^{(k)}$ such that $g$ interpolates both $g_1$ and $g_2$ over $\Aut(\bA)$ and
$\zeta(g_1) \neq \zeta(g_2)$. By Lemma~\ref{lem:rich} there exists 
a finite $X_0 \subseteq A$ which is $k$-rich with respect to $\Pol(\bB)$.  

\medskip 
{\bf Claim.} For every finite $X \subseteq A$ that contains $X_0$ there exist $\ell \in \{1,\dots,k\}$, $f \in \Pol(\bB)^{(k)}$, $\alpha,\beta \in \Aut(\bA)$, 
such that 
\begin{itemize}
\item $f^\alpha_{\ell}$ has the same behaviour as
$g_1$ on $X^k$, and 
\item  $f^\beta_{\ell}$ has the same behaviour as $g_2$ on $X^k$. 
\end{itemize}
Indeed, by Lemma~\ref{lem:checkers} 
applied to $g$ and $X_0$ 
there exist $\alpha_{1,1},\dots,\alpha_{2,k} \in \Aut(\bA)$ 
such that 
\begin{itemize}
\item for all $u_1,\dots,u_k \in \{1,2\}$ the operation $g(\alpha_{u_1,1},\dots,\alpha_{u_k,k})$ is canonical on $X_0^k$ over $\bA$, and 
\item for $i \in \{1,2\}$
the operation  
$g(\alpha_{i,1},\dots,\alpha_{i,k})$  has the same behaviour as $g_i$ on $X_0^k$. 
\end{itemize}
Let $\ell \in \{1,\dots,k\}$ be the smallest index 
such that 
\begin{align*}
& \zeta \big(g(\alpha_{1,1},\dots,\alpha_{1,{\ell}-1},\alpha_{1,\ell},\alpha_{2,{\ell+1}},\dots,\alpha_{2,k} ) \big) \\\neq \; & \zeta \big (g(\alpha_{1,1},\dots,\alpha_{1,\ell- 1},\alpha_{2,\ell},\alpha_{2,{\ell+1}},\dots,\alpha_{2,k}) \big).
\end{align*}
We know that such an index exists, because 
$$\zeta \big (g(\alpha_{1,1},\dots,\alpha_{1,k}) \big) = \zeta(g_1) \neq \zeta(g_2)= \zeta \big (g(\alpha_{2,1},\dots,\alpha_{2,k}) \big).$$
Then  
$f \coloneqq g (\alpha_{1,1},\dots,\alpha_{1,\ell-1},\id,\alpha_{2,{\ell+1}},\dots,\alpha_{2,k})$, $\alpha \coloneqq \alpha_{1,\ell}$, and 
$\beta \coloneqq \alpha_{2,\ell}$ have the required properties, because $X_0$ is $k$-rich with respect to $\Pol(\bB)$.

Let $X_0 \subset X_1 \subset \cdots$
be finite subsets of $A$ such that $\bigcup_{i=0}^{\infty} X_i = A$.
Then the claim applied to $X = X_i$, for $i \in {\mathbb N}$, asserts the existence of $f_i$, $\alpha_i$, $\beta_i$, and $\ell_i$ such that
$(f_i)_{\ell_i}^{\alpha_i}$ and $(f_i)_{\ell_i}^{\beta_i}$ are canonical on $X_i^k$ and 
$\zeta((f_i)^{\alpha_i}_{\ell_i}) \neq \zeta((f_i)^{\beta_i}_{\ell_i})$. 
By thinning out the sequences 
$(f_i)_{i \in \mathbb N}$, $(\alpha_i)_{i \in{\mathbb N}}$, $(\beta_i)_{i \in{\mathbb N}}$,
and $(\ell_i)_{i \in{\mathbb N}}$ we can assume
that all $\ell_i$ are equal, say $\ell$, 
and that the complete behaviour ${\mathcal B}_1$ of $f_i^{\alpha_i}$  on $X_0$ and ${\mathcal B}_2$ of $f_i^{\beta_i}$ on $X_0$ does not depend on $i$. Note that we must have  ${\mathcal B}_1 \neq {\mathcal B}_2$. 
By Lemma~\ref{lem:simul-interpol}
(and in particular the statement at the end starting with ``moreover'') 
there exist $g \in \Pol(\bB)$ and $u,v \in \overline{\Aut(\bA)}$ such that
$g^u_\ell$ has behaviour ${\mathcal B}_1$ on all of $A$, and $g^v_\ell$ has behaviour ${\mathcal B}_2$ on all of $A$. So they are canonical over $\bA$
and $\zeta(g^u_{\ell}) \neq \zeta(g^v_{\ell})$.
Suppose that 
$\zeta(g^u_{\ell})=\pi^k_{r}$ and
$\zeta(g^v_{\ell})=\pi^k_{s}$.
Let
$$f(x,y) \coloneqq f(\underbrace{y,\dots,y}_{r-1},x,y,\dots,y).$$
Then note that 
\begin{align}
 \zeta \big(f(\pi^2_1,u(\pi^2_2)) \big) 
= \; & 
\zeta\big(g(\underbrace{u(\pi^2_2),\dots,u(\pi^2_2)}_{r-1},\pi^2_1,u(\pi^2_2),\dots,u(\pi^2_2)) \big) 
\label{eq:2}
\\
= \; & 
\zeta \big(g^u_{\ell}(\underbrace{\pi^2_2,\dots,\pi^2_2}_{r-1},\pi^2_1,\pi^2_2,\dots,\pi^2_2) \big)  
\label{eq:3}
\\
= \; & \pi^k_r(\underbrace{\pi^2_2,\dots,\pi^2_2}_{r-1},\pi^2_1,\pi^2_2,\dots,\pi^2_2) 
\label{eq:4} \\
= \; & \pi^2_1 \nonumber 
\end{align}
where equality of (\ref{eq:2}) and (\ref{eq:3}) holds since
$\zeta$ is $\overline{\Aut(\bA)}$-invariant,
and equality of (\ref{eq:3}) and (\ref{eq:4}) holds since $\zeta$ is minor-preserving. 
Similarly, since $r \neq s$
\begin{align*}
\zeta \big (f(\pi^2_1,v(\pi^2_2)) \big) 
& = \pi^k_s(\underbrace{\pi^2_2,\dots,\pi^2_2}_{r-1},\pi^2_1,\pi^2_2,\dots,\pi^2_2) 
 = \pi^2_2 
\end{align*}
Therefore, $f,u,v$ show that item $(2)$ from the statement does not hold, concluding the proof that $(2) \Rightarrow (1)$.  
\end{proof}

We finally prove Theorem~\ref{thm:binary-indep},
which shows that it suffices to verify the UIP on independent elementary substructures. 

\begin{proof}[Proof of Theorem~\ref{thm:binary-indep}]
The forward implication holds trivially. 
We now show the converse, $(2) \Rightarrow (1)$. 
It suffices to verify $(2)$ in Theorem~\ref{thm:binary}. Let $f \in \Pol(\bC)^{(2)}, u_1,u_2 \in \overline{\Aut(\bA)}$
be such that $f^{u_1}_2,f^{u_2}_2 \in \Can(\bA)$. 
We have to show that $\zeta(f^{u_1}_2) = \zeta(f^{u_2}_2)$. 
 Let $F \subseteq A$ be 2-rich with respect to $\Pol(\bC)$. 
Then it suffices to show that $f^{u_1}_2$ and $f^{u_2}_2$ realise the same complete behaviour on $F^2$. 

For $j \in \{1,2\}$ let $e_j$ be an embedding of 
$\bA$ into $\bA_j$. 
Since $\bA$ is homogeneous
there exists $\epsilon \in \Aut(\bA)$ such that 
$e_1$ and $\epsilon$ agree on $u_1(F) \cup u_2(F)$. 
 By Lemma~\ref{lem:canon} applied to
$f(\id,\epsilon^{-1} e_2)$
there are $\beta_1,\beta_2 \in \Aut(\bA)$ such that $f(\beta_1,\epsilon^{-1} e_2 \beta_2)$ is canonical on $F^2$. 
Let $v_i \coloneqq \epsilon u_i$
 and $w \coloneqq e_2 \beta_2$. 
Note that $v_i(F) \subseteq A_1$ 
and $w(F) \subseteq A_2$. 
Let $h \coloneqq f(\beta_1,\epsilon^{-1}) \in \Pol(\bC)$,
and note that 
$$h_2^{v_i} = f(\beta_1,\epsilon^{-1} \epsilon u_i) = f(\beta_1,u_i)$$ is canonical since $f_2^{u_i} = f(\id,u_i)$ is canonical,  
and that $$h_2^{w} = f(\beta_1,\epsilon^{-1} e_2 \beta_2)$$ is canonical on $F^2$ as we have seen above.  
We thus apply (2) two times 
to obtain that $\zeta(h_2^{v_1}) = \zeta(h_2^{w}) = \zeta(h_2^{v_2})$. Finally, note that 
$$\zeta(f(\id,u_i)) = \zeta(f(\beta_1,u_i))  = \zeta(f(\beta_1,\epsilon^{-1} \epsilon u_i))  = \zeta(h_2^{v_i}).$$ 
We conclude that 
$\zeta(f(\id,u_1)) = \zeta(f(\id,u_2))$.  
\end{proof}

\section{First-order expansions of the basic relations of RCC5}
\label{sect:rcc5}

RCC5 is a fundamental formalism for spatial reasoning, and may be viewed as an $\omega$-categorical structure $\bR$ (see, e.g.,~\cite{qe-journal,Qualitative-Survey}). 
There are many equivalent ways of formally
introducing RCC5; we follow the presentation in~\cite{qe-journal} and
 refer to~\cite{Qualitative-Survey} 
for other definitions and their equivalence. 
Let $\fs$ be the structure with domain $S \coloneqq 2^{\mathbb N} \setminus \{\emptyset\}$, i.e., the set of all non-empty subsets of the natural numbers ${\mathbb N}$. The signature of $\fs$ consists of the five binary relation symbols $\eq,\pp,\ppi,\dr,\op$ where for 
$x,y \subseteq \mathbb{N}$ we have
\begin{align}
(x,y)\in \eq & \text{ iff } x=y, & \text{ ``$x$ and $y$ are equal''}
\label{eq:eq} \\
(x,y)\in \pp & \text{ iff } x\subset y, & \text{ ``$x$ is strictly contained in $y$''} 
\label{eq:pp2} \\
(x,y)\in  \ppi & \text{ iff } x\supset y, & \text{ ``$x$ strictly contains $y$''}
\label{eq:ppi} \\
(x,y)\in \dr & \text{ iff } x\cap y=\emptyset, & \text{ ``$x$ and $y$ are disjoint''}
\label{eq:dr} \\
(x,y)\in \op & \text{ iff } 
x\not\subset y\wedge y\not\subset x\wedge x\cap y\neq \emptyset, 
& \text{ ``$x$ and $y$ properly overlap''.}
\label{eq:op} 
\end{align}

	Note that by definition every pair $(x,y) \in S^2$ is contained in exactly one of the relations $\dr^\fs,\op^\fs,\pp^\fs,\ppi^\fs,\eq^\fs$.
Note that the structure $\fs$ is not $\omega$-categorical; however, $\Age(\fs)$ is 
an amalgamation class (Theorem 30 in~\cite{qe-journal}), and hence there exists a countable homogeneous structure $\bR$ with the same age as $\fs$. We refer to the relations $\eq^\bR,\pp^\bR,\ppi^\bR,\dr^\bR,\op^\bR$ as the \emph{basic relations of RCC5}. 

The \emph{composition} of two binary relations $R_1$ and $R_2$ is the binary relation 
$R_1 \circ R_2 \coloneqq \big \{(x,y):\exists z \big(R_1(x,z) \wedge R_2(z,y) \big) \big \}$.
The \emph{converse} (sometimes also called \emph{inverse}) of a relation $R$ is the 
relation $\{(y,x) \colon (x,y) \in R\}$, and denoted by $R^\smile$. The converse of $\pp$ is $\ppi$,
and $\eq^\bR$, $\dr^\bR$, and $\op^\bR$ are their own converse. The full binary relation containing all
pairs of elements of $\bR$ is denoted by $\bf 1$. 
 It is straightforward to verify that 
the relations
$\eq^\bR,\pp^\bR,\ppi^\bR,\dr^\bR,\op^\bR$ 
compose as shown in Table~\ref{table:rcc}. 

\begin{remark}
Note that the respective relations of $\bS$ are \emph{not} closed under composition; e.g, the relation $\pp^{\bS} \circ \pp^{\bS}$ does not contain the pair $(\{0\},\{0,1\})$, and hence is not equal to $\pp^{\bS}$. However, 
$\pp^{\bR} \circ \pp^{\bR} = \pp^{\bR}$ by the homogeneity of $\bR$.
\end{remark} 

\begin{lemma}\label{lem:R-fin-bound}
$\bR$ is finitely bounded.
\end{lemma}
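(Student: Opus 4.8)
The plan is to show that $\Age(\bR) = \Age(\fs)$ is defined by finitely many forbidden substructures over the signature $\{\eq,\pp,\ppi,\dr,\op\}$. First I would record the ``obvious'' local constraints that every structure in $\Age(\fs)$ must satisfy: exactly one of the five relations holds on each ordered pair (this is already a finite set of forbidden $2$-element configurations, ruling out pairs carrying zero or two labels), the diagonal pairs $(x,x)$ carry exactly $\eq$, the relation $\eq$ is an equivalence relation (forbidding small $\eq$-violations), $\pp$ and $\ppi$ are converse to each other while $\dr$ and $\op$ are symmetric (finitely many forbidden ordered pairs/triples), and every composition $R_1 \circ R_2$ is contained in the union of relations prescribed by Table~\ref{table:rcc} (each violation of a composition law is witnessed by a $3$-element substructure, so this too is a finite list of forbidden triangles). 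Call the resulting finite set of forbidden structures $\mathcal F$, and let $\mathcal C$ be the class of all finite $\{\eq,\pp,\ppi,\dr,\op\}$-structures omitting every member of $\mathcal F$.

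By construction $\Age(\fs) \subseteq \mathcal C$, so the content of the lemma is the reverse inclusion: every finite structure $\bA \in \mathcal C$ embeds into $\fs$, i.e.\ into some power set $2^N \setminus \{\emptyset\}$ with the set-theoretic interpretations \eqref{eq:eq}--\eqref{eq:op}. The natural way to do this is by induction on $|A|$. Given $\bA \in \mathcal C$ with an element $a$ removed, the induction hypothesis supplies an embedding of $\bA \setminus \{a\}$ into some $\fs$; then one must place the new element $a$, i.e.\ choose a nonempty set $x_a \subseteq N$ (after possibly enlarging $N$) that has the prescribed relation with each previously placed $x_b$. This is a one-point extension/amalgamation step over $\fs$, and it is exactly the kind of statement proved in establishing that $\Age(\fs)$ has amalgamation (Theorem~30 in~\cite{qe-journal}); the point here is that the hypotheses needed to run that extension argument --- consistency of the labels, closure under the composition table --- are precisely what membership in $\mathcal C$ guarantees, because any obstruction to extending would be a violation of a composition law or of symmetry/converse, hence a copy of a structure in $\mathcal F$.

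Concretely, I would carry out the one-point extension as follows. Suppose $x_1,\dots,x_m \subseteq N$ realise $\bA \setminus \{a\}$ and we are told which of the five relations $a$ should bear to each index $i$. The composition-closure and consistency conditions let us simultaneously satisfy all the ``$\dr$'' and ``$\pp$'' requirements by taking $x_a$ to contain suitable fresh elements of $N$ together with appropriate portions of the $x_i$, and to avoid the $x_i$ it must be disjoint from; the ``$\ppi$'', ``$\op$'' and ``$\eq$'' cases are handled symmetrically, and one enlarges $N$ by finitely many fresh points to get the strictness in $\subset$ and the nonemptiness of the required intersections and differences. The verification that the resulting $x_a$ has exactly the prescribed relation with each $x_i$ is a routine case check driven by Table~\ref{table:rcc}. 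Then $\bA$ embeds into $\fs$, completing the induction, so $\mathcal C \subseteq \Age(\fs)$ and hence $\mathcal C = \Age(\fs) = \Age(\bR)$, which is the definition of $\bR$ being finitely bounded.

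The main obstacle is the one-point extension step: one has to be careful that the choices made to satisfy the relation with $x_i$ do not inadvertently break the relation with $x_j$, and that a single $x_a$ can meet all constraints at once. This is genuinely where the composition table is used, and it is essentially a re-run of the amalgamation argument for $\Age(\fs)$; the only new point is bookkeeping --- checking that \emph{every} way the extension could fail corresponds to one of the finitely many forbidden triangles or pairs we put into $\mathcal F$, so that no additional (possibly infinite) family of obstructions is needed. I expect the composition-table case analysis to be the longest part, but it is routine and can be organised by the converse/symmetry symmetries of the five relations.
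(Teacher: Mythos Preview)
Your proposal is correct and takes essentially the same approach as the paper: both establish that membership in $\Age(\bR)$ is decided by substructures of size at most three, so the forbidden configurations are the bad $2$- and $3$-element structures coming from the symmetry/converse conditions and the composition table. The paper's proof simply cites this fact (Proposition~15, implication $2\Rightarrow 3$, in~\cite{Qualitative-Survey}) rather than reproving it, whereas you outline the one-point extension argument that underlies that proposition.
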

\begin{proof}
A finite structure embeds into $\bR$ if and only if all three-element substructures of the structure embed into $\bR$. For details we refer to~\cite{Qualitative-Survey}, implication $2 \Rightarrow 3$ in Proposition 15. 
\end{proof}

\begin{remark} As a \emph{relation algebra}, RCC5 is given by the composition table and the data about the converses. Our structure $\bR$ introduced above is a \emph{representation} of 
RCC5. We do not introduce relation algebras formally, because they will not be needed in the following, and rather refer to~\cite{Qualitative-Survey}. 
\end{remark}

\begin{figure*}
\begin{center}
$\begin{tabu}[t]{|c|[1pt]c|c|c|c|c|}
\hline
\circ & \dr & \op & \pp & \ppi & \eq\\[-1pt]
\tabucline[1pt]{1-6}
\dr & \bf  1 & \pp\cup \dr \cup \po & \pp\cup \dr \cup \po & \dr & \dr\\
\hline
\op & \ppi\cup \dr \cup \po & \bf 1 & \pp\cup \op & \ppi\cup \op & \op\\
\hline
\pp & \dr & \pp\cup \dr \cup \po & \pp & \bf 1 & \pp\\
\hline
\ppi & \ppi\cup \dr \cup \po & \ppi \cup \op& 
\bf 1 \setminus \dr
& \ppi &\ppi\\
\hline
\eq & \dr & \op & \pp & \ppi & \eq\\
\hline
\end{tabu}$
\end{center}
\caption{The composition table for the relations of $\bR$.}
\label{table:rcc}
\end{figure*}

A \emph{first-order expansion} of a structure $\bA$ is an expansion of $\bA$ by relations that are first-order definable over $\bA$. 

\begin{theorem}\label{thm:rcc5}
For every first-order expansion $\bC$ of 
$\bR$ either 
\begin{itemize}
\item $\Pol(\bC) \cap \Can(\bR)$ does not have a uniformly continuous minor-preserving map to $\proj$,  
in which case $\Csp(\bC)$ is in P,
or 
\item $\Pol(\bC)$ has a uniformly continuous minor-preserving map to $\proj$, 
in which case $\Csp(\bC)$ is NP-complete. 
\end{itemize}
\end{theorem}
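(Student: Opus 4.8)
The plan is to apply Corollary~\ref{cor:dicho} with $\bA = \bB = \bR$ (noting $\bR$ is finitely bounded homogeneous by Lemma~\ref{lem:R-fin-bound}) — except that $\bR$ itself is \emph{not} known to be Ramsey, so the first task is to produce a finitely homogeneous Ramsey structure $\bA$ of which $\bR$ is a reduct. As indicated in the introduction, the approach is to take the natural ordered expansion: $\bR$ arises from the atomless Boolean algebra, and the ordered atomless Boolean algebra has the Ramsey property~\cite{Topo-Dynamics}; combined with the Ramsey transfer theorem of Mottet and Pinsker~\cite{MottetPinskerCores} this should yield a finitely homogeneous (indeed finitely bounded) Ramsey expansion $\bR^<$ of $\bR$. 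So the setup becomes: $\bC$ is a first-order expansion of $\bR$, $\bB = \bR$ is finitely bounded homogeneous, and $\bA = \bR^<$ is finitely homogeneous Ramsey with $\bR$ a reduct of $\bR^<$.

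With this in place, by Corollary~\ref{cor:dicho} it suffices to verify its hypothesis: whenever there is a uniformly continuous minor-preserving map from $\Pol(\bC) \cap \Can(\bR)$ to $\proj$, there is also a minor-preserving map from $\Pol(\bC) \cap \Can(\bR^<)$ to $\proj$ with the UIP with respect to $\Pol(\bC)$ over $\bR^<$. The plan for this is: first, transfer the hypothesized map from $\Pol(\bC)\cap\Can(\bR)$ to a map on $\Pol(\bC)\cap\Can(\bR^<)$ — the smaller structure's canonical polymorphisms restrict/project onto those of $\bR$, so a minor-preserving map to $\proj$ on the former produces one on $\Pol(\bC)\cap\Can(\bR)$ and conversely one needs to lift; here one uses Proposition~\ref{prop:can-proj} together with the fact (to be proved) that the canonical clone of a first-order expansion of $\bR$ over $\bR^<$ is essentially controlled by the $\bR$-behaviour plus the interaction with the order, which is rigid. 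Second, and this is the crux, verify the UIP. By Theorem~\ref{thm:binary-indep} it is enough to check the UIP on a pair of independent elementary substructures of $\bR^<$: for all $f \in \Pol(\bC)^{(2)}$ and $u_1, u_2 \in \overline{\Aut(\bR^<)}$ with $u_1(R^<) \subseteq A_1$, $u_2(R^<) \subseteq A_2$, if $f(\id, u_1)$ and $f(\id, u_2)$ are canonical over $\bR^<$, then $\zeta(f(\id,u_1)) = \zeta(f(\id,u_2))$.

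The verification of this binary-independent UIP condition I expect to be the main obstacle, and it should proceed by a careful combinatorial analysis of the binary canonical polymorphisms of first-order expansions of $\bR$. The strategy is: classify the complete behaviours over $\bR^<$ that a binary polymorphism of $\bC$ of the form $f(\id, u)$ can realise, using the composition structure of the RCC5 relations (Table~\ref{table:rcc}) and the extra rigidity coming from the order; show that when $u_1, u_2$ send $\bR^<$ into independent copies $A_1, A_2$, the relevant behaviour of $f(\id, u_i)$ is forced to depend only on data that is the same for $i = 1$ and $i = 2$ — intuitively because the pair $(\id(a), u_i(a))$ always lies in a "generic" orbit (say, the \textbf{dr} relation or a fixed overlap configuration dictated by independence), so the behaviour is determined by how $f$ acts on that single orbit pattern together with the canonical behaviour of $f$ on each coordinate separately, which is independent of $i$. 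One then concludes $\zeta(f(\id,u_1)) = \zeta(f(\id,u_2))$ since $\zeta$ is $\overline{\Aut(\bR^<)}$-invariant and minor-preserving. The hard part is making "independence forces a fixed orbit pattern" precise for RCC5: this is where Theorem~\ref{thm:indep} and the specific amalgamation behaviour of $\Age(\fs)$ enter, and where several pages of case analysis on triples and on the finitely many binary behaviours will be needed. Finally, once the UIP is established, Corollary~\ref{cor:dicho} gives the P versus NP-complete dichotomy, and the tractable side is in P via Theorem~\ref{thm:can-tract} while the hard side is NP-complete via Theorem~\ref{thm:hard} together with membership in NP for CSPs of reducts of finitely bounded structures.
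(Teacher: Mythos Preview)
Your overall architecture is right and matches the paper: take the Ramsey order expansion $(\bR,\prec)$ built via the ordered atomless Boolean algebra and the Mottet--Pinsker transfer, then invoke Corollary~\ref{cor:dicho}, with the UIP verified through Theorem~\ref{thm:binary-indep}. However, there is a genuine gap in how you propose to produce the minor-preserving map on $\Pol(\bC)\cap\Can(\bR,\prec)$ and to verify its UIP.

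The paper does \emph{not} lift an arbitrary minor-preserving map from $\Pol(\bC)\cap\Can(\bR)$ to one on $\Pol(\bC)\cap\Can(\bR,\prec)$ and then prove UIP for the lifted map. Instead it defines two \emph{specific} maps $\eta$ and $\rho$ on $\Pol(\bC)\cap\Can(\bR,\prec)$: $\eta$ factors the action on $\{\drprec,\poprec,\pp\}$ through the two-class partition $\{\{\drprec,\poprec\},\{\pp\}\}$, and $\rho$ restricts the action to $\{\drprec,\poprec\}$. The substantial algebraic work (Propositions~\ref{prop:wedge}, \ref{prop:cyclic}, Theorem~\ref{thm:cyclic}, Corollary~\ref{cor:cyclic}) shows via Post's lattice and a pseudo-cyclic polymorphism construction that if \emph{any} uniformly continuous minor-preserving map $\Pol(\bC)\cap\Can(\bR)\to\proj$ exists, then $\eta$ or $\rho$ already maps to $\proj$. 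Your plan to ``lift via Proposition~\ref{prop:can-proj} together with rigidity of the order'' does not give this; you would need to know in advance which Boolean clone the image is, and the reduction to these two concrete maps is exactly what makes the UIP verification tractable.

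Your sketch of the UIP argument is also off the mark. You suggest that independence forces $(\id(a),u_i(a))$ into a fixed generic orbit, so that the behaviour of $f(\id,u_i)$ is determined by data independent of $i$. But $\zeta(f(\id,u_i))$ depends on the action of $f(\id,u_i)$ on \emph{all} pairs of orbits of $(\bR,\prec)$, not on any diagonal, and there is no reason these full behaviours coincide for $i=1,2$. The paper's proofs (Theorems~\ref{thm:eta-uip}, \ref{thm:rho-uip}) instead argue by contradiction: assuming $\zeta(f(\id,u_1))\neq\zeta(f(\id,u_2))$, one uses a bridging lemma (Lemma~\ref{lem:copy}, which exploits the $\dr$-independent substructures of Lemma~\ref{lem:rcc5indep}) to arrange $\alpha_1,\alpha_2$ with $\alpha_1(a)\subset\alpha_2(a)$, and then a short four-step composition chain in the RCC5 table forces $f$ to violate a preserved relation (e.g.\ producing $(x,y)\in\pp$ and $(x,y)\in\;\nsim$ simultaneously). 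The argument is specific to $\eta$ and $\rho$ separately and hinges on particular composition identities such as $\pp\subseteq(\drprec)\circ\pp\circ(\drprec)$; it is not a generic ``behaviours must agree'' statement.
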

We prove 
Theorem~\ref{thm:rcc5} using  
Corollary~\ref{cor:dicho}  
and hence first need to introduce a Ramsey expansion $(\bR;\prec)$ of $\bR$ (Section~\ref{sect:rcc5-ramsey}); we use a recent Ramsey transfer result of Mottet and Pinsker~\cite{MottetPinskerCores}. 
Every first-order expansion $\bC$ of $\bR$ is clearly a model-complete core. We verify the remaining assumption
in Corollary~\ref{cor:dicho} in two steps. 
Assuming that there exists a uniformly continuous minor-preserving map from $\Pol(\bC) \cap \Can(\bR)$ to $\proj$,
we construct in Section~\ref{sect:rcc5-uch1} either 
\begin{itemize}
\item a minor-preserving map $\eta \colon \Pol(\bC) \cap \Can(\bR,\prec) \to\proj$
 which arises from the action of
the canonical polymorphisms on the two relations $\pp$ and 
$(\dr \cup \po) \, \cap \prec$,
or 
\item a minor-preserving map $\rho \colon \Pol(\bC) \cap \Can(\bR,\prec) \to\proj$ which arises from the action of the canonical polymorphisms on the two relations $\drprec$ and $\poprec$. 
\end{itemize}
In the second step, we prove that if such a map 
$\eta,\rho \colon \Pol(\bC) \cap \Can(\bR,\prec) \to \proj$ exists, then it has the UIP with respect to $\Pol(\bC)$ over $(\bR,\prec)$  (Section~\ref{sect:rcc5-uip}); here we use Theorem~\ref{thm:binary-indep}. 
The statement then follows from Corollary~\ref{cor:dicho}.

\subsection{A Ramsey order expansion of $\bR$}
\label{sect:rcc5-ramsey}
The structure $\bR$ is not Ramsey, but it has
a homogeneous Ramsey expansion by a linear order. 
Let $\mathcal C$ be the class of all expansions 
of structures from $\Age(\bS)$ with the signature 
$\{\eq,\dr,\op,\pp,\ppi,\prec\}$ 
such that $\prec$ denotes a linear extension 
of $\pp$. 
\begin{proposition}\label{prop:amalg}
The class $\mathcal C$ defined above is an  amalgamation class. 
\end{proposition}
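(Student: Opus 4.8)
The plan is to verify the three closure conditions for an amalgamation class directly: closure under isomorphism, closure under (induced) substructures, and the amalgamation property. The first two are essentially bookkeeping. Closure under isomorphism is immediate. For closure under substructures, note that if a structure $\bB$ lies in $\mathcal C$ and $\bB'$ is an induced substructure of $\bB$, then the $\{\eq,\dr,\op,\pp,\ppi\}$-reduct of $\bB'$ is a substructure of the corresponding reduct of $\bB$, hence lies in $\Age(\fs)$ since that age is closed under substructures; and the restriction of a linear extension of $\pp^{\bB}$ to $B'$ is still a linear order extending $\pp^{\bB'}$, so $\bB' \in \mathcal C$. So the real content is the amalgamation property.

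For amalgamation, I would take $\bB_0, \bB_1, \bB_2 \in \mathcal C$ with embeddings $e_i \colon \bB_0 \hookrightarrow \bB_i$ for $i \in \{1,2\}$, and (identifying along $e_i$) assume $\bB_0 = \bB_1 \cap \bB_2$ as sets, with $\bB_1$ and $\bB_2$ inducing the same structure on $\bB_0$. The strategy is a two-stage amalgamation. First I would forget the linear orders $\prec^{\bB_1}, \prec^{\bB_2}$ and amalgamate the $\{\eq,\dr,\op,\pp,\ppi\}$-reducts using the amalgamation property of $\Age(\fs)$ (Theorem 30 in \cite{qe-journal}, as cited just before Lemma~\ref{lem:R-fin-bound}); this produces a structure $\bD_0$ on $D \coloneqq B_1 \cup B_2$ over the signature $\{\eq,\dr,\op,\pp,\ppi\}$ whose restrictions to $B_1$ and $B_2$ are the given reducts. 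Because $\Age(\fs)$ is the age of the homogeneous $\bR$, which we may even take to be a \emph{free} (or at least ``nice enough'') amalgamation on the new pairs, I can arrange that for $x \in B_1 \setminus B_0$ and $y \in B_2 \setminus B_0$ the pair $(x,y)$ lies in $\dr^{\bD_0}$; in particular no new $\pp$-pairs are created between the two sides. The second stage is to define a linear order $\prec$ on $D$ extending $\pp^{\bD_0}$ and restricting to $\prec^{\bB_1}$ on $B_1$ and to $\prec^{\bB_2}$ on $B_2$.

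For the second stage I would argue as follows. On $B_1$ and on $B_2$ we already have linear orders agreeing on $B_0$. Since $\pp^{\bD_0}$ has no edges between $B_1 \setminus B_0$ and $B_2 \setminus B_0$ (by the choice in stage one), the partial order generated by $\prec^{\bB_1} \cup \prec^{\bB_2}$ together with $\pp^{\bD_0}$ is acyclic: any cycle would have to cross between the two sides, but crossings are only via $B_0$, and on $B_0$ the two orders coincide, so a cycle would project to a cycle within one of the $\bB_i$, which is impossible. Hence the generated relation is a partial order, and by Szpilrajn's extension theorem it has a linear extension $\prec$ on $D$. By construction $\prec$ extends $\pp^{\bD_0}$ and its restriction to $B_i$ extends $\prec^{\bB_i}$; one has to check it actually \emph{equals} $\prec^{\bB_i}$ on $B_i$, which holds because $\prec^{\bB_i}$ is already a \emph{linear} order on $B_i$, so any linear order containing it restricts to it exactly. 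Then $\bD \coloneqq (\bD_0, \prec)$ lies in $\mathcal C$ and witnesses amalgamation.

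The main obstacle I anticipate is the interaction in stage one between the amalgamation of the $\{\eq,\dr,\op,\pp,\ppi\}$-reducts and the requirement that the resulting $\pp$-relation be compatible with a linear order — specifically, ensuring that I can choose the amalgam of the reducts so that no problematic $\pp$-edges (which would create a cycle together with $\prec^{\bB_1} \cup \prec^{\bB_2}$) appear across the two sides. The cleanest way is to check that $\Age(\fs)$ admits an amalgamation in which the ``fresh'' pairs can all be put into $\dr$; this should be read off the composition table (Table~\ref{table:rcc}), since $\dr$ composes permissively (the $\dr$ row and column contain $\bf 1$ in the relevant entries), so declaring all cross pairs to be $\dr$ never conflicts with the constraints inherited from $\bB_1$ and $\bB_2$ via $B_0$. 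Once that is pinned down, the rest is the routine Szpilrajn argument. A secondary point to be careful about is that $\mathcal C$ is defined via $\Age(\fs)$ (equivalently $\Age(\bR)$), so I must keep the $\{\eq,\dr,\op,\pp,\ppi\}$-reduct of the amalgam inside that age throughout — which is exactly what stage one guarantees.
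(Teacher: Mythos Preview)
Your verification of closure under isomorphism and substructures is fine, and your overall two-stage plan (first amalgamate the RCC5 reducts, then extend the linear orders) is the right shape. But there is a genuine gap in stage one.

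You claim that the amalgam of the $\{\eq,\dr,\op,\pp,\ppi\}$-reducts can be chosen so that every cross pair $(x,y)$ with $x\in B_1\setminus B_0$ and $y\in B_2\setminus B_0$ lies in $\dr$. This is false. Take any $a\in B_0$, $x\in B_1\setminus B_0$, $y\in B_2\setminus B_0$ with $(x,a)\in\pp^{\bB_1}$ and $(a,y)\in\pp^{\bB_2}$. From the composition table, $\pp\circ\pp=\pp$, so the three-element substructure on $\{x,a,y\}$ embeds into $\fs$ only if $(x,y)\in\pp$; declaring $(x,y)\in\dr$ would throw the reduct out of $\Age(\fs)$. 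Your justification --- that the $\dr$ row and column contain $\mathbf{1}$ --- checks the wrong thing: what is needed is that $\dr$ lies in $R_1\circ R_2$ for \emph{every} pair $R_1,R_2$ occurring on paths $x$--$a$--$y$ through $B_0$, and this fails for $\pp\circ\pp$ (and symmetrically for $\ppi\circ\ppi$, and for various mixed cases). Consequently your Szpilrajn argument, which relies on there being no cross-$\pp$ edges, does not go through as stated.

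The paper avoids this by reducing to 1-point amalgamation ($B_i=A\cup\{b_i\}$) and invoking the finer amalgamation result for $\Age(\fs)$ from \cite{qe-journal}: one can choose the relation $R$ on $(b_1,b_2)$ so that the reduct stays in $\Age(\fs)$ and, crucially, so that $R=\pp$ (resp.\ $\ppi$) is forced \emph{only} when there is a witness $a\in A$ with $(b_1,a)\in\pp^{\bB_1}$ and $(a,b_2)\in\pp^{\bB_2}$ (resp.\ the symmetric chain). In that forced case, $b_1\prec^{\bB_1} a\prec^{\bB_2} b_2$ already holds, so setting $b_1\prec b_2$ is consistent; if $R\in\{\dr,\op\}$ one falls back to ordinary order amalgamation. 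Your approach can be repaired along these lines, but not with the blanket ``all cross pairs are $\dr$'' choice.
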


\begin{proof}
It is clear from the definition that $\mathcal C$ is closed under isomorphisms and substructures. It is well-known that in order to prove the amalgamation property, it suffices to verify the 1-point amalgamation property (see, e.g.,~\cite{Book}): for all structures $\bA,\bB_1,\bB_2 \in {\mathcal C}$ such that $A = B_1 \cap B_2$ and $B_i = A \cup \{b_i\}$, for $i \in \{1,2\}$ and $b_1 \neq b_2$, there exists $\bC \in {\mathcal C}$ with $C = B_1 \cup B_2 = A \cup \{b_1,b_2\}$ such that $\bB_1$ and $\bB_2$ are substructures of $\bC$. 
It is easy to see that such a structure $\bC$ can be determined by specifying which relations of $\bC$ contain the pair $(b_1,b_2)$. 

Bodirsky and Chen~\cite{qe-journal} (also see~\cite{Book}) 
proved that there exists $R \in  \{\dr,\op,\pp,\ppi\}$ 
such that 
\begin{itemize}
\item if we add the pair $(b_1,b_2)$ to $R^{\bC}$
and the pair $(b_2,b_1)$ to $(R^\bC)^{\smile}$, then 
the $\{\eq,\dr,\op,\pp,\ppi\}$-reduct of the resulting
structure $\bC$ is in $\Age(\bS)$,
and 
\item $(b_1,b_2) \in \pp^{\bC}$
only if there exists $a \in A$ such that $(b_1,a) \in \pp^{\bB_1}$ and $(a,b_2) \in \pp^{\bB_2}$. 
\end{itemize}
If $R$ equals $\pp$ we add 
$(b_1,b_2)$ to $\prec^{\bC}$; 
if $R$ equals $\ppi$ we add $(b_2,b_1)$ to $\prec^{\bC}$. 
If $R$ is from $\{\dr,\op\}$, then we add 
$(b_1,b_2)$ or $(b_2,b_1)$ to $\prec^{\bC}$ according to an order-amalgam 
of the $\{\prec\}$-reducts of $\bB_1$ and $\bB_2$ over $\bA$. 
In each of these cases, $\prec^{\bC}$ is a linear order that
extends $\pp^{\bC}$, and hence $\bC$ is in ${\mathcal C}$. 
\end{proof}

One can check by an easy back-and-forth
argument that the $\{\eq,\dr,\op,\pp,\ppi\}$-reduct
of the Fra{\"i}ss\'e-limit of ${\mathcal C}$ is isomorphic to $\bR$; hence, we may denote this Fra{\"i}ss\'e-limit by 
$(\bR,\prec)$. To show that
$(\bR,\prec)$ is Ramsey,
we use a classical result from Ramsey theory about the atomless Boolean algebra, and the following recent Ramsey transfer result.

\begin{definition}[Mottet and Pinsker~\cite{MottetPinskerCores}]
Let $\mathscr G$ be a permutation group on a set $X$.
A function $g \colon X \to X$ is called \emph{range-rigid with respect to $\mathscr G$} if 
for every $k \in {\mathbb N}$, every orbit of $k$-tuples in the range of $g$ is preserved by $g$. 
\end{definition}

Mottet and Pinsker~\cite{MottetPinskerCores} 
proved that if $\bB$ is a homogeneous structure and $g \colon B \to B$ is range-rigid with respect to $\Aut(\bB)$, then the age of the structure induced by
$\bB$ on the image of $g$ has the amalgamation property; we
denote the Fra\"{i}ss\'e-limit of this class by $\bB_g$. 


\begin{theorem}[Lemma 10 in~\cite{MottetPinskerCores}]
\label{thm:ramsey-trans}
Let $\bB$ be a homogeneous
structure and let $g \colon B \to B$ be range-rigid with respect to $\Aut(\bB)$. If $\bB$ is Ramsey,
then so is $\bB_g$. 
\end{theorem}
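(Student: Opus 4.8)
\textbf{Proof proposal for Theorem~\ref{thm:ramsey-trans}.}
The plan is to deduce the Ramsey property of $\bB_g$ from that of $\bB$ by transferring a given colouring of copies of a small structure in $\bB_g$ to a colouring of copies in $\bB$, applying the Ramsey property of $\bB$ there, and then pulling the resulting monochromatic copy back into the image of $g$. The key point that makes this work is range-rigidity: if $g$ is range-rigid with respect to $\Aut(\bB)$, then $g$, viewed as a map from $\bB$ to the substructure $\bB'$ induced on $g(B)$, is a homomorphism that moreover preserves the orbit structure of tuples inside $g(B)$; since $\bB'$ is homogeneous with Fra{\"\i}ss\'e-limit $\bB_g$, it is in fact isomorphic to $\bB_g$, and so without loss of generality we may identify $\bB_g$ with $\bB'$ and treat $g$ as a retraction-like map $\bB \to \bB_g$ whose restriction to $\bB_g$ preserves all orbits (hence, by homogeneity, is elementary on $\bB_g$).

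First I would fix finite $\bA, \bE \in \Age(\bB_g)$ and $r \in {\mathbb N}$, and a colouring $\chi \colon \binom{\bB_g}{\bA} \to \{1,\dots,r\}$; since $\bB_g$ embeds into $\bB$ as the substructure on $g(B)$, we may regard $\bA$ and $\bE$ as finite substructures of $\bB$ as well. Using that $\bB$ is Ramsey, choose $\bF \in \Age(\bB)$ with $\bF \to (\bE)^{\bA}_r$. Now I would transfer $\chi$ to a colouring $\chi'$ of $\binom{\bF}{\bA}$: given an embedding $f \colon \bA \hookrightarrow \bF \subseteq \bB$, the composite $g \circ f$ maps $\bA$ into $g(B) = B_g$, and by range-rigidity $g \circ f$ is again an embedding of $\bA$ (it preserves all orbits of tuples from $\bA$ because its image lies in $g(B)$), so we may set $\chi'(f) \coloneqq \chi(g \circ f)$. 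Applying $\bF \to (\bE)^{\bA}_r$ to $\chi'$ yields an embedding $e \colon \bE \hookrightarrow \bF$ which is $\chi'$-monochromatic on all copies of $\bA$ inside $e(\bE)$. Then $g \circ e \colon \bE \to B_g$ is an embedding of $\bE$ into $\bB_g$ (again by range-rigidity applied to the image, which lies in $g(B)$), and for every $f' \in \binom{\bE}{\bA}$ we have $\chi\big((g\circ e)\circ f'\big) = \chi\big(g\circ(e\circ f')\big) = \chi'(e \circ f')$, which is constant as $f'$ ranges over $\binom{\bE}{\bA}$. Hence $g \circ e$ witnesses that $\bB_g$ contains a $\chi$-monochromatic copy of $\bE$, and since $\bA, \bE, r, \chi$ were arbitrary this shows $\Age(\bB_g)$, and therefore $\bB_g$, has the Ramsey property.

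The main obstacle — and the place where range-rigidity is doing real work — is verifying that $g \circ f$ is genuinely an \emph{embedding} (not merely a homomorphism) whenever $f$ embeds a finite structure into $\bB$: for this one needs that $g$ reflects the quantifier-free type, i.e.\ that two tuples in $g(B)$ realising the same orbit in $\bB$ have preimages (namely themselves, plus $f(\bA)$'s image) again realising the same orbit, which is exactly the range-rigidity hypothesis unwound at the level of $k$-tuples. A secondary point to handle carefully is the identification of $\bB_g$ with the substructure of $\bB$ on $g(B)$: one must invoke the Mottet--Pinsker amalgamation result quoted just above to know that $\Age$ of this substructure is an amalgamation class, and then use $\omega$-categoricity / homogeneity of its Fra{\"\i}ss\'e-limit together with range-rigidity to see that the substructure on $g(B)$, being homogeneous with the right age, is that limit. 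Once these two identifications are in place the colour-transfer argument is routine.
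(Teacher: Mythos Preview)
The paper does not prove this statement; it is quoted as Lemma~10 of~\cite{MottetPinskerCores} and used as a black box, so there is no in-paper argument to compare against.

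Your overall strategy --- pull the colouring back along $g$, apply Ramsey in $\bB$, push the monochromatic copy forward via $g$ --- is correct, and the crucial step, that $g\circ f$ is again an embedding whenever $f$ embeds some $\bA\in\Age(\bB_g)$ into $\bB$, is indeed exactly what range-rigidity delivers: since $\bA$ already has a copy inside $g(B)$, homogeneity of $\bB$ puts the tuple $f(\bar a)$ into an orbit that is represented in $g(B)$, and range-rigidity then forces $g(f(\bar a))$ to stay in that orbit.

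Where your write-up has a genuine gap is the identification of $\bB_g$ with the substructure of $\bB$ induced on $g(B)$. For that you need the latter to be homogeneous, which you assert but do not prove; composing an automorphism of $\bB$ with $g$ gives only a self-\emph{embedding} of the substructure on $g(B)$, not an automorphism, and $\omega$-categoricity --- which you appeal to --- is not among the hypotheses of the theorem. The identification is in fact unnecessary: the Ramsey property is a property of the age, and $\Age(\bB_g)$ equals the age of the substructure on $g(B)$ by definition. So argue finitely. Given $\bA,\fe\in\Age(\bB_g)$ and $r$, pick $\bF\in\Age(\bB)$ with $\bF\to(\fe)^{\bA}_r$, realise $\bF$ as a substructure of $\bB$, and let $\bC$ be the substructure on $g(F)$; then $\bC\in\Age(\bB_g)$, and your colour-transfer argument run between the finite $\bC$ and $\bF$ yields $\bC\to(\fe)^{\bA}_r$ directly, with no need to know what $\bB_g$ looks like globally.
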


Let 
 $\bA = (A;\cap,\cup,\overline{\cdot},0,1)$ be the countable atomless Boolean algebra~\cite{Hodges}, 
 which we view as a subalgebra of $(2^{\mathbb N};\cap,\cup,\overline{\cdot},0,1)$. 
 We use the usual shortcuts: 
 \begin{itemize}
\item $x \subseteq y$ stands for $x \cap y = x$,
\item $x \subset y$ stands for $x \subseteq y$ and $x \neq y$, and 
\item $x + y$ stands for $(x \cap \overline{y}) \cup (y \cap \overline{x})$. 
\end{itemize} 
Let $\bA'$ be the ($\omega$-categorical) relational structure with
the same domain as $\bA$ carrying all relations that are first-order definable in $\bA$. 
Kechris, Pestov, and Todor\v{c}evi\'c described a  linear order $\prec$ on $A$
so that the expanded structure $(\bA',\prec)$ is a homogeneous $\omega$-categorical
Ramsey structure~\cite{Topo-Dynamics}. 
The age of $(\bA',\prec)$ can be described as follows. If $\bF$ is a finite substructure of
$\bA$, then $\bF$ 
is a finite Boolean algebra and there exists an enumeration $a_1,\dots,a_n$ of the atoms of $\bF$ such that for all $u,v \in F$ 
with $u = \bigcup_{i=1}^n (d_i  \cap a_i)$ and
$v = \bigcup_{i=1}^n (e_i \cap a_i)$ for 
$d,e \in \{0,1\}^n$
we have $u \prec v$ if and only if there exists some $j \in \{1,\dots,n\}$ such that $d_j < e_j$ and $d_i = e_i$ for all $i > j$; such an ordering is called an \emph{antilexicographical ordering}. 

To apply the Ramsey transfer theorem (Theorem~\ref{thm:ramsey-trans}) to the Ramsey structure $(\bA',\prec)$ to infer the Ramsey property of $(\bR,\prec)$, we need to find an appropriate function $g$ which is range-rigid 
with respect to $\Aut(\bA',\prec)$. In the course of the argument, we define a number of relations that are summarised in Figure~\ref{fig:structs}.

\begin{figure}
\begin{center}
\begin{tabular}{llp{3.4cm}l}
Structure & Domain & Signature &  Explanation \\
\hline
$\bA$ & $A$ & $\{\cap, \cup, \overline{\cdot}, 0, 1\}$ & the countable atomless Boolean algebra \\
$(\bA,\prec)$ & A & $\{\cap, \cup, \overline{\cdot}, 0, 1, \prec\}$ & a Ramsey order expansion of $\bA$ \\
$\bB$ & $B := A \setminus \{0\}$ & $\{R_{k,l} \mid k, l \geq 1\}$ $\cup$ $\{O_{k,d,e} \mid k \geq 1, e,d \in \{0,1\}^k\}$ & homogeneous structure to define the range-rigid map $g$ \\
$\bB_g$ & $B_g$ & same as $\bB$ & from Theorem~\ref{thm:ramsey-trans} \\
$(\bD,\prec)$ & $B$ &  $\{\eq,\dr,\op,\pp,\ppi,\prec\}$ & reduct of $\bB$ \\
$(\bC,\prec)$ & $B_g$ & $\{\eq,\dr,\op,\pp,\ppi,\prec\}$ & corresponding reduct of $\bB_g$ 
\end{tabular}  
\end{center}
\caption{Some structures needed to prove the Ramsey property of $(\bR,\prec)$.}
\label{fig:structs}
\end{figure}



Let $\bB$ be the relational structure with domain $B \coloneqq A \setminus \{0\}$ 
and 
for all $k,l \geq 1$ the relation
\begin{align*}
R_{k,l} \coloneqq \big \{ (a_1,\dots,a_k,b_1,\dots,b_l)
\mid  \bigcap_{i=1}^k a_i \subseteq \bigcup_{j=1}^l b_j \big \} 
\end{align*}
and for all $k \geq 1$ and $d,e \in \{0,1\}^k$ the relation 
\begin{align*}
O_{k,d,e} \coloneqq \big \{ (a_1,\dots,a_k) \mid \bigcap_{i = 1}^k (a_i + d_i)  \prec \bigcap_{i = 1}^k (a_i + e_i) \big \}. 
\end{align*}
Note that $x \prec y$ has the definition $O_{2,(0,1),(1,0)}(x,y)$ in $\bB$, which holds precisely if 
$x \cap \overline y \prec {\overline x} \cap y$. 


\begin{lemma}\label{lem:b-hom}
The structure $\bB$ is homogeneous. 
\end{lemma}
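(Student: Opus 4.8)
The plan is to transfer homogeneity from the ordered atomless Boolean algebra $(\bA',\prec)$, which by Kechris--Pestov--Todor\v{c}evi\'c we already know to be homogeneous (and $\omega$-categorical). Three ingredients enter. First, the bottom element $0$ is $\emptyset$-definable in $(\bA',\prec)$, so every automorphism of $(\bA',\prec)$ fixes $0$ and hence restricts to a permutation of $B = A\setminus\{0\}$. Second, each relation $R_{k,l}$ and $O_{k,d,e}$ of $\bB$ is first-order definable in $(\bA',\prec)$ (it only involves $\cap,\cup,+,\subseteq,\prec$), so these restricted automorphisms preserve all relations of $\bB$, i.e. $\Aut((\bA',\prec))\restriction B \subseteq \Aut(\bB)$. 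Third -- the substantive point -- the quantifier-free type of a tuple $\bar a=(a_1,\dots,a_n)\in B^n$ in the language $\{R_{k,l},O_{k,d,e}\}$ determines the isomorphism type of the finite ordered Boolean algebra generated by $\bar a$ inside $(\bA',\prec)$, together with the positions of the $a_i$ in it. Granting the third point, homogeneity of $\bB$ follows: if $\bar a,\bar b\in B^n$ have the same quantifier-free $\bB$-type, then they generate isomorphic ordered finite Boolean algebras with matching distinguished tuples; such an isomorphism respects $\prec$ and (since it is a Boolean isomorphism between finite subalgebras of $\bA$, hence extends to an automorphism of $\bA$) respects all relations first-order definable over $\bA$, so it is an isomorphism of the induced substructures of $(\bA',\prec)$; by homogeneity of $(\bA',\prec)$ it extends to some $g\in\Aut((\bA',\prec))$, and $g\restriction B\in\Aut(\bB)$ maps $\bar a$ to $\bar b$.

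For the third point, write $c_f := \bigcap_{i:f_i=1}a_i\cap\bigcap_{i:f_i=0}\overline{a_i}$ for $f\in\{0,1\}^n$ for the cells of the subalgebra $\langle\bar a\rangle$ generated by $\bar a$; the nonzero cells are exactly its atoms, and both the abstract Boolean algebra $\langle\bar a\rangle$ and the elements $a_i=\bigcup\{c_f:f_i=1\}$ are determined once we know which cells vanish and how the nonzero ones are ordered. I would first show that the quantifier-free $\bB$-type of $\bar a$ records, for every $f$, whether $c_f=0$. For a \emph{mixed} $f$ (both $\{i:f_i=1\}$ and $\{i:f_i=0\}$ nonempty) this is visible from a single relation: $R_{k,l}$ applied to the $a_i$ with $f_i=1$ followed by the $a_i$ with $f_i=0$ holds precisely when $c_f=0$. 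For the two \emph{pure} cells $\bigcap_i a_i$ and $\overline{\bigcup_i a_i}$ -- where no $R_{k,l}$ with $k,l\geq 1$ directly asserts vanishing -- I would use the $O$-relations together with the fact (read off from the antilexicographic description) that $0$ is the $\prec$-least element of $A$: evaluating $O_{2n,d,e}$ at the argument tuple $(a_1,\dots,a_n,a_1,\dots,a_n)$, with $d$ chosen so that the left intersection contains $a_1\cap\overline{a_1}$ and hence is identically $0$, and with $e$ chosen so that the right intersection equals the target cell $c_f$, yields an atomic formula that holds iff $0\prec c_f$, i.e. iff $c_f\neq 0$. Second, for two nonzero cells the atomic formula $O_{n,(1-f_i)_i,(1-g_i)_i}(a_1,\dots,a_n)$ holds iff $c_f\prec c_g$, so the quantifier-free type also records the linear order on the atoms of $\langle\bar a\rangle$; since $\langle\bar a\rangle$ with the induced order lies in $\Age((\bA',\prec))$, this order is the antilexicographic order of that atom-ordering and so the order on all of $\langle\bar a\rangle$ is determined as well. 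Combining the two, the quantifier-free $\bB$-type of $\bar a$ determines the vanishing-cell pattern and the atom-order, which is the third point.

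The main obstacle is exactly this zero-detection step: because the signature of $\bB$ contains $R_{k,l}$ only for $k,l\geq 1$, there is no atomic formula directly asserting that one of the ``pure'' cells $\bigcap_i a_i$ or $\overline{\bigcup_i a_i}$ vanishes, and one must route through the $O$-relations, exploiting that $0$ is the $\prec$-minimum, to recover this information. Once the vanishing-cell pattern and the atom-order are in hand, the remainder is routine bookkeeping with cells and the antilexicographic normal form for orders on finite Boolean algebras.
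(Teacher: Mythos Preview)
Your argument is correct and follows essentially the same route as the paper: both reduce homogeneity of $\bB$ to homogeneity of $(\bA',\prec)$ by showing that the quantifier-free $\bB$-type of a tuple determines its quantifier-free $(\bA',\prec)$-type (equivalently, the isomorphism type of the generated ordered Boolean subalgebra with the marked generators), and then invoke the extension to an automorphism of $(\bA',\prec)$ whose restriction to $B$ is the desired automorphism of $\bB$.

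In fact you are more careful than the paper on one point. The paper writes that every condition ``$t(\bar x)=0$'' decomposes into a conjunction of instances of $R_{r,s}$, but since the signature of $\bB$ only contains $R_{k,l}$ for $k,l\ge 1$, the two \emph{pure} cells $\bigcap_i a_i$ and $\bigcap_i \overline{a_i}$ are not directly covered by any $R_{k,l}$; your doubled-argument trick with $O_{2n,d,e}$ (forcing the left intersection to be $0$ and the right intersection to be the target cell, and using that $0$ is the $\prec$-minimum) is exactly the missing detail that makes the zero-detection step go through for all cells. The rest --- recovering the order on the nonzero cells via $O_{n,(1-f_i)_i,(1-g_i)_i}$ and then the full induced order via the antilexicographic normal form --- matches the paper's second claim verbatim.
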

\begin{proof}
Let $i$ be an isomorphism between finite induced substructures of $\bB$. 
We claim that $i$ preserves all quantifier-free formulas of $\bA$. To see this, it suffices to show that 
for every Boolean term $f$ of arity $n$ and every Boolean term $g$ of arity $k$,
the function $i$ preserves the formula 
$f(x_1,\dots,x_n) = g(y_1,\dots,y_k)$. 
Since $f(x_1,\dots,x_n) = g(y_1,\dots,y_k)$ if and only if 
$f(x_1,\dots,x_n) + g(y_1,\dots,y_k) = 0$, 
it suffices to show the statement for the special case $f(x_1,\dots,x_n) = 0$. 
Every Boolean term can be rewritten
as a finite union of finite intersections of arguments and complements of arguments. 
Therefore, $f(x_1,\dots,x_n) = 0$ is equivalent to a conjunction of formulas of the form 
$x_{i_1} \cap \cdots \cap x_{i_r} \subseteq x_{j_1}\cup \cdots \cup x_{j_s}$.
These formulas are preserved by $i$, because $i$ preserves $R_{r,s}$.

We claim that $i$ also preserves all quantifier-free formulas of $(\bA,\prec)$. The definition of $\prec$ implies that it suffices to show that $i$ preserves $\prec$ for the atoms of the Boolean algebra generated by the domain $\{a_1,\dots,a_m\}$ of $i$. Let $b$ and $c$ be two such atoms with $b \prec c$. Then $b = \bigcap_{i = 1}^m (a_i + d_i)$ and $c = \bigcap_{i = 1}^m (a_i + e_i)$ for some $d,e \in \{0,1\}^m$,
and hence $i(b) \prec i(c)$, because $i$ preserves the relation $O_{m,d,e}$.  
By the homogeneity of $(\bA,\prec)$, 
the map $i$ has an extension to an automorphism of $(\bA,\prec)$. The restriction of this map to $B$ is an automorphism of $\bB$, concluding the proof. 
\end{proof}

\begin{lemma}
The structure $\bB$ is Ramsey. 
\end{lemma}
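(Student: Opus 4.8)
The plan is to deduce the Ramsey property of $\bB$ from the Ramsey transfer theorem of Mottet and Pinsker (Theorem~\ref{thm:ramsey-trans}), applied to the ordered atomless Boolean algebra $(\bA',\prec)$, which is homogeneous, $\omega$-categorical and Ramsey. The key point is that $\bB$ is, up to first-order interdefinability, the substructure of $(\bA',\prec)$ induced on the set $B=A\setminus\{0\}$ of nonzero elements. Since every automorphism of a Boolean algebra fixes $0$, the set $B$ is invariant under $\Aut(\bA',\prec)=\Aut(\bA,\prec)$. Fix any $a_0\in B$ and let $g\colon A\to A$ be the identity on $B$ with $g(0)=a_0$. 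Then $g$ has range exactly $B$, and it is range-rigid with respect to $\Aut(\bA',\prec)$: every tuple over $B$ is fixed by $g$, so a fortiori every orbit of $k$-tuples contained in $B^k$ is preserved by $g$. Let $\mathfrak N$ be the substructure of $(\bA',\prec)$ induced on $B$. Any isomorphism between finite substructures of $\mathfrak N$ is an isomorphism between finite substructures of $(\bA',\prec)$ that avoid $0$; by homogeneity of $(\bA',\prec)$ it extends to an automorphism of $(\bA',\prec)$, which fixes $0$ and hence restricts to an automorphism of $\mathfrak N$. Thus $\mathfrak N$ is homogeneous, so $\mathfrak N=(\bA',\prec)_g$, and by Theorem~\ref{thm:ramsey-trans} the structure $\mathfrak N$ is Ramsey.

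It remains to verify that $\bB$ and $\mathfrak N$ are first-order interdefinable. Each relation $R_{k,l}$ and $O_{k,d,e}$ is quantifier-free definable over $(\bA,\prec)$, hence first-order definable in $\mathfrak N$. For the converse, recall that the atomless Boolean algebra admits quantifier elimination and that $(\bA',\prec)$ is homogeneous, so the type in $(\bA',\prec)$ of a tuple $(a_1,\dots,a_n)$ of nonzero elements is determined by which of the Boolean atoms $\bigcap_{i\in I}a_i\cap\bigcap_{j\in\{1,\dots,n\}\setminus I}\overline{a_j}$ equal $0$, together with the $\prec$-order among the nonzero ones. The $\prec$-order among these atoms is captured by the relations $O_{k,d,e}$. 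As for the zero pattern, when $I\ne\{1,\dots,n\}$ the atom indexed by $I$ is $0$ exactly if $R_{|I|,\,n-|I|}$ holds of the tuple $(a_i)_{i\in I}$ followed by $(a_j)_{j\notin I}$, while the atom $\bigcap_{i=1}^{n}a_i$ is $0$ exactly if no nonzero element lies below all of $a_1,\dots,a_n$, i.e.\ if $\neg\exists z\,\bigwedge_{i=1}^{n}R_{1,1}(z,a_i)$ holds in $\bB$. Hence every relation of $\mathfrak N$ is first-order definable in $\bB$, so $\Aut(\bB)=\Aut(\mathfrak N)$; in particular $\bB$ is $\omega$-categorical, since $\Aut(\bB)$ contains the oligomorphic group $\{\alpha|_B:\alpha\in\Aut(\bA,\prec)\}$.

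Finally, two homogeneous structures on the same domain with the same automorphism group have the same Ramsey status. Indeed, replacing in each finite induced substructure the relations of one structure by the interdefinable relations of the other yields a bijection between the ages of the two structures that preserves the embedding relation: by homogeneity any embedding between finite substructures extends to an automorphism, which is common to both structures. Consequently, for any fixed finite structures this bijection identifies the corresponding sets of embeddings, and hence the colourings occurring in the partition relation $\mathfrak c\to(\mathfrak b)^{\mathfrak a}_r$; so this relation holds throughout one age precisely when it holds throughout the other. Applying this to $\mathfrak N$ (which is Ramsey) and $\bB$ shows that $\bB$ is Ramsey. I expect the delicate points to be the verification in the second paragraph that the comparatively sparse signature $\{R_{k,l},O_{k,d,e}\}$ already first-order defines the full induced structure (the only non-immediate case being disjointness, handled by the existential formula above), and the observation in the third paragraph that the Ramsey property of a homogeneous structure depends only on its automorphism group; the remaining steps are a direct application of the Mottet--Pinsker transfer.
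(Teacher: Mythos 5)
Your proof is correct, but it takes a genuinely different route from the paper. The paper's proof of this lemma is a three-line argument: the restriction map from $\Aut(\bA',\prec)$ to the automorphism group of the induced structure on $B$ is a topological group isomorphism (since every automorphism fixes $0$, deleting the single fixed point changes neither the group nor its topology); because the Ramsey property of an $\omega$-categorical structure depends only on its topological automorphism group (the Kechris--Pestov--Todor\v{c}evi\'c correspondence, cited as~\cite{Topo-Dynamics}), the induced structure on $B$ is Ramsey; and this structure has the same automorphism group as $\bB$ even as a permutation group. You instead deduce the Ramsey property of the restriction to $B$ by a clever degenerate application of the Mottet--Pinsker transfer (Theorem~\ref{thm:ramsey-trans}), taking $g$ to be the identity on $B$ moving $0$ to an arbitrary nonzero element; the observation that this $g$ is range-rigid and that the induced substructure on $B$ is homogeneous (hence is $\bB_g$) is sound. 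Your second paragraph on first-order interdefinability is correct and overlaps substantially with the content of Lemma~\ref{lem:b-hom}, which proves that a partial isomorphism of $\bB$ preserves all quantifier-free formulas of $(\bA,\prec)$ — you re-derive the converse direction as well, including the detail (which the paper glosses with ``it is easy to see'') that the emptiness of $\bigcap_i a_i$ is definable via $\neg\exists z\,\bigwedge_i R_{1,1}(z,a_i)$ because the algebra is atomless. Your third paragraph, that homogeneous structures on the same domain with the same automorphism group have the same Ramsey status, is a hands-on proof of the special case of the KPT invariance that the paper cites wholesale. Overall your argument is more self-contained but considerably longer; the paper's is shorter because it leans directly on the KPT correspondence, and it reserves the Mottet--Pinsker transfer for the genuinely nontrivial range-rigid map $e$ of Lemma~\ref{lem:range-rigid}, where it is really needed.
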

\begin{proof}
The structure $(\bA',\prec)$ and its homogeneous substructure $\bA''$ with domain $B$ have topologically isomorphic 
automorphism groups; since the Ramsey property only depends on the topological automorphism group (see, e.g.,~\cite{Topo-Dynamics}), it follows that $\bA''$ is Ramsey. 
It is easy to see that $\bA''$ and $\bB$ have the same automorphism group, even as permutation groups, which implies the statement. 
\end{proof}

Consider 
the $\{\eq,\dr,\op,\pp,\ppi\}$-structure with domain $A$ 
whose relations
are defined over $\bA$ by the expressions in (\ref{eq:eq}),
(\ref{eq:pp2}), (\ref{eq:ppi}), (\ref{eq:dr}), (\ref{eq:op}); let $\bD$ be the substructure of this structure with domain $B$,
and let $(\bD,\prec)$ be the expansion 
of $\bD$ by the restriction of the order $\prec$ to $B$.

\begin{lemma}\label{lem:range-rigid}
There exists a self-embedding 
$g$ of $(\bD,\prec)$ such that 
\begin{enumerate}
\item for all $k,l \in {\mathbb N}$ and $a_1,\dots,a_k,b_1,\dots,b_l \in B$ we have that 
$$(g(a_1),\dots,g(a_k),g(b_1),\dots,g(b_l)) \in R_{k,l}$$
if and only if  $(a_i,b_j) \in \pp \cup \eq$ or $(a_i,a_j) \in \dr$ for some
$i,j \leq \max(k,l)$, and 
\item for every $k \in {\mathbb N}$ and 
$d,e \in \{0,1\}^k$
there exists a quantifier-free formula 
$\phi_{k,d,e}$ in the signature 
of $(\bD,\prec)$ 
that defines $O_{k,d,e}$. 
\end{enumerate}
The function $g$ is range-rigid 
with respect to $\Aut(\bB)$. 
\end{lemma}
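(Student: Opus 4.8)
The plan is to build $e$ out of a classical "thickening" trick for atomless Boolean algebras. First I would fix a countable antichain $p_0, p_1, p_2, \dots$ of nonzero elements of $\bA$ summing to $1$ (so the $p_n$ partition the unit), and choose a bijection between $\mathbb{N}$ and the index set that lists each basic element of the Boolean algebra we want to keep track of; concretely, pick an embedding $\iota$ of the countable atomless Boolean algebra $\bA$ into $\bA$ whose image is the subalgebra $\bA \restriction p_0$ relativised below $p_0$. The idea is that $e$ should map each $a \in B$ to an element of the form $\iota(a) \cup (\text{something spread over the remaining } p_n)$, engineered so that the containment relation $\bigcap a_i \subseteq \bigcup b_j$ between $e$-images degenerates to the purely "local" condition in item (1): the inclusion below $p_0$ is controlled by $\iota$ and hence by the original inclusions, while above $p_0$ I add enough "junk" mass to each $e(a)$ so that a nontrivial intersection $\bigcap e(a_i)$ is nonzero exactly when the $a_i$ are not pairwise disjoint, and a union $\bigcup e(b_j)$ swallows it exactly when one of the original "pairwise" containments/equalities or disjointnesses holds. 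The cleanest way to do this is to set $e(a) = \iota(a) \cup \bigcup \{ p_{\langle i,j\rangle} : \text{the pair-condition involving } a \text{ indexed by } (i,j) \text{ is forced}\}$, i.e. encode the finitely-checkable RCC5 data directly into which $p_n$'s are included; since $\bA$ is atomless every such prescription can be realised inside $\bA$.

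Next I would verify the two asserted properties. For item (1), the forward direction is immediate from the construction — whenever one of the listed pair-conditions holds we put a common block $p_{\langle i,j\rangle}$ into $\bigcap e(a_i)$ that lies outside $\bigcup e(b_j)$ only when it should, and conversely the $\iota$-part forces the inclusion below $p_0$ to mimic $\bigcap a_i \subseteq \bigcup b_j$ among the $a$'s alone; the key point is that because $R_{k,l}$ only depends on the $(a_i, b_j)$-inclusion pattern and $\fs$ (hence $\bR$) is determined by its $3$-element substructures (Lemma~\ref{lem:R-fin-bound} and the remark after it), the global inclusion among $e$-images reduces to a Boolean combination of the binary relations $\pp, \eq, \dr$ among the original arguments, which is exactly the stated condition. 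For item (2), I observe that $e$ is in particular a self-embedding of $(\bD,\prec)$, so it preserves $\prec$ and all the RCC5 relations; the element $\bigcap_{i=1}^k (a_i + d_i)$ is, up to the bookkeeping, either $0$ or a single "cell" of the Boolean algebra generated by $a_1,\dots,a_k$, and the ordering between two such cells is, by the antilexicographic description of $\prec$ recalled just before Lemma~\ref{lem:b-hom}, determined by a quantifier-free condition on the $a_i$'s once we know which cells are nonzero — and "which cells are nonzero" is itself quantifier-free expressible via the $\dr/\eq/\pp$ pattern. Writing out $\phi_{k,d,e}$ is then a finite case analysis: it is a disjunction, over the possible nonzero-cell patterns compatible with the $a_i$, of conjunctions recording the $\prec$-order of the two relevant cells, each such atomic order statement being either $x \prec y$, $y \prec x$, or a contradiction depending on the $d,e$ bit strings and the pattern.

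Finally, range-rigidity of $e$ with respect to $\Aut(\bB)$ is essentially a formality once items (1) and (2) are in place: an $n$-orbit of tuples in the range $e(B)$ is determined by the quantifier-free type in $\bB$, i.e. by which relations $R_{k,l}$ and $O_{k,d,e}$ hold; by item (1) the first family, restricted to $e$-images, is governed by the $\pp/\eq/\dr$-pattern of preimages, and by item (2) the second family is too — but $e$, being a self-embedding of $(\bD,\prec)$, preserves exactly that pattern, and hence preserves every such orbit. So for any $g = e(a) \mapsto e(g_0(a))$ pattern... more precisely, one checks directly that $e \circ \gamma$ and $e$ agree on the relevant types for $\gamma \in \Aut(\bB)$, giving that $e$ preserves every orbit inside its range. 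The main obstacle I anticipate is getting the combinatorics of the encoding right so that \emph{both} item (1) \emph{and} item (2) hold simultaneously — there is tension between adding enough junk mass above $p_0$ to collapse $R_{k,l}$ to the pairwise condition, and not adding so much that the order relations $O_{k,d,e}$ on intersections become uncontrollable; the atomlessness of $\bA$ gives enough room, but the bookkeeping (which $p_n$ goes into which $e(a)$, indexed by which pair-condition) needs to be set up carefully and it is here that I would expect to spend the bulk of the proof.
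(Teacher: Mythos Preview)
Your overall strategy---thicken each element with extra ``junk'' mass so that higher-arity Boolean relations collapse to pairwise RCC5 data---points in the right direction, but the proposal has a real gap at the core of the construction.

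Pair-indexed blocks $p_{\langle i,j\rangle}$ cannot work. Item~(1) forces in particular that whenever $a_1,\dots,a_k$ are pairwise in $\po$, the intersection $e(a_1)\cap\cdots\cap e(a_k)$ is nonzero (take $l=1$ and any $b_1$ with $a_i\not\subseteq b_1$ for all $i$; the pairwise condition fails, so $R_{k,1}$ must fail on the images). For this to hold for every $k$ you need a block common to all of $e(a_1),\dots,e(a_k)$, i.e.\ a block indexed by the \emph{whole} pairwise-$\po$ set $\{a_1,\dots,a_k\}$, not merely by its pairs. The paper does exactly this: it reduces to finite $F\subseteq B$ by compactness, lets $I$ be the collection of all pairwise-$\po$ subsets of $F$, and defines $f(a)=\{X\in I:\exists a'\in X,\ a'\subseteq a\}\subseteq 2^I$. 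The index set is finite antichains, not pairs.

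Relatedly, your appeal to Lemma~\ref{lem:R-fin-bound} for item~(1) is a red herring. The 3-element bound concerns $\bR$, not $\bB$: the Boolean relation $R_{k,l}$ is \emph{not} determined by 2- or 3-element subconfigurations (three pairwise-$\po$ elements may or may not have empty triple intersection). The whole content of item~(1) is that $e$ \emph{forces} $R_{k,l}$ on its range to become pairwise-determined; this is a feature of the construction, not something you get for free. Note also that your $\iota$-copy below $p_0$ does not help---below $p_0$ you still see the uncollapsed $R_{k,l}$---so all the work has to happen in the junk.

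For item~(2), which you correctly flag as the hard part, the paper's mechanism is to equip $I$ with a specific linear order $\sqsubset$ (compare two subsets via the $\prec$-least element of their symmetric difference) and take the antilexicographic order on $2^I$ along $\sqsubset$. One then shows (their Claims~2 and~3) that the $\sqsubset$-maximal element of $\bigcap_i (f(a_i)+d_i)$, when nonempty, is the explicit set of $\subset$-minimal elements among $\{a_i:d_i=0\}$; this pins down the $O_{k,d,e}$-comparison as a quantifier-free $\prec$-statement on those minimal elements. Your sketch does not supply any such order on the junk blocks or any computation of the top block in the relevant intersections, so item~(2) remains unverified.

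Your final paragraph on range-rigidity is fine and matches the paper's argument.
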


\begin{proof}
First note that any function $g$ that satisfies the first statement of the lemma must be range-rigid.
 By the homogeneity of $\bB$, orbits of $k$-tuples in $\bB$ can be described by conjunctions of relations of the form $R_{k,l}$ and of the form $O_{k,d,e}$. 
Therefore, every orbit of $k$-tuples in the range of $g$ has by items (1) and (2) of the statement a quantifier-free definition over the relations of $(\bD,\prec)$. 
Since $g$ preserves these relations, 
$g$ preserves all orbits of $k$-tuples in its range. 
 
 To prove the first part of the statement, 
 by a standard compactness argument it suffices
to prove that for every finite substructure
$\bF$ of $\bB$ there exists a finite 
substructure $(\bG,\prec)$ of $(\bA,\prec)$ and a map $f \colon F \to G \setminus \{0\}$ that preserves the relations of $(\bD,\prec)$ 
and satisfies the property of $g$ formulated in 
items (1) and (2) in the statement. Let 
\begin{align*}
I \coloneqq \big \{ & \{a_1,\dots,a_k\}  \mid \; k \in {\mathbb N},  a_1,\dots,a_k \in F,
(a_i,a_j) \in \po^{\bF} 
\text{ for all } 
1 \leq i < j  \leq k\} \big \} 
\end{align*}
and let $f \colon F \to 2^I$ be the map given by
$$f(a) \coloneqq \{X \in I \mid \exists a' \in X. \, a' \subseteq a \}.$$
Define the relation $\sqsubset$ on $I$ so that 
$X \sqsubset Y$ holds for $X,Y \in I$ if and only if the smallest element of $X + Y = (X \cup Y) \setminus (X \cap Y)$ with respect to $\prec$ is contained in $X$. 

{\bf Claim 1.} $\sqsubset$ defines a linear order on $I$. It is clear from the definition of $\sqsubset$ that for all $X,Y \in I$ exactly one of $X = Y$, $X \sqsubset Y$, and $Y \sqsubset X$ holds. To show that $\sqsubset$ is transitive it suffices to show that $\sqsubset$ does not contain a directed 3-cycle. 
Let $X,Y,Z \in I$ be pairwise distinct. Let $a$ be the smallest element in $(X \cup Y \cup Z) \setminus (X \cap Y \cap Z)$ and 
suppose that $a \in X \setminus Y$, so that $X \sqsubset Y$.
\begin{itemize}
\item If $a \in Z$ then $a$ is the smallest element in $Z+Y$, and thus $Z \sqsubset Y$.
\item If $a \notin Z$ then $a$ is the smallest element in $X+Z$, and thus $X \sqsubset Z$.
\end{itemize} 
In either case the restriction of $\sqsubset$ to $X,Y,Z$ does not form a directed 3-cycle. By symmetry, the assumption that $x \in X \setminus Y$ can be made without loss of generality; this proves the claim. 

Let $\bG$ be the Boolean algebra on $2^I$, 
and let $\prec$ denote the linear order on $G$ 
such that 
$S \prec T$ if the largest element of the symmetric difference $S + T$ with respect to the linear order $\sqsubset$ is contained in $T$. Note that $\prec$ is an antilexicographic linear order on $G$ and hence $(\bG;\prec)$ embeds into $(\bA;\prec)$. 
Also note that the image of $f$ lies in $G \setminus \{ 0 \}$. 

{\bf Claim 2.} Every $X \in I$ equals the largest element of
$I$ with respect to $\sqsubset$ which is contained in $\bigcap_{a \in X} f(a)$. 
By construction, $X \in f(a)$ for all $a \in X$, that is,  $X \in \bigcap_{a \in X} f(a)$. Assume that there exists $Y \in \bigcap_{a \in X} f(a)$. We have to show that $Y=X$ or $Y \sqsubset X$. If $X \subseteq Y$ then we are done. Otherwise, let $a$ be the smallest element of $X \setminus Y$ with respect to $\prec$. Since $Y \in f(a)$ there exists some $a' \in Y$ such that $a' \subseteq a$ by the definition of $f(a)$. Since $a \notin Y$, it follows that $a' \neq a$. So $a' \subset a$ and thus $a$ and $a'$ cannot both be in $X$. Since $a \in X$ we conclude that $a' \notin X$. 
Hence, $a' \in Y \setminus X$. Since $\prec$ extends $\subset$ it follows that $a' \prec a$. 
Since $a$ was chosen to be minimal in $X \setminus Y$ with respect to $\prec$ this implies that $Y \sqsubset X$. 

To prove that $f$ preserves the relations of $\bD$, it suffices to show that $f$ preserves $\pp$, $\dr$, and $\po$. 
\begin{itemize}
\item Suppose that $a,b \in F$ satisfy $\pp(a,b)$ in $\bF$. 
Let us assume that $X \in f(a)$ for some $X \in I$. 
By definition we know that $a' \in X$ for some $a' \subseteq a$. Then we also have $a' \subseteq b$, and thus by definition $X \in f(b)$. This concludes the proof that $f(a) \subseteq f(b)$. 
On the other hand $\{b\} \in f(b) \setminus f(a)$, thus in fact $\pp(f(a),f(b))$ holds in $\bG$. 
\item 
Suppose that $a,b \in F$ satisfy $\dr(a,b)$ in $\bF$.  
If $X \in f(a)$ then by definition there exists $a' \in X$ such that $a' \subseteq a$. Since $X \in I$ and $a' \cap b = 0$ this implies that 
$b' \notin X$ for every $b' \subseteq b$. Therefore, $X \notin f(b)$.  This implies that $\dr(f(a) \cap f(b))$ holds in $\bG$. 
\item Suppose that $a,b \in F$ satisfy $\po(a,b)$ in $\bF$. Then $\{a\} \in f(a) \setminus f(b)$, 
$\{b\} \in f(b) \setminus f(a)$, and $\{a,b\} \in \gamma(a) \cap \gamma(b)$. Therefore, $\po(f(a),f(b))$ holds in $\bG$. 
\end{itemize}
We now show that $f$ preserves $\prec$. 
Let $a,b \in F$ be such that $a \prec b$. 
Then by definition $\{a\} \sqsubset \{b\}$. 
Claim 2 implies that the largest elements contained in $f(a)$ and $f(b)$ are $\{a\}$ and $\{b\}$, respectively. It follows that $f(a) \prec f(b)$. 

To prove that $f$ satisfies items (1) and (2) of the statement, the following notation will be convenient.
For $X \subseteq F$, we write $M(X)$ for the minimal elements in $S$ with respect to $\subset$, i.e., 
$$M(X) \coloneqq \{a \in X \mid \neg \exists b \in X. \, b \subset a\}.$$
Note that for every $a \in X$ there exists an $a' \in M(X)$ such that $a' \subseteq a$, 
because $X\subseteq F$ is finite.

{\bf Claim 3.} Let $a_1,\dots,a_k,b_1,\dots,b_l \in F$. Then the following are equivalent. 
\begin{enumerate}
\item $(f(a_1),\dots,f(a_k),f(b_1),\dots,f(b_l)) \notin R_{k,l}$;
\item $c \coloneqq \bigcap_{i = 1}^k f(a_i) \cap \bigcap_{i = 1}^l \overline{f(b_i)} \neq \emptyset$;
\item 
$a_i \cap a_j \neq 0$ and $a_i \not \subseteq b_j$ for all $1 \leq i < j \leq \max(k,l)$. 
\item The $\sqsubset$-maximal element of $c = \bigcap_{i = 1}^k f(a_i) \cap \bigcap_{i = 1}^k \overline{f(b_i)}$ is $M(\{a_1,\dots,a_k\})$. 
\end{enumerate}
(1) and (2) are equivalent by definition.  \\
(2) implies (3): if there are  $i,j \in \{1,\dots,k\}$ such that 
$a_i \cap a_j = \emptyset$, then $f(a_i) \cap f(a_j) = \emptyset$ since $f$ preserves $\dr$, and hence $c = \emptyset$. 
If there are $i \leq k$ and $j \leq l$ such that $a_i \subseteq b_j$ then $a \cap \overline{b_j} = \emptyset$ and hence $c = \emptyset$. \\
(3) implies (4): Let $X \coloneqq M(\{a_1,\dots,a_k\})$. Then by assumption $X \in I$, and by Claim 2 we know that $X$ is the maximal element of $\bigcap_{i=1}^k f(a_i)$ with respect to $\sqsubset$. Suppose for contradiction that $X \in f(b_j)$ for some $j \in \{1,\dots,l\}$. The definition of $f$ implies
that there exists $b_j' \subseteq b_j$ such that $b_j' \in X$. In other words, $b_j' = a_i$ for some $i \in \{1,\dots,k\}$, and therefore we have $a_i \subseteq b_j$, a contradiction. \\
(4) implies (2): trivial. 

\medskip 
Note that the equivalence of (1) and (3) in Claim 3
immediately implies that $f$ satisfies item (1) of the statement.  
To prove that $f$ satisfies item (2), let $a_1,\dots,a_k \in F$ and $d,e \in \{0,1\}^k$. 
Let $X \coloneqq M(\{a_i \mid d_i = 0\})$ and $Y \coloneqq M(\{a_i \mid e_i = 0\})$. 
We will prove that
$(f(a_1),\dots,f(a_k)) \in O_{k,d,e}$ if and only if 
$\bigcap_{i=1}^k (f(a_i) + e_i) \neq 0$ and either 
\begin{enumerate}
\item $\bigcap_{i=1}^k (f(a_i) + d_i) = 0$, or 
\item $\bigcap_{i=1}^k (f(a_i) + d_i) \neq 0$ and $X \sqsubset Y$. 
\end{enumerate}
If $\bigcap_{i=1}^k (f(a_i) + e_i) = 0$ then clearly 
\begin{align}
\bigcap_{i = 1}^k (f(a_i) + d_i)  \prec \bigcap_{i = 1}^k (f(a_i) + e_i) \label{eq:order}
\end{align} is false and hence $(f(a_1),\dots,f(a_k)) \notin O_{k,d,e}$ and we are done. 
So suppose that $\bigcap_{i=1}^k (f(a_i) + e_i) \neq 0$. 
If $\bigcap_{i=1}^k (f(a_i) + d_i) = 0$
then $(\ref{eq:order})$ holds. In this case $(f(a_1),\dots,f(a_k)) \in O_{k,d,e}$ and we are again done. So suppose that $\bigcap_{i=1}^k (f(a_i) + d_i) \neq 0$.

Then by Claim 3 we know that $X,Y \in I$ and
 $X$ and $Y$ are the $\sqsubset$-maximal elements in 
$\bigcap_{i=1}^k (f(a_i) + d_i)$ 
and $\bigcap_{i=1}^k (f(a_i) + e_i)$, respectively. 
Thus, if $X \sqsubset Y$ then $\bigcap_{i=1}^k (f(a_i) + d_i) \prec \bigcap_{i=1}^k (f(a_i) + e_i)$ and so $(f(a_1),\dots,f(a_k)) \in O_{k,d,e}$ and we are done. 
If $Y \sqsubset X$ then $\bigcap_{i=1}^k (f(a_i) + e_i) \prec \bigcap_{i=1}^k (f(a_i) + d_i)$ and hence $(f(a_1),\dots,f(a_k)) \notin O_{k,d,e}$ and we are also done. 
If $X = Y$, let $i \in \{1,\dots,k\}$. If $d_i = 0$ then by the definition of $X$ there exists an $a_j \in X$ 
such that $a_j \subseteq a_i$. Then $a_j \in Y$ and thus $e_j = 0$. Since $\bigcap_{l=1}^k (f(a_l) + e_l) \neq 0$ it follows that in particular $f(a_j) \cap (f(a_i) + e_i) = (\gamma(a_j)+e_j) \cap (f(a_i) + e_i) \neq \emptyset$. Since $f$ preserves 
$\dr$ this implies that $a_j \cap (a_i \cap e_i) \neq 0$. Since $a_j \subseteq a_i$ this is only possible if $e_i=0$. Similarly one can show that $e_i = 0$ implies $d_i = 0$. Therefore, $d=e$, and thus $O_{k,d,e} = \emptyset$. In particular, $(f(a_1),\dots,f(a_k)) \notin O_{k,d,e}$. 

Claim 3 implies that $\bigcap_{i=1}^k (f(a_j) + e_i) \neq 0$ and $\bigcap_{i=1}^k (f(a_i) + d_i) = 0$ can be defined by quantifier-free $\{\pp,\eq,\dr\}$-formulas. Note that condition (2) above can be expressed (assuming 
for simplicity of notation that 
$X = \{a_i \mid d_i = 0\}$ and $Y = \{a_i \mid e_i = 0\}$; the general case is similar) by the quantifier-free $\{\prec\}$-formula 
$$ \bigvee_{i: d_i = 0, e_i = 1} \bigwedge_{j: d_j \neq e_j} a_i \prec a_j $$
which concludes the proof. 
\end{proof}

\begin{proposition}\label{prop:ages}
$\Age(\bD,\prec) = \Age(\bR,\prec)$. 
\end{proposition}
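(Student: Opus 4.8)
The plan is to show $\Age(\bD,\prec)=\mathcal C$, which gives the claim because $(\bR,\prec)$ is the Fra{\"i}ss\'e-limit of the amalgamation class $\mathcal C$ (Proposition~\ref{prop:amalg}) and hence $\Age(\bR,\prec)=\mathcal C$. The easy inclusion $\Age(\bD,\prec)\subseteq\mathcal C$ is immediate: $\bD$ is by construction a substructure of $\fs$ (its domain $B$ is a set of non-empty subsets of $\mathbb N$ and its RCC5-relations are the restrictions of those of $\fs$), so the $\{\eq,\dr,\op,\pp,\ppi\}$-reduct of every finite substructure of $\bD$ lies in $\Age(\fs)$; moreover the restriction of $\prec$ to $B$ is a linear extension of $\pp^{\bD}$, since on the finite Boolean subalgebra generated by any finite subset of $B$ the order $\prec$ is antilexicographic, and an antilexicographic order always extends $\subset$ (if $u\subset v$ then the coefficient vectors satisfy $d\le e$, $d\ne e$, and for the largest $j$ with $d_j\ne e_j$ one has $d_j<e_j$ and $d_i=e_i$ for $i>j$, so $u\prec v$). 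Hence every finite substructure of $(\bD,\prec)$ lies in $\mathcal C$.

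For the inclusion $\mathcal C\subseteq\Age(\bD,\prec)$, fix $\bF\in\mathcal C$ with $\{\eq,\dr,\op,\pp,\ppi\}$-reduct $\bF_0\in\Age(\fs)$, and enumerate the domain of $\bF$ as $f_1,\dots,f_n$ so that $f_1\prec^{\bF}\cdots\prec^{\bF}f_n$; then $\pp^{\bF_0}(f_i,f_j)$ implies $i<j$. Since $\bF_0$ embeds into $\fs$ and finitely many subsets of $\mathbb N$ generate a finite Boolean algebra, we may fix a realisation of $\bF_0$ by non-empty subsets $S_1,\dots,S_n$ of a finite set, where $S_i$ realises $f_i$ and the RCC5-relations between $f_i$ and $f_j$ are read off from strict inclusion, disjointness, and proper overlap of $S_i$ and $S_j$. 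The key point is to modify this realisation so that the antilexicographic order it induces becomes exactly $\prec^{\bF}$ without changing the RCC5-relations, by a sequence of \emph{lifts}: a lift at $f_k$ introduces a fresh atom $z_k$, declared to be the largest atom so far, and adjoins $z_k$ to precisely those current sets $S_j$ with $j=k$ or $\pp^{\bF_0}(f_k,f_j)$. Such a lift preserves $\bF_0$: the index set $U_k=\{k\}\cup\{j:\pp^{\bF_0}(f_k,f_j)\}$ is upward closed under $\pp^{\bF_0}$, so adjoining $z_k$ exactly to those sets neither creates nor destroys an inclusion among the marked sets, and $U_k$ contains no two $\dr^{\bF_0}$-related elements because the corresponding sets all contain the non-empty set $S_k$ --- this is where one uses the $3$-element obstruction (valid in $\Age(\fs)$ by Lemma~\ref{lem:R-fin-bound}) that no element lies below two disjoint elements.

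Now carry out lifts at $f_1,f_2,\dots,f_n$ in this order. If $g_a$ denotes the set realising $f_a$ after all $n$ lifts, then $g_a\cap\{z_1,\dots,z_n\}=\{z_k:k=a\text{ or }\pp^{\bF_0}(f_k,f_a)\}$, which is contained in $\{z_1,\dots,z_a\}$ and contains $z_a$; hence for $a<b$ the sets $g_a$ and $g_b$ agree on every $z_c$ with $c>b$ and differ at $z_b\in g_b\setminus g_a$, so $z_b$ is the largest atom of $g_a\triangle g_b$ and therefore $g_a\prec g_b$. Thus the antilexicographic order on $g_1,\dots,g_n$ is exactly $f_1\prec\cdots\prec f_n=\prec^{\bF}$, independently of the original realisation. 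Finally, the $g_i$, viewed as non-zero elements of the finite Boolean algebra $\bG$ on the enlarged ground set, together with this antilexicographic order form a structure in $\Age(\bA',\prec)$; composing with a Boolean-algebra embedding of $\bG$ into $\bA$ that preserves $\prec$ sends the $g_i$ into $B=A\setminus\{0\}$ and exhibits $\bF$ as a substructure of $(\bD,\prec)$, so $\bF\in\Age(\bD,\prec)$.

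I expect the lift construction to be the main obstacle: one has to verify \emph{at once} that a single lift preserves all five RCC5-relations --- which rests on the combinatorial fact, inherited from $\bF_0\in\Age(\fs)$, that the $\pp$-up-set of any element of $\bF_0$ is $\dr$-free --- and that composing lifts in the order dictated by $\prec^{\bF}$ realises an \emph{arbitrary} linear extension of $\pp$ as the antilexicographic order, for which the decisive observation is that the atom $z_b$ introduced by the lift at $f_b$ is the leading bit distinguishing $g_a$ from $g_b$ whenever $a<b$. (This lift is essentially a streamlined form of the blow-up $f\colon F\to 2^{I}$ used in the proof of Lemma~\ref{lem:range-rigid}.)
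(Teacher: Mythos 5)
Your proof is correct and takes essentially the same approach as the paper's own: both modify an RCC5-realisation of the reduct $\bF_0$ by adjoining, to the image of each $u_k$ and of every element strictly $\pp$-above it, a fresh pairwise-disjoint atom placed after the old atoms in the antilexicographic enumeration and ordered according to $\prec^\bF$, which forces the induced order on the images to agree with $\prec^\bF$. The differences are cosmetic: you perform the blow-up one atom at a time (your ``lifts'') and realise $\bF_0$ by subsets of a finite set before embedding the generated Boolean algebra into $(\bA,\prec)$ at the end, whereas the paper first fixes an embedding $g\colon\bF\to\bD$ and does the whole modification at once inside $\bA$ via $b(u)=g(u)\cup\bigcup v_i$; your index set ``$j=k$ or $\pp^{\bF_0}(f_k,f_j)$'' corresponds to taking the union over $(u_i,u)\in(\pp\cup\eq)^\bF$, which is indeed what the paper's subsequent claim $\pp^{\bD}(v_j,b(u_j))$ requires.
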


\begin{proof}
Note that $\bD$ has the same age as $\bS$ and hence the same age as $\bR$.
Also, $\prec$ is a linear extension of $\pp^{\bD}$,
and hence every finite substructure 
of $(\bD,\prec)$ is also a substructure
of $(\bR,\prec)$. 

Conversely, let $(\bF,\prec)$ be a finite substructure
of $(\bR,\prec)$. 
Let $g$ be an embedding of $\bF$ into $\bD$. 
Let $u_1,\dots,u_k$ be an enumeration of $F$ such that $u_1 \prec \cdots \prec u_k$. Let 
$v_1,\dots,v_k \in B$ be such that $(v_i,g(u_j)) \in \dr^{\bD}$ for all $i,j \in \{1,\dots,k\}$ and such that $(v_i,v_j) \in \dr^{\bD}$ for $i \neq j$. 
Then we define $f \colon F \to B$ by
$$b(u) \coloneqq g(u) \cup \bigcup_{(u_i,u) \in \pp^{\bF}} v_i.$$
Note that $v_1,\dots,v_k$ are atoms in the
Boolean algebra generated by the elements 
$g(F) \cup \{v_1,\dots,v_n\}$ in $\bA$, and let $w_1,\dots,w_{\ell}$ be the other atoms. 
We may assume that $\prec$ is defined
on this Boolean algebra according to the enumeration $w_1,\dots,w_{\ell},v_1,\dots,v_k$ of the atoms. 
We prove that $f$ is an embedding of 
$(\bF,\prec)$ into $(\bD,\prec)$. 
Let $u,u' \in F$. 
If $(u,u') \in \pp^{\bF}$, then $g(u) \subset g(v)$
and $\{i \colon  (u_i,u) \in \pp^{\bF} \} \subset 
\{i \colon  (u_i,u') \in \pp^{\bF} \}$ by the transitivity of $\pp^{\bF}$, so $(b(u),b(u')) \in \pp^{\bD}$. 
It is also clear that $b$ preserves $\eq$, $\ppi$, $\dr$. To see that $b$ preserves $\op$,
note that if $(u,u') \in \po^{\bF}$, then $(g(u),g(u')) \in \po^{\bD}$, so 
$g(u) \cap g(u')$, $g(u) \cap \overline{g(u')}$,
and $\overline{g(u)} \cap g(u')$ are non-empty. 
Note that $g(u) \cap g(u') \subseteq b(u) \cap b(u')$, $g(u) \cap \overline{g(u')} \subseteq b(u) \cap \overline{b(u')}$, and $\overline{g(u)} \cap g(u') \subseteq \overline{b(u)} \cap b(u')$, 
so $(b(u),b(u')) \in \po^{\bD}$. 

To prove that $b$ preserves $\prec$, 
let $i,j \in \{1,\dots,k\}$ be such that $i < j$ and $u_i \prec u_j$. Then $\pp^{\bD}(v_j,b(u_j))$, 
$\dr^{\bD}(v_j,b(u_i))$, and for every $m \in \{j+1,\dots,k\}$
we have $\dr^{\bD}(v_m,b(u_i))$ 
and $\dr^{\bD}(v_m,b(u_j))$. Indeed, 
if $\dr^{\bD}(v_m,b(u_i))$ does not hold, 
then $\pp^{\bD}(v_m,b(u_i))$ and hence
$\pp^{\bF}(u_m,u_i)$ by the definition of $b$. 
This in turn implies that $u_m \prec u_i$ 
and hence $m<i$, a contradiction. 
By the definition of $\prec$ on $A$ 
this implies that $b(u_i) \prec b(u_j)$ and finishes the proof of the claim. 
\end{proof}

Every relation of $(\bD,\prec)$ has a quantifier-free definition in the homogeneous structure $\bB$; let $(\bC,\prec)$ be the structure with the signature $\{\eq,\dr,\op,\pp,\ppi,\prec\}$ 
defined over $\bB_g$ via these quantifier-free definitions.

\begin{proposition}\label{prop:c-homo}
The structures $(\bC,\prec)$ and $\bB_g$ are quantifier-free interdefinable; in particular, 
$(\bC,\prec)$ is homogeneous. 
\end{proposition}
\begin{proof}
By definition, every relation of  $(\bC,\prec)$ has a quantifier-free definition in $\bB_g$. Conversely, let $R$ be a relation
of $\bB_g$. If $R$ is of the form $R_{k,l}$ then item (1) of Lemma~\ref{lem:range-rigid} implies that $R$
has a quantifier-free definition over $(\bC,\prec)$.
If $R$ is of the form $O_{k,d,e}$ then item (2) of  Lemma~\ref{lem:range-rigid} implies that $R$
has a quantifier-free definition over $(\bC,\prec)$.
\end{proof}

\begin{corollary}\label{cor:c-and-r}
$(\bC,\prec)$ and $(\bR,\prec)$ are isomorphic. 
\end{corollary}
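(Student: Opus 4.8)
The plan is to apply the standard fact that a homogeneous structure is determined up to isomorphism by its age (Lemma~6.1.4 in~\cite{Hodges}). The preceding proposition shows that $(\bC,\prec)$ is homogeneous, and $(\bR,\prec)$ is homogeneous because it is the Fra{\"i}ss\'e-limit of $\mathcal C$; both are structures in the signature $\{\eq,\dr,\op,\pp,\ppi,\prec\}$, so it suffices to prove $\Age(\bC,\prec) = \Age(\bR,\prec)$. By Proposition~\ref{prop:ages} the right-hand side equals $\Age(\bD,\prec)$, so the remaining task is to show $\Age(\bC,\prec) = \Age(\bD,\prec)$.

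I would argue this by translating between finite substructures of the two structures. For each relation $S$ in the common signature fix the quantifier-free $\bB$-formula $\phi_S$ that defines $S^{(\bD,\prec)}$ (these exist by construction, and $\bD$, $\bB$ share the domain $B$); by construction the same $\phi_S$ defines $S$ over $\bB_e$ inside $(\bC,\prec)$. Two further facts are used: first, $e$ is a self-embedding of $(\bD,\prec)$, hence maps $(\bD,\prec)$ isomorphically onto $(\bD,\prec)|_{e(B)}$ and in particular preserves and reflects each $S^{(\bD,\prec)}$; second, $e$ being range-rigid, $\bB_e$ is by~\cite{MottetPinskerCores} the Fra{\"i}ss\'e-limit of $\Age(\bB|_{e(B)})$, so that $\Age(\bB_e) = \Age(\bB|_{e(B)})$. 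For $\Age(\bD,\prec) \subseteq \Age(\bC,\prec)$: given a finite $F' \subseteq B$, the finite structure $\bB|_{e(F')}$ lies in $\Age(\bB|_{e(B)}) = \Age(\bB_e)$ and hence embeds into $\bB_e$ by some $\iota$; a short chase through the two definitions of the relations $S$ (using that $\phi_S$ is quantifier-free and that $e$ reflects $S^{(\bD,\prec)}$) shows that $\iota \circ (e|_{F'})$ embeds $(\bD,\prec)|_{F'}$ into $(\bC,\prec)$. For the reverse inclusion: any finite substructure of $(\bC,\prec)$ has domain some finite $F'' \subseteq B_e$; then $\bB_e|_{F''} \in \Age(\bB_e) = \Age(\bB|_{e(B)})$ embeds into $\bB|_{e(B)}$ by some $j$ with $j(F'') \subseteq e(B)$, and the symmetric chase shows that $e^{-1}\circ j$ embeds the substructure into $(\bD,\prec)$. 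Thus $\Age(\bC,\prec) = \Age(\bD,\prec) = \Age(\bR,\prec)$, and the two homogeneous structures are isomorphic.

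The two chases are routine book-keeping. The only conceptually delicate point — and the reason the argument needs both properties of $e$ — is the following: the translation from $\bB$-structures to their $\{\eq,\dr,\op,\pp,\ppi,\prec\}$-reducts is highly non-injective (the Boolean information carried by the relations $R_{k,l},O_{k,d,e}$ is not recoverable from the RCC5-type together with the order), so a priori a finite $(\bD,\prec)$-structure need not be the reduct of any member of the possibly smaller age $\Age(\bB|_{e(B)})$. What rescues the inclusion $\Age(\bD,\prec) \subseteq \Age(\bC,\prec)$ is that precomposing with the self-embedding $e$ carries $(\bD,\prec)|_{F'}$ to $(\bD,\prec)|_{e(F')}$, which \emph{is} a reduct of $\bB|_{e(F')} \in \Age(\bB|_{e(B)})$; range-rigidity of $e$ is in turn what makes $\bB_e$ a legitimate Fra{\"i}ss\'e-limit to begin with.
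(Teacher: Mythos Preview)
Your proof is correct and follows the same approach as the paper: both reduce to showing that the two homogeneous structures have the same age, invoking Proposition~\ref{prop:ages} to identify $\Age(\bR,\prec)$ with $\Age(\bD,\prec)$. The paper's proof actually stops there, leaving the equality $\Age(\bC,\prec)=\Age(\bD,\prec)$ implicit; your argument via the quantifier-free definitions $\phi_S$ and the identity $\Age(\bB_e)=\Age(\bB|_{e(B)})$ correctly fills in this step (the composition with $e^{-1}$ in the reverse inclusion is harmless but unnecessary, since $j$ already lands in $e(B)\subseteq B$ and hence directly embeds into $(\bD,\prec)$).
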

\begin{proof}
By Proposition~\ref{prop:c-homo}, 
it suffices to prove that the homogeneous structures $(\bC,\prec)$ and $(\bR,\prec)$ have
the same age. 
By Proposition~\ref{prop:ages}, 
the age of $(\bR,\prec)$ equals the age
of $(\bD,\prec)$, which has the same age as
$(\bC,\prec)$, because $g$ is an embedding 
from $(\bD,\prec)$ to $(\bC,\prec)$. 
\end{proof}

\begin{theorem}\label{thm:ramsey-exp}
$(\bR,\prec)$ is Ramsey. 
\end{theorem}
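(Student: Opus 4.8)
The plan is to assemble ingredients that are already in place: the Ramsey transfer result of Mottet and Pinsker (Theorem~\ref{thm:ramsey-trans}), the range-rigidity of the self-embedding $e$ of $(\bD,\prec)$ established in Lemma~\ref{lem:range-rigid}, the homogeneity and Ramsey property of $\bB$ (Lemma~\ref{lem:b-hom} and the subsequent lemma), and finally the quantifier-free interdefinability of $(\bC,\prec)$ with $\bB_e$ together with the isomorphism $(\bC,\prec) \cong (\bR,\prec)$.

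First I would apply Theorem~\ref{thm:ramsey-trans} to the structure $\bB$ and the function $g \coloneqq e$. By Lemma~\ref{lem:b-hom} the structure $\bB$ is homogeneous, and it is Ramsey by the lemma that follows; by Lemma~\ref{lem:range-rigid} the map $e$ is range-rigid with respect to $\Aut(\bB)$. Hence Theorem~\ref{thm:ramsey-trans} yields that $\bB_e$, the Fra\"{i}ss\'e-limit of the age of the substructure of $\bB$ induced on the image of $e$, is Ramsey.

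Next I would transfer the Ramsey property from $\bB_e$ to $(\bC,\prec)$. By construction every relation of $(\bC,\prec)$ is defined over $\bB_e$ by a quantifier-free formula, and conversely we have already shown (using items (1) and (2) of Lemma~\ref{lem:range-rigid}) that every relation $R_{k,l}$ and $O_{k,d,e}$ of $\bB_e$ has a quantifier-free definition over $(\bC,\prec)$. Thus $(\bC,\prec)$ and $\bB_e$ are quantifier-free interdefinable and in particular have the same automorphism group as a permutation group. Since the Ramsey property of a homogeneous structure depends only on its automorphism group as a topological (in fact permutation) group -- see, e.g.,~\cite{Topo-Dynamics} -- it follows that $(\bC,\prec)$ is Ramsey. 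Finally, $(\bC,\prec)$ and $(\bR,\prec)$ are isomorphic by the preceding corollary, so $(\bR,\prec)$ is Ramsey.

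Given the preparatory work, the only real content of this proof is the verification that $e$ is range-rigid, which has been carried out in Lemma~\ref{lem:range-rigid}; modulo that, Theorem~\ref{thm:ramsey-exp} is a short composition of the cited facts, so I do not expect any genuine obstacle at this stage.
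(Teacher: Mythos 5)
Your proposal is correct and follows essentially the same route as the paper: apply the Mottet--Pinsker transfer (Theorem~\ref{thm:ramsey-trans}) to the Ramsey structure $\bB$ and the range-rigid map $e$ to get that $\bB_e$ is Ramsey, then pass the Ramsey property to $(\bR,\prec)$ via the fact that $\bB_e$ and $(\bR,\prec)$ have the same automorphism group. You spell out the intermediate step through $(\bC,\prec)$ a little more explicitly than the paper does, but the argument is the same.
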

\begin{proof}
We apply the Ramsey transfer (Theorem~\ref{thm:ramsey-trans})
to the homogeneous Ramsey structure $\bB$ from Lemma~\ref{lem:b-hom} and the map $g$ from Lemma~\ref{lem:range-rigid} which is
range-rigid with respect to $\Aut(\bB)$. 
Theorem~\ref{thm:ramsey-trans} states that
the homogeneous structure $\bB_g$ is Ramsey.  The structure $\bB_g$ and 
$(\bC,\prec)$ have the same automorphism group (Proposition~\ref{prop:c-homo}), and hence $(\bC,\prec)$ is Ramsey as well. This implies the statement, because $(\bR,\prec)$ is isomorphic to 
$(\bC,\prec)$ by Corollary 9.12.
\end{proof}

\subsection{Polymorphisms of $\bR$ that are canonical with respect to $(\bR,\prec)$}
From now on, we identify the symbols
$\eq,\pp,\ppi,\dr,\po$ with the respective relations of $\bR$, and we write $\succ$ for the converse of $\prec$. Note that $\pp \cup \dr \cup \po$ and $\ppi \cup \dr \cup \po$ are primitively positively definable in $\bR$, since they are entries in Table~\ref{table:rcc}. Hence, their intersection $\dr \cup \po$ is primitively positively definable in $\bR$, too. We also write $\bot$ instead of $\dr \cup \po$, and $\precnsim$ for the relation $\bot \, \cap \prec$. 

The composition table for the binary relations with a first-order definition over $(\bR;\prec)$ can be derived from the composition table of $\bR$ (Table~\ref{table:rcc}) using the following lemma. 

\begin{lemma}\label{lem:decomp}
Let $R_1,R_2 \in \{\eq,\pp,\ppi,\dr,\op\}$ 
and let $O_1,O_2$ be two orbits of pairs 
of $(\bR;\prec)$ such that $O_i \subseteq R_i$. Then $$O_1 \circ O_2 = \begin{cases}
(R_1 \circ R_2) \, \cap \prec & \text{ if } O_1,O_2 \subseteq \; \prec \\
(R_1 \circ R_2) \, \cap \succ & \text{ if } O_1,O_2 \subseteq \; \succ \\
R_1 \circ R_2 & \text{ otherwise. }
\end{cases}$$
\end{lemma}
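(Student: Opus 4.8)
The plan is to first describe the orbits of pairs of $(\bR;\prec)$, and then reduce the composition identity to an embedding question about $3$-element structures in $\Age(\bR,\prec)$. Since $\prec$ is a linear order extending $\pp$, the orbits of pairs of $(\bR;\prec)$ are exactly $\eq$; $\pp$ (which lies in $\prec$); $\ppi$ (which lies in $\succ$); $\dr\cap\prec$ and $\dr\cap\succ$; and $\op\cap\prec$ and $\op\cap\succ$. (The cases where $R_1$ or $R_2$ equals $\eq$ amount to $\eq\circ O=O$ and are handled directly, so I assume $R_1,R_2\in\{\pp,\ppi,\dr,\op\}$; then $O_1$ and $O_2$ each force a strict order between their two arguments.) The inclusion $O_1\circ O_2\subseteq$ RHS is immediate: if $z$ witnesses $(x,y)\in O_1\circ O_2$, then $(x,z)\in R_1$ and $(z,y)\in R_2$ witness $(x,y)\in R_1\circ R_2$, and if $O_1,O_2\subseteq\prec$ (resp.\ $\subseteq\succ$) then $x\prec z\prec y$ (resp.\ $x\succ z\succ y$) gives $x\prec y$ (resp.\ $x\succ y$) by transitivity of $\prec$. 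So the content is the reverse inclusion.

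For the reverse inclusion, take $(x,y)$ in the right-hand side and let $R\in\{\eq,\pp,\ppi,\dr,\op\}$ be the basic relation with $(x,y)\in R$; by Table~\ref{table:rcc} and the hypothesis we have $R\subseteq R_1\circ R_2$. I would build the $3$-element $\{\eq,\dr,\op,\pp,\ppi,\prec\}$-structure $\bT$ on vertices $a,b,c$ in which $(a,c)$ realizes the orbit of $(x,y)$, $(a,b)$ realizes $O_1$, and $(b,c)$ realizes $O_2$, taking $\prec^{\bT}$ on $(a,c)$ to be the order of $(x,y)$ and on $(a,b)$, $(b,c)$ the orders dictated by $O_1$, $O_2$. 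The key step is to verify $\bT\in\Age(\bR,\prec)=\mathcal C$: (i) the $\{\eq,\dr,\op,\pp,\ppi\}$-reduct of $\bT$ realizes the triangle $(R_1,R_2,R)$, which embeds into $\bR$ because $R\subseteq R_1\circ R_2$ is precisely the statement that such a triangle occurs in the representation $\bR$ — here one uses the characterization of $\Age(\bR)$ by its $3$-element substructures from the proof of Lemma~\ref{lem:R-fin-bound}; and (ii) $\prec^{\bT}$ is a linear order extending $\pp^{\bT}$. In (ii) the inclusion $\pp^{\bT}\subseteq\prec^{\bT}$ is built into the construction, so the point is to exclude a directed $3$-cycle on $\{a,b,c\}$: when $O_1,O_2\subseteq\prec$ (resp.\ $\subseteq\succ$) all three edges are oriented and the hypothesis $(x,y)\in(R_1\circ R_2)\cap\prec$ (resp.\ $\cap\succ$) forces $a\prec c$ (resp.\ $c\prec a$), so there is no cycle; in every other configuration $b$ is a source or a sink of the triangle, so the orientation of $(a,c)$ is unconstrained by $b$ and we may take it to match that of $(x,y)$ without creating a cycle.

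Finally, $\bT\in\Age(\bR,\prec)$ and the substructure of $\bT$ on $\{a,c\}$ is isomorphic, via $a\mapsto x$, $c\mapsto y$, to the substructure of $(\bR,\prec)$ on $\{x,y\}$; since $(\bR,\prec)$ is the homogeneous Fra{\"i}ss\'e-limit of $\mathcal C$, this partial isomorphism extends to an embedding of $\bT$ into $(\bR,\prec)$, and the image $z$ of $b$ satisfies $(x,z)\in O_1$ and $(z,y)\in O_2$, so $(x,y)\in O_1\circ O_2$. The only genuinely case-dependent part, and the step I expect to take most care over, is item (ii): tracking the forced orientations on the triangle across the configurations of $(O_1,O_2)$ and confirming that a directed $3$-cycle can occur only when it would contradict the hypothesis; the remaining ingredients (the composition table, the $3$-element characterization of $\Age(\bR)$, and homogeneity of $(\bR,\prec)$) are applied directly.
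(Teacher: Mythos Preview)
Your proof is correct and takes essentially the same approach as the paper: both reduce the reverse inclusion to exhibiting a $3$-element structure in $\Age(\bR,\prec)$ (the paper starts from a witness for $O_1\circ O_2$ and flips the order on the outer pair when needed, whereas you build $\bT$ directly) and then invoke homogeneity of $(\bR,\prec)$. One small oversight worth patching: in the third case the target $(x,y)$ may lie in $\eq$ (precisely when $R_1=R_2^{\smile}$, hence $O_1=O_2^{\smile}$), so your $\bT$ degenerates to two points; the argument still goes through there trivially, but you should say so.
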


\begin{proof}
It is clear that $O_1 \circ O_2 \subseteq R_1 \circ R_2$ and that if $O_1,O_2 \subseteq \; \prec$ then $$O_1 \circ O_2 \subseteq \, \prec \circ \prec \, = \, \prec.$$ 
So the $\subseteq$-containment in the statement of the lemma holds in the first case, and by similar reasoning also in the other two cases. 
The reverse containment
in the first two cases is also clear since the homogeneity of $(\bR,\prec)$ implies that the expression in the statement on the right describes an orbit of 
pairs in $\Aut(\bR,\prec)$ (we have already seen that it is non-empty). 
To show the equality in the third case,
let $(x,z) \in R_1 \circ R_2$. 
If $R_1 \circ R_2$ equals 
$\pp$ (or $\ppi$) then $x \prec z$ (or $z \succ x$), and again $R_1 \circ R_2$ is an orbit of pairs in $\Aut(\bR,\prec)$ and the equality holds. If $R_1 \circ R_2$ equals $\eq$ then
the statement is clear, too. 
Otherwise, let $x',y',z' \in R$ be such that $(x',y') \in O_1$ 
and $(y',z') \in O_2$. If $(x',z')$ lies in the same orbit as $(x,z)$ in $\Aut(\bR,\prec)$
then $(x,z) \in O_1 \circ O_2$. Otherwise, 
consider the structure induced by $(\bR,\prec)$
on $\{x',y',z'\}$; we claim that if we replace the tuple $(x',z')$ in $\prec$ by the tuple $(z',x')$,
the resulting structure still embeds into $(\bR;\prec)$. 
By assumption, $O_1 \subseteq \, \prec$ and $O_2 \subseteq \, \succ$, or  $O_1 \subseteq \, \succ$ and $O_2 \subseteq \, \prec$, so the modified relation $\prec$ is still acyclic. Moreover, 
since $R_1 \circ R_2 \in \{\dr,\po\}$, 
the modified relation $\prec$ is still a linear extension of
$\pp$ and hence in $\Age(\bR;\prec)$. 
The homogeneity of $(\bR;\prec)$ implies
that $(x',z')$ lies in the same orbit as $(x,z)$,
and hence $(x,z) \in O_1 \circ O_2$. 
This concludes the proof that $R_1 \circ R_2 \subseteq O_1 \circ O_2$. 
\end{proof}

\begin{corollary}\label{cor:square}
Let $O \subseteq \; \prec$ be an orbit of pairs in  
$\Aut(\bR;\prec)$. Then $\pp \subseteq O \circ O$.
\end{corollary}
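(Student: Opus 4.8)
The plan is to reduce the claim to a lookup in the composition table of $\bR$ (Table~\ref{table:rcc}) via Lemma~\ref{lem:decomp}.

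First I would determine which basic relation the orbit $O$ can be contained in. Every pair of elements of $\bR$ lies in exactly one of $\eq,\pp,\ppi,\dr,\op$, so $O$ is contained in exactly one of these. Since $\prec$ is irreflexive, $\eq\cap\prec=\emptyset$; and since $\prec$ is a linear extension of $\pp$ (so $\prec\supseteq\pp$ and $\prec$ is antisymmetric), we have $\ppi\subseteq\;\succ$, hence $\ppi\cap\prec=\emptyset$. Thus $O\subseteq R$ for some $R\in\{\pp,\dr,\op\}$. (In fact, since the quantifier-free type of a pair in $(\bR,\prec)$ is determined by the basic relation containing it together with the $\prec$-comparison of its entries, homogeneity of $(\bR,\prec)$ gives the sharper statement $O\in\{\pp,\drprec,\poprec\}$, but only the weaker one is needed below.)

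Next I would apply Lemma~\ref{lem:decomp} with $O_1=O_2=O$: since $O\subseteq\;\prec$, its first case yields $O\circ O=(R\circ R)\cap\prec$. From Table~\ref{table:rcc} we read off $\pp\circ\pp=\pp$ and $\dr\circ\dr=\op\circ\op=\mathbf 1$, so $R\circ R\supseteq\pp$ in each of the three cases. Since $\pp\subseteq\;\prec$, this gives $\pp=\pp\cap\prec\subseteq(R\circ R)\cap\prec=O\circ O$, which is the claim.

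There is essentially no obstacle here: the only point needing a moment's care is the bookkeeping that excludes $\eq$ and $\ppi$ (and, for the sharper description, the verification that $\drprec$ and $\poprec$ are single orbits); everything else is an application of Lemma~\ref{lem:decomp} together with three entries of Table~\ref{table:rcc}.
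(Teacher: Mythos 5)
Your proof is correct and takes essentially the same approach as the paper: identify the basic relation containing $O$, apply Lemma~\ref{lem:decomp} with $O_1=O_2=O$, and read off the relevant entries of Table~\ref{table:rcc}. The only cosmetic difference is that you handle all three cases uniformly while the paper treats $O=\pp$ directly and the other two via the lemma.
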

\begin{proof}
If $O = \pp$, then $O \circ O = O = \pp$. 
If $O = (\dr \; \cap \prec)$ then 
\begin{align*}
O \circ O & = (\dr \circ \dr) \, \cap \prec && \text{(Lemma~\ref{lem:decomp})} \\
& = {\bf 1} \, \cap \prec  && \text{(table in Figure~\ref{table:rcc})} \\
& = \; \prec \, .
\end{align*}
Similarly we may compute that $\pp$ is contained in $(\poprec) \circ (\poprec)$. 
\end{proof}

The following general observations are useful when working with operations that preserve  binary relations over some set $B$. If $f \colon B^k \to B$ is an operation that preserves $R_1,\dots,R_k,R'_1,\dots,R_k' \subseteq B^2$ then 
\begin{align}
& f(R_1 \circ R_1', \dots, R_k \circ R'_k) \nonumber \\
\subseteq \; & f(R_1,\dots,R_k) \circ f(R'_1,\dots,R'_k). \label{eq:comp-pres}
\end{align}
Also note that 
\begin{align}
f(R_1^\smile,\dots,R_k^\smile) = f(R_1,\dots,R_k)^\smile.
\label{eq:smile-pres}
\end{align}

\begin{lemma}\label{lem:pres-prec}
Let $f \in \Pol(\bR) \cap \Can(\bR,\prec)$.
Then $f$ preserves $\prec$
and $\precnsim$. 
\end{lemma}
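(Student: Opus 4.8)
The plan is to prove the two statements in the obvious order: first that $f$ preserves $\prec$, and then deduce preservation of $\precnsim = \bot \cap \prec$ essentially for free. For the second part, recall that $\bot = \dr \cup \op$ is primitively positively definable in $\bR$ (as noted just before the lemma), so every operation in $\Pol(\bR)$, and in particular $f$, preserves $\bot$; once we know $f$ preserves $\prec$, it therefore also preserves the intersection $\precnsim = \prec \cap \bot$, since a function preserving two binary relations preserves their intersection. So the whole content is in showing that $f$ preserves $\prec$.

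First I would record the structure of the orbits of pairs of $(\bR,\prec)$. Since $\prec$ is a linear order extending $\pp$, these orbits are exactly $\eq$, $\pp$, $\ppi$, $\dr \cap \prec$, $\dr \cap \succ$, $\op \cap \prec$, $\op \cap \succ$; in particular every nondiagonal orbit of $(\bR,\prec)$ is contained either in $\prec$ or in $\succ$, and $\pp$ is one such orbit. Also, since $f \in \Pol(\bR)$, $f$ preserves $\pp$. Now suppose toward a contradiction that $f$ does not preserve $\prec$. As $f$ is canonical over $(\bR,\prec)$ and this structure has maximal arity $2$, there are orbits of pairs $O_1,\dots,O_k$ of $(\bR,\prec)$, each contained in $\prec$, such that the orbit $P$ assigned by $f$ to $(O_1,\dots,O_k)$ is not contained in $\prec$; by the previous sentence, $P = \eq$ or $P \subseteq \succ$.

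The key step is then to feed $\pp$ through $P\circ P$ using Corollary~\ref{cor:square}. Fix any pair $(a,c) \in \pp$. Since $O_i \subseteq \prec$ is an orbit of pairs of $(\bR;\prec)$, Corollary~\ref{cor:square} gives $\pp \subseteq O_i \circ O_i$, so for each $i$ we may choose $b_i$ with $(a,b_i) \in O_i$ and $(b_i,c) \in O_i$. Writing $\bar a = (a,\dots,a)$, $\bar b = (b_1,\dots,b_k)$, $\bar c = (c,\dots,c)$, canonicity gives $(f(\bar a),f(\bar b)) \in P$ and $(f(\bar b),f(\bar c)) \in P$, hence $(f(\bar a),f(\bar c)) \in P\circ P$; and since $f$ preserves $\pp$ and $(a,c)\in\pp$ in each coordinate, also $(f(\bar a),f(\bar c)) \in \pp$. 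Thus $\pp \cap (P\circ P) \neq \emptyset$.

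To finish I would derive a contradiction from $\pp \cap (P\circ P) \neq \emptyset$. If $P = \eq$, then $P\circ P = \eq$, which is disjoint from $\pp$. If $P \subseteq \succ$, let $R \in \{\ppi,\dr,\op\}$ be the relation of $\bR$ with $P \subseteq R$; applying Lemma~\ref{lem:decomp} in the case $O_1,O_2 \subseteq \succ$ gives $P\circ P = (R\circ R)\cap\succ \subseteq \succ$, which is disjoint from $\pp$ since $\pp \subseteq \prec$ and $\prec\cap\succ = \emptyset$. Either way we reach a contradiction, so $f$ preserves $\prec$, and then $f$ preserves $\precnsim$ by the reduction in the first paragraph. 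The only mildly delicate point is the classification of orbits of pairs of $(\bR,\prec)$ into those inside $\prec$, those inside $\succ$, and $\eq$ — this is where we use that $\prec$ is a \emph{linear} extension of $\pp$; everything else is bookkeeping with Corollary~\ref{cor:square}, Lemma~\ref{lem:decomp}, and the pp-definability of $\bot$.
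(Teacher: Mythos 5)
Your proof is correct and takes essentially the same approach as the paper: assume for contradiction that some tuple of orbits $O_1,\dots,O_k$ inside $\prec$ is sent by the canonical behaviour to an orbit $P$ not inside $\prec$, use Corollary~\ref{cor:square} to get $\pp \subseteq O_i \circ O_i$, and conclude $f(\pp,\dots,\pp) \subseteq P \circ P$, contradicting that $f$ preserves $\pp$. The paper compresses your case split ($P=\eq$ vs.\ $P \subseteq \succ$) by simply observing $P \circ P \subseteq \succeq \circ \succeq = \succeq$, which is disjoint from $\pp$; otherwise the two arguments coincide, including the closing reduction for $\precnsim$ via the pp-definability of $\bot$.
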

\begin{proof}
Let $a_1,\dots,a_k,b_1,\dots,b_k$ be elements of $\bR$ such that $a_i \prec b_i$ for all $i \in \{1,\dots,k\}$ and $ f(b_1,\dots,b_k) \preceq f(a_1,\dots,a_k)$. 
For $i \in \{1,\dots,k\}$, let $O_i$ be the orbit of $(a_i,b_i)$ in $\Aut(\bR,\prec)$. 
We then have 
\begin{align*}
f(\pp,\dots,\pp) & \subseteq f(O_1 \circ O_1,\dots,O_k \circ O_k) && \text{(Corollary~\ref{cor:square})} \\
& \subseteq f(O_1,\dots,O_k) \circ f(O_1,\dots,O_k) && \text{(\ref{eq:comp-pres})} \\
& \subseteq (\succeq \circ \succeq) && \text{(canonicity)} \\
& = \; \succeq
\end{align*}
which is a contradiction to $f$ being a polymorphism of $\bR$. 
Since $\nsim$ is primitively positively definable in
$\bR$, it also follows that every polymorphism
of $\bR$ which is canonical
with respect to $(\bR,\prec)$ preserves $\precnsim$. 
\end{proof}

\begin{lemma}\label{lem:pres-neq}
Let $f \in \Pol(\bR)$ be canonical over $\bR$ or over $(\bR,\prec)$. 
Then $f$ preserves $1 \setminus \eq$. 
\end{lemma}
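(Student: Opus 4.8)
The plan is to argue by contradiction, using the composition identities~(\ref{eq:comp-pres}) and~(\ref{eq:smile-pres}), the composition table (Table~\ref{table:rcc}), and the fact recorded just above that $\pp\cup\dr\cup\po$ is primitively positively definable in $\bR$.

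First I would suppose that $f$ does \emph{not} preserve $1\setminus\eq$; then there are tuples $\bar a=(a_1,\dots,a_k)$ and $\bar b=(b_1,\dots,b_k)$ of elements of $\bR$ with $a_i\neq b_i$ for every $i$ but $f(\bar a)=f(\bar b)$. Write $G$ for $\Aut(\bR)$ or $\Aut(\bR,\prec)$ according to which of the two structures $f$ is canonical over, and let $O_i$ be the $G$-orbit of the pair $(a_i,b_i)$. Since in both structures the diagonal is a single orbit $\eq$, canonicity of $f$ forces $f(O_1,\dots,O_k)\subseteq\eq$ (by canonicity the image lies in one orbit, namely that of $(f(\bar a),f(\bar b))$, and that pair is on the diagonal); then~(\ref{eq:smile-pres}) gives $f(O_1^\smile,\dots,O_k^\smile)=f(O_1,\dots,O_k)^\smile\subseteq\eq$ as well.

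The heart of the argument is the claim that $\pp\subseteq O_i\circ O_i^\smile$ for every $i$. When $f$ is canonical over $\bR$, each $O_i$ is one of $\pp,\ppi,\dr,\op$, and Table~\ref{table:rcc} gives $\pp\circ\ppi={\bf 1}$, $\ppi\circ\pp={\bf 1}\setminus\dr$, $\dr\circ\dr={\bf 1}$ and $\op\circ\op={\bf 1}$, each of which contains $\pp$. When $f$ is canonical over $(\bR,\prec)$ the orbit $O_i$ additionally records whether the pair lies in $\prec$ or in $\succ$, so $O_i$ and $O_i^\smile$ always sit on opposite sides of $\prec$; Lemma~\ref{lem:decomp} then puts us in its ``otherwise'' case, so $O_i\circ O_i^\smile$ again equals the corresponding entry of Table~\ref{table:rcc} and the same computation applies. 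Using monotonicity of the image operation $(S_1,\dots,S_k)\mapsto f(S_1,\dots,S_k)$ together with~(\ref{eq:comp-pres}) we then obtain
\[
f(\pp,\dots,\pp)\ \subseteq\ f(O_1\circ O_1^\smile,\dots,O_k\circ O_k^\smile)\ \subseteq\ f(O_1,\dots,O_k)\circ f(O_1^\smile,\dots,O_k^\smile)\ \subseteq\ \eq\circ\eq\ =\ \eq .
\]
On the other hand $f\in\Pol(\bR)$ preserves the primitively positively definable relation $\pp\cup\dr\cup\po$, hence $f(\pp,\dots,\pp)\subseteq\pp\cup\dr\cup\po$; this set is non-empty because $\pp^{\bR}\neq\emptyset$, and it is disjoint from $\eq$ because the five basic relations partition the set of pairs of $\bR$. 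This contradiction proves the lemma.

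The only genuinely delicate point — and hence the main obstacle — is keeping track, in the ordered case, of exactly which orbits a non-diagonal pair can have over $(\bR,\prec)$ and verifying that $O_i\circ O_i^\smile$ still contains $\pp$; once one observes that $O_i$ and $O_i^\smile$ necessarily lie in $\prec$ and $\succ$ respectively, Lemma~\ref{lem:decomp} collapses everything back to the unordered composition table and the argument is uniform in the two cases.
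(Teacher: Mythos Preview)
Your proof is correct and follows the same overall strategy as the paper: assume some tuple of non-diagonal orbits is collapsed to $\eq$, use composition together with~(\ref{eq:comp-pres}) and~(\ref{eq:smile-pres}) to force $f(\pp,\dots,\pp)\subseteq\eq$, and contradict the fact that $f$ preserves a relation disjoint from $\eq$.

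There is one mild difference worth pointing out. The paper composes each $O_i$ with itself and invokes Corollary~\ref{cor:square} to land in $\{\pp,\ppi\}$, and then needs a second composition step (using $\pp\subseteq\pp\circ\ppi$ and $\pp\subseteq\ppi\circ\pp$) to reduce to $f(\pp,\dots,\pp)$. You instead compose $O_i$ with its converse $O_i^{\smile}$, which by Lemma~\ref{lem:decomp} (ordered case) or directly from Table~\ref{table:rcc} (unordered case) already contains $\pp$, so you reach $f(\pp,\dots,\pp)\subseteq\eq$ in a single step. This is a small but genuine streamlining. For the final contradiction you could equally well (and more simply) use that $\pp$ itself is a relation of $\bR$, so $f(\pp,\dots,\pp)\subseteq\pp$; the detour through the primitively positively definable set $\pp\cup\dr\cup\po$ is unnecessary.
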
 
\begin{proof}
Let $k$ be the arity of $f$. First suppose that $f$ is canonical over $(\bR,\prec)$. 
Let $O_1,\dots,O_k$ be orbits of pairs of $\Aut(\bR;\prec)$ that are distinct from $\eq$. Suppose for contradiction that $f(O_1,\dots,O_k) = \eq$. Then 
\begin{align*}
f(O_1 \circ O_1,\dots,O_k \circ O_k) & \subseteq f(O_1,\dots,O_k) \circ \cdots \circ f(O_1,\dots,O_k) \\ & \subseteq \eq \circ \cdots \circ \eq = \eq. 
\end{align*}
Since $O_i \circ O_i$ contains $\pp$ or $\ppi$ for every $i \in \{1,\dots,k\}$ by Corollary~\ref{cor:square}, it follows that $f(P_1,\dots,P_k) \subseteq \eq$ for some $P_1,\dots,P_k \in \{\pp,\ppi\}$. 
Note that $f(P_1^\smile,\dots,P_k^\smile)= \eq^\smile = \eq$ by~\eqref{eq:smile-pres}.
Since $\pp \subseteq \pp \circ \ppi$ and 
$\pp \subseteq \ppi \circ \pp$ we obtain that
$f(\pp,\pp) \subseteq \eq \circ \eq = \eq$, a contradiction to $f(\pp,\pp) \subseteq \pp$. 
We conclude that $f$ preserves 
$1 \setminus \eq$. 
If $f$ is canonical over $\bR$ instead of $(\bR,\prec)$, then the statement can be shown similarly, using the table in Figure~\ref{table:rcc}  
 instead of Corollary~\ref{cor:square}.
\end{proof}

\subsection{Independent substructures}
To verify the UIP property, we will use 
Theorem~\ref{thm:binary-indep}
and therefore need certain pairs $(\bA_1,\bA_2)$ of independent elementary substructures of $(\bR,\prec)$. 

\begin{lemma}\label{lem:rcc5indep}
There are elementary substructures $\bA_1$ 
and $\bA_2$ of $(\bR,\prec)$ such that 
for all $a_1 \in A_1$ and $a_2 \in A_2$ we have
$(a_1,a_2) \in \dr$ and $a_1 \prec a_2$; in particular, $\bA_1$ and $\bA_2$ are independent. 
\end{lemma}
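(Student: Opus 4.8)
The plan is to carry out the compactness argument from the proof of Proposition~\ref{prop:indep}, but with extra sentences that force the desired relations between the two substructures; this reduces the lemma to a single amalgamation-style claim about $\Age(\bR,\prec)$, which I would verify using the representation of $\bR$ inside $\fs$. The ``in particular'' part is then automatic: if $(a_1,a_2)\in\dr$ and $a_1\prec a_2$ hold for \emph{all} $a_1\in A_1$ and $a_2\in A_2$, then for $\bar a,\bar a'\in A_1^n$ with $\typ^{(\bR,\prec)}(\bar a)=\typ^{(\bR,\prec)}(\bar a')$ and any tuple $\bar c$ over $A_2$ the quantifier-free types of $(\bar a,\bar c)$ and $(\bar a',\bar c)$ agree, since every relation linking a coordinate of the first block to one of the second block is forced (it is $\dr$, together with $\prec$ in the appropriate direction); by homogeneity of $(\bR,\prec)$ this gives $\typ^{(\bR,\prec)}(\bar a/A_2)=\typ^{(\bR,\prec)}(\bar a'/A_2)$, and symmetrically, so $\bA_1$ and $\bA_2$ are independent.

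The key claim is that for all $\bF_1,\bF_2\in\Age(\bR,\prec)$ there exist $\bG\in\Age(\bR,\prec)$ and embeddings $g_i\colon\bF_i\hookrightarrow\bG$ with $G=g_1(F_1)\cup g_2(F_2)$ such that $(x,y)\in\dr^{\bG}$ and $x\prec^{\bG}y$ for all $x\in g_1(F_1)$ and $y\in g_2(F_2)$. To prove it, recall that $\Age(\bR,\prec)$ is the class $\mathcal C$ of all $\{\eq,\dr,\op,\pp,\ppi,\prec\}$-structures whose $\{\eq,\dr,\op,\pp,\ppi\}$-reduct embeds into $\fs$ and in which $\prec$ is a linear extension of $\pp$. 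I would fix embeddings into $\fs$ of the $\{\eq,\dr,\op,\pp,\ppi\}$-reducts of $\bF_1$ and $\bF_2$ and post-compose them with the maps on subsets induced by $n\mapsto 2n$ and $n\mapsto 2n+1$, so that $F_1$ is represented by nonempty sets of even numbers and $F_2$ by nonempty sets of odd numbers; then every region of $F_1$ is disjoint from every region of $F_2$, so the combined assignment embeds the disjoint union of the two reducts into $\fs$ and sends every cross-pair into $\dr$. Finally I would define $\prec^{\bG}$ to agree with $\prec^{\bF_1}$ on $F_1$, with $\prec^{\bF_2}$ on $F_2$, and to put every element of $F_1$ before every element of $F_2$: this is a linear order, and it extends $\pp^{\bG}$ because it does so inside each block by hypothesis and because there are no cross $\pp$-pairs at all (a nonempty set of even numbers is never strictly contained in a set of odd numbers). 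Hence $\bG\in\mathcal C=\Age(\bR,\prec)$.

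To assemble the proof, fix an enumeration $a_1,a_2,\dots$ of the domain of $(\bR,\prec)$, let $B=\{b_1,b_2,\dots\}$ and $C=\{c_1,c_2,\dots\}$ be sets of new constants, let $\phi_n(x_1,\dots,x_n)$ express that $\typ^{(\bR,\prec)}(x_1,\dots,x_n)=\typ^{(\bR,\prec)}(a_1,\dots,a_n)$, and let $T$ be the union of $\Th((\bR,\prec))$, of $\{\phi_n(b_1,\dots,b_n)\wedge\phi_n(c_1,\dots,c_n):n\in{\mathbb N}\}$, and of $\{\dr(b_i,c_j)\wedge b_i\prec c_j:i,j\in{\mathbb N}\}$. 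A finite subset of $T$ involves only finitely many constants, say $b_1,\dots,b_p$ and $c_1,\dots,c_q$, and is satisfied in any structure containing copies of the substructures of $(\bR,\prec)$ on $\{a_1,\dots,a_p\}$ and on $\{a_1,\dots,a_q\}$ with all cross-pairs in $\dr$ and $\prec$ pointing from the first to the second; the key claim, together with the fact that every structure in $\Age(\bR,\prec)$ embeds into $(\bR,\prec)$, produces such a structure, so every finite subset of $T$ is satisfiable. Then, exactly as in the proof of Proposition~\ref{prop:indep}, compactness, the downward L\"owenheim-Skolem theorem and $\omega$-categoricity give a model that is an expansion of $(\bR,\prec)$ by the constants; the sentences $\phi_n$ and homogeneity of $(\bR,\prec)$ make the substructures $\bA_1$ and $\bA_2$ induced by $B$ and $C$ elementary, and the sentences $\dr(b_i,c_j)\wedge b_i\prec c_j$ give the asserted relations. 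The only genuinely delicate point is the verification in the key claim that $\prec^{\bG}$ is a linear extension of $\pp^{\bG}$; the even/odd representation is used precisely to eliminate all cross $\pp$-pairs and thereby make this immediate.
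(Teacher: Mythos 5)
Your proposal is correct and follows essentially the same route as the paper: the paper also reduces the lemma to exhibiting, for each finite $\bB\in\Age(\bR,\prec)$, a structure in $\Age(\bR,\prec)$ containing two copies of $\bB$ with all cross-pairs in $\dr$ and ordered block-by-block, and realises this using exactly the same even/odd coding inside $\fs$. The only difference is presentational: the paper compresses the compactness and homogeneity step into one sentence (invoking the argument of Proposition~\ref{prop:indep} implicitly), whereas you spell it out, and you also make explicit the verification of the ``in particular'' clause that the paper simply asserts.
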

\begin{proof}
By the homogeneity of $(\bR,\prec)$ it suffices to show that for every structure $(\bB,\prec) \in \Age(\bR,\prec)$ there are embeddings $e_1,e_2 \colon \bB \to (\bR,\prec)$ such that for all $b_1,b_2 \in B$ we have $(e_1(a_1),e_2(a_2)) \in \dr$ and $e_1(b_1) \prec e_2(b_2)$. 
Choose an embedding $f$ 
of $\bB$ into the structure $\bS$ from the definition of $\bR$; so $f(b) \subseteq {\mathbb N}$ for each $b \in B$. Then $f_1 \colon b \mapsto \{2n \colon n \in f(b)\}$ and $f_2 \colon b \mapsto \{2n+1 \colon n \in f(b)\}$
are two embeddings of $\bB$ into $\bS$ 
such that $(f_1(b_1),f_2(b_2)) \in \dr$ for all $b_1,b_2 \in B$. Let $\bB'$ be the substructure
of $\bS$ with domain $B' \coloneqq f_1(B) \cup f_2(B)$. 
For all $b_1,b_2 \in B$ with $b_1 \prec b_2$,
define the linear order $\prec$ on $B'$ by 
$f_1(b_1) \prec f_1(b_2)$,
$f_2(b_1) \prec f_2(b_2)$,
$f_1(b_1) \prec f_2(b_2)$. 
Then it is straightforward to check that $\prec$ extends $\pp$, and hence $(\bB',\prec) \in \Age(\bR,\prec)$, which concludes the proof.  
\end{proof}

Independent substructures are used in the proof
of the following lemma that plays an important role when verifying the UIP later. 

\begin{lemma}\label{lem:copy}
Let $\bC$ be a first-order expansion of $\bR$ 
and let 
$\zeta \colon \Pol(\bC) \cap {\Can(\bR,\prec)} \to \proj$ be a clone homomorphism which does
not have the UIP with respect to $\Pol(\bC)$. 
Then for every finite 2-rich subset $F$ of the domain of $\bR$ 
there exist $f \in \Pol(\bC)^{(2)}$ and 
$\alpha_1,\alpha_2 \in \Aut(\bR;\prec)$ 
such that 
\begin{itemize}
\item 
$f$ is canonical on $F \times \alpha_1(F)$
and on $F \times \alpha_2(F)$ with respect to $(\bR,\prec)$, 
\item $\zeta(f(\id,\alpha_1)|_{F^2}) \neq \zeta(f(\id,\alpha_2)|_{F^2})$ (recall Definition~\ref{def:rich-behave}), 
\item for all $a,b \in F$, if $(a,b) \in \pp \cup \eq$, then 
$(\alpha_1(a),\alpha_2(b)) \in \pp$, and 
\item  for all $a,b \in F$, if $(a,b) \in \; \nsim$
then $(\alpha_1(a),\alpha_2(b)) \in \; \nsim$. 
\end{itemize}
\end{lemma}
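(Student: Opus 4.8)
The plan is to combine the hypothesis that $\zeta$ fails the UIP with the ``binary witness'' mechanism already developed in the proof of Theorem~\ref{thm:binary}, and then to superimpose the independence data supplied by Lemma~\ref{lem:rcc5indep}. First I would fix the given $2$-rich finite set $F$; since $F$ is $2$-rich with respect to $\Pol(\bC)$, by Lemma~\ref{lem:rich} (and enlarging $F$ if necessary, which is harmless by the remark after Definition~\ref{def:rich}) I may assume $F$ is large enough to also be $2$-rich in the sense needed below. Because $\zeta$ does not have the UIP with respect to $\Pol(\bC)$, the argument inside the proof of Theorem~\ref{thm:binary} --- applied with $\bA = (\bR,\prec)$ and invoking the checker-board canonisation Lemma~\ref{lem:checkers} and the simultaneous-interpolation Lemma~\ref{lem:simul-interpol} --- produces $g \in \Pol(\bC)^{(2)}$ and $u,v \in \overline{\Aut(\bR,\prec)}$ such that $g(\id,u)$ and $g(\id,v)$ are canonical over $(\bR,\prec)$ and $\zeta(g(\id,u)) \neq \zeta(g(\id,v))$; moreover by a further application of Lemma~\ref{lem:canon2}/Lemma~\ref{lem:checkers} we may arrange that $g$ itself is canonical on $F \times u(F)$ and on $F \times v(F)$ with respect to $(\bR,\prec)$. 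This already gives the first two bullet points of the statement, with $(f,\alpha_1,\alpha_2)$ a candidate close to $(g,u|_F\text{-extended},v|_F\text{-extended})$; the remaining work is to replace $u,v$ by genuine automorphisms $\alpha_1,\alpha_2 \in \Aut(\bR,\prec)$ whose images interact with $F$ in the prescribed way (disjoint-and-below), without destroying canonicity or the inequality of $\zeta$-values.

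For that, I would use Lemma~\ref{lem:rcc5indep}: it yields elementary substructures $\bA_1, \bA_2$ of $(\bR,\prec)$ with $(a_1,a_2)\in\dr$ and $a_1 \prec a_2$ for all $a_1 \in A_1$, $a_2 \in A_2$. Since $\bA_1,\bA_2$ are elementary substructures of the $\omega$-categorical $(\bR,\prec)$, there are embeddings $e_i\colon (\bR,\prec)\hookrightarrow \bA_i$, and by homogeneity each $e_i$ lies in $\overline{\Aut(\bR,\prec)}$. The idea is exactly the one used in the proof of Theorem~\ref{thm:binary-indep}: precompose. Write $h \coloneqq g(\id, \epsilon^{-1})$ for a suitable $\epsilon \in \Aut(\bR,\prec)$ agreeing with $e_1$ on $u(F)\cup v(F)$, so that $h_2^{\epsilon u} = g(\id, u)$ and $h_2^{\epsilon v} = g(\id, v)$ remain canonical with the same behaviours on $F^2$ (hence $\zeta(h_2^{\epsilon u}) \neq \zeta(h_2^{\epsilon v})$ by $\overline{\Aut(\bR,\prec)}$-invariance), while $\epsilon u(F) \subseteq A_1$; then apply Lemma~\ref{lem:canon2} to $h(\id, e_2(\cdot)\circ(\text{something}))$ to canonise on $F^2$ with the second coordinate landing in $A_2$. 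Finally I pick $\alpha_1 \in \Aut(\bR,\prec)$ extending $\epsilon u|_F$ and $\alpha_2$ extending the $A_2$-valued map restricted to $F$; this is possible by homogeneity of $(\bR,\prec)$ because these restrictions are partial isomorphisms (their domains are finite substructures of $(\bR,\prec)$). The disjoint-and-below property of $\bA_1, \bA_2$ then forces $(\alpha_1(a),\alpha_2(b)) \in \dr \subseteq \nsim$ and $\alpha_1(a) \prec \alpha_2(b)$ for all $a,b \in F$. But the third bullet asks for $(\alpha_1(a),\alpha_2(b)) \in \pp$ when $(a,b)\in\pp\cup\eq$, which \emph{contradicts} disjointness, so the two independent substructures cannot simply be ``$\dr$ with everything'' --- I need a variant of Lemma~\ref{lem:rcc5indep} tailored so that the cross-type between $A_1$ and $A_2$ is governed by the $\pp$/$\nsim$-pattern of $F$.

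The main obstacle, and the heart of the argument, is therefore constructing the \emph{right} pair of independent elementary substructures: I want $\bA_1,\bA_2$ (or rather embeddings $f_1\colon \bR\hookrightarrow \bA_1$, $f_2\colon \bR\hookrightarrow \bA_2$) such that for $a,b$ in a fixed copy of $F$, the pair $(f_1(a),f_2(b))$ inherits $\pp$ whenever $(a,b)\in\pp\cup\eq$ and inherits $\nsim$ (i.e. lies in $\dr\cup\op$) whenever $(a,b)\in\nsim$. Concretely, in the set representation $\bS$ with $F$ realised as subsets of $\mathbb{N}$, I would send $a \mapsto f_1(a) \coloneqq \{2n : n \in a\}$ and $b \mapsto f_2(b) \coloneqq \{2n : n \in b\} \cup \{2n+1 : n \in b\}$ --- no, more carefully: arrange $f_1(a)$ and $f_2(b)$ so that $f_1(a) \subseteq f_2(b) \iff a \subseteq b$ and $f_1(a)\cap f_2(b) = \emptyset \iff a\cap b = \emptyset$, which can be done by a standard coding (e.g. keeping even coordinates shared to witness containment and using odd coordinates privately for each copy to witness overlap/disjointness), and then extend the linear order antilexicographically so that the expanded structure stays in $\Age(\bR,\prec)$. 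This is where the bulk of the (routine but fiddly) verification lives: checking that the resulting finite $\{\eq,\dr,\op,\pp,\ppi,\prec\}$-structure on $f_1(F)\cup f_2(F)$ embeds into $(\bR,\prec)$, i.e. that $\prec$ is a linear extension of $\pp$ there, and that $f_1, f_2$ preserve all five basic relations and $\prec$. Once that variant of Lemma~\ref{lem:rcc5indep} is in hand, the assembly above --- precompose $g$ with $e_i^{-1}$'s, canonise via Lemma~\ref{lem:canon2} on $F^2$, and replace the resulting $\overline{\Aut(\bR,\prec)}$-maps by honest automorphisms $\alpha_1,\alpha_2$ extending their restrictions to $F$ --- delivers all four bullet points: canonicity on $F\times\alpha_i(F)$ is exactly what canonisation buys, the $\zeta$-inequality survives because $\zeta$ is $\overline{\Aut(\bR,\prec)}$-invariant, and the $\pp$- and $\nsim$-transfer conditions are built into the chosen pair of independent substructures.
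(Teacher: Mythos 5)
Your first step — using the failure of the UIP (via Theorem~\ref{thm:binary} or Theorem~\ref{thm:binary-indep}) to produce a binary $f'$ and two maps $u_1,u_2 \in \overline{\Aut(\bR,\prec)}$ with $f'(\id,u_1),f'(\id,u_2)$ canonical and $\zeta$-different — is correct, and you are right that the naive use of Lemma~\ref{lem:rcc5indep} clashes with the $\pp$-transfer bullet because all cross-pairs are $\dr$. But your proposed fix is where the argument breaks. You want a ``variant of Lemma~\ref{lem:rcc5indep}'' in which the embeddings $f_1,f_2$ are arranged so that $(f_1(a),f_2(b))\in\pp$ when $a\subseteq b$ and $(f_1(a),f_2(b))\in\;\nsim$ when $a\cap b=\emptyset$; with such a coding the cross-type of a pair $(f_1(a),f_2(b))$ depends on the Boolean relation between $a$ and $b$. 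That means $\Im(f_1)$ and $\Im(f_2)$ are \emph{not} independent: take $a,a'$ with $\typ(f_1(a))=\typ(f_1(a'))$ (single points all have the same type), and a $b$ with $a\subseteq b$ but $a'\not\subseteq b$; then $\typ(f_1(a)/\Im(f_2))\neq\typ(f_1(a')/\Im(f_2))$. So Theorem~\ref{thm:binary-indep} cannot be invoked for this pair, and Theorem~\ref{thm:binary} alone gives you no control over $u_1,u_2$. In short, independence and the $\pp/\nsim$-transfer cannot be satisfied simultaneously by a \emph{single} pair of substructures, so the ``tailored variant'' you are asking for does not exist.

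What the paper does instead is keep the genuinely independent disjoint-and-below pair from Lemma~\ref{lem:rcc5indep} (so Theorem~\ref{thm:binary-indep} applies, giving $f',u_1,u_2$ with $\Im(u_i)\subseteq A_i$), and then introduce a \emph{third} map. After choosing $\epsilon_1,\epsilon_2\in\Aut(\bR,\prec)$ landing in $\Im(u_1),\Im(u_2)$ respectively, it forms the ``union'' $\epsilon(x)\coloneqq e\bigl(\epsilon_1(x)\cup\epsilon_2(x)\bigr)$ (embedding the Boolean-algebra join back into $\bR$ while fixing the $\epsilon_i$'s). Because $\epsilon_i(a)\subset\epsilon(b)$ whenever $a\subseteq b$, and $(\epsilon_i(a),\epsilon(b))\in\;\nsim$ whenever $a\;\nsim\;b$ (using disjointness of $A_1$ and $A_2$ for the second), the pair $(\epsilon_i,\epsilon)$ has exactly the $\pp/\nsim$-transfer properties you want, for \emph{each} $i$. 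Finally, after canonising, the behaviours of $f'$ against $\epsilon_1\beta_2$, $\epsilon_2\beta_2$, and $\epsilon\beta_2$ give three $\zeta$-values; since the first two differ, a pigeonhole argument shows that at least one of $\epsilon_1,\epsilon_2$ yields a $\zeta$-value different from $\epsilon$, and \emph{that} pair becomes $(\alpha_1,\alpha_2)$. This ``merge plus pigeonhole'' mechanism is the missing idea in your proposal: it is what lets the $\zeta$-inequality coexist with the ordered/nested cross-type, without ever requiring a non-independent pair to be fed into Theorem~\ref{thm:binary-indep}.
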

\begin{proof}
Let $\bA_1,\bA_2$ be the two independent 
elementary substructures of $(\bR,\prec)$ from 
Lemma~\ref{lem:rcc5indep}. 
Since $\zeta$ does not have the UIP with respect to
$\Pol(\bC)$, by Theorem~\ref{thm:binary-indep}
there exists $f' \in \Pol(\bC)^{(2)}$ and 
$u_1,u_2 \in \overline{\Aut(\bR,\prec)}$ such that for $i \in \{1,2\}$ the image $\Im(u_i)$ of $u_i$ is contained in $A_i$,  the operation $f'(\id,u_i)$ is canonical with respect to $(\bR,\prec)$, and 
$\zeta(f'(\id,u_1)) \neq \zeta(f'(\id,u_2))$. 
By Lemma~\ref{lem:canon2} there exists a finite 
subset $X$ of the domain of $\bR$ such that for all $g \in \Pol(\bR)^{(2)}$ there exist $\beta_1,\beta_2 \in \Aut(\bR,\prec)$ with $\beta_1(F),\beta_2(F) \subseteq X$ 
such that $g$ is canonical on $\beta_1(F) \times \beta_2(F)$. 
For $i \in \{1,2\}$, 
let $\epsilon_i \in \Aut(\bR,\prec)$ 
be such that $\epsilon_i(X) 
\subseteq \Im(u_i)$. 
We may view the substructure of $\bR$ induced
by $\epsilon_1(X) \cup \epsilon_2(X)$ as a substructure of $\bS$. 
By the homogeneity of $\bR$ we may also assume that
the substructure of $\bS$ on $\epsilon_1(X) \cup \epsilon_2(X) \cup \{\epsilon_1(x) \cup \epsilon_2(x) \colon x \in X\}$ has an embedding 
$e$ into $\bR$ such that $e(\epsilon_i(x))=\epsilon_i(x)$ for all $x \in X$
and $i \in \{1,2\}$. 
Define 
$$\epsilon \colon X \to \bR \text{ by } \epsilon(x) \coloneqq e \big (\epsilon_1(x) \cup \epsilon_2(x) \big).$$ 
Let $a,b \in X$. Note that
\begin{enumerate}
\item
 \label{eq:pp}
  if $(a,b) \in \pp \cup \eq$ then 
\begin{align*} 
\epsilon_i(a) = e(\epsilon_i(a)) & \subset e \big (\epsilon_1(a) \cup \epsilon_2(a) \big) \\
& \subseteq e \big (\epsilon_1(b) \cup \epsilon_2(b) \big) = \epsilon(b)
\end{align*}
and hence $(\epsilon_i(a),\epsilon(b)) \in \pp$;  
\item \label{eq:nsim} 
if $(a,b) \in \; \nsim$ then 
$(\epsilon_i(a),\epsilon_i(b)) \in \; \nsim$.
Since $\epsilon_i(a) \in A_i$ it follows that
$\epsilon_i(a)$ is disjoint from $\epsilon_j(b)$ for $j \neq i$. This implies  that $$\epsilon_i(a) \setminus \epsilon(b) = \epsilon_i(a) \setminus  \big (\epsilon_1(b) \cup \epsilon_2(b) \big) = \epsilon_i(a) \setminus \epsilon_i(b) \neq \emptyset$$
and that for $j \neq i$ we have
$$\epsilon_j(b) = \epsilon_j(b) \setminus \epsilon_i(a) \subseteq \epsilon(b) \setminus \epsilon_i(a).$$
In particular, $\epsilon(b) \setminus \epsilon_i(a) \neq \emptyset$. We obtained that the sets $\epsilon_i(a) \setminus \epsilon(b)$ and
$\epsilon(b) \setminus \epsilon_i(a)$ are both non-empty. 
Therefore $(\epsilon_i(a),\epsilon(b)) \in \; \nsim$. 
\end{enumerate}
We define an order $\prec'$ on 
$\epsilon_1(X) \cup \epsilon_2(X) \cup \epsilon(X)$ by setting $a \prec' b$ if one of the following holds.
\begin{itemize}
\item $a,b \in \epsilon_1(X) \cup \epsilon_2(X)$ and $a \prec b$;
\item $a \in \epsilon_1(X) \cup \epsilon_2(X)$
and $b \in \epsilon(X)$; 
\item $a,b \in \epsilon(X)$ and $\epsilon^{-1}(a) \prec \epsilon^{-1}(b)$. 
\end{itemize}
Then it is easy to see that $\prec'$ defines a partial order that extends $\pp$; let $\prec''$ be a linear order that extends $\prec'$. 
By the definition of $(\bR;\prec)$ there exists
an automorphism $\gamma$ of $\bR$ 
that maps $\prec''$ to $\prec$. 
This shows that we may assume that $\epsilon$ preserves $\prec$ (otherwise, replace $\epsilon$ by $\gamma \circ \epsilon$). 

By the definition of $X$ there are $\beta_1,\beta_2 \in \Aut(\bR,\prec)$ such that $\beta_1(F) \subseteq X$, $\beta_2(F) \subseteq X$, and 
$f'(\id,\epsilon)$ is canonical
on $\beta_1(F) \times \beta_2(F)$ over $(\bR,\prec)$. 
Since $$\epsilon_i \beta_2(F) \subseteq \epsilon_i(X) \subseteq \Im(u_i)$$
for $i \in \{1,2\}$ 
we have that 
$\zeta(f'|_{\beta_1(F) \times \epsilon_1 \beta_2(F)}) \neq \zeta(f'|_{\beta_1(F) \times \epsilon_2 \beta_2(F)})$. 
Then for some $i \in \{1,2\}$ we
have that $\zeta(f'|_{\beta_1(F) \times \epsilon_i \beta_2(F)}) \neq \zeta(f'|_{\beta_1(F) \times \epsilon \beta_2 (F)})$. 
We claim that $f \coloneqq f'(\beta_1,\id)$, 
$\alpha_1 \coloneqq \epsilon_i \circ \beta_2$ 
and $\alpha_2 \coloneqq \epsilon \circ \beta_2$ satisfy the conclusion of the lemma: 
\begin{itemize}
\item 
$f$ is canonical on $F \times \alpha_i(F)$, for $i \in \{1,2\}$, because $f'(\id,\epsilon)$ is canonical on $\beta_1(F) \times \beta_2(F)$. 
\item 
for all $(a,b) \in F^2 \cap (\pp \cup \eq)$ we have
that $(\beta_2(a),\beta_2(b)) \in X^2 \cap (\pp \cup \eq)$ and by (\ref{eq:pp}) we obtain 
$(\alpha_1(a),\alpha_2(b)) = (\epsilon_i(\beta_2(a)),\epsilon(\beta_2(b))) \in \pp$. 
\item for all $(a,b) \in F^2 \cap \nsim$ we have 
that $(\beta_2(a),\beta_2(b)) \in X^2 \cap \bot$ and by (\ref{eq:nsim}) we obtain 
\begin{align*}
(\alpha_1(a),\alpha_2(b)) & = (\epsilon_i(\beta_2(a)),\epsilon(\beta_2(b))) \in \; \nsim.\qedhere
\end{align*}  
\end{itemize}
\end{proof}


\subsection{Uniformly continuous minor-preserving maps}
\label{sect:rcc5-uch1}
Let $\bB$ be a first-order expansion of $\bR$. 
In this section we show that if there exists 
a uniformly continuous minor-preserving map 
from $\Pol(\bB) \cap \Can(\bR)$ to $\proj$,
then a specific minor-preserving map $\eta$ (introduced in Section~\ref{sect:eta}) 
or $\rho$ (introduced in Section~\ref{sect:rho})  is a uniformly continuous minor-preserving map from $\Pol(\bB) \cap \Can(\bR,\prec) \to \proj$. 
For this task, we need some concepts from finite universal algebra that will be recalled in the following. 


\subsubsection{Subfactors}
\label{sect:subfactors}
A \emph{subfactor} of a clone ${\mathscr C}$ on $A$ is a clone ${\mathscr C'}$ obtained from ${\mathscr C}$ as follows: pick a subset $S \subseteq A$ which is preserved by all operations of ${\mathscr C}$ and an equivalence relation $E$ on $S$ that is preserved by all operations of ${\mathscr C}$ to $E$;  
the domain of ${\mathscr C}'$ are the equivalence classes of $E$, and the operations of ${\mathscr C}'$ are the actions of the operations of ${\mathscr C}$ on these classes.
We say that the subfactor ${\mathscr C}'$ is \emph{trivial} if
it is isomorphic to $\proj$. A frequent case will be that the subfactor is a clone on two elements; note that $\Pol(\bB)$ has a trivial subfactor if and only if there are two non-empty disjoint $H_1,H_2 \subseteq B$ such that
for every $f \in \Pol(\bB)^{(k)}$ there exists $j \in \{1,\dots,k\}$ such that 
for all $i_1,\dots,i_k \in \{1,2\}$
$$f(H_{i_1},\dots,H_{i_k}) \subseteq H_{i_j}.$$
An operation $f \colon A^k \to A$ is 
 \emph{idempotent} if $f(x,\dots,x) = x$ for every $x \in A$. 
If $E$ is an equivalence relation on a set $S$, and $u \in S$ is an element, then the equivalence class of $u$ with respect to $E$ is denoted by $u/E$.


\subsubsection{Factoring $\prec$} 
\label{sect:eta}
In the following, $0$ stands for $\{\drprec, \poprec\}$ and $1$ stands for $\{\pp\}$. 
Note that $\prec \; = (\drprec) \cup (\poprec) \cup \pp$. Let $F$ be the equivalence relation on 
$\{\drprec, \poprec, \pp\}$ with the equivalence classes $0$ and $1$. 
Throughout this section, $\xi$ denotes $\xi^{(\bR,\prec)}_2$. 

\begin{lemma}\label{lem:F}
For every $f \in \Pol(\bR) \cap \Can(\bR,\prec)$, 
the operation $\xi(f)$ preserves $F$.
\end{lemma}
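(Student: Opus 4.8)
The plan is to unwind what ``$\xi(f)$ preserves $F$'' means and to reduce it to the composition behaviour of the three orbits of pairs of $(\bR,\prec)$ that are contained in $\prec$, namely $\pp$, $\drprec$, and $\poprec$ (these are exactly the $2$-orbits of $(\bR,\prec)$ lying inside $\prec$, since $\prec$ is a strict linear extension of $\pp$). First I would observe that, by Lemma~\ref{lem:pres-prec}, $f$ preserves $\prec$, so whenever $\xi(f)$ is applied to a tuple of orbits from $\{\pp,\drprec,\poprec\}$ the value is again one of these three orbits; in particular it lies in the field of $F$. Since the two classes of $F$ are $1=\{\pp\}$ and $0=\{\drprec,\poprec\}$, it then suffices to prove: for all tuples $\bar O=(O_1,\dots,O_k)$ and $\bar Q=(Q_1,\dots,Q_k)$ over $\{\pp,\drprec,\poprec\}$ with $(O_i,Q_i)\in F$ for every $i$, one has $\xi(f)(\bar O)=\pp$ if and only if $\xi(f)(\bar Q)=\pp$; and since the hypothesis is symmetric in $\bar O$ and $\bar Q$, only one implication must be shown.

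So assume $\xi(f)(\bar O)=\pp$. From $(O_i,Q_i)\in F$ one reads off that $O_i=\pp$ exactly when $Q_i=\pp$, and that otherwise both $O_i$ and $Q_i$ lie in $\{\drprec,\poprec\}$. The key step is to check that $Q_i\subseteq O_i\circ O_i$ for every $i$: when $O_i=Q_i=\pp$ this is $\pp\circ\pp=\pp$ from Table~\ref{table:rcc}; when $O_i\in\{\drprec,\poprec\}$, Lemma~\ref{lem:decomp} together with $\dr\circ\dr=\op\circ\op={\bf 1}$ from Table~\ref{table:rcc} (equivalently, the computation in the proof of Corollary~\ref{cor:square}) gives $O_i\circ O_i={\bf 1}\cap{\prec}={\prec}$, which contains $Q_i$. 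Then I would pick arbitrary $(x_i,z_i)\in Q_i$ for $i=1,\dots,k$ and, using $(x_i,z_i)\in O_i\circ O_i$, choose $y_i$ with $(x_i,y_i)\in O_i$ and $(y_i,z_i)\in O_i$. Since $f$ is canonical over $(\bR,\prec)$ and $\xi(f)(O_1,\dots,O_k)=\pp$, both $(f(x_1,\dots,x_k),f(y_1,\dots,y_k))$ and $(f(y_1,\dots,y_k),f(z_1,\dots,z_k))$ lie in $\pp$, hence $(f(x_1,\dots,x_k),f(z_1,\dots,z_k))\in\pp\circ\pp=\pp$. As the $(x_i,z_i)$ were arbitrary and $f$ is canonical, this yields $\xi(f)(Q_1,\dots,Q_k)=\pp$. (This middle step is precisely the inclusion~(\ref{eq:comp-pres}) specialised to $R_i=R_i'=O_i$ and read through $\xi$.) Combining this with its symmetric counterpart shows that $\xi(f)(\bar O)$ and $\xi(f)(\bar Q)$ always lie in the same $F$-class, i.e.\ $\xi(f)$ preserves $F$.

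I do not expect a genuine obstacle; the only thing to watch is the bookkeeping: keeping an orbit distinct from the \emph{union} of orbits $O_i\circ O_i$, and using that the condition $(O_i,Q_i)\in F$ forces ``$O_i=\pp$'' and ``$Q_i=\pp$'' to hold simultaneously. In particular, the argument never needs $f$ to preserve the individual orbits $\drprec$ or $\poprec$ (which in general it does not), only that it preserves $\prec$, via Lemma~\ref{lem:pres-prec}.
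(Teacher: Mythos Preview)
Your proposal is correct and follows essentially the same approach as the paper: reduce to showing that $\xi(f)(\bar O)=\pp$ implies $\xi(f)(\bar Q)=\pp$ whenever the $O_i$ and $Q_i$ are $F$-equivalent, verify $Q_i\subseteq O_i\circ O_i$ via $\pp\circ\pp=\pp$ and $(\drprec)\circ(\drprec)=(\poprec)\circ(\poprec)={\prec}$, and then apply the composition inclusion~(\ref{eq:comp-pres}). The paper's proof is organised identically, just phrased directly in terms of~(\ref{eq:comp-pres}) rather than unpacking it with explicit elements $x_i,y_i,z_i$.
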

\begin{proof}
Let $f \in \Pol(\bR)^{(k)}$ be canonical with respect to $(\bR,\prec)$ and suppose that 
$O_1,\dots,O_k$ are orbits of pairs contained in $\prec$ such that $f(O_1,\dots,O_k) \subseteq \pp$. By Lemma~\ref{lem:pres-prec} it suffices to show that then
$f(O_1',\dots,O_k') \subseteq \pp$ for all orbits of pairs 
$O_i'$ such that $(O_i',O_i) \in F$. 
We claim that $O_i' \subseteq O_i \circ O_i$. 
If $O_i = \pp$ then $O_i' = \pp$ and the statement is clear since $\pp \circ \pp = \pp$. 
If $O_i = \po$ then $O_i \circ O_i = \; \prec$ (Lemma~\ref{lem:decomp}) which contains 
$\drprec$ and $\poprec$. 
Similarly, if $O_i = \dr$ then $O_i \circ O_i = \; \prec$, and again the claim follows. 
By (\ref{eq:comp-pres}),  we have
\begin{align*}
f(O_1',\dots,O_k') & \subseteq f(O_1 \circ O_1,\dots,O_k \circ O_k) \\
& \subseteq f(O_1,\dots,O_k) \circ f(O_1,\dots,O_k) = \pp. \qedhere
\end{align*}
\end{proof}

\begin{definition}
Let ${\mathscr C} := \Pol(\bR) \cap \Can(\bR,\prec)$.
Then $\eta \colon {\mathscr C} \to {\mathscr O}_{\{0,1\}}$ 
is the map given by 
$$\eta(f) \coloneqq (\xi(f)|_{\{\drprec, \poprec, \pp\}})/F.$$
\end{definition}

An Boolean operation $f \colon \{0,1\}^3 \to \{0,1\}$ is called 
\begin{itemize}
\item the ternary \emph{minority} operation if for all $x,y \in \{0,1\}$ we have $f(x,x,y) = f(x,y,x) = f(y,x,x) = y$,
\item the ternary \emph{majority} operation if 
for all $x,y  \in \{0,1\}$ we have $f(x,x,y) = f(x,y,x) = f(y,x,x) = x$. 
\end{itemize}
Note that the given conditions specify $f$ uniquely.

\begin{proposition}\label{prop:wedge}
Let ${\mathscr C} \subseteq \Pol(\bR) \cap \Can(\bR,\prec)$ be a clone.
Then $\eta({\mathscr C})$ contains neither the binary maximum, nor the ternary majority, nor the ternary minority operation. Every operation in 
$\eta({\mathscr C})$
is a projection, or $\mathscr C$ contains an operation $f$ such that $\eta(f) = \wedge$ where $\wedge$ denotes the binary minimum operation. 
\end{proposition}
\begin{proof}
First observe that $\eta({\mathscr C})$ is 
an idempotent clone because every $f \in {\mathscr C}$ preserves $\pp$, and preserves 
$\precnsim = (\drprec) \, \cup \, (\poprec)$ by Lemma~\ref{lem:pres-prec}. 
The well-known classification of clones over a two-element set by Post~\cite{Post} implies that if 
$\eta({\mathscr C})$ contains an operation that is not a projection, then it must contain the binary minimum, the binary maximum, the ternary majority, or the ternary minority    operation.
So it suffices to prove that the last three cases are impossible. Note that in these three cases 
${\mathscr C}$ contains a ternary operation $g$ such that $\eta(g)$ is cyclic and $\eta(g)(0,0,1) = 1$ or $g(0,1,1) = 1$. 
Hence,  
\begin{align*}
 g(\precnsim,\precnsim,\pp), g(\precnsim,\pp,\precnsim), g(\pp,\precnsim,\precnsim) & \subseteq \pp \\
\text{ or } \quad g(\precnsim,\pp,\pp), g(\pp,\precnsim,\pp), g(\pp,\pp,\precnsim) & \subseteq  \pp \, .
\end{align*}
Note that
\begin{align*}
\precnsim \circ \precnsim \circ \, \pp \, = \, \precnsim \circ \, \pp \, \circ \precnsim \, = \pp \,  \circ \precnsim \circ  \precnsim  & = \, \prec \\
\text{ and } 
\precnsim \circ  \, \pp  \circ \pp = \pp  \, \circ \precnsim \, \circ \, \pp = \pp \circ  \pp  \, \circ  \precnsim & = \, \prec
\end{align*}
and we obtain $g(\prec,\prec,\prec) \subseteq \pp$ from applying $(\ref{eq:comp-pres})$ twice. 
In particular, we have $g(\dr \; \cap \prec,\dr \; \cap \prec,\dr \; \cap \prec) \subseteq \pp$, which contradicts the fact that $g$ preserves $\precnsim$.  
\end{proof}

If $\mathscr C$ contains an operation $f$ such that $\eta(f) = \wedge$, then ${\mathscr C}$ also contains such an operation which is not only
canonical with respect to $(\bR,\prec)$, but also
with respect to $\bR$. To prove this, we first show
the following lemma.

\begin{lemma}\label{lem:partial-can}
Let $f \in \Pol(\bR)^{(k)}$ be canonical with respect to $(\bR,\prec)$. Let $O_1,\dots,O_k$ be orbits of pairs of distinct elements in $\Aut(\bR)$ such that $f(O_1,\dots,O_k) \subseteq \; \nsim$. Then there exists $O \in \{\dr,\po\}$ such that $f(O_1,\dots,O_k) \subseteq O$. 
\end{lemma}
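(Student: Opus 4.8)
The plan is to argue by contradiction. Assume that $f(O_1,\dots,O_k)$ meets both $\dr$ and $\po$. Since $f$ is canonical over $(\bR,\prec)$, for any choice of orbits $P_1,\dots,P_k$ of pairs of $(\bR,\prec)$ with $P_i\subseteq O_i$ the image $f(P_1,\dots,P_k)$ is a single orbit of $(\bR,\prec)$, and $f(O_1,\dots,O_k)$ is the union of all these images as the $P_i$ range over the $(\bR,\prec)$-orbits refining the $O_i$. As $\nsim=\dr\cup\po$ and both $\dr$ and $\po$ are unions of orbits of $(\bR,\prec)$, each such single-orbit image contained in $\nsim$ lies entirely in $\dr$ or entirely in $\po$. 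Hence there are orbits $P_i,Q_i\subseteq O_i$ of $(\bR,\prec)$ with $f(P_1,\dots,P_k)\subseteq\dr$ and $f(Q_1,\dots,Q_k)\subseteq\po$. The idea is to combine these two facts in a single composition that forces $f$ to send a tuple of $\pp$'s out of $\pp$, which is impossible since $f\in\Pol(\bR)$ preserves the relation $\pp$.

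The combinatorial heart of the argument is the claim that, for every $i$,
\[
\pp\ \subseteq\ Q_i\circ\pp\circ P_i ,
\]
where $\circ$ denotes composition of binary relations of $\bR$. I would prove this by cases on $O_i$. If $O_i\in\{\pp,\ppi\}$, then $O_i$ is already a single orbit of $(\bR,\prec)$ (it is contained in $\prec$, resp.\ in $\succ$), so $P_i=Q_i=O_i$; here Table~\ref{table:rcc} gives $\pp\circ\pp\circ\pp=\pp$ and $\ppi\circ\pp\circ\ppi={\bf 1}$ (using $\pp\circ\ppi={\bf 1}$ and $\ppi\circ\pp\supseteq\pp$). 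If $O_i\in\{\dr,\po\}$, then $P_i$ and $Q_i$ each range over the two orbits $O_i\cap{\prec}$ and $O_i\cap{\succ}$, giving four subcases, and in each of them Lemma~\ref{lem:decomp}, Corollary~\ref{cor:square} and Table~\ref{table:rcc} show that $Q_i\circ\pp\circ P_i$ contains one of ${\prec}$, ${\succ}$ or ${\bf 1}$, hence contains $\pp$. Granting the claim, one applies the inclusion in $(\ref{eq:comp-pres})$ twice — noting that this $\subseteq$ is immediate from the definitions (a chain $x_i\to u_i\to v_i\to y_i$ in $Q_i,\pp,P_i$ is mapped to $f(\bar x)\to f(\bar u)\to f(\bar v)\to f(\bar y)$), so no preservation hypothesis on $f$ is needed — together with $f(\pp,\dots,\pp)\subseteq\pp$, to obtain
\[
\emptyset\ \neq\ f(\pp,\dots,\pp)\ \subseteq\ f\big((Q_i\circ\pp\circ P_i)_{i\le k}\big)\ \subseteq\ f(Q_1,\dots,Q_k)\circ f(\pp,\dots,\pp)\circ f(P_1,\dots,P_k)\ \subseteq\ \po\circ\pp\circ\dr .
\]
Reading $\po\circ\pp\circ\dr=\ppi\cup\dr\cup\po=\ppi\cup\nsim$ off Table~\ref{table:rcc}, and using that $f(\pp,\dots,\pp)$ is a non-empty subset of $\pp$ because $f$ preserves $\pp$, we get $\emptyset\neq f(\pp,\dots,\pp)\subseteq\pp\cap(\ppi\cup\nsim)=\emptyset$, the desired contradiction. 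Therefore $f(O_1,\dots,O_k)\subseteq\dr$ or $f(O_1,\dots,O_k)\subseteq\po$.

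The only genuine work is the case analysis for the displayed composition claim, and the one delicate point there is handling the orbits lying in $\succ$ (namely $\ppi$, $\dr\cap{\succ}$ and $\po\cap{\succ}$): for these the first composition $\pp\circ P_i$ need not already contain $\pp$, so one must compose once more on the left with $Q_i$ and use identities such as $\pp\circ\ppi={\bf 1}$ and ${\bf 1}=\dr\circ\dr=\po\circ\po$ to reabsorb a pair in $\pp$. Everything else — the reduction to refined $(\bR,\prec)$-orbits, associativity and monotonicity of relation composition, the two applications of $(\ref{eq:comp-pres})$, and the final relation arithmetic — is routine.
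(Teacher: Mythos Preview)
Your argument is correct and reaches the same contradiction as the paper, but via a different composition. The paper fixes a reference tuple $O_i' \subseteq O_i$ (taking $O_i' = O_i \cap {\prec}$ when $O_i \in \{\dr,\po\}$), sets $O = \xi(f)(O_1',\dots,O_k')$, and then for any other refining tuple $(O_i'')$ shows the \emph{two-term} containment $\pp \subseteq O_i' \circ (O_i'')^\smile$; feeding this through (\ref{eq:comp-pres}) and (\ref{eq:smile-pres}) forces $\pp$ into both $O \circ \xi(f)(O_1'',\dots)^\smile$ and its reverse, and since $\pp \not\subseteq \po \circ \dr$ the two images must lie in the same $\{\dr,\po\}$-class. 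Your route instead picks two ``bad'' refining tuples simultaneously and sandwiches $\pp$ in the \emph{three-term} product $Q_i \circ \pp \circ P_i$, deriving $f(\pp,\dots,\pp) \subseteq \po \circ \pp \circ \dr$ directly. The paper's version is a little cleaner (only one nontrivial composition to verify per coordinate, using converses to get both directions for free) and proves the slightly stronger intermediate fact that \emph{every} refining image agrees with the reference one; your version is more hands-on but equally valid.

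One small slip in your write-up: you assert that in the four $\{\dr,\po\}$ subcases $Q_i \circ \pp \circ P_i$ contains one of $\prec$, $\succ$, or ${\bf 1}$, ``hence contains $\pp$''. Containing only $\succ$ would \emph{not} yield $\pp$, since $\pp \subseteq {\prec}$. In fact the case analysis always produces $\prec$ or ${\bf 1}$ (never $\succ$ alone), so your conclusion stands; just drop $\succ$ from the list.
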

\begin{proof}
If $O_i \in \{\dr,\po\}$, then let $O_i' \coloneqq (O_i \, \cap \prec)$; otherwise, let $O_i' \coloneqq O_i$. Then $O_1',\dots,O_i'$ are orbits of pairs in $\Aut(\bR,\prec)$.
Let $O \coloneqq \xi(f)(O_1',\dots,O_k')$. By assumption, $O \in \{\dr \; \cap \prec, \po \; \cap \prec\}$. 
We have to show that for all orbits of pairs
$O_1'',\dots,O_k''$ in $\Aut(\bR,\prec)$
such that $O_i'' \subseteq O_i$ for all $i \in \{1,\dots,k\}$ we have that 
$\xi(f)(O_1'',\dots,O_k'') \in \{O,O^\smile\}$. 
Note that $\pp$ is contained in $O'_i \circ (O_i'')^{\smile}$, and (\ref{eq:comp-pres}) implies that $\pp = \xi(f)(\pp,\dots,\pp)$ is contained in $\xi(f)(O_1',\dots,O_k') \circ \xi(f)(O_1'',\dots,O_k'')$ and in $\xi(f)(O_1'',\dots,O_k'') \circ \xi(f)(O_1',\dots,O_k')$. On the other hand, we know that $\pp$ is not contained in $\po \circ \dr$, which implies that if $O \subseteq \po$ then $O^\smile \subseteq \po$ and if $O \subseteq \dr$ then $O^{\smile} \subseteq \dr$. 
\end{proof}

\begin{lemma}\label{lem:eta-wedge-inj}
Let $f \in \Pol(\bR)^{(2)} \cap \Can(\bR,\prec)$ 
be such that $\eta(f) = \wedge$. Then 
$f$ is injective. 
\end{lemma}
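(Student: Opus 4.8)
The goal is to show that if $f \in \Pol(\bR)^{(2)} \cap \Can(\bR,\prec)$ satisfies $\eta(f) = \wedge$, then $f$ is injective. The natural strategy is to argue that a non-injective canonical polymorphism of $\bR$ behaves on the orbits in a way that is incompatible with $\eta(f) = \wedge$. I would start by using that $f \in \Pol(\bR)$ preserves $\neq$ — no, more precisely: $f$ preserves $1 \setminus \eq$ by Lemma~\ref{lem:pres-neq}, so $f(x_1,y_1) = f(x_2,y_2)$ with $(x_1,x_2)$ or $(y_1,y_2)$ in a non-$\eq$ orbit cannot collapse to a single point unless the other coordinate is also $\eq$. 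Actually the cleanest route: suppose $f$ is not injective, so there exist $a \neq b$ in $R$ with $f(a,c) = f(b,c)$ for some $c$, or $f(c,a) = f(c,b)$; canonicity over $(\bR,\prec)$ then forces $f$ to map the corresponding non-$\eq$ orbit of pairs in the first (or second) argument, paired with $\eq$ in the other argument, into $\eq$. So there is an orbit $O \neq \eq$ (say in the first coordinate, WLOG since $\wedge$ is symmetric — but here we must be careful, $f$ need not be symmetric, so handle both coordinates) with $f(O, \eq) \subseteq \eq$ or $f(\eq, O) \subseteq \eq$.

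Next I would derive a contradiction from $f(O, \eq) \subseteq \eq$ (the other case being symmetric, and using $\eta(f)=\wedge$ which treats both coordinates alike). The key is that $O$ is one of $\pp, \ppi, \dr, \op$ refined by $\prec$, and by Corollary~\ref{cor:square} and Lemma~\ref{lem:decomp} we have $\pp \subseteq O \circ O$ whenever $O \subseteq \prec$; combining with $f(\eq \circ \eq, \dots) = f(\eq, \dots)$ and the composition-preservation inequality~(\ref{eq:comp-pres}), $f(\pp, \eq) \subseteq f(O \circ O, \eq \circ \eq) \subseteq f(O,\eq) \circ f(O,\eq) \subseteq \eq \circ \eq = \eq$, i.e.\ $f(\pp, \eq) \subseteq \eq$. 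But now I want to contradict $\eta(f) = \wedge$: the assumption $\eta(f) = \wedge$ means that on the three $\prec$-orbits $\drprec, \poprec, \pp$ (modulo $F$), $f$ acts as conjunction with $1 = \{\pp\}$ and $0 = \{\drprec,\poprec\}$; in particular $f(\pp, \pp \, \cap \, \text{anything in the } 1\text{-class})$ lands in $\pp$, while $f$ applied with a $0$-class orbit in either argument lands in the $0$-class, hence in $\precnsim$, which is disjoint from $\eq$. The contradiction should come from choosing the $\eq$-coordinate to be reachable: since $\eq \subseteq \pp \circ \ppi$ (an element contains itself... more precisely $\eq \subseteq \ppi \circ \pp$ and $\eq \subseteq \pp \circ \ppi$ as in the proof of Lemma~\ref{lem:pres-neq}), and $f$ preserves $\pp$ and by~(\ref{eq:smile-pres}) also behaves well under $\smile$, we can push $f(\pp,\eq) \subseteq \eq$ down to $f(\pp,\pp) \subseteq \eq \circ \eq = \eq$ using $\eq \subseteq \pp \circ \ppi$ in the second coordinate — but $f$ preserves $\pp$, so $f(\pp,\pp) \subseteq \pp$, contradicting $f(\pp,\pp)\subseteq\eq$.

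\textbf{Main obstacle.} The delicate point is the asymmetry: $\eta(f) = \wedge$ is a statement about $\xi(f)$ on $\prec$-orbits, and $\wedge$ as a Boolean operation of arity $2$ is symmetric, but $f$ itself is not assumed symmetric, so I must check that a collapse in \emph{either} the first or the second argument leads to a contradiction, and in both cases I need the relevant instance of Corollary~\ref{cor:square} (which is symmetric in the sense that $\pp \subseteq O \circ O$ for any $\prec$-orbit $O$) together with the fact that $f$ preserves $\pp$. A secondary subtlety is bridging from "$f$ not injective" to "$f(O,\eq) \subseteq \eq$ for a non-$\eq$ orbit $O$": one must note that if $f(a,c) = f(b,c)$ with $a \neq b$, then picking the orbit $O$ of $(a,b)$ in $\Aut(\bR,\prec)$ (refining the $\Aut(\bR)$-orbit), canonicity gives $f(O, \eq) \subseteq \eq$, and if instead the collapse is $f(c,a)=f(c,b)$ we get $f(\eq, O)\subseteq\eq$; the rest of the argument then applies verbatim with the roles of the coordinates exchanged, and the conclusion $f(\pp,\pp) \subseteq \eq$ contradicts $f \in \Pol(\bR)$ in both cases. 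I expect the whole proof to be short once these case distinctions are set up carefully.
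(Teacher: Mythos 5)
Your overall structure matches the paper's: dispose of the case where both coordinates collapse non-trivially via Lemma~\ref{lem:pres-neq}; reduce the remaining non-injectivity to $f(O,\eq)\subseteq\eq$ (or $f(\eq,O)\subseteq\eq$) for some orbit $O\neq\eq$; upgrade to $f(\pp,\eq)\subseteq\eq$ (or $f(\eq,\pp)\subseteq\eq$) using Corollary~\ref{cor:square} and~(\ref{eq:smile-pres}); then seek a contradiction with $f(\pp,\pp)\subseteq\pp$. Up to that point you are on the right track.

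The gap is in the final step. You write that you can ``push $f(\pp,\eq)\subseteq\eq$ down to $f(\pp,\pp)\subseteq\eq\circ\eq=\eq$ using $\eq\subseteq\pp\circ\ppi$ in the second coordinate.'' But~(\ref{eq:comp-pres}) runs the other way: from $\pp\circ\pp=\pp$ and $\eq\subseteq\pp\circ\ppi$ you only obtain
$f(\pp,\eq)\subseteq f(\pp,\pp\circ\ppi)\subseteq f(\pp,\pp)\circ f(\eq,\ppi)$,
which is an upper bound on $f(\pp,\eq)$, not on $f(\pp,\pp)$; there is no way to invert this to conclude $f(\pp,\pp)\subseteq\eq$. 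In fact you cannot hope to get $f(\pp,\pp)\subseteq\eq$ from $f(\pp,\eq)\subseteq\eq$ alone without invoking $\eta(f)=\wedge$. You do state the needed consequence of $\eta(f)=\wedge$ (``$f$ applied with a $0$-class orbit ... lands in $\precnsim$, which is disjoint from $\eq$''), but your final chain never actually uses it — $\eq\subseteq\pp\circ\ppi$ is not the relevant decomposition. What the paper does instead is combine the collapse fact with $f(\pp,\precnsim)\subseteq\nsim$ (from $\eta(f)=\wedge$) and the composition fact $\pp\subseteq\pp\circ\precnsim$: with $f(\eq,\pp)\subseteq\eq$, decompose $\pp=\eq\circ\pp$ in the first argument and $\pp\subseteq\pp\circ\precnsim$ in the second, yielding $f(\pp,\pp)\subseteq f(\eq,\pp)\circ f(\pp,\precnsim)\subseteq\eq\circ\nsim=\nsim$, contradicting $f(\pp,\pp)\subseteq\pp$ (the case $f(\pp,\eq)\subseteq\eq$ is handled symmetrically, using $\pp\subseteq\precnsim\circ\pp$ and $f(\precnsim,\pp)\subseteq\nsim$). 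Replace your final step with this chain and the proof closes.
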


\begin{proof}
Let $O_1$ and $O_2$ be orbits of pairs of 
$\Aut(\bR,\prec)$. If both $O_1$ and $O_2$ are distinct from $\eq$, then by Proposition~\ref{lem:pres-neq} we know that $f(O_1,O_2) \neq \eq$. 
Now suppose that one of $O_1$ and $O_2$, say $O_1$, equals $\eq$. If $O_2 = \eq$, there is nothing to be shown; otherwise, either $\pp$ or $\ppi$ is contained in $O_2 \circ O_2$ by Corollary~\ref{cor:square}. In the first case 
we have 
$$f(\eq,\pp) = f(\eq \circ \eq, O_2 \circ O_2) \subseteq \eq \circ \eq = \eq$$
and in the second case, analogously, 
$f(\eq,\ppi) \subseteq \eq$. Note that 
$f(\eq,\ppi) \subseteq \eq$ and 
$f(\eq,\pp) \subseteq \eq$ are equivalent by (\ref{eq:smile-pres}). 
So we have that $f(\eq,\pp) \subseteq \eq$.
Also note that $f(\pp,\bot) \subseteq \, \nsim$
because $\eta(f) = \wedge$. 
Then 
\begin{align*}
f(\pp,\pp) & \subseteq f(\eq \circ \pp, \pp \, \circ \nsim) \\
& \subseteq \eq \, \circ \nsim \; = \; \nsim \, ,
\end{align*}
which contradicts the assumption that 
$f$ preserves $\pp$. 
\end{proof}

\begin{lemma}\label{lem:nsim}
Let $f \in \Pol(\bR)^{(2)} \cap \Can(\bR,\prec)$ be such that $\eta(f) = \wedge$. 
If $O_1,O_2 \in \{\pp,\ppi,\dr,\po\}$ are distinct then $f(O_1,O_2) \subseteq \; \nsim$. 
\end{lemma}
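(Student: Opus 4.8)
The goal is to show that a binary canonical polymorphism $f$ of $\bR$ with $\eta(f)=\wedge$ sends any two distinct orbits $O_1,O_2\in\{\pp,\ppi,\dr,\po\}$ into $\nsim$. The plan is to reduce everything to the composition calculus for $(\bR,\prec)$ (Lemma~\ref{lem:decomp}, Corollary~\ref{cor:square}) together with the two facts we already know about $f$: it preserves $\pp$ (hence $\ppi$ by~(\ref{eq:smile-pres})), it preserves $\prec$ and $\precnsim$ (Lemma~\ref{lem:pres-prec}), it preserves $1\setminus\eq$ (Lemma~\ref{lem:pres-neq}), and on the three orbits contained in $\prec$ it acts as $\wedge$ after factoring by $F$, which in particular means $f(\precnsim,\pp)\subseteq\nsim$ whenever the inputs lie in $\prec$, more precisely $f$ maps any pair of $\prec$-orbits to $\pp$ iff both are $\pp$, and otherwise into $\precnsim$.

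First I would fix distinct $O_1,O_2\in\{\pp,\ppi,\dr,\po\}$ and observe that since $f$ preserves $1\setminus\eq$ (Lemma~\ref{lem:pres-neq}), $f(O_1,O_2)$ is one of $\pp,\ppi,\dr,\po$; the task is to rule out $\pp$ and $\ppi$, and by~(\ref{eq:smile-pres}) it suffices to rule out $\pp$ (apply the converse). So assume for contradiction $f(O_1,O_2)\subseteq\pp$. Now I would split into cases according to whether each $O_i$ is one of $\pp,\ppi$ or one of $\dr,\po$. The main tool is Corollary~\ref{cor:square} in the form that for a $\prec$-orbit $O$ we have $\pp\subseteq O\circ O$, combined with the fact that $\dr$ and $\po$ each contain a $\prec$-orbit and a $\succ$-orbit whose composition (via Lemma~\ref{lem:decomp}) gives all of $\dr\circ\dr=\mathbf 1$ or $\po\circ\po=\mathbf 1$. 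The strategy in each case: write $\pp$ (or $\ppi$, or $\prec$) as a composition $P_1\circ Q_1$ and $P_2\circ Q_2$ where $P_i\subseteq O_i'$, $Q_i\subseteq O_i''$ for suitable orbits with $O_i',O_i''$ in the same $F$-class as (or equal to) $O_i$, so that applying~(\ref{eq:comp-pres}) twice yields $f(\pp,\pp)\subseteq f(O_1,O_2)\circ f(\text{something})\subseteq\pp\circ(\text{something})$, and then derive that $f$ maps $\precnsim$ into $\prec$ or maps $\pp$ outside $\pp$, contradicting Lemma~\ref{lem:pres-prec} or the preservation of $\pp$. Concretely, if both $O_i\in\{\dr,\po\}$, one uses that $\pp\subseteq(\dr\cap\prec)\circ(\dr\cap\prec)$ and $\pp\subseteq(\po\cap\prec)\circ(\po\cap\prec)$ (Corollary~\ref{cor:square}) to force $f$ of two $\precnsim$-orbits into $\pp$, directly contradicting $f$ preserving $\precnsim$; if one $O_i$ is $\pp$ or $\ppi$, say $O_1=\pp$, then using $\pp=\pp\circ\pp$ on the first coordinate and $\pp\subseteq O_2\circ O_2$ on the second, plus the $F$-invariance of $\eta(f)$, I would push into the situation $f(\pp,\pp)\subseteq\pp\circ\pp$ while also $f(\pp,\precnsim)\subseteq\nsim$ from $\eta(f)=\wedge$ and then compose $\pp=\pp\circ\eq$ against $\pp=\precnsim\circ(\text{converse})$ to get a contradiction as in the proof of Lemma~\ref{lem:eta-wedge-inj}.

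\textbf{Main obstacle.} The delicate point is bookkeeping the orbit refinements under $\prec$: $\dr$ and $\po$ each split into a $\prec$-part and a $\succ$-part, and $f$ is only canonical over $(\bR,\prec)$, so I must always work with the finer orbits and keep track of which compositions land in $\prec$ versus $\succ$ versus the full relation (this is exactly what Lemma~\ref{lem:decomp} controls). The argument has to be organized so that whenever $f(O_1,O_2)\subseteq\pp$ is assumed, the chosen decompositions of $\pp$ on both input coordinates use orbits lying in $\prec$ (so that canonicity of $f$ over $(\bR,\prec)$ applies and Lemma~\ref{lem:pres-prec} gives the contradiction), which is why Corollary~\ref{cor:square} — producing $\pp$ inside $O\circ O$ for a \emph{$\prec$-orbit} $O$ — is the crucial ingredient. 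Once the case split is set up correctly, each case is a two-line composition computation; the work is entirely in choosing the decompositions so the types line up.
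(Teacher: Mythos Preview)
Your outline has the right ingredients (composition calculus, the reduction to ruling out $\pp$ via~(\ref{eq:smile-pres}), and the use of $\eta(f)=\wedge$ on $\prec$-orbits), but there is a genuine gap in the mixed case, and one minor slip earlier.

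\textbf{Minor slip.} The sentence ``$f(O_1,O_2)$ is one of $\pp,\ppi,\dr,\po$'' is not justified: $f$ is canonical only over $(\bR,\prec)$, not over $\bR$, so $f(O_1,O_2)$ need not be contained in a single $\bR$-orbit. The correct reduction is to pass to $(\bR,\prec)$-sub-orbits $P_i\subseteq O_i$ and rule out $\xi(f)(P_1,P_2)=\pp$ (the case $\ppi$ then follows by~(\ref{eq:smile-pres}), and $\eq$ is excluded by Lemma~\ref{lem:eta-wedge-inj}). You note this in your obstacle paragraph, so this is only a matter of reorganising the opening.

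\textbf{Main gap.} Once you work with $(\bR,\prec)$-orbits $P_1,P_2$, the natural case split is by whether each $P_i$ lies in $\prec$ or in $\succ$, not by the type of the ambient $\bR$-orbit. If both $P_i\subseteq\prec$ (or both $\subseteq\succ$, by converse), then since $O_1\neq O_2$ at least one $P_i$ is in the $F$-class $0$, and $\eta(f)=\wedge$ gives $\xi(f)(P_1,P_2)\subseteq\precnsim$ immediately. The content of the lemma is entirely in the \emph{mixed} case, say $P_1\subseteq\prec$ and $P_2\subseteq\succ$, and here your proposed tool---Corollary~\ref{cor:square}, i.e.\ $\pp\subseteq O\circ O$ for a $\prec$-orbit $O$---does not do the job. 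For instance when $P_1=\pp$ and $P_2=\ppi$ one has $\ppi\circ\ppi=\ppi$, so there is no way to produce $\pp$ on the second coordinate by squaring; and the looser idea of writing $\pp$ inside $(\poprec)\circ(\poprec)$ only yields $f(\pp,\pp)\subseteq\precnsim\circ\precnsim$, which contains $\pp$ and gives no contradiction.

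The paper's trick is different and is exactly what your plan is missing: for the orbit $P_i\subseteq\succ$ one observes (via Lemma~\ref{lem:decomp} and the composition table) that $\poprec\subseteq P_i\circ\pp$, while for the orbit $P_i\subseteq\prec$ trivially $P_i\subseteq P_i\circ\pp$. Setting $P_i':=P_i$ if $P_i\subseteq\prec$ and $P_i':=\poprec$ otherwise, one gets
\[
f(P_1',P_2')\ \subseteq\ f(P_1\circ\pp,\,P_2\circ\pp)\ \subseteq\ f(P_1,P_2)\circ f(\pp,\pp)\ =\ \pp\circ\pp\ =\ \pp,
\]
but now both $P_i'$ are $\prec$-orbits and at least one equals $\poprec$, so $\eta(f)=\wedge$ forces $f(P_1',P_2')\subseteq\precnsim$, the desired contradiction. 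The point is that composing with $\pp$ on the right translates a $\succ$-orbit into the fixed $\prec$-orbit $\poprec$; this single observation replaces all of your case analysis.
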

\begin{proof}
First observe that 
if $O_1,O_2 \subseteq \; \prec$ then $\xi(f)(O_1,O_2) \in \{\drprec, \poprec \}$ since $\eta(f) = \wedge$. If $O_1,O_2 \subseteq \; \succ$ then we apply the same argument to
$O_1^\smile$ instead of $O_1$ 
and $O_2^{\smile}$ instead of $O_2$.  
Otherwise, by the injectivity of $f$ (Lemma~\ref{lem:eta-wedge-inj}) we know that $\xi(f)(O_1,O_2) \neq \eq$. 
Assume for contradiction that $\xi(f)(O_1,O_2) = \pp$. For $i \in \{1,2\}$, let $O_i' \coloneqq O_i$ if $O_i \subseteq \; \prec$
and otherwise let $O_i' \coloneqq \po \; \cap \prec$. Then one can check using the composition table (Table~\ref{table:rcc}) and Lemma~\ref{lem:decomp} that 
$O_i' \subseteq O_i \circ \pp$. Therefore, 
\begin{align*}
f(O_1',O_2') &\subseteq f(O_1 \circ \pp, O_2 \circ \pp) \\
& \subseteq  f(O_1,O_2) \circ f(\pp,\pp) = \pp \circ \pp = \pp.
\end{align*}
On the other hand, $O_i \subseteq \; \succ$ for some $i \in \{1,2\}$, and hence $O_i' = \poprec$. Since $\eta(f) = \wedge$ we have $f(O_1',O_2') \subseteq \, \nsim$, a contradiction. 
\end{proof}

\begin{lemma}\label{lem:eta-can}
Let ${\mathscr C} \subseteq \Pol(\bR) \cap \Can(\bR,\prec)$ be a clone that contains
a binary operation $f$ such that $\eta(f) = \wedge$. Then $\mathscr C$ contains
an operation $g$ such that $\eta(g) = \wedge$
and $g \in \Can(\bR)$. 
\end{lemma}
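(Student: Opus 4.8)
The plan is to show that $f$ itself is already canonical over $\bR$; then $g:=f$ proves the lemma, since $f\in\mathscr C$ and $\eta(f)=\wedge$ by assumption. As $\bR$ is finitely homogeneous with maximal arity $2$, it suffices (see Section~\ref{sect:canon}) to check that $f$ realises a complete $2$-behaviour over $\bR$, i.e.\ that $f(O_1,O_2)$ is contained in a single orbit of $\Aut(\bR)$ for all orbits of pairs $O_1,O_2$ of $\Aut(\bR)$. There are five such orbits, $\eq,\pp,\ppi,\dr,\op$, and among them only $\dr$ and $\op$ split when passing to $(\bR;\prec)$, namely $\dr=\drprec\cup(\dr\cap\succ)$ and $\op=(\op\cap\prec)\cup(\op\cap\succ)$ into two mutually converse $\Aut(\bR;\prec)$-orbits each, while $\eq,\pp,\ppi$ remain single (self-converse) orbits.

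First I would dispose of the routine cases. If $O_1,O_2\in\{\eq,\pp,\ppi\}$ then $O_1,O_2$ are also orbits of $\Aut(\bR;\prec)$, so $f(O_1,O_2)$ is a single $\Aut(\bR;\prec)$-orbit by canonicity of $f$ over $(\bR;\prec)$, hence contained in a single $\Aut(\bR)$-orbit. If $O_1,O_2\in\{\pp,\ppi,\dr,\op\}$ with at least one of them in $\{\dr,\op\}$, then $f(O_1,O_2)\subseteq\nsim$ — by Lemma~\ref{lem:nsim} if $O_1\ne O_2$, and because $f\in\Pol(\bR)$ preserves the primitively positively definable relation $\nsim$ if $O_1=O_2$ — and since $O_1,O_2$ are then orbits of pairs of distinct elements, Lemma~\ref{lem:partial-can} yields an orbit $O\in\{\dr,\op\}$ with $f(O_1,O_2)\subseteq O$.

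The only remaining cases are those in which exactly one of $O_1,O_2$ equals $\eq$ and the other lies in $\{\dr,\op\}$. Fix such a case; by~(\ref{eq:smile-pres}) we have $f(\eq,W^\smile)=f(\eq,W)^\smile$ and $f(W^\smile,\eq)=f(W,\eq)^\smile$ for every orbit $W$, so the value $f(O_1,O_2)$ has the form $P\cup P^\smile$ for a single $\Aut(\bR;\prec)$-orbit $P$, and (since $\dr$ and $\op$ each consist of two mutually converse $\Aut(\bR;\prec)$-orbits and $\eq,\pp,\ppi$ are self-converse) it is a single $\Aut(\bR)$-orbit as soon as $P\notin\{\eq,\pp,\ppi\}$. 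Injectivity of $f$ (Lemma~\ref{lem:eta-wedge-inj}) excludes $P=\eq$. The key step is to exclude $P\in\{\pp,\ppi\}$, for which it suffices to show that $f(\eq,Z)\ne\pp$ and $f(Z,\eq)\ne\pp$ for every $\Aut(\bR;\prec)$-orbit $Z$ contained in $\dr$ or in $\op$ (the value $\ppi$ being handled by passing to converses via~(\ref{eq:smile-pres})). Suppose, say, $f(\eq,Z)=\pp$, and set $O:=\drprec$ if $Z\subseteq\dr$ and $O:=\op\cap\prec$ if $Z\subseteq\op$. Using Lemma~\ref{lem:decomp} and the composition table (Table~\ref{table:rcc}) one checks $O\subseteq Z\circ\pp$ in all four cases (one has $\drprec\circ\pp=\prec$, $(\dr\cap\succ)\circ\pp=\pp\cup\dr\cup\op$, $(\op\cap\prec)\circ\pp=\pp\cup(\op\cap\prec)$, $(\op\cap\succ)\circ\pp=\pp\cup\op$, each containing $O$). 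Hence, since $\eq\circ\pp=\pp$ and $f(\pp,\pp)\subseteq\pp$, formula~(\ref{eq:comp-pres}) gives
\[ f(\pp,O)\ \subseteq\ f(\pp,Z\circ\pp)\ =\ f(\eq\circ\pp,\,Z\circ\pp)\ \subseteq\ f(\eq,Z)\circ f(\pp,\pp)\ \subseteq\ \pp\circ\pp\ =\ \pp, \]
contradicting $f(\pp,O)\subseteq\nsim$ (Lemma~\ref{lem:nsim}, as $\pp$ and the $\Aut(\bR)$-orbit of $O$ are distinct). The case $f(Z,\eq)=\pp$ is entirely symmetric: from $f(Z\circ\pp,\pp)=f(Z\circ\pp,\eq\circ\pp)\subseteq f(Z,\eq)\circ f(\pp,\pp)\subseteq\pp$ and $O\subseteq Z\circ\pp$ one gets $f(O,\pp)\subseteq\pp$, again contradicting Lemma~\ref{lem:nsim}. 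This finishes the verification that $f$ realises a complete $2$-behaviour over $\bR$, so $f\in\Can(\bR)$ and we may take $g:=f$.

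I expect the main obstacle to be carrying out the composition arguments of the last paragraph without error: because the composition table of $(\bR;\prec)$ has many entries equal to $\mathbf 1$, naive composition arguments are vacuous, and one has to pick the decompositions above so that the right-hand side lands inside $\pp$ rather than $\mathbf 1$ — exactly the subtlety already met in the proof of Lemma~\ref{lem:eta-wedge-inj}. A fallback, should it turn out that $f\notin\Can(\bR)$ after all, would be to replace $f$ by a term operation over $f$ and $\overline{\Aut(\bR;\prec)}$ with the same $\eta$-image (which is still $\wedge$, as $\eta$ is a clone homomorphism) and to realise, via a compactness argument (Proposition~\ref{prop:compact}), a complete $\bR$-behaviour that agrees with $f$ outside the $\eq$-cases and is fixed there by Lemma~\ref{lem:partial-can}.
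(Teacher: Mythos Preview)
Your proof is correct, and in fact you prove something slightly stronger than the paper: you show that $f$ itself is already canonical over $\bR$, whereas the paper passes to $g(x,y)\coloneqq f\bigl(f(x,y),f(y,x)\bigr)$. Looking closely at the paper's case analysis, however, every case there is also argued entirely in terms of $f$ (it computes $f(O_1,O_2)$, not $g(O_1,O_2)$), so the introduction of $g$ is essentially superfluous and the two arguments coincide in substance.

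Where your proof adds value is in the treatment of $(O_1,O_2)=(\eq,O_2)$ with $O_2\in\{\dr,\op\}$. The paper's Case~3 writes ``Since $f$ is injective~$\dots$, we have $\xi(f)(\eq,O_2')\in\{\poprec,\drprec,\po\cap\succ,\dr\cap\succ\}$'', but injectivity alone (Lemma~\ref{lem:eta-wedge-inj}) only excludes $\eq$; excluding $\pp$ and $\ppi$ requires a further step. Your composition argument --- rewriting $\pp=\eq\circ\pp$, verifying $O\subseteq Z\circ\pp$ for the appropriate $O\in\{\drprec,\poprec\}$, and concluding $f(\pp,O)\subseteq\pp$ in contradiction to Lemma~\ref{lem:nsim} --- is exactly the missing justification, and it mirrors the technique used in the proof of Lemma~\ref{lem:eta-wedge-inj}.

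One trivial slip: $\pp$ and $\ppi$ are not ``self-converse'' as you write early on; they are converses of each other. This does not affect your argument, since your later reasoning correctly identifies $P\in\{\pp,\ppi\}$ as precisely the case where $P\cup P^\smile=\pp\cup\ppi$ fails to be a single $\Aut(\bR)$-orbit. The fallback sketch in your last paragraph is unnecessary.
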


\begin{proof}
We claim that the operation
$$g(x,y) \coloneqq f \big (f(x,y),f(y,x) \big)$$
satisfies the requirements. It is clear that $\eta(g) = \wedge$. To show that $g$ is canonical with respect to $\bR$, let $O_1,O_2$ be two orbits of pairs in $\Aut(\bR)$. We have to show that $g(O_1,O_2)$ is contained in one orbit of pairs in $\Aut(\bR,\prec)$. 
If $O_1 = O_2$ then $g(O_1,O_2) \subseteq O_1$, and we are done, so suppose that $O_1 \neq O_2$. 

\medskip 
{\bf Case 1.} $O_1 \neq \eq$ and $O_2 \neq \eq$. Then $f(O_1,O_2) \subseteq \; \nsim$
 by Lemma~\ref{lem:nsim}.
Hence, the statement follows from Lemma~\ref{lem:partial-can}. 


{\bf Case 2.} 
$O_1 = \eq$ and 
$O_2 \in \{\pp,\ppi\}$. Then the statement follows
directly from the assumption that $f$ is canonical with respect to $(\bR,\prec)$. 

{\bf Case 3.} 
$O_1 = \eq$ and 
$O_2 \in \{\po,\dr\}$. Define $O_2' \coloneqq (O_2 \, {\cap} \, {\prec})$. Note that $O_2 = O_2' \cup (O_2')^\smile$. 
Since $f$ is injective (Lemma~\ref{lem:eta-wedge-inj}), we have z
$\xi(f)(\eq,O_2') \in \{(\poprec),(\drprec), (\po \, {\cap} \, {\succ}), (\dr \, {\cap} \, {\succ})\}$. 
Using observation (\ref{eq:smile-pres}) we get that $$f(\eq,O_2) 
= f(\eq,O_2') \cup f(\eq,O_2')^\smile \in \{\dr,\po\}.$$ 

{\bf Case 4.} $O_1 \neq \eq$ and $O_2 = \eq$.
The statement can be shown analogously to the cases above. This concludes the proof. 
\end{proof}

\subsubsection{Restricting to $\precnsim$} 
\label{sect:rho}
Recall that $\precnsim \; = (\dr \cup \po) \, \cap \, {\prec}$ is preserved by every polymorphism of $\bR$ (Lemma~\ref{lem:pres-prec}). 
In the following, $0$ stands for $\drprec$
and $1$ stands for $\poprec$.

\begin{definition}
Let ${\mathscr C} := \Pol(\bR) \cap \Can(\bR,\prec)$. 
Let $\rho \colon {\mathscr C} \to {\mathscr O}_{\{0,1\}}$ be the map given by 
$\rho(f) \coloneqq \xi(f)|_{\{0,1\}}$. 
\end{definition}

\begin{proposition}\label{prop:cyclic}
Let ${\mathscr C} \subseteq \Pol(\bR) \cap \Can(\bR,\prec)$ be a clone.
Then either 
every operation in $\rho({\mathscr C})$
is a projection, or $\rho({\mathscr C})$
contains a ternary cyclic operation. 
\end{proposition}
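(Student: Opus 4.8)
The plan is to reduce the statement to a fact about idempotent clones on a two-element set. First I would verify that $\rho({\mathscr C})$ is genuinely a clone of operations \emph{on $\{0,1\}$}: the map $\xi = \xi^{(\bR,\prec)}_2$ is a clone homomorphism on $\Can(\bR,\prec) \supseteq {\mathscr C}$, and by Lemma~\ref{lem:pres-prec} every $f \in {\mathscr C}$ preserves $\precnsim = (\drprec) \cup (\poprec)$; together with the canonicity of $f$ over $(\bR,\prec)$ this forces $\xi(f)$ to map the two-element subset $\{0,1\} = \{\drprec,\poprec\}$ of ${\mathcal O}_2(\bR,\prec)$ into itself. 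Hence $\rho(f) = \xi(f)|_{\{0,1\}}$ is a well-defined operation on $\{0,1\}$, $\rho$ is a clone homomorphism, and $\rho({\mathscr C})$ is a clone of operations on $\{0,1\}$.

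The key step, and the one I expect to require the most care, is to show that $\rho({\mathscr C})$ is \emph{idempotent}. Given $f \in {\mathscr C}^{(n)}$, consider the unary operation $u \coloneqq f(\pi^1_1,\dots,\pi^1_1) \in {\mathscr C}$. Since $u \in \Pol(\bR)$ is an endomorphism of $\bR$ and $\bR$ is a model-complete core, $u$ is a self-embedding of $\bR$, and in particular preserves $\dr$ and $\po$. As $u$ is moreover canonical over $(\bR,\prec)$, Lemma~\ref{lem:pres-prec} shows that $u$ preserves $\prec$. Therefore $u$ preserves $\drprec = \dr \cap \prec$ and $\poprec = \po \cap \prec$, i.e.\ $\rho(u)$ is the identity operation on $\{0,1\}$; since $\rho(f)(x,\dots,x) = \rho(u)(x)$ for all $x \in \{0,1\}$, we conclude $\rho(f)(x,\dots,x) = x$. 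The subtle point is that $\dr$ and $\po$ are not primitively positively definable in $\bR$ (only $\bot = \dr \cup \po$ is), so one genuinely needs the core property of $\bR$ for the unary witness $u$; for operations of higher arity only preservation of $\bot$ is available, which is exactly why idempotency has to be established via unary operations.

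To finish, I would invoke Post's classification of clones over a two-element set~\cite{Post}, just as in the proof of Proposition~\ref{prop:wedge}: an idempotent clone on $\{0,1\}$ that is not the clone of projections contains the Boolean minimum $\wedge$, the Boolean maximum $\vee$, the majority operation, or the minority operation. In the first alternative the statement holds. Otherwise $\rho({\mathscr C})$ contains a ternary cyclic operation: the majority and minority operations are themselves ternary and, being symmetric, cyclic; and if $\wedge \in \rho({\mathscr C})$ then the totally symmetric (hence cyclic) ternary operation $(x,y,z) \mapsto x \wedge y \wedge z$ lies in the clone $\rho({\mathscr C})$, and symmetrically for $\vee$. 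This exhausts all cases.
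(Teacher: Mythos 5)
Your proof is correct and follows the same route as the paper: reduce to Post's classification of clones on $\{0,1\}$, exactly as in the proof of Proposition~\ref{prop:wedge}, and observe that each of min, max, majority, minority generates a ternary cyclic operation. You usefully make explicit that $\rho(\mathscr{C})$ is an idempotent clone on $\{0,1\}$, which the paper's one-line proof leaves implicit (though the analogous observation is made for $\eta$ in the proof of Proposition~\ref{prop:wedge}).

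However, the detour through the unary operation $u$ is unnecessary, and the justification you offer for it is wrong. You claim that for operations of arity at least two only preservation of $\bot = \dr \cup \po$ is available, because $\dr$ and $\po$ are not primitively positively definable in $\bR$. But pp-definability is beside the point: $\dr$ and $\po$ are \emph{relations in the signature of $\bR$} (the basic relations of RCC5 are $\eq,\pp,\ppi,\dr,\op$), so every element of $\Pol(\bR)$ --- of any arity --- preserves $\dr$ and $\po$ directly by the definition of a polymorphism. Combined with Lemma~\ref{lem:pres-prec}, which gives preservation of $\prec$, one obtains immediately that any $f \in \mathscr{C}^{(n)}$ preserves $\drprec = \dr \cap \prec$ and $\poprec = \po \cap \prec$, hence $\rho(f)$ is idempotent; no appeal to the core property of $\bR$ or to unary witnesses is needed. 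Your argument is still sound --- it just passes through a redundant reduction and misdiagnoses why it would be required.
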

\begin{proof}
The statement follows from Post's result~\cite{Post}, because each of the clones generated by min, max, majority, and minority  contains a ternary cyclic operation.   
\end{proof} 

\subsubsection{Canonical pseudo-cyclic polymorphisms}
In this section we prove the goal statement from the beginning of Section~\ref{sect:rcc5-uch1} (Corollary~\ref{cor:cyclic}). 

\begin{theorem}\label{thm:cyclic}
Let ${\mathscr C} \subseteq \Pol(\bR) \cap \Can(\bR,\prec)$ be a clone that 
contains 
\begin{itemize}
\item a binary operation $g$ such that $\eta(g) = \wedge$, and  
\item a ternary operation $f$ such that
$\rho(f)$ is cyclic. 
\end{itemize}
Then $\xi \big({\mathscr C} \cap \Can(\bR) \big)$ contains a ternary cyclic operation. 
\end{theorem}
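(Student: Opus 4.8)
The plan is to combine the two given operations $g$ (a canonical-over-$(\bR,\prec)$ binary operation with $\eta(g) = \wedge$, which by Lemma~\ref{lem:eta-can} we may take to be canonical over $\bR$ after replacing it by $g'(x,y) = g(g(x,y),g(y,x))$) and $f$ (a ternary operation with $\rho(f)$ cyclic) into a single ternary operation $h \in {\mathscr C}$ that is canonical over $\bR$ and such that $\xi(h)$ is cyclic. The natural candidate is to first symmetrise $f$ to a genuinely cyclic operation modulo $\xi$, then use $g$ to "merge" the possible behaviours. Concretely, first I would replace $f$ by $f'(x,y,z) \coloneqq g\big(g(f(x,y,z),f(y,z,x)),f(z,x,y)\big)$, or a suitable iterate thereof: since $g$ is canonical over $\bR$ with $\eta(g) = \wedge$ and $\xi(g)$ preserves $F$ and $\prec$, one checks that $\xi(f')$ is cyclic on all of ${\mathcal O}_2(\bR,\prec)$ — not merely on $\{0,1\}$ — and that $\rho(f')$ remains a cyclic (in fact the same up to the idempotent semilattice reduction) operation. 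The key point is that $\wedge$ is associative and commutative, so wrapping the three cyclic shifts of $f$ in nested $g$'s produces something invariant under cyclic permutation of arguments at the level of $\xi$.

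The second and more delicate step is to upgrade canonicity from $(\bR,\prec)$ to $\bR$ while keeping $\xi(f')$ cyclic. Here I would argue as in Lemma~\ref{lem:eta-can}: for two (or three) orbits of pairs $O_1, O_2$ in $\Aut(\bR)$, I must show $f'(O_1,O_2,\cdot)$ (for all choices of the third argument's orbit) lands in a single $\Aut(\bR,\prec)$-orbit, using the composition-table lemmas (Lemma~\ref{lem:decomp}, Corollary~\ref{cor:square}, Lemma~\ref{lem:partial-can}) together with the fact that $f'$ preserves $\pp$, $\nsim$, and $\precnsim$ (Lemmas~\ref{lem:pres-prec}, \ref{lem:pres-neq}). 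When the symmetrised operation fails to be canonical over $\bR$ directly, I would again compose with $g$ in the style of Lemma~\ref{lem:eta-can}, i.e.\ set $h(x,y,z) \coloneqq g\big(g(f'(x,y,z),f'(x,z,y)),\ldots\big)$ averaging over a transversal that kills the $\prec/\succ$ ambiguity, exactly the role $g(f(x,y),f(y,x))$ played there. Because $\eta(g) = \wedge$ maps $\{\drprec,\poprec\}$-pairs to $\nsim$-values below $\pp$, this averaging collapses $O$ and $O^\smile$ consistently, which is precisely what Lemma~\ref{lem:partial-can} and Lemma~\ref{lem:nsim} are designed to exploit.

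The main obstacle I anticipate is the bookkeeping needed to show that the merging operation $h$ is simultaneously (i) a polymorphism of $\bR$ (hence in ${\mathscr C}$, since ${\mathscr C}$ is a clone and $f,g \in {\mathscr C}$), (ii) canonical over $\bR$, and (iii) such that $\xi(h)|_{\{0,1\}}$ — equivalently $\rho(h)$ — is still cyclic rather than having degenerated to a projection. The danger is that the very compositions with $g$ that buy canonicity over $\bR$ might destroy cyclicity on $\{\drprec,\poprec\}$, because $\eta(g)=\wedge$ is itself a projection on no coordinate but is not cyclic-preserving in an obvious way. I expect this is handled by keeping the outermost structure of $h$ a cyclic combination of the shifts of $f$ (so cyclicity of $\xi(h)$ is forced by construction) and using $g$ only "inside", on pairs where $f$'s two $\prec$-lifts disagree — the argument being local to each orbit exactly as in Case~1–Case~4 of Lemma~\ref{lem:eta-can}. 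Once $h$ is produced, $\xi(h) \in \xi({\mathscr C} \cap \Can(\bR))$ is a ternary cyclic operation, proving the theorem.
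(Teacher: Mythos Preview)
Your proposal has the right ingredients but composes them in the wrong order, and this is a genuine gap rather than a cosmetic difference.

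You place $g$ on the \emph{outside} and $f$ on the \emph{inside}, defining $f'(x,y,z)=g\big(g(f(x,y,z),f(y,z,x)),f(z,x,y)\big)$, and then claim that ``$\xi(f')$ is cyclic on all of ${\mathcal O}_2(\bR,\prec)$'' because $\wedge$ is associative and commutative. But $\eta(g)=\wedge$ only tells you that $g$ is commutative at the coarse two-element level $\{\pp\}$ versus $\{\drprec,\poprec\}$; it says nothing about whether $\xi(g)$ is commutative on actual orbits. So all you obtain is that $\eta(f')$ is cyclic, not that $\xi(f')$ is cyclic. Concretely, $\xi(f')(\pp,\dr,\po)$ and $\xi(f')(\dr,\po,\pp)$ could both lie in $\nsim$ (same $\eta$-value) yet be different orbits ($\dr$ versus $\po$). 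Your own worry in the last paragraph --- that wrapping with $g$ ``might destroy cyclicity on $\{\drprec,\poprec\}$'' --- is exactly the unresolved point, and your suggested fix (``keep the outermost structure a cyclic combination of shifts of $f$, use $g$ only inside'') contradicts the $f'$ you actually defined.

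The paper does the opposite and this is the key idea: it uses $g$ \emph{inside} (via $h(x,y,z)=g(g(x,y),z)$ and then $h'(x,y,z)=h(h(x,y,z),h(y,z,x),h(z,x,y))$) to force every non-constant triple of $\Aut(\bR)$-orbits into $\nsim$, relying on Lemma~\ref{lem:nsim} and the injectivity from Lemma~\ref{lem:eta-wedge-inj}. Only then is $f$ applied \emph{outside}: $h''(x,y,z)=f(h'(x,y,z),h'(y,z,x),h'(z,x,y))$. Now all three arguments of $f$ lie in $\{\dr,\po\}$, so Lemma~\ref{lem:partial-can} reduces the question to the action on $\{\drprec,\poprec\}$, where the hypothesis that $\rho(f)$ is cyclic applies directly and yields $\xi(h'')(O_1,O_2,O_3)=\xi(h'')(O_2,O_3,O_1)$ in one stroke. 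Canonicity over $\bR$ and cyclicity of $\xi(h'')$ are established simultaneously; no separate ``upgrade'' step is needed. The moral: the only place you control cyclicity is through $\rho(f)$, so $f$ must be the outermost layer, applied after $g$ has already collapsed everything to the two-orbit set where $\rho$ lives.
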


\begin{proof}
Let \begin{align*}
h(x,y,z) & \coloneqq g(g(x,y),z) \\
h'(x,y,z) & \coloneqq h \big (h(x,y,z),h(y,z,x),h(z,x,y) \big) \\ 
h''(x,y,z) & \coloneqq f \big (h'(x,y,z),h'(y,z,x),h'(z,x,y) \big)
\end{align*}
We verify that $h''$ is canonical with respect to
$\Pol(\bR)$ and that $\xi(h'')$ is cyclic. 
In order to show this it is enough to show that for all orbits  of pairs $O_1,O_2,O_3$ in $\Aut(\bR)$ 
there exists an orbit of pairs $O$ in $\Aut(\bR)$ such that
$$h''(O_1,O_2,O_3),h''(O_2,O_3,O_1),h''(O_3,O_1,O_2) \subseteq O.$$
Let $O_1,O_2,O_3$ be orbits of pairs in
$\Aut(\bR)$. Note that $h'' \in \Pol(\bR)$; hence, 
if $O_1 = O_2 = O_3$ then 
$$h''(O_1,O_2,O_3), h''(O_2,O_3,O_1), h''(O_3,O_1,O_2) \subseteq O_1$$ and we are done in this case. Suppose that this is not the case, so that in particular $O_i \neq \eq$ for some $i \in \{1,2,3\}$. 
Since $g$ is injective (Lemma~\ref{lem:eta-wedge-inj}), so is $h$.
By Proposition~\ref{lem:eta-can}, we
have that $g,h \in \Can(\bR)$ and 
\begin{align*}
O_1' & \coloneqq \xi(h)(O_1,O_2,O_3) \neq \eq, \\
O_2' & \coloneqq \xi(h)(O_2,O_3,O_1) \neq \eq, \\
O_3' & \coloneqq \xi(h)(O_3,O_1,O_2) \neq \eq.  
\end{align*}
We claim that $O_1'' \coloneqq h'(O_1,O_2,O_3) \subseteq h(O_1',O_2',O_3') \subseteq \; \nsim$. 
If $O'_1 \neq O'_2$ then 
$g(O'_1,O'_2) \subseteq \, \nsim$ by Lemma~\ref{lem:nsim}.
If $O'_1,O'_2 \subseteq \, \nsim$
then $g(O'_1,O'_2) \subseteq \, \nsim$
since $g \in \Pol(\bR)$ preserves $\nsim$. 
In both cases, 
$g(g(O'_1,O'_2),O'_3) \subseteq  g(\nsim,O'_3) \subseteq \, \nsim$. 
Otherwise, $O'_1 = O'_2 \in \{\pp,\ppi\}$
and $O'_3 \neq O'_1$ 
 in which case $$g(g(O'_1,O'_2),O'_3) \subseteq g(O'_1,O'_3) \subseteq \, \nsim$$ which proves the claim. 
 
Similarly, $O_2'' \coloneqq h'(O'_2,O'_3,O'_1) \subseteq \; \nsim$ and $O_3'' \coloneqq h'(O'_3,O'_1,O'_2) \subseteq \, \nsim$. Since $f \in \Pol(\bR)$ preserves $\nsim$ we have that $f(O_1'',O_2'',O_3'') \subseteq \; \nsim$. Lemma~\ref{lem:partial-can} implies
that  there exists $O'''_1 \in \{\dr,\po\}$ such that
$f(O_1'',O_2'',O_3'') \subseteq O_1'''$. 
Similarly, there are $O'''_2,O'''_3 \in \{\dr,\po\}$ 
such that $f(O_2'',O_3'',O_1'') \subseteq O'''_2$
and $f(O_3'',O_1'',O_2'') \subseteq O'''_3$. 
For $i \in \{1,2,3\}$, let $O^*_i \coloneqq O_i'' \, \cap {\prec}$. 
Then 
$$f(O^*_1,O^*_2,O^*_3),f(O^*_2,O^*_3,O^*_1),f(O^*_3,O^*_1,O^*_2)  \subseteq \; \prec$$ 
by Lemma~\ref{lem:pres-prec}. 
Since $f$ is cyclic on $\{\drprec,\poprec\}$ it follows that $f(O_2^*,O_3^*,O_1^*) \subseteq (O'''_1 \cap \prec)$, which
implies that $O'''_2 = O_1'''$. Similarly, 
we obtain $O_3''' = O_2'''$. Since $h''(O_1,O_2,O_3) \subseteq O_1'''$, $h''(O_2,O_3,O_1) \subseteq O_2'''$, and 
$h''(O_3,O_1,O_2) \subseteq O_3'''$ 
this concludes the proof that $h'' \in \Can(\bR)$ and $\xi(h'')$ is cyclic. 
\end{proof}

\begin{corollary}\label{cor:cyclic}
Let $\bB$ be a first-order expansion of $\bR$. 
If there exists 
a uniformly continuous minor-preserving map 
from $\Pol(\bB) \cap \Can(\bR)$ to $\proj$, then 
$\eta$ 
or $\rho$ is a uniformly continuous minor-preserving map from $\Pol(\bB) \cap \Can(\bR,\prec)$ to $\proj$. 
\end{corollary}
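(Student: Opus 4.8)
The plan is to prove the contrapositive at the level of images. Since $\eta$ and $\rho$ are \emph{unconditionally} uniformly continuous minor-preserving maps — both are obtained from the uniformly continuous clone homomorphism $\xi = \xi^{(\bR,\prec)}_2$ by first restricting operations to the subsets $\{\drprec,\poprec,\pp\}$, respectively $\{\drprec,\poprec\}$, of $\mathcal{O}_2(\bR,\prec)$ (these are closed under $\xi(\Pol(\bR)\cap\Can(\bR,\prec))$ by Lemma~\ref{lem:pres-prec}) and, for $\eta$, then passing to the quotient by the congruence $F$ (well defined by Lemma~\ref{lem:F}); composites of clone homomorphisms with such restrictions and quotients are again uniformly continuous and minor-preserving — it suffices to show: if neither $\eta(\mathscr C)$ nor $\rho(\mathscr C)$ consists only of projections, where $\mathscr C := \Pol(\bB)\cap\Can(\bR,\prec)$, then $\Pol(\bB)\cap\Can(\bR)$ admits no uniformly continuous minor-preserving map to $\proj$, contradicting the hypothesis. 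If instead one of $\eta(\mathscr C)$, $\rho(\mathscr C)$ does consist only of projections, then that map is already a uniformly continuous minor-preserving map to $\proj$ and we are done.

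So assume both $\eta(\mathscr C)$ and $\rho(\mathscr C)$ contain a non-projection. Note $\mathscr C\subseteq\Pol(\bR)\cap\Can(\bR,\prec)$ is a clone, since $\Pol(\bB)\subseteq\Pol(\bR)$ ($\bB$ being a first-order expansion of $\bR$). By Proposition~\ref{prop:wedge}, $\mathscr C$ contains a binary operation $g$ with $\eta(g)=\wedge$; by Proposition~\ref{prop:cyclic}, $\mathscr C$ contains a ternary operation $f$ with $\rho(f)$ cyclic. Applying Theorem~\ref{thm:cyclic} to $\mathscr C$ then yields an operation $h\in\mathscr C\cap\Can(\bR) = \Pol(\bB)\cap\Can(\bR,\prec)\cap\Can(\bR)$ with $\xi(h)$ a ternary cyclic operation. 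The point to observe next is that $h$ is canonical not only over $(\bR,\prec)$ but also over $\bR$; hence the forgetful surjection $\mathcal{O}_2(\bR,\prec)\twoheadrightarrow\mathcal{O}_2(\bR)$ intertwines $\xi^{(\bR,\prec)}_2(h)$ with $\xi^{\bR}_2(h)$, so that $\xi^{\bR}_2(h)$ is a ternary cyclic operation as well. Equivalently (applying Lemma~\ref{lem:lift} over $\bR$, whose maximal arity is $2$), there are $e_1,e_2\in\overline{\Aut(\bR)}$ with $e_1\circ h(x,y,z)=e_2\circ h(y,z,x)$, i.e.\ $\Pol(\bB)\cap\Can(\bR)$ contains an operation that is cyclic modulo $\overline{\Aut(\bR)}$.

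To finish, I would invoke the finite-domain dichotomy together with the transfer to the infinite domain exactly as in the proof of Proposition~\ref{prop:can-proj}, implication $(4)\Rightarrow(1)$: the clone $\xi^{\bR}_2(\Pol(\bB)\cap\Can(\bR))$ on the finite set $\mathcal{O}_2(\bR)$ has a ternary cyclic (in particular, weak near-unanimity) operation, and $\bR$ is finitely bounded by Lemma~\ref{lem:R-fin-bound}, hence of less than doubly exponential growth; so by the results of~\cite{BKOPP-equations} there is no uniformly continuous minor-preserving map from $\Pol(\bB)\cap\Can(\bR)$ to $\proj$. This contradicts the hypothesis and completes the proof. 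The only genuinely nontrivial ingredients are Propositions~\ref{prop:wedge} and~\ref{prop:cyclic}, Theorem~\ref{thm:cyclic} (all already proven), and this last bridge from~\cite{BKOPP-equations}; the main points to get right are the bookkeeping in the case distinction (that $\eta,\rho$ are always uniformly continuous and minor-preserving, so the sole obstruction to one of them landing in $\proj$ is a non-projection in its image) and the observation that the operation $h$ produced by Theorem~\ref{thm:cyclic}, a priori canonical only over $(\bR,\prec)$, is in fact canonical over $\bR$ — so that the contradiction is derived over $\bR$, where the hypothesis lives, rather than over $(\bR,\prec)$.
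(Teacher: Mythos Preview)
Your proof is correct and follows essentially the same route as the paper: contrapositive, Proposition~\ref{prop:wedge} for $\eta(g)=\wedge$, Proposition~\ref{prop:cyclic} for $\rho(f)$ cyclic, Theorem~\ref{thm:cyclic} for a cyclic action on $\mathcal{O}_2(\bR)$, and then Proposition~\ref{prop:can-proj} (via~\cite{BKOPP-equations}) to contradict the hypothesis. Your write-up is in fact more careful than the paper's on two points: you make explicit that $\eta$ and $\rho$ are unconditionally uniformly continuous clone homomorphisms (so the only issue is whether their image is $\proj$), and you correctly place the final contradiction at $\Pol(\bB)\cap\Can(\bR)$ rather than at $\mathscr C=\Pol(\bB)\cap\Can(\bR,\prec)$, which is where the hypothesis actually lives.
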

\begin{proof}
We prove the contraposition, and suppose that
neither 
$\eta$ nor $\rho$ is a uniformly continuous minor-preserving map from $\Pol(\bB) \cap \Can(\bR,\prec)$ to $\proj$. By Proposition~\ref{prop:wedge}, 
$\Pol(\bB)$ contains 
a binary operation $f$ 
such that $\eta(f) = \wedge$. 
By Proposition~\ref{prop:cyclic}, 
$\mathscr C$ contains 
a ternary operation $g$ such that
$\rho(g)$ is cyclic. 
By Theorem~\ref{thm:cyclic}, 
$\mathscr C$ contains a ternary operation $c$ such that 
$\xi(c)$ is cyclic, and hence $\mathscr C$ does not
have a uniformly continuous minor-preserving map to $\proj$ by Proposition~\ref{prop:can-proj}. 
\end{proof}

\subsection{Verifying the UIP}
\label{sect:rcc5-uip}
In this section we show that if
$\eta$ or $\rho$ is a uniformly continuous map 
from $\Pol(\bC) \cap \Can(\bR,\prec)$ to
$\proj$, then it has the UIP. 

\begin{theorem}\label{thm:eta-uip}
Let $\bC$ be a first-order expansion of 
the structure $\bR$ such that
$\eta$ is a uniformly continuous minor-preserving map from 
$\Pol(\bC) \cap \Can(\bR,\prec)$ to $\proj$. 
Then $\eta$ has the UIP with respect to $\Pol(\bC)$ over $(\bR,\prec)$.  
\end{theorem}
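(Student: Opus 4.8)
The plan is to argue by contradiction, reducing to Lemma~\ref{lem:copy}. Assuming $\eta$ does not have the UIP with respect to $\Pol(\bC)$ over $(\bR,\prec)$, I first observe that $\eta$ is a clone homomorphism from $\Pol(\bC)\cap\Can(\bR,\prec)$ onto $\proj$: it is the composition of $\xi^{(\bR,\prec)}_2$ with a restriction and the quotient by the equivalence relation $F$, hence minor-preserving and $\overline{\Aut(\bR,\prec)}$-invariant, and its image is $\proj$ by hypothesis. I can therefore apply Lemma~\ref{lem:copy} with $\zeta=\eta$: for a sufficiently large $2$-rich finite $F$ I obtain $f\in\Pol(\bC)^{(2)}$ and $\alpha_1,\alpha_2\in\Aut(\bR,\prec)$ such that $f$ is canonical over $(\bR,\prec)$ on $F\times\alpha_1(F)$ and on $F\times\alpha_2(F)$, such that $\eta(f(\id,\alpha_1)|_{F^2})\neq\eta(f(\id,\alpha_2)|_{F^2})$, and such that for all $a,b\in F$ we have $(a,b)\in\pp\cup\eq\Rightarrow(\alpha_1(a),\alpha_2(b))\in\pp$ and $(a,b)\in\nsim\Rightarrow(\alpha_1(a),\alpha_2(b))\in\nsim$. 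Since the image of $\eta$ is $\proj$, Proposition~\ref{prop:wedge} shows that $\eta(f(\id,\alpha_1)|_{F^2})$ and $\eta(f(\id,\alpha_2)|_{F^2})$ are binary projections, so one is $\pi^2_1$ and the other is $\pi^2_2$.

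Next I would set up the configuration that exposes the clash. Treating first the case $\eta(f(\id,\alpha_1)|_{F^2})=\pi^2_1$ and $\eta(f(\id,\alpha_2)|_{F^2})=\pi^2_2$, let $h_i\in\Pol(\bC)\cap\Can(\bR,\prec)$ realise the complete behaviour of $f(\id,\alpha_i)$ on $F^2$ (recall Definition~\ref{def:rich-behave}); then $\eta(h_1)=\pi^2_1$, $\eta(h_2)=\pi^2_2$, and $h_1,h_2$ preserve $\prec$ by Lemma~\ref{lem:pres-prec}. Choosing $a,b,c,d\in F$ with $(a,b)\in\precnsim$ and $(c,d)\in\pp$, the value $\eta(h_1)=\pi^2_1$ forces the pair $(f(a,\alpha_1(c)),f(b,\alpha_1(d)))$ into $\precnsim$, so $f(a,\alpha_1(c))$ and $f(b,\alpha_1(d))$ are $\subseteq$-incomparable, whereas $\eta(h_2)=\pi^2_2$ forces $(f(a,\alpha_2(c)),f(b,\alpha_2(d)))$ into $\pp$.

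To close the argument I would transport this information across the two canonicity regions. Since $f$ itself need not be canonical over $(\bR,\prec)$ and so carries no $\eta$-value, the only available bridge between the regions $F\times\alpha_1(F)$ and $F\times\alpha_2(F)$ is that $f$, being a polymorphism of $\bR$, preserves $\pp$, $\nsim$, and every relation primitively positively definable in $\bR$, and hence preserves composition of orbits in the sense of~(\ref{eq:comp-pres}); moreover the $\alpha_1$- and $\alpha_2$-images of elements of $F$ are tied together precisely by the pp- and nsim-conditions supplied by Lemma~\ref{lem:copy} (for instance $\alpha_1(x)\subsetneq\alpha_2(y)$ whenever $(x,y)\in\pp\cup\eq$, and $(\alpha_1(x),\alpha_2(y))\in\nsim$ whenever $(x,y)\in\nsim$). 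Combining these with the known behaviour of $f$ on the two regions, I would build a short chain of $\subseteq$- and $\nsim$-relations — routed through $\alpha_2$-images of $c$ and $d$ together with a few auxiliary elements — that links $f(a,\alpha_1(c))$ and $f(b,\alpha_1(d))$ and forces them to be $\subseteq$-comparable, contradicting the conclusion of the previous paragraph. The remaining case $\eta(f(\id,\alpha_1)|_{F^2})=\pi^2_2$, $\eta(f(\id,\alpha_2)|_{F^2})=\pi^2_1$ is symmetric: one starts instead from $(a,b)\in\pp$, $(c,d)\in\precnsim$ and interchanges the roles of the pp- and nsim-conditions. The main obstacle is exactly this last step — exhibiting the chain and getting the bookkeeping right (which orbits the auxiliary elements must realise, and which pp-definable relation of $\bR$ to invoke at each link), since everything else is essentially packaged by Lemma~\ref{lem:copy} and Proposition~\ref{prop:wedge}.
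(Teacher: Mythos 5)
Your framework matches the paper's: assume failure of the UIP, invoke Lemma~\ref{lem:copy} (the paper actually derives the needed operations via Theorem~\ref{thm:binary-indep} and then Lemma~\ref{lem:copy}), and seek a contradiction by building a $\pp$-chain between two values that the $\nsim$-preservation of $f$ forces apart. But the step you flag as ``the main obstacle'' is not a routine bit of bookkeeping — it is the substantive content of the proof, and it is not worked out. What the paper does is fix two four-term chains witnessing the composition identities $\nsim \subseteq \pp \circ \pp \circ \precnsim$ and $\nsim \subseteq \precnsim \circ \pp \circ \pp$, namely tuples $(a_{j,1},\dots,a_{j,4})$ for $j=1,2$ realising each, and then evaluates $f$ at the diagonal points $(a_{\ell,k},\,b_{\ell+1,k})$, where $b_{j,k}=\alpha_1(a_{j,k})$ for $k\le 2$ and $b_{j,k}=\alpha_2(a_{j,k})$ for $k\ge 3$, and $\ell$ is chosen so that $\eta(f(\id,\alpha_1)|_{F^2})=\pi^2_\ell$. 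The crucial features are (i) the second-coordinate entries come from the \emph{other} chain $\ell+1$, (ii) the middle link $(b_{j,2},b_{j,3})$ bridges the two canonicity regions via the $\pp$-preservation clause of Lemma~\ref{lem:copy}, (iii) the outer link $(b_{j,1},b_{j,4})$ uses the $\nsim$-preservation clause, and (iv) the two chains are mirror images so that whichever projection $\eta(f(\id,\alpha_1))$ realises, the first and third links produce $\pp$. None of this appears in your sketch.

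Your concrete configuration also doesn't line up with what is needed. You take $a,b$ in the first coordinate with $(a,b)\in\precnsim$ and $c,d$ in the second with $(c,d)\in\pp$ and then consider $(f(a,\alpha_1(c)),f(b,\alpha_1(d)))$ and $(f(a,\alpha_2(c)),f(b,\alpha_2(d)))$; that gives a length-two chain, which cannot yield a contradiction because you only have one bridge between the two canonicity regions available, while the argument needs three $\pp$-steps in a row (one inside the $\alpha_1$-region, one crossing, one inside the $\alpha_2$-region) in order to land both endpoints in $\pp$ while the same endpoints are linked by an $\nsim$. Finally, your claim that the second case ($\eta(f(\id,\alpha_1))=\pi^2_2$, $\eta(f(\id,\alpha_2))=\pi^2_1$) follows by ``interchanging the roles of the $\pp$- and $\nsim$-conditions'' is misleading: the two clauses of Lemma~\ref{lem:copy} are one-directional (always from $\alpha_1$-image to $\alpha_2$-image), so they cannot simply be swapped. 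The paper handles the two cases by choosing which of the two pre-built chains to substitute into the first coordinate and which into the second; this is what makes the argument uniform in $\ell$, and it is a detail you need to supply rather than dismiss as symmetric.
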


\begin{proof}
We use 
Lemma~\ref{lem:copy}. 
Let $\bA_1,\bA_2$ be the two independent 
elementary substructures of $(\bR,\prec)$ from 
Lemma~\ref{lem:rcc5indep}. 
Note that 
\begin{align*}
\nsim & \subseteq \; \precnsim \, \circ \; \pp \circ \, \pp \\
 \text{ and } \nsim & \subseteq \, \pp \circ \, \pp \, \circ \precnsim 
\end{align*}
so there are elements $a_{j,k}$ of $\bR$, for $j \in \{1,2\}$ and $k \in \{1,2,3,4\}$, such that
\begin{align*}
 & (a_{1,1},a_{1,2}) \in \pp,
  (a_{1,2},a_{1,3}) \in \pp, \\
 & (a_{1,3},a_{1,4}) \in \; \precnsim,  \; (a_{1,1},a_{1,4}) \in \; \nsim,  \\
 \text{ and } & (a_{2,1},a_{2,2}) \in \; \precnsim, \; (a_{2,2},a_{2,3}) \in \pp, \\
&  (a_{2,3},a_{2,4}) \in \pp,  \; 
 (a_{2,1},a_{2,4}) \in \; \nsim. 
 \end{align*}
 Let $F$ be a finite subset of the domain of $\bR$ and 2-rich with respect to 
 $\Pol(\bB)$ (see Lemma~\ref{lem:rich}); we may assume without loss of generality that $F$ contains $a_{j,k}$ for all $j \in \{1,2\}$ and $k \in \{1,2,3,4\}$. 
 
 If $\eta$ does not have the UIP, then we may apply Lemma~\ref{lem:copy} 
to $\eta$ and $F$;
let 
 $f \in \Pol(\bC)^{(2)}$ and 
 $\alpha_1, \alpha_2 \in \Aut(\bR;\prec)$ 
be the operations as in the statement of 
Lemma~\ref{lem:copy} so that 
$\eta(f(\id,\alpha_1)|_{F^2}) = \pi^2_{\ell}$ and
$\eta(f(\id,\alpha_2)|_{F^2}) = \pi^2_{\ell+1}$ 
where indices are considered modulo two.
Let $b_{j,k} \coloneqq \alpha_1(a_{j,k})$ for
 $k \in \{1,2\}$ and let $b_{j,k} \coloneqq \alpha_2(a_{j,k})$ for $k \in \{3,4\}$. Then $(b_{1,1},b_{1,2}),(b_{2,3},b_{2,4}) \in \; \pp$ and $(b_{1,3},b_{1,4}),(b_{2,1},b_{2,2}) \in \, \precnsim$, because $\alpha_i$ preserves $\pp$ and $\precnsim$.  Moreover, for $j \in \{1,2\}$ we have $(b_{j,2},b_{j,3}) \in \, \pp$ and $(b_{j,1},b_{j,4}) \in \; \precnsim$ by the properties of $\alpha_i$. This implies
 that 
 $$(f(a_{\ell,1},b_{\ell+1,1}),f(a_{\ell,4},b_{\ell+1,4})) \in \; \nsim.$$ On the other hand, we have that 
\begin{itemize}
\item 
$\big (f(a_{\ell,1},b_{\ell+1,1}),f(a_{\ell,2},b_{\ell+1,2}) \big) \in \pp$, because 
$(a_{1,1},a_{1,2}) \in \pp$, 
$(b_{1,1},b_{1,2}) \in \pp$,  
and 
   $\eta(f(\id,\alpha_1)|_{F^2}) = \pi^2_\ell$. 
\item $\big (f(a_{\ell,2},b_{\ell+1,2}),f(a_{\ell,3},b_{\ell+1,3}) \big) \in \pp$,  because $(a_{\ell,2},a_{\ell,3}) \in \pp$
and $(b_{\ell+1,2}$, $b_{\ell+1,3}) \in \pp$. 
\item 
$\big (f(a_{\ell,3},b_{\ell+1,3}),f(a_{\ell,4},b_{\ell+1,4}) \big) \in \pp$, because 
   $\eta(f(\id,\alpha_2)|_{F^2}) = \pi^2_{\ell+1}$,
$(b_{2,3}$, $b_{2,4}) \in \pp$, 
and $(a_{2,3},a_{2,4}) \in \pp$.  
\end{itemize}
Since $\pp \circ \pp \circ \pp = \pp$,
the pair 
$\big (f(a_{\ell,1},b_{\ell+1,1}),f(a_{\ell,4},b_{\ell+1,4}) \big)$
is also in $\pp$, a contradiction. The statement 
now follows from Theorem~\ref{thm:binary-indep}. 
 \end{proof}

The following theorem can be shown similarly 
as the previous theorem. 

\begin{theorem}\label{thm:rho-uip}
Let $\bC$ be a first-order expansion of 
$\bR$ such that
$\rho$ is a uniformly continuous minor-preserving map from 
$\Pol(\bC) \cap \Can(\bR,\prec)$ to $\proj$.  
Then $\rho$ has the UIP with respect to $\Pol(\bC)$ over $(\bR,\prec)$.  
\end{theorem}

\begin{proof}
We again use Lemma~\ref{lem:copy}. 
the proof is very similar to  the proof
of Theorem~\ref{thm:eta-uip}. 
Let $\bA_1,\bA_2$ be the two independent 
elementary substructures of $(\bR,\prec)$ from 
Lemma~\ref{lem:rcc5indep}. 
Let $f \in \Pol(\bC)^{(2)}$ and $u_1,u_2 \in \overline{\Aut(\bR,\prec)}$ be such that for $i 
\in \{1,2\}$ we have $\Im(u_i) \subseteq A_i$ and  $f(\id,u_i)$ is canonical with respect to $(\bR,\prec)$. Suppose for contradiction that 
$\rho(f(\id,u_1)) \neq \rho( f(\id,u_2) )$.
Note that 
\begin{align*}
\pp \subseteq  \; & (\drprec) \circ \pp \circ (\drprec) \\
\pp \subseteq \; & (\poprec) \circ \pp \circ (\poprec) 
\end{align*}
so there are elements $a_{j,k}$ of $\bR$, for $j \in \{1,2\}$ and $k \in \{1,2,3,4\}$, such that
\begin{align*}
 & (a_{1,1},a_{1,2}) \in \drprec, \; (a_{1,2},a_{1,3}) \in \pp, \\
 & (a_{1,3},a_{1,4}) \in \drprec,  \; (a_{1,1},a_{1,4}) \in \pp,  \\
 & (a_{2,2},a_{2,2}) \in \poprec, \; (a_{2,2},a_{2,3}) \in \pp, \\
 & (a_{2,3},a_{2,4}) \in \poprec,  \; (a_{2,1},a_{2,4}) \in \pp. 
 \end{align*}
 Let $F$ be a finite subset of the domain of $\bR$ which is 2-rich with respect to 
 $\Pol(\bC)$ (see Lemma~\ref{lem:rich}); we may assume without loss of generality that $F$ contains $a_{j,k}$ for all $j \in \{1,2\}$ and $k \in \{1,2,3,4\}$. 
  Let $\alpha_1$ and $\alpha_2$ be the automorphisms of $(\bR;\prec)$
obtained from applying Lemma~\ref{lem:copy} to $\rho$ and $F$ so that 
$\rho(f(\id,\alpha_1)|_{F^2}) = \pi^2_{\ell}$ and
$\rho(f(\id,\alpha_2)|_{F^2}) = \pi^2_{\ell+1}$ 
where indices are considered modulo two. 

Let $b_{j,k} \coloneqq \alpha_1(a_{j,k})$ for
 $k \in \{1,2\}$ and let $b_{j,k} \coloneqq \alpha_2(a_{j,k})$ for $k \in \{3,4\}$. Then 
 $(b_{1,1},b_{1,2}),(b_{1,3},b_{1,4}) \in \; \drprec$ and $(b_{2,1},b_{2,2}),(b_{2,3},b_{2,4})\in \, \poprec$, because $\alpha_i$ preserves $\drprec$ and $\poprec$.  Moreover, for $j \in \{1,2\}$ we have $(b_{j,2},b_{j,3}),(b_{j,1},b_{j,4})  \in \, \pp$ by the properties of $\alpha_i$. This implies
 that 
 $(f(a_{\ell,1},b_{\ell+1,1}),f(a_{\ell,4},b_{\ell+1,4})) \in \; \pp$. On the other hand, 
\begin{itemize}
\item 
$(f(a_{\ell,1},b_{\ell+1,1}),f(a_{\ell,2},b_{\ell+1,2})) \in \drprec$, because 
   $\eta(f(\id,\alpha_1)|_{F^2}) = \pi^2_\ell$,
$(a_{1,1}$, $a_{1,2}) \in \drprec$, 
and $(b_{1,1},b_{1,2}) \in \drprec$.  
\item $(f(a_{\ell,2},b_{\ell+1,2}),f(a_{\ell,3},b_{\ell+1,3})) \in \pp$,  because $(a_{\ell,2},a_{\ell,3}) \in \pp$
and $(b_{\ell+1,2}$, $b_{\ell+1,3}) \in \pp$. 
\item 
$(f(a_{\ell,3},b_{\ell+1,3}),f(a_{\ell,4},b_{\ell+1,4})) \in \poprec$, because 
   $\eta(f(\id,\alpha_2)|_{F^2}) = \pi^2_{\ell+1}$,
$(b_{2,3},b_{2,4}) \in \poprec$, 
and $(a_{2,3},a_{2,4}) \in \poprec$.  
\end{itemize}
Since $\big ((\drprec) \circ \pp \circ (\poprec) \big) \cap \pp = \emptyset$, we reached a contradiction. The statement 
now follows from Theorem~\ref{thm:binary-indep}. 
\end{proof}

\subsection{Proof of the complexity classification}
\label{sect:rcc5-proof}
In this section we prove Theorem~\ref{thm:rcc5}.
Let $\bC$ be a first-order expansion of $\bR$. 
We verify that $\bC$ satisfies the assumptions
of Corollary~\ref{cor:dicho}. 
The structure $\bR$ is finitely bounded by Lemma~\ref{lem:R-fin-bound},
and $\bR$ has the finitely homogeneous Ramsey expansion $(\bR,\prec)$ by Theorem~\ref{thm:ramsey-exp}.
If $\Pol(\bC) \cap \Can(\bR)$ has a uniformly continuous minor-preserving map to $\proj$, then by Corollary~\ref{cor:cyclic}
either $\eta$ or $\rho$ is a uniformly 
continuous minor-preserving map from
$\Pol(\bC) \cap \Can(\bR,\prec)$ to $\proj$. 
In the first case, Theorem~\ref{thm:eta-uip} 
implies that $\eta$ has the UIP with respect to $\Pol(\bC)$ and in the second case
Theorem~\ref{thm:rho-uip} implies
that $\rho$ has the UIP with respect to $\Pol(\bC)$. The complexity dichotomy follows
from Corollary~\ref{cor:dicho}. 
\qed

\section{Expressibility in Datalog and Extensions}
\label{sect:Datalog}
Let $\bC$ be a finite-signature first-order expansion of $\bR$. We show that if the first condition of Theorem~\ref{thm:rcc5} holds then $\Csp(\bC)$ is in Datalog. 
By Lemma 4.9 in~\cite{BodMot-Unary} there exists a finite structure with a finite binary relational signature so that $\Pol(\bD)=\xi_3^\bR(\Pol(\bC) \cap \Can(\bR))$. 
	We use the following variant of Theorem~\ref{thm:can-tract}, which follows from the results in~\cite{BodMot-Unary}. 
	
\begin{lemma}\label{can_datalog}
	Let $\bD$ be a finite-signature structure so that $\Pol(\bD)=\xi_3^\bR(\Pol(\bC) \cap \Can(\bR))$. 
	If $\Csp(\bD)$ is in Datalog then so is $\Csp(\bC)$.
\end{lemma}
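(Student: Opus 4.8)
The plan is to produce a reduction from $\Csp(\bC)$ to $\Csp(\bD)$ along which Datalog programs can be pulled back. This is precisely the reduction underlying Theorem~\ref{thm:can-tract}, made explicit in \cite{BodMot-Unary}: with each instance $\bA$ of $\Csp(\bC)$ one associates an instance $\bA^{\sharp}$ of $\Csp(\bD)$ whose variables are indexed by the $3$-tuples of variables of $\bA$, the relations of $\bA^{\sharp}$ expressing (i) that the element of $D=\mathcal{O}_3(\bR)$ assigned to a $3$-tuple is consistent with the constraints that $\bA$ imposes on that $3$-tuple, and (ii) that the elements assigned to two $3$-tuples agree on their common coordinates. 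Using that $\Pol(\bD)=\xi^{\bR}_3(\Pol(\bC)\cap\Can(\bR))$ on the one hand, and that a finite structure embeds into $\bR$ as soon as each of its $3$-element substructures does (Lemma~\ref{lem:R-fin-bound}) on the other, one obtains $\bA\to\bC$ if and only if $\bA^{\sharp}\to\bD$: a homomorphism $\bA^{\sharp}\to\bD$ supplies a globally coherent choice of $3$-orbits for the tuples of $\bA$, which can then be amalgamated into a homomorphism $\bA\to\bC$.

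The first thing to record is that this reduction is a \emph{positive} first-order reduction, modulo the fixed finite domain expansion $A\mapsto A^3$: each relation of $\bA^{\sharp}$ is defined from the relations of $\bA$ by a formula that does not depend on $\bA$ and that is a finite disjunction of primitive positive formulas over the signature of $\bC$ together with equality (and, if needed, a few auxiliary relations that are primitively positively definable in $\bR$, hence in $\bC$, via Table~\ref{table:rcc} and homogeneity, and which may therefore be inlined). Neither negation nor universal quantification occurs, and the only equality atoms are of the form ``coordinate $i$ equals coordinate $j$'', which can be removed by identifying variables: the coherence constraints~(ii) only ever \emph{force} coordinate values to be equal, and the consistency constraints~(i) are conjunctions of relations of $\bC$. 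Establishing this requires inspecting the construction of $\bA^{\sharp}$ in \cite{BodMot-Unary}.

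Granting this, the transfer of Datalog is the standard widening argument. Assume $\Csp(\bD)$ is in Datalog, witnessed by a Datalog program $\Pi$ over the signature of $\bD$ whose $0$-ary goal predicate is derivable on an instance exactly when that instance has no homomorphism to $\bD$. Build a Datalog program $\Pi'$ over the signature of $\bC$ by: replacing each variable of $\Pi$ by a triple of variables; replacing each IDB predicate of arity $a$ by one of arity $3a$; replacing each body atom $S(\dots)$ with $S$ a relation of $\bD$ by the positive first-order definition of the corresponding relation of $\bA^{\sharp}$ over $\bA$, with the existential variables of that definition introduced as fresh body variables and each rule split into finitely many rules to realise the disjunction; eliminating equality atoms by variable identification; and keeping the goal predicate. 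Since the signature of $\bD$ is finite and binary, $\Pi'$ is a finite Datalog program of bounded width. A straightforward induction on the stages of the least-fixed-point computation shows that $\Pi'$ derives its goal on $\bA$ exactly when $\Pi$ derives its goal on $\bA^{\sharp}$, i.e.\ exactly when $\bA\not\to\bC$; hence $\Csp(\bC)$ is in Datalog.

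The main obstacle is the verification in the second paragraph: that the concrete reduction of \cite{BodMot-Unary} can be cast in the required negation-free, positive form, with the equivalence $\bA\to\bC\Leftrightarrow\bA^{\sharp}\to\bD$. Once that is in hand, the remaining steps are the purely syntactic widening-and-inlining manipulations of Datalog programs described above, which is why the lemma is correctly attributed to the results of \cite{BodMot-Unary}.
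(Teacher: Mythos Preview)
Your proposal is correct and takes essentially the same approach as the paper: both defer to the reduction and Datalog-transfer argument from \cite{BodMot-Unary}, invoking the two key structural facts about $\bR$ (binary signature, finitely bounded by structures of size at most three via Lemma~\ref{lem:R-fin-bound}) that make the parameter $k=3$ work. The paper's own proof is a one-line citation to the discussion after Theorem~4.10 in \cite{BodMot-Unary}; you have simply unpacked what that citation contains and correctly identified that the substantive verification (the positive-first-order form of the reduction and the equivalence $\bA\to\bC\Leftrightarrow\bA^{\sharp}\to\bD$) lives in that reference.
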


\begin{proof}
	 The statement follows from the discussion after the proof of Theorem 4.10 in~\cite{BodMot-Unary} considering that $\bR$ is binary and 
	 because there exists a set of at most 3-element structures  ${\mathcal F}$ such that 
	 the age of $\bR$ equals the class of all finite structures that do not embed any of the structures from ${\mathcal F}$. 
\end{proof}

	The following sufficient condition for $\Csp(\bD)$ to be in Datalog is due to~\cite{BoundedWidthJournal}.
	
	\begin{theorem}\label{thm:bounded-width}
	Let $\bD$ be a finite structure with finite relational signature. If $\bD$ has for all but finitely many $n \in {\mathbb N}$ a WNU polymorphism of arity $n$. Then $\Csp(\bD)$ can be solved in Datalog. 
	\end{theorem}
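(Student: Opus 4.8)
The plan is to invoke the structure theory underlying the bounded width theorem; since a complete argument is long, I only outline the route, appealing to~\cite{BoundedWidthJournal} for the details. First I would reduce to the case that $\bD$ is a \emph{core} whose polymorphism clone is idempotent. Replacing $\bD$ by its core does not change $\Csp(\bD)$ as a decision problem, and it preserves the hypothesis: a WNU polymorphism of $\bD$ of arity $n$, composed with the embedding and the retraction onto the core, yields a WNU polymorphism of the core of arity $n$. Composing further with a suitable automorphism of the core makes it idempotent (in a core, $f(x,\dots,x)$ is an automorphism, whose inverse is again a polymorphism). By standard reductions it then suffices to show that $\Csp$ of the core, expanded by all singleton unary relations, is in Datalog; so from now on assume $\bD$ is a core and $\Pol(\bD)$ is idempotent.

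Next I would translate the hypothesis into the absence of an \emph{affine obstruction}. Having WNU polymorphisms of arity $n$ for all but finitely many $n$ forces $\Pol(\bD)$ to omit the unary and affine tame‑congruence‑theory types: a single WNU of arity at least $3$ already rules out the unary type (Mar\'oti--McKenzie, see also~\cite{MarotiMcKenzie}), and every finite module lacks a WNU of arity $n$ for infinitely many $n$ (namely those for which $n\cdot 1$ is not invertible in the relevant ring), so affinity is excluded as well. By the equational characterisations of Kozik--Krokhin--Valeriote--Willard this is equivalent to $\Pol(\bD)$ containing a ternary WNU $v$ and a $4$‑ary WNU $w$ with $v(y,x,x)=w(y,x,x,x)$ for all $x,y$; this is the form of the algebraic hypothesis that the combinatorics uses.

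The heart of the matter is then to show that, under this condition, the canonical Datalog program enforcing $(2,3)$‑consistency is \emph{sound}: any instance of $\Csp(\bD)$ surviving this propagation already has a solution, so the complement of $\Csp(\bD)$ is definable in Datalog. The key tool is \emph{absorption}: a WNU polymorphism produces, for every pair of subalgebras, rich families of absorbing subalgebras, and the \emph{absorption loop lemma} (a smooth, algebraically connected digraph compatible with a WNU has a loop) lets one iteratively shrink the domain of each variable in a non‑trivial Prague instance---replacing it by a proper absorbing subalgebra or by a linked component---while preserving the Prague property, until all domains are singletons. This yields a solution.

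The main obstacle is precisely this absorption calculus together with the loop lemma: proving, in the absence of the affine type, that linked smooth digraphs admitting a WNU polymorphism always have a loop, and that the shrinking step does not destroy the consistency already achieved. Essentially all the genuine difficulty of the bounded width theorem lives here; the reduction to the idempotent core case, the reformulation of the hypothesis as omitting the affine type, and the translation of ``every surviving instance has a solution'' into ``$\Csp(\bD)$ is in Datalog'' are routine.
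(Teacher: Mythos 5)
The paper does not give a proof of this theorem; it is stated as a citation to Barto and Kozik's bounded width theorem~\cite{BoundedWidthJournal}, so there is no internal argument to compare against. Your sketch is a correct high-level outline of the route taken in that reference: pass to an idempotent core (the WNU identities survive restriction and composition with the inverse of the unary polynomial $f(x,\dots,x)$), reformulate the hypothesis as omitting the unary and affine types --- here your observation that a finite module lacks a WNU of arity $n$ whenever $n$ is not invertible, and hence for infinitely many $n$, is exactly the right reason why ``all but finitely many'' forces no affine subfactor --- and then run the absorption machinery and the loop lemma to show that the canonical consistency algorithm is a decision procedure, which gives Datalog definability of the complement. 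You correctly flag the absorption calculus and the loop lemma as the genuine content; that is indeed where essentially all the difficulty in~\cite{BoundedWidthJournal} sits. One small caution: the reformulation via a $3$-ary and a $4$-ary WNU $v,w$ with $v(y,x,x)=w(y,x,x,x)$ is an equivalent Mal'cev-style characterisation of the same class, and while it is a convenient algebraic handle, the absorption argument in~\cite{BoundedWidthJournal} works directly from the omission of the affine and unary types rather than from these specific terms, so you should treat that step as an optional repackaging of the hypothesis, not a load-bearing ingredient. With that caveat, your plan matches the cited proof.
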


	The condition in Theorem~\ref{thm:bounded-width} has many equivalent characterisations in universal algebra. 
	A clone $\mathscr C$ with domain $A$ is called \emph{affine} (see~\cite{FreeseMcKenzie}) if there is an abelian group $(A;+,-)$ such that 
	\begin{itemize}
	\item $(x,y,z) \mapsto x-y+z$ is in ${\mathscr C}$,
	and
	\item for all $a,b,c \in A^n$ we have
	$f(a-b+c) = f(a) - f(b) + f(c)$ for every $f \in {\mathscr C}^{(n)}$. 
	\end{itemize}
		
	
	\begin{theorem}[Lemma 4.5 in~\cite{Strong-Zhuk}]\label{thm:wnu-datalog}
	 Let ${\mathscr C}$ be a clone of idempotent operations on a finite set without a WNU term operation of arity $n \geq 3$. Then 
${\mathscr C}$ has a trivial or affine subfactor with a domain of size $k \geq 2$. 
\end{theorem}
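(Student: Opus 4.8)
The plan is to carry out the proof inside tame congruence theory. First I would regard $\mathscr C$ as the clone of term operations of a finite idempotent algebra $\mathbf A$ with universe $A$, so that a subfactor of $\mathscr C$ corresponds to passing to a subalgebra $\mathbf S$ of a finite power of $\mathbf A$ and then to a quotient $\mathbf S/\theta$ by a congruence (the standard reading of the term subfactor in this context). The hypothesis that $\mathscr C$ lacks an $n$-ary WNU term for some $n \geq 3$ is, by the Maltsev characterisations of bounded width (Barto--Kozik; Kozik--Krokhin--Valeriote--Willard), equivalent to the statement that the variety $\mathcal V(\mathbf A)$ does not omit the tame congruence types $\mathbf 1$ (unary) and $\mathbf 2$ (affine). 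Hence some finite algebra in $\mathcal V(\mathbf A)$ has a prime quotient $\langle\alpha,\beta\rangle$ of type $\mathbf 1$ or of type $\mathbf 2$; since $\mathcal V(\mathbf A) = \mathbf{HSP}(\mathbf A)$ and $\mathbf A$ is idempotent, I would lift this prime quotient into a subalgebra $\mathbf D$ of a finite power of $\mathbf A$.

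If the type is $\mathbf 1$, then $\mathscr C$ has no Taylor term at all (for idempotent algebras, admitting type $\mathbf 1$ is exactly the non-existence of a WNU, equivalently Taylor, term of any arity). Here I would invoke the standard structural fact that a finite idempotent clone with no Taylor term has a subfactor equal to the clone of projections on a set of size $k \geq 2$ (see, e.g., \cite{wonderland} and the references there); since every idempotent essentially unary clone consists only of projections, this is a subfactor of the required essentially-unary form. If the type is $\mathbf 2$, then $\mathbf A$ has a Taylor term but not bounded width, and I would localise to a $\langle\alpha,\beta\rangle$-minimal set $U$ and a trace $N \subseteq U$; the minimal algebra induced on $N$ modulo $\alpha|_N$ is polynomially equivalent to a module over a finite ring, and its domain, being a quotient of a type-$\mathbf 2$ trace, has size $k \geq 2$. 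Exploiting the abelianness of $\langle\alpha,\beta\rangle$ together with the idempotency of $\mathbf D$, I would then extract from this a subfactor of $\mathbf D$ that is term-equivalent to an affine algebra over an abelian group; since a subfactor of a subfactor is again a subfactor, this produces the desired affine subfactor of $\mathscr C$.

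I expect the main obstacle to be this last point: upgrading the polynomial-equivalence description of a type-$\mathbf 1$ or type-$\mathbf 2$ minimal algebra --- which freely uses constants as nullary polynomials --- to an honest subfactor built only from term operations of $\mathscr C$. Controlling which constants are available is precisely where idempotency is indispensable, and doing so while tracking that the resulting domain has at least two elements is the technical heart of the argument; it rests on the commutator-theoretic and absorption machinery developed for the bounded width theorem by Barto, Kozik, Bulatov, and others, which I would cite rather than reprove.
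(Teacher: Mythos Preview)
The paper does not give a proof of this statement; it is quoted as Lemma~4.5 of Zhuk's paper and used as a black box, so there is no in-paper argument to compare your proposal against.

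On the proposal itself: your overall strategy via tame congruence theory is the natural one, and the type-$\mathbf 1$ branch is indeed standard for idempotent clones. One discrepancy is worth flagging, however. You read ``subfactor'' as a quotient of a subalgebra of a \emph{finite power} of $\mathbf A$, but the paper's definition (stated immediately before the theorem) is strictly narrower: one takes an invariant subset $S$ of the domain of $\mathscr C$ itself and then quotients by an invariant equivalence relation on $S$; no powers are involved. Your type-$\mathbf 2$ argument, which lifts a prime quotient from some algebra in $\mathbf{HSP}(\mathbf A)$, therefore lands in the wrong place. For finite idempotent $\mathbf A$ one can in fact realise every type of $\mathcal V(\mathbf A)$ already inside $\mathbf A$, so this is repairable, but it is an extra step your sketch omits. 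The part you flag as the main obstacle --- converting the polynomial description of a type-$\mathbf 2$ trace into an honest affine subfactor built from term operations --- is genuinely the crux, and your proposal does not go beyond naming it; Zhuk's own argument proceeds through his structural theory (strong subalgebras, centers, linear quotients) rather than through tame congruence theory.
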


\begin{lemma}\label{no_affine}
Let $\bC$ be a first-order expansion of $\bR$ 
such that $\Pol(\bC)$ does not have a uniformly continuous minor-preserving map to $\proj$. Then 
	none of the subfactors of size $k \geq 2$ of the clone $\mathscr{C}:=\xi_2^\bR(\Pol(\bC) \cap \Can(\bR))$ 
	contains an affine clone. 
\end{lemma}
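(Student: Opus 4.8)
The plan is to reduce the statement to a question about how $\mathscr{C}=\xi_2^{\bR}(\Pol(\bC)\cap\Can(\bR))$ acts on the three‑element subset $\{\dr,\op,\pp\}$, and ultimately on $\{\dr,\op\}$, using the cyclic operations that were produced in Section~\ref{sect:rcc5-uch1}. First I would observe that the hypothesis ``$\Pol(\bC)$ has no uniformly continuous minor‑preserving map to $\proj$'' forces the same for $\Pol(\bC)\cap\Can(\bR)$: if the latter had such a map, then by Corollary~\ref{cor:cyclic} one of $\eta,\rho$ would be a uniformly continuous minor‑preserving map from $\Pol(\bC)\cap\Can(\bR,\prec)$ to $\proj$, by Theorem~\ref{thm:eta-uip} or~\ref{thm:rho-uip} it would have the UIP with respect to $\Pol(\bC)$, and by the extension lemma (Lemma~\ref{lem:uip}) it would extend to a uniformly continuous minor‑preserving map on all of $\Pol(\bC)$, contrary to hypothesis. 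The same argument, together with the fact that the restriction of a uniformly continuous minor‑preserving map is again one, shows that \emph{neither} $\eta$ \emph{nor} $\rho$ is a uniformly continuous minor‑preserving map from $\Pol(\bC)\cap\Can(\bR,\prec)$ to $\proj$. Since $\Pol(\bC)\cap\Can(\bR)$ is a closed clone containing $\Aut(\bR)$, it equals $\Pol(\bC')$ for a first‑order reduct $\bC'$ of $\bR$; as $\dr,\op,\pp,\ppi$ belong to $\bC'$ it is a model‑complete core, and $\Pol(\bC')\subseteq\Can(\bR)$. Applying Proposition~\ref{prop:can-proj} now yields that $\mathscr{C}$ contains a cyclic operation, and also that $\mathscr{C}$ is a clone of \emph{idempotent} operations on $\{\eq,\pp,\ppi,\dr,\op\}$ (the orbits of pairs being primitively positively definable over $\bC'$).

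Next I would produce a \emph{good} cyclic operation. Since $\eta$ is not a uniformly continuous minor‑preserving map to $\proj$, Proposition~\ref{prop:wedge} gives $g\in\Pol(\bC)\cap\Can(\bR,\prec)$ with $\eta(g)=\wedge$, which by Lemma~\ref{lem:eta-can} may be taken canonical over $\bR$ as well; since $\rho$ is not such a map, Proposition~\ref{prop:cyclic} gives an operation with cyclic $\rho$‑image, and Theorem~\ref{thm:cyclic} then manufactures $c\in\Pol(\bC)\cap\Can(\bR)\cap\Can(\bR,\prec)$ with $\xi^{(\bR,\prec)}_2(c)$ cyclic. Inspecting the proof of Theorem~\ref{thm:cyclic}, $c$ sends every non‑constant triple of orbits of pairs into $\nsim=\dr\cup\op$, hence by Lemma~\ref{lem:partial-can} into $\dr$ or into $\op$. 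Thus $\bar c:=\xi_2^{\bR}(c)\in\mathscr{C}$ is a ternary idempotent cyclic operation with $\bar c(x,y,z)\in\{\dr,\op\}$ whenever $\{x,y,z\}$ is not a singleton.

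Now suppose, for contradiction, that $\mathscr{C}$ has an affine subfactor $\mathscr{C}'$ of size $k\ge 2$ with domain $Q=S/E$ and abelian group $(Q;+)$. From Table~\ref{table:rcc}, the sets $\dr\cup\op$, $\pp\cup\dr\cup\op$, $\ppi\cup\dr\cup\op$ are primitively positively definable in $\bR$, so $\{\dr,\op\}\subseteq\{\pp,\dr,\op\}$, etc., are subuniverses of $(\{\eq,\pp,\ppi,\dr,\op\};\mathscr{C})$. The action $\bar c/E$ is a ternary idempotent cyclic operation of the affine clone $\mathscr{C}'$, which forces $x\mapsto 3x$ to be an automorphism of $(Q;+)$ and $\bar c/E(x,y,z)=3^{-1}(x+y+z)$; in particular, for distinct $p,q\in Q$ the tuple $(3p-2q,q,q)$ is non‑constant and $\bar c/E(3p-2q,q,q)=p$, so every element of $Q$ lies in the image of $\bar c/E$ on non‑constant tuples. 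Since $\bar c$ maps non‑constant tuples into $\{\dr,\op\}$, that image is contained in $(\{\dr,\op\}\cap S)/E$, so $Q=(\{\dr,\op\}\cap S)/E$; hence $k=2$, $\{\dr,\op\}\subseteq S$ with $\dr\not\mathrel{E}\op$, and a short computation identifies $\mathscr{C}'$ with the clone of $\mathbb{B}:=(\{\dr,\op\};\mathscr{C}|_{\{\dr,\op\}})$ (the $E$‑classes being $[\dr],[\op]$, each operation of $\mathscr{C}$ acting on $Q$ exactly as it does on $\{\dr,\op\}$). So $\mathbb{B}$ is the affine algebra over the two‑element group: its clone is generated by $(x,y,z)\mapsto x\oplus y\oplus z$, its only binary idempotent term operations are the two projections, and it has no majority term operation.

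For the contradiction I would use Lemma~\ref{lem:pres-prec}: every $f\in\Pol(\bC)\cap\Can(\bR,\prec)$ preserves $\precnsim=\{\drprec,\poprec\}$, and the bijection $q\colon\{\drprec,\poprec\}\to\{\dr,\op\}$ ($\drprec\mapsto\dr$) conjugates the action of such an $f$ on $\{\drprec,\poprec\}$, i.e.\ $\rho(f)$, to $\xi_2^{\bR}(f)|_{\{\dr,\op\}}$; moreover, by the canonisation lemma (Lemma~\ref{lem:canon}) any $f\in\Pol(\bC)\cap\Can(\bR)$ can be replaced, without changing $\xi_2^{\bR}(f)$, by one that is canonical over $(\bR,\prec)$ as well. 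Hence the clone of $\mathbb{B}$ is, up to conjugation by $q$, a subclone of $\rho(\Pol(\bC)\cap\Can(\bR,\prec))$ containing $\rho(c)$, a ternary idempotent cyclic operation on $\{\drprec,\poprec\}$, hence majority or minority. As $\mathbb{B}$ is idempotent and not a clone of projections, Post's classification leaves exactly two possibilities: the clone of $\mathbb{B}$ contains a majority or a binary semilattice operation (contradicting the previous paragraph), or it equals the affine clone. The hard part will be to exclude this last case, in which $\rho$ of the canonical polymorphisms is precisely the affine clone on $\{\drprec,\poprec\}$ and every $g$ with $\eta(g)=\wedge$ restricts to a projection there. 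I expect to rule it out by a direct argument inside $(\bR,\prec)$: combining such a $g$ with $\bar c$ and suitable automorphisms of $(\bR,\prec)$, along the recursive lines of the proof of Theorem~\ref{thm:cyclic} and using the composition table of $(\bR,\prec)$ (Lemma~\ref{lem:decomp}) together with the facts $\ppi\cap{\prec}=\emptyset=\pp\cap{\succ}$, to manufacture a canonical polymorphism whose restriction to $\{\drprec,\poprec\}$ is a semilattice operation, contradicting affineness of the clone of $\mathbb{B}$. Carrying out this last step is the technical heart of the proof; everything else is bookkeeping with the structure theory already developed.
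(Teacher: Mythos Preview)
Your proposal is incomplete: you explicitly leave the ``technical heart'' --- excluding the case where the affine subfactor is (isomorphic to) the clone of $\mathbb{B}=(\{\dr,\op\};\mathscr{C}|_{\{\dr,\op\}})$ --- as a plan rather than an argument. Your suggested route (manufacturing a semilattice on $\{\drprec,\poprec\}$ by combining $g$ with $\bar c$) is vague, and there is no reason to expect it to succeed: the affine clone on two elements already contains the ternary cyclic $\rho(c)$, so cyclicity alone gives no contradiction, and $\eta(g)=\wedge$ is perfectly compatible with $\rho(g)$ being a projection.

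The missing step has in fact a three--line direct argument that bypasses your entire reduction. Pick the Malt\-sev operation $f$ of the affine subfactor (so $\xi_2^{\bR}(f)|_{\{\dr,\op\}}$ is the Boolean minority); as in your proposal one may assume $f\in\Can(\bR,\prec)$. Then $f(\dr,\op,\op)\subseteq\dr$ and $f(\pp,\pp,\pp)\subseteq\pp$. Since $\dr=\pp\circ\dr$ and $\dr,\op\subseteq\pp\circ\op$ (Table~\ref{table:rcc}), one gets
\[
f(\dr,\dr,\op)\subseteq f(\pp\circ\dr,\pp\circ\op,\pp\circ\op)\subseteq f(\pp,\pp,\pp)\circ f(\dr,\op,\op)\subseteq\pp\circ\dr=\dr,
\]
contradicting minority. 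This is exactly how the paper handles the case $S=\{\op,\dr\}$ (its Claim~2). More generally, the paper never invokes Theorem~\ref{thm:cyclic} or the cyclic operation at all: it works directly with the Malt\-sev term of the putative affine subfactor and rules out all possible $S\subseteq\{\eq,\pp,\ppi,\dr,\op\}$ by short composition--table computations (Claims~1--3), finishing by showing that any remaining $S$ would force $\eta(f)$ to be Malt\-sev, contradicting Proposition~\ref{prop:wedge}. Your elaborate reduction via $\bar c$ is thus unnecessary; the whole lemma follows from elementary manipulations with the Malt\-sev operation and Table~\ref{table:rcc}.
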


\begin{proof}
	Let us assume for contradiction that
	${\mathscr C}$ has an affine subfactor ${\mathscr C}'$ with at least two elements. 
	Let 
	$E$ be the equivalence relation on a 
	subset $S$ of the domain of ${\mathscr C}$ that witnesses this. We may assume that $S$ is chosen to be minimal satisfying these conditions.
	Let $f \in \Pol(\bC) \cap \Can(\bR)$ 
	be 
	such that $(\xi^{\bR}_2(f)|_S)/E$ is given by $(x,y,z) \mapsto x-y+z$ where $+$ and $-$ refer to the underlying abelian group of ${\mathscr C'}$. 
	By the canonisation lemma (Lemma~\ref{lem:canon}) we know that $f$ interpolates some function $f'$ over $\Aut(\bR,\prec)$ which is canonical over $(\bR,\prec)$. Then clearly $\xi_2^\bR(f')=\xi_2^\bR(f)$ so we may also assume without loss of generality that $f\in \Can(\bR,\prec)$.
		
	\medskip 
{\bf Claim 1.} $S\subseteq \{\pp,\ppi,\po,\dr\}$. 
Suppose for contradiction that this is not the case;
that is, one of the $E$-classes contains $\eq$. 
First consider the case that this class is distinct from $\{\eq\}$. 
By Proposition~\ref{lem:pres-neq} 
we know that $\{\pp,\ppi,\po,\dr\}$ is preserved by $\xi_2^\bR(\Pol(\bB) \cap \Can(\bR,\prec))$; hence, we may intersect each class with 
$\{\pp,\ppi,\po,\dr\}$ and replace $S$ by $S \setminus \{\eq\}$ and obtain a contradiction to the minimality of $S$. 
We may therefore suppose that $\{\eq\}$ is one of the $E$-classes. 
Arbitrarily choose $O \in S\setminus \{\eq\}$.
By assumption we have $\xi_2^{\bR}(f(\eq,O,O)) =  \xi_2^{\bR}(f)(O,O,\eq) = \eq$. By the table in Figure~\ref{table:rcc} and Lemma~\ref{lem:decomp} we know that $O\subseteq O\circ O$. Therefore, $$f(O,O,O)\subseteq f(\eq\circ O,O\circ O,O\circ \eq)\subseteq \eq\circ \eq=\eq,$$ a contradiction.

\medskip 
{\bf Claim 2.} $S \neq \{\po,\dr\}$. Otherwise, $\xi_2^{\bR}(f)|_{\{\po,\dr\}}$ would be a minority operation and $f(\dr,\po,\po)\subseteq \dr$ and $f(\pp,\pp,\pp)\subseteq \pp$. By Table~\ref{table:rcc} we know that $\dr=\pp\circ \dr$ and $\dr,\po\subseteq \pp\circ \po$. Therefore $f(\dr,\dr,\po)\subseteq \pp\circ \dr=\dr$ which contradicts the assumption that 
	$\xi_2^{\bR}(f)|_{\{\po,\dr\}}$ 
	is a minority operation.


\medskip 
{\bf Claim 3.} $S \neq \{\pp,\ppi\}$. Otherwise, $\xi_2^{\bR}(f)|_{\{\pp,\ppi\}}$ is a minority operation and 
$$f(\pp,\ppi,\ppi),f(\ppi,\pp,\ppi) \subseteq \pp.$$ On the other hand, by the table in Figure~\ref{table:rcc} we know that 
$$\po \subseteq \pp \circ \ppi \circ \ppi, \ppi \circ \pp \circ \ppi, \ppi \circ \ppi \circ \pp.$$
Thus, $f(\po,\po,\po) \subseteq \pp \circ \pp \circ \pp = \pp$, contradicting that 
$f$ 
 preserves $\po$.

\medskip 
Claim 3 implies that there exists $C_0 \in S \setminus \{\pp,\ppi\}$. 
Pick $C_1 \in S$ such that $C_0$ and $C_1$ are in distinct $E$-classes.
We claim that $C_1 \in \{\pp,\ppi\}$. Suppose for contradiction that $C_1 \in \{\po,\dr\}$. Since $\po \cup \dr$ is preserved by $f$ by Lemma~\ref{lem:partial-can}, 
it follows that $S = \{C_0,C_1\} = \{\po,\dr\}$ by the minimal choice of $S$, in contradiction to Claim 2.
Hence, $C_1 = \pp$ or $C_1 = \ppi$ by Claim 1. 
We consider the case that $C_1 = \pp$; the case that $C_1 = \ppi$ can be treated similarly.

Recall from the beginning of Section~\ref{sect:eta} that $C_0 \cap {\prec}$ and $\pp$ are in different $F$-classes. If $P,Q \in \{C_0 \cap {\prec}, \pp\}$, then 
by the assumptions for $f$ we have that 
 $\xi_2(f)(P,P,Q)$ is in the same $E$-class as $Q$. 
Note that both $Q$ and $f(P,P,Q)$ are contained in $\prec$. Hence, if $Q$ is $\pp$ then $f(P,P,Q)$ is contained in $\pp$,
and if $Q$ is $C_0 \, \cap \prec$ then $f(P,P,Q)$ is contained in $\dr \; \cap \prec$ or in $\po \; \cap \prec$. It follows that
$f(P,P,Q)$ and $Q$ are in the same $F$-class.  Likewise, we deduce
that $f(Q,P,P)$ and $Q$ lie in the same $F$-class. We conclude that 
the operation $\eta(f)$ is the ternary minority operation, which is a contradiction to  
Proposition~\ref{prop:wedge}.
\end{proof}

The following result is a dichotomy result that does not rely on any complexity-theoretic assumptions.  

\begin{theorem}\label{thm:data}
Let $\bC$ be a finite-signature first-order expansion of $\bR$. If $\Pol(\bC)$ has a uniformly continuous minor-preserving map to $\proj$, then $\Pol(\bC)$ is not expressible in fixed-point logic plus counting (FPC). Otherwise, $\Csp(\bC)$ is solvable in Datalog.
\end{theorem}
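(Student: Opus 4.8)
The plan is to prove the two clauses separately; throughout I use the following preliminary facts. Since $\bR$ is a model-complete core (being homogeneous with the five basic relations partitioning the pairs) and $\bC$ is a first-order expansion of $\bR$, for every $f\in\Pol(\bC)\subseteq\Pol(\bR)$ the diagonal $x\mapsto f(x,\dots,x)$ is an endomorphism, hence a self-embedding, hence lies in $\overline{\Aut(\bR)}$ by homogeneity; consequently the finite clone $\Pol(\bD):=\xi_3^\bR(\Pol(\bC)\cap\Can(\bR))$ of Lemma~\ref{can_datalog} is idempotent. Moreover, since $\bR$ has maximal arity $2$, the behaviour over $\bR$ of a canonical operation is determined by its $2$-behaviour, so restricted to $\Pol(\bC)\cap\Can(\bR)$ the maps $\xi_2^\bR$ and $\xi_3^\bR$ agree up to a canonical clone isomorphism; in particular $\xi_2^\bR(\Pol(\bC)\cap\Can(\bR))$ and $\Pol(\bD)$ have the same lattice of subfactors. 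Finally recall $\bR$ is finitely bounded (Lemma~\ref{lem:R-fin-bound}) and has the finitely homogeneous Ramsey expansion $(\bR,\prec)$ (Theorem~\ref{thm:ramsey-exp}).

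\emph{The Datalog clause.} Assume $\Pol(\bC)$ has no uniformly continuous minor-preserving map to $\proj$. By Lemma~\ref{can_datalog} it suffices to place $\Csp(\bD)$ in Datalog, and by Theorem~\ref{thm:bounded-width} it suffices to show that $\Pol(\bD)$ has a weak near-unanimity operation of arity $n$ for all but finitely many $n$. Suppose not; then $\Pol(\bD)$ lacks such an operation of some arity $n\ge 3$, and since $\Pol(\bD)$ is a finite idempotent clone, Theorem~\ref{thm:wnu-datalog} furnishes a subfactor $\mathscr D'$ of $\Pol(\bD)$ with at least two elements that is essentially unary or affine. If $\mathscr D'$ is essentially unary then, being idempotent, it is the clone of all projections on its domain, which has a minor-preserving map to $\proj$; composing with the surjective subfactor homomorphism and with the uniformly continuous clone homomorphism $\xi_3^\bR\colon \Pol(\bC)\cap\Can(\bR)\to\Pol(\bD)$ (whose target is finite, so continuity is automatic) yields a uniformly continuous minor-preserving map $\Pol(\bC)\cap\Can(\bR)\to\proj$. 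Corollary~\ref{cor:cyclic} then shows that $\eta$ or $\rho$ is a uniformly continuous minor-preserving map from $\Pol(\bC)\cap\Can(\bR,\prec)$ to $\proj$; Theorem~\ref{thm:eta-uip} or Theorem~\ref{thm:rho-uip} shows it has the UIP with respect to $\Pol(\bC)$ over $(\bR,\prec)$; and the extension lemma (Lemma~\ref{lem:uip}) extends it to a uniformly continuous minor-preserving map $\Pol(\bC)\to\proj$, contradicting the assumption. If instead $\mathscr D'$ is affine with a domain of size at least two, then by the clone isomorphism noted above $\xi_2^\bR(\Pol(\bC)\cap\Can(\bR))$ has an affine subfactor of size at least two, contradicting Lemma~\ref{no_affine}. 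Either way we reach a contradiction, so $\Csp(\bD)$, and hence $\Csp(\bC)$, is in Datalog.

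\emph{The non-definability clause.} Assume $\Pol(\bC)$ has a uniformly continuous minor-preserving map to $\proj$. By the methods behind Theorem~\ref{thm:hard} (see~\cite{wonderland}), this condition is equivalent to $\bC$ primitively positively constructing every finite structure; in particular $\bC$ pp-constructs the CSP of a nontrivial system of linear equations over $\mathbb Z_2$. By the Atserias--Bulatov--Dawar theorem~\cite{AtseriasBulatovDawar} (building on the Cai--F\"urer--Immerman construction) this CSP is not expressible in FPC --- indeed not in $C^k_{\infty\omega}$ for any $k$. Since a primitive positive construction consists of primitive positive interpretations, which are first-order (hence FPC) reductions, together with homomorphic equivalences, which preserve the CSP, the construction gives an FPC-reduction; therefore $\Csp(\bC)$ is not FPC-definable. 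Combined with the Datalog clause (and Datalog $\subseteq$ FPC) this yields the stated dichotomy.

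\emph{Main obstacle.} Once the results of this section are assembled the Datalog clause is essentially bookkeeping, the only delicate step being the identification of $\xi_2^\bR$ with $\xi_3^\bR$. The real work lies in the non-definability clause: one must make precise that the presence of a uniformly continuous minor-preserving map to $\proj$ produces a reduction which remains valid in the presence of counting, from a problem that is unconditionally not FPC-expressible. This requires the FPC-compatible form of the primitive-positive-construction toolkit for $\omega$-categorical structures --- that pp-interpretations induce FPC-reductions and that homomorphic equivalence is FPC-invariant --- together with the Atserias--Bulatov--Dawar lower bound; combining these carefully is the crux of the argument.
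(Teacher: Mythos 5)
Your proof is correct and follows essentially the same route as the paper's: rule out affine subfactors via Lemma~\ref{no_affine}, invoke Theorem~\ref{thm:wnu-datalog} to get weak near-unanimity operations of all arities $\geq 3$, apply Theorem~\ref{thm:bounded-width} and Lemma~\ref{can_datalog} for the Datalog membership, and use pp-constructibility of all finite structures together with the Atserias--Bulatov--Dawar result for the FPC lower bound.

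Two small points of comparison. First, you are more explicit than the paper about the ``essentially unary'' alternative in Theorem~\ref{thm:wnu-datalog}: you observe that an idempotent essentially unary subfactor would yield a uniformly continuous minor-preserving map $\Pol(\bC)\cap\Can(\bR)\to\proj$ and then run the Corollary~\ref{cor:cyclic} / UIP / extension-lemma chain to contradict the hypothesis on $\Pol(\bC)$. The paper's written proof only mentions Lemma~\ref{no_affine} (which eliminates affine subfactors) and leaves this case implicit, so filling it in is a genuine improvement in completeness. Second, you apply Theorem~\ref{thm:wnu-datalog} to $\Pol(\bD)=\xi_3^{\bR}(\Pol(\bC)\cap\Can(\bR))$ and then need to pull an affine subfactor back to $\xi_2^{\bR}(\Pol(\bC)\cap\Can(\bR))$ before Lemma~\ref{no_affine} can be used; the assertion that $\xi_2^{\bR}$ and $\xi_3^{\bR}$ yield ``the same lattice of subfactors'' is stated without justification and is not obviously literally true (the two clones act on different finite sets, and subfactor lattices are not in general an abstract-clone invariant). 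The paper avoids this by applying Theorem~\ref{thm:wnu-datalog} directly to $\xi_2^{\bR}(\Pol(\bC)\cap\Can(\bR))$, where Lemma~\ref{no_affine} applies verbatim, and then lifting each resulting WNU along the natural surjection to $\xi_3^{\bR}$ (possible because $\bR$ is homogeneous in a binary language, so the $2$-behaviour determines the $3$-behaviour). If you replace the subfactor-lattice assertion by this lifting step, your argument becomes fully rigorous and matches the paper's.
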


\begin{proof}
If $\Pol(\bC)$ has a uniformly continuous minor-preserving map to $\proj$, then 
 $\bB$ can constructs primitively positively all finite structures (see, e.g.,~\cite{Book}), and 
it follows from work in~\cite{AtseriasBulatovDawar} 
that $\Csp(\bC)$ is
inexpressible in FPC.

Otherwise, Lemmas~\ref{no_affine} implies that none of the subfactors of
${\mathscr C} := \xi_2^{\bR}(\Pol(\bC) \cap \Can(\bR))$ 
contains an affine clone with a domain of size at least two. 
Theorem~\ref{thm:wnu-datalog} implies that
$\mathscr C$ has for every $k \geq 3$ a WNU of arity $k$; let $w_k \in \Pol(\bC) \cap \Can(\bR)$ be such that $\xi_2^{\bR}(w_k)$ is a WNU of arity $k$. 
Since $\bR$ is homogeneous in a binary language,  the operation $\xi_3^{\bR}(w_k)$ 
is a WNU polymorphism of arity $k$. 
Let $\bD$ be a finite-signature structure so that $\Pol(\bD)=\xi_3^\bR(\Pol(\bC) \cap \Can(\bR))$.
Then Theorem~\ref{thm:bounded-width} implies that	$\Csp(\bD)$ is in Datalog. 
	We can therefore conclude with Lemma~\ref{can_datalog}  that $\Csp(\bC)$ is in Datalog.
\end{proof}

\section{Future Work}
The techniques to verify the UIP presented here can also be used
to verify the tractability conjecture for the class ${\mathcal C}$ of all
 structures where the number of orbits of 
 tuples with pairwise distinct entries is bounded by 
  $c n^{dn}$
for constants $c,d$ with $d<1$.
The automorphism groups of the structures in ${\mathcal C}$ have been described  in~\cite{BodirskyBodor}; 
 they include all first-order reducts of unary structures, and in particular all finite structures. 
It can be shown that ${\mathcal C}$ is precisely Lachlan's class of \emph{cellular structures}; an explicit description of the automorphism groups of such structures can be found in~\cite{BodorMonadicallyStable}. 

In fact, ${\mathcal C}$ can be further generalised to the class of all \emph{hereditarily cellular structures}, which are precisely the $\omega$-categorical monadically stable structures~\cite{Lachlan-Tree-Decomp};
the growth-rates of the number of orbits is discussed in~\cite{Braunfeld-Monadic-Stab}.  
In the thesis of the second author~\cite{BodorDiss}, 
the tractability conjecture can be verified even for this larger class, again using the unique interpolation property and the results from the present paper. 

We propose two larger classes of structures where the tractability conjecture has not yet been classified:
\begin{itemize}
\item The class of all structures with a first-order interpretation in the structure $({\mathbb N};=)$.  This class includes for example all first-order reducts of the line graph of an infinite clique; these structures are not covered by any of the classes mentioned above. 
\item The class of all structures where the number of orbits of the action of the automorphism group on sets of size $n$ is bounded by some polynomial in $n$; the automorphism groups of such structures have been classified by Falque and Thierry~\cite{FalqueThiery}. This is an interesting class and a challenge to the method of the present paper because there are polynomial-time tractable cases where the canonical polymorphisms have a uniformly continuous minor-preserving map to the projections (some of these cases are described in~\cite{RydvalDescr}). 
\end{itemize}
Concerning RCC5 we ask whether 
 every first-order expansion of $\bR$ that can be solved by Datalog can also be solved by a Datalog program with IDBs of maximal arity two. 
 The same statement is known to be true for finite-domain CSPs~\cite{BoundedWidth}. But we cannot deduce the statement above from the result for finite domains via Lemma~\ref{can_datalog}, because the reduction does not preserve the arity of the IDBs.
 
\section*{Acknowledgements}
The authors thank Thomas Quinn-Gregson, all the referees of the conference version~\cite{BodirskyBodorUIP}, and the referees of the present journal version for their helpful comments. 

\bibliographystyle{alpha}
\bibliography{../../global}

\end{document}